\newcommand{\reff}[1]{(\ref{#1})}
\newcommand{\tb}{\boldsymbol{t}}
\newcommand{\betab}{\boldsymbol{\beta}}
\newcommand{\sG}{\mathcal{G}}
\newcommand{\sA}{\mathcal{A}}
\newcommand{\sU}{\mathcal{U}}
\newcommand{\sV}{\mathcal{V}}
\newcommand{\sN}{\mathcal{N}}
\newcommand{\IR}{\mathbb{R}}
\newcommand{\IZ}{\mathbb{Z}}
\newcommand{\IM}{\mathbb{M}}
\newcommand{\IW}{\mathcal{W}}
\newcommand{\IX}{\mathbb{X}}
\newcommand{\dt}{t}
\newcommand{\IE}{\mathbb{E}}
\newcommand{\IC}{\mathbb{C}}
\newcommand{\N}{\mathbb{N}}
\newcommand{\IN}{\mathbb{N}}
\newcommand{\E}{\mathbb{E}}
\newcommand{\IP}{\mathbb{P}}
\newcommand{\sF}{\mathcal{F}}
\newcommand{\G}{\mathbb{G}}
\newcommand{\Ii}{\mathds{1}}
\newcommand{\Var}{\mathrm{Var}}
\newcommand{\eps}{\varepsilon}
\newcommand{\norm}[1]{\left\lVert#1\right\rVert}
\DeclareMathOperator*{\argmin}{\arg\min}
\newcommand{\undersim}[1]{\mathrel{\mathpalette\@undersim{#1}}}
\newcommand{\@undersim}[2]{%
  \vcenter{%
    \ialign{%
      ##\cr
      $\m@th#1#2$\cr
      \noalign{\nointerlineskip\kern.2ex}
      $\m@th#1\sim$\cr
      \noalign{\kern-.4ex}
    }%
  }%
}
\newcommand{\gsim}{\undersim{>}}
\newcommand{\lsim}{\undersim{<}}
\newtheorem{thm}{Theorem}[section]
\newtheorem{lem}[thm]{Lemma}
\newtheorem{cor}[thm]{Corollary}
\newtheorem{prop}[thm]{Proposition}
\newtheorem{assum}[thm]{Assumption}
\theoremstyle{definition}
\newtheorem{rem}[thm]{Remark}
\newtheorem{ex}[thm]{Example}
\numberwithin{equation}{section}
\begin{document}

\title{Notes on Time Series}

\thispagestyle{empty}

\begin{center}
{\LARGE \bf Forecasting time series with encoder-decoder neural networks}\\
{\large Nathawut Phandoidaen, Stefan Richter}\\

{phandoidaen@math.uni-heidelberg.de, stefan.richter@iwr.uni-heidelberg.de}\\

{\small Institut für angewandte Mathematik, Im Neuenheimer Feld 205, Universität Heidelberg}\\
\today
\end{center}

\begin{abstract}
    {In this paper, we consider high-dimensional stationary processes where a new observation is generated from a compressed version of past observations. The specific evolution is modeled by an encoder-decoder structure. We estimate the evolution with an encoder-decoder neural network and give upper bounds for the expected forecast error under specific structural and sparsity assumptions introduced by \cite{schmidthieber2017}. The results are shown separately for conditions either on the absolutely regular mixing coefficients or the functional dependence measure of the observed process. In a quantitative simulation we discuss the behavior of the network estimator under different model assumptions. We corroborate our theory by a real data example where we consider forecasting temperature data.}
\end{abstract}

\section{Introduction}
\label{sec_intro}

During the last years, machine learning has become a very active field of research. One of the main advantages of the algorithms and models considered in this area is their capability of dealing with high-dimensional input and output data. Especially in supervised learning, neural networks have drawn a lot of attention and have been shown to build a flexible model class. Moreover, existing methods such as the prominent stochastic gradient descent enable neural networks to train in a way which avoid excessive overfitting. While the network estimates do not allow for an easily accessible interpretation of the connection between input- and output data, their prediction abilities are remarkable and satisfactory in practice. Investigating regression problems, \cite{schmidthieber2017}, \cite{bauer2019} or \cite{imaizumi2018deep} provide statistical results which support this behavior theoretically.

In this work, we consider the forecasting of high-dimensional time series with neural networks. The general idea to use networks for forecasting was already described in \cite{tang1993}, \cite{kline2004}, \cite{Zhang2012}. However up to now, no theoretical results about the achieved prediction error seem to exist. Such results are of utmost value since the conditions needed can shed light on the choice of a network structure and is still an open problem in practice. Furthermore, quantification of the impact of the underlying dependence in the data can yield information on the number of training samples (or observation length, in a time series context) which is needed to bound the prediction error.

To obtain such results, we assume that the observed time series $X_i$, $i=1,...,n$, is a realization of a stationary stochastic process which obeys
\begin{equation}
	X_{i} = f_0(X_{i-1},...,X_{i-r}) + \varepsilon_i, \quad i=r+1,...,n\label{model_time_evolution}
\end{equation}
where $\varepsilon_i$ is an i.i.d. sequence of $d$-dimensional random variables, $r \in\IN$ is the number of lags considered and $f_0:\IR^{dr} \to \IR^d$ is an unknown function. The forecasting ability of an estimator $\hat f$ of $f_0$ is measured via $\IE D(\hat f)$ where
\begin{equation}
    D(f) := \frac{1}{d}\IE\big[ \big|X_{r+1} - f(X_r,...,X_1)\big|_2^2 \IW(X_r,...,X_1)\big]\label{definition_predictionerror}
\end{equation}
and $|\cdot|_2$ denotes the Euclidean norm and $\IW: \IR^{dr} \to \IR$ is a weight function.

The function $f_0$ is treated nonparametrically, that is, no specific evolution over time is imposed. It is clear that a standard nonparametric estimator $\hat f^{np}$ of $f_0$ will suffer from the curse of dimension. If, for instance, $f_0$ is $(\beta-1)$-times differentiable with Lipschitz continuous derivative $f_0^{(\beta-1)}$ and the $X_i$ are i.i.d., we would expect that there is some constant $C > 0$ (depending on characteristics of $f_0$) such that roughly,
\begin{equation}
    \IE D(\hat f^{np}) \le C\cdot n^{-\frac{2\beta}{2\beta+dr}},\label{naive_nonparametric_rate}
\end{equation}
which is a very slow rate if the dimension $d$ of the time series is large.
To overcome this issue, several structural assumptions for $f_0$ have been proposed, for example additive models (cf. \cite{stone1} for i.i.d. models, \cite{vogt2012} for locally stationary time series). In this paper we will impose a specific encoder-decoder structure on $f_0$ which we see as a reasonable approximation of the true evolution and simultaneously helps to drastically improve the convergence rate. Graphically, we assume that $f_0$ 
``compresses" the given information of the last $r$ lags into a vector of much smaller size and afterwards ``expands" this concentrated information to produce the observation of the next time step ahead. The details will be discussed in Section \ref{sec_model}.

Exploiting the given structure, we define a neural network estimator $\hat f^{net}$ and provide an upper bound for $\IE D(\hat f^{net})$. We quantify the underlying dependence of $X_i$, $i=1,...,n$, by either the functional dependence measure (cf. \cite{wu2005anotherlook}) or absolutely regular mixing coefficients (cf. \cite{rio1995}) which allow for wide-range applicability of the results. It should be noted that the recursion is only used in the fashion of a regression model and we do not impose any contraction condition on $f_0$. Thus, it is not necessary that $X_i$ itself has geometric decaying dependence coefficients. Moreover, for the same reason, our theory allows us to discuss the more general $d$-variate regression model
\[
    Y_i = f_0(X_{i-1},...,X_{i-r}) + \varepsilon_i, \quad i=r+1,...,n,
\]
where we do not impose a direct connection between input $X_i$ and output $Y_i$. In this context, let us emphasize that our stochastic results can be seen as a generalization of \cite{schmidthieber2017} who dealt with i.i.d. data $X_i$ and one-dimensional outputs $Y_i$, in particular. The encoder-decoder structure we impose is very important to transfer the strong convergence rates from \cite{schmidthieber2017} to the setting of high-dimensional outputs, especially in the case of recursively defined time series.

From a theoretical point of view, we make the following contributions: First, we derive oracle-type inequalities for the prediction error \reff{definition_predictionerror} under the two dependence paradigms mentioned above. These results are completely new and seem to be the first oracle-type inequalities for the prediction error under dependence. These are presented in an extra section and may be of independent interest to prove convergence rates of other nonparametric or semiparametric estimation procedures. Besides the additional difficulties posed by dependence, it turns out that some fiddly calculations are needed to unify several terms contributing to the upper bounds. Second, we introduce the encoder-decoder structure as a reasonable evolution scheme for time series and derive upper bounds of the approximation error. 

The paper is organized as follows. In Section \ref{sec_model} we describe the structural assumptions on $f_0$ and formulate the neural network estimator. In Section \ref{sec_theory} we introduce the two measures of dependence and provide upper bounds for the neural network estimator. Section \ref{sec_oracle} contains oracle-type inequalities for minimum empirical risk estimators with respect to $D(f)$ which may be of independent interest. In Section \ref{sec_simulation} we give a small simulation study about the behavior of the neural network estimator fro a practical point of view and apply it to real-world temperature data. In Section \ref{sec_conclusion}, a conclusion is drawn. Most of the proofs are deferred to the Appendix (Section \ref{sec_appendix}) or to the supplementary material without further reference.

Finally, let us shortly introduce some notation used in this paper. For $q > 0$, let $|v|_{q} := (\sum_{j=1}^{r}|v_j|^q)^{1/q}$ denote the $q$-norm of a vector $v\in \IR^{r}$ with the convention $|v|_{\infty} := \max_{j=1,...,r}|v_j|$ and $|v|_0 := \#\{j\in \{1,...,r\}: v_j\not=0\}$ where $\#$ denotes the number of elements of a set. For matrices $W \in \IR^{r \times s}$, let $|W|_{\infty} := \max_{j=1,...,r,k=1,...,s}|W_{jk}|$ and $|W|_0 := \#\{j\in \{1,...,r\}, k\in \{1,...,s\}: W_{jk}\not=0\}$. For mappings $f:\IR^{t} \to \IR$, we denote by $\|f\|_{\infty} := \sup_{x\in \IR
^t}|f(x)|$ the supremum norm. If $f:\IR^{t} \to \IR^d$, we use $\|f\|_{\infty} := \| |f|_{\infty} \|_{\infty}$.
For sequences $x_n,y_n$ we write $x_n \lsim y_n$ if there exist a constant $C > 0$ independent of $n$ such that $x_n \le C y_n$ for $n\in\IN$. We write $x_n \asymp y_n$ if $x_n \lsim y_n$ and $y_n \lsim x_n$.

\section{Encoder-decoder model and neural network estimator}
\label{sec_model}

To simplify the notation, we will abbreviate $\IX_{i-1} := (X_{i-1},...,X_{i-r})$. Thus, equation \reff{model_time_evolution} becomes 
\[
    X_i = f_0(\IX_{i-1}) + \varepsilon_i, \quad i=r+1,...,n,
\]
and
\[
    D(f) := \frac{1}{d}\IE\big[ \big|X_{r+1} - f(\IX_{r})\big|_2^2 \IW(\IX_r)\big]
\]
denotes the expected one-step forecasting error (averaged over the dimensions). For theoretical reasons we will impose that the weight function $\IW:\IR^{dr} \to \IR$ has compact support $\subset [0,1]^{dr}$. This means that we restrict ourselves to the prediction error of predictions from observations $\IX_r \in [0,1]^{dr}$. However, the concepts can easily be extended to general compact sets $A \subset \IR^{dr}$ instead of $[0,1]^{dr}$. For the ease of presentation we choose to discuss our theory on the unit interval, instead.

In the following, we restrict ourselves to the case of Subgaussian noise.

\begin{assum}\label{ass_subgaussian} $\varepsilon_1$ is Subgaussian, that is, for any $k\in\IN$ and any component $j\in\{1,...,d\}$,
\[
    \IE[|\varepsilon_{1j}|^k]^{1/k} \le C_{\varepsilon}\cdot \sqrt{k}.
\]
\end{assum}

\subsection{Encoder-decoder structure and smoothness assumptions}

We require that $f_0$ in \reff{model_time_evolution} has a specific ``sparse" form, which we model through several structural assumptions. 

\begin{assum}[Encoder-decoder assumption]\label{ass_autoencoder}
    We assume that
\begin{equation}
	f_0 = f_{dec} \circ f_{enc},\label{decomposition_f}
\end{equation}
for $f_{enc}:\IR^{dr} \to \IR^{\tilde d}$ with $\tilde d \in \{1,...,d\}$, and $f_{dec}: \IR^{\tilde d}\to \IR^d$ only depending on a maximum of $t_{dec} \in \{1,...,\tilde d\}$ arguments in each component. Furthermore,
    \begin{equation}
        f_{enc} = g_{enc,1} \circ g_{enc,0},\label{decomposition_fenc}
    \end{equation}
where $g_{enc,0}:\IR^{dr} \to \IR^D$, $D\in\IN$, only depends on a maximum of $t_{enc,0} \in \{1,...,dr\}$ arguments in each component and $g_{enc,1}:\IR^{D} \to \IR^{\tilde d}$ only depends on a maximum of $t_{enc,1} \in \{1,....,D\}$ arguments in each component.
\end{assum}

The structure of $f_0$ (which has not to be a neural net itself) is depicted in Figure \ref{figure_representation_f0}. Condition \reff{decomposition_f} asks $f_0$ to decompose into a function $f_{enc}$ which reduces the dimension from $dr$ to $\tilde d \in \{1,...,d\}$ (the ``encoder"), and $f_{dec}:\IR^{\tilde d} \to \IR^{d}$ which expands the dimension to $d$ (the ``decoder"). For $r = 1$, such structures typically arise when information has to be compressed into a vector $\IR^{\tilde d}$ (with the encoder) but also should be restorable (with the decoder). 

The domain of definition of $f_{enc}$ is $d$-dimensional, with a possibly large $d$. Therefore, a structural constraint in the form of \reff{decomposition_fenc} is one possibility to control the convergence rate of the corresponding network estimator. A typical example we have in mind are additive models of the following form, where $g_{enc,1}$ is basically chosen as a summation function.

\begin{ex}[Additive models]\label{ex_additive_model} \noindent
    \begin{itemize}
        \item[(1)] Reduction to one dimension: Suppose that $f_0 = f_{dec}\circ f_{enc}$ where $f_{dec}:\IR \to \IR^d$ and 
    \[
        f_{enc}(x) = \sum_{j=1}^{d}g_j(x_j)
    \]
    for functions $g_j:\IR \to \IR$. Then, Assumption \ref{ass_autoencoder} is fulfilled with $t_{enc,0} = \tilde d = t_{dec} = 1$, $t_{enc,1} = D = d$.
        \item[(2)] Reduction to $\tilde d$ dimensions: Suppose that $f_0 = f_{dec}\circ f_{enc}$ where $f_{dec}:\IR^{\tilde d} \to \IR^d$ and $f_{enc} = (f_{enc,k})_{k=1,...,\tilde d}$ with
        \[
            f_{enc,k}(x) = \sum_{i_1,...,i_{t_{enc,0}}=1}^{d}g_{i_1,...,i_{t_{enc}}}^{(k)}(x_{i_1},...,x_{i_{t_{enc}}})
        \]
        for functions $g_{i_1,...,i_{t_{enc,0}}}:\IR^{t_{enc,0}} \to \IR$. 
        Then, Assumption \ref{ass_autoencoder} is fulfilled with the given $t_{enc,0}, \tilde d = t_{dec}$ and $t_{enc,1} = D = d^{t_{enc,0}}$.
    \end{itemize}
\end{ex}

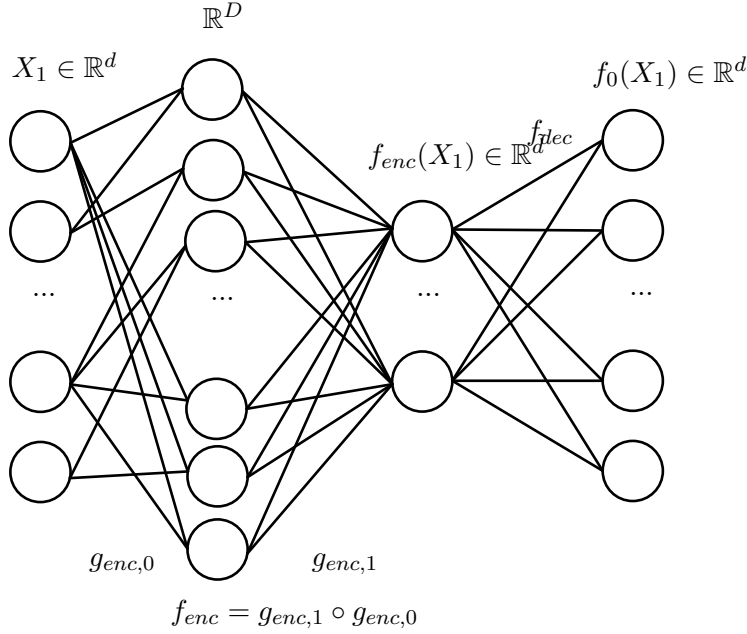
\begin{figure}
\centering
\begin{tikzpicture}[line cap=round,line join=round,x=1cm,y=1cm, scale = 0.8]
rectangle (9.960437252493387,10.168498059250371);
\draw [line width=1pt] (-2.341658071587629,3.982239637052577) circle (0.5cm);
\draw [line width=1pt] (-2.341658071587629,2.482239637052577) circle (0.5cm);
\draw [line width=1pt] (-2.341658071587629,-0.01776036294742267) circle (0.5cm);
\draw [line width=1pt] (-2.341658071587629,-1.5177603629474228) circle (0.5cm);
\draw (-2.6496273509171425,1.6221855282418773) node[anchor=north west] {...};
\draw [line width=1pt] (4.002648359216246,2.487317716577269) circle (0.5cm);
\draw [line width=1pt] (4.002648359216246,-0.012682283422730478) circle (0.5cm);
\draw (3.740549880098117,1.6221855282418773) node[anchor=north west] {...};
\draw [line width=1pt] (7.501439859858489,3.9996584045803636) circle (0.5cm);
\draw [line width=1pt] (7.501439859858489,2.4996584045803636) circle (0.5cm);
\draw [line width=1pt] (7.501439859858489,-0.00034159541963685314) circle (0.5cm);
\draw [line width=1pt] (7.501439859858489,-1.5003415954196369) circle (0.5cm);
\draw (7.276073283535731,1.647773290310765) node[anchor=north west] {...};
\draw [line width=1pt] (-1.8422888246853146,3.9571327605400497)-- (0.02432705899354673,4.861036007530361);
\draw [line width=1pt] (-1.8422277899395572,2.4583776636864094)-- (0.02432705899354673,4.861036007530361);
\draw [line width=1pt] (4.501838495957166,2.5157642872068093)-- (7.001439975876717,3.9999990190465433);
\draw [line width=1pt] (4.501838495957166,2.5157642872068093)-- (7.001440429519582,2.4989036458313163);
\draw [line width=1pt] (4.501838495957166,2.5157642872068093)-- (7.001439976545934,0);
\draw [line width=1pt] (4.501838495957166,2.5157642872068093)-- (7.001450003322883,-1.4971567309792098);
\draw [line width=1pt] (4.502487493025501,0)-- (7.001450003322883,-1.4971567309792098);
\draw [line width=1pt] (4.502487493025501,0)-- (7.001439976545934,0);
\draw [line width=1pt] (4.502487493025501,0)-- (7.001440429519582,2.4989036458313163);
\draw [line width=1pt] (4.502487493025501,0)-- (7.001439975876717,3.9999990190465433);
\draw (-2.986970354363784,5.639464173057247) node[anchor=north west] {$X_1 \in \mathbb{R}^d$};
\draw (-0.35143086126835194,-3.469779123466776) node[anchor=north west] {$f_{enc} = g_{enc,1} \circ g_{enc,0}$};
\draw (6.659615945606875,5.562700886850584) node[anchor=north west] {$f_0(X_1) \in \mathbb{R}^d$};
\draw (5.559342176644704,4.513602642026188) node[anchor=north west] {$f_{dec}$};
\draw (2.9238026835492725,4.283312783406198) node[anchor=north west] {$f_{enc}(X_1) \in \mathbb{R}^{\tilde d}$};
\draw [line width=1pt] (0.5127370400476697,4.843275644582937) circle (0.5cm);
\draw [line width=1pt] (0.5304974029950924,3.503118911109741) circle (0.5cm);
\draw [line width=1pt] (0.6015388547847831,-1.5898940792139746) circle (0.5cm);
\draw [line width=1pt] (0.601538854784783,-2.8057282720552106) circle (0.5cm);
\draw (0.31620200079827874,1.571010004104102) node[anchor=north west] {...};
\draw [line width=1pt] (0.5660181288899384,2.321304106048913) circle (0.5cm);
\draw [line width=1pt] (0.5837784918373614,-0.46707287669645337) circle (0.5cm);
\draw (0.18591214217828955,6.458272559261654) node[anchor=north west] {$\mathbb{R}^D$};
\draw [line width=1pt] (1.0121062869499842,4.818168768070409)-- (3.5047269335196267,2.5328616934814243);
\draw [line width=1pt] (1.0299276846431642,3.4792569377435734)-- (3.5047269335196267,2.5328616934814243);
\draw [line width=1pt] (1.0657106257858222,2.3037710359823906)-- (3.5047269335196267,2.5328616934814243);
\draw [line width=1pt] (1.0837784918373614,-0.46707287669645337)-- (3.5047269335196267,2.5328616934814243);
\draw [line width=1pt] (3.5046222834069853,-0.057067278793110646)-- (1.0837784918373614,-0.46707287669645337);
\draw [line width=1pt] (1.0968810874316106,-2.8738169829877656)-- (3.5046222834069853,-0.057067278793110646);
\draw [line width=1pt] (1.0968810874316106,-2.8738169829877656)-- (3.5047269335196267,2.5328616934814243);
\draw [line width=1pt] (1.0657106257858222,2.3037710359823906)-- (3.5046222834069853,-0.057067278793110646);
\draw [line width=1pt] (1.0299276846431642,3.4792569377435734)-- (3.5046222834069853,-0.057067278793110646);
\draw [line width=1pt] (1.0121062869499842,4.818168768070409)-- (3.5046222834069853,-0.057067278793110646);
\draw [line width=1pt] (1.1013858816330477,-1.6022613647265604)-- (3.5046222834069853,-0.057067278793110646);
\draw [line width=1pt] (1.1013858816330477,-1.6022613647265604)-- (3.5047269335196267,2.5328616934814243);
\draw [line width=1pt] (-1.8422277899395572,2.4583776636864094)-- (0.031198401030158363,3.52958599702453);
\draw [line width=1pt] (-1.8434317287177122,-0.05983780667286972)-- (0.031198401030158363,3.52958599702453);
\draw [line width=1pt] (-1.8520376559161444,-1.6191104915002903)-- (0.0716185855466438,2.2466777598838874);
\draw [line width=1pt] (-1.8434317287177122,-0.05983780667286972)-- (0.0716185855466438,2.2466777598838874);
\draw [line width=1pt] (-1.8422888246853146,3.9571327605400497)-- (0.10368084353557377,-0.3274081062813876);
\draw [line width=1pt] (-1.8434317287177122,-0.05983780667286972)-- (0.10368084353557377,-0.3274081062813876);
\draw [line width=1pt] (-1.8422888246853146,3.9571327605400497)-- (0.10825004664196874,-1.5082475743222081);
\draw [line width=1pt] (-1.8520376559161444,-1.6191104915002903)-- (0.10825004664196874,-1.5082475743222081);
\draw [line width=1pt] (-1.8434317287177122,-0.05983780667286972)-- (0.10171568016200122,-2.7924318774309103);
\draw [line width=1pt] (-1.8422888246853146,3.9571327605400497)-- (0.10171568016200122,-2.7924318774309103);
\draw (-1.7,-2.7) node[anchor=north west] {$g_{enc,0}$};
\draw (2.0,-2.7) node[anchor=north west] {$g_{enc,1}$};
\end{tikzpicture}
\caption{Graphical representation of the encoder-decoder assumption on $f_0$ in the special case $r = 1$.}
\label{figure_representation_f0}
\end{figure}

\subsection{Neural networks and the estimator} We now present the network estimator, formally. To do so, we use the formulation from \cite{schmidthieber2017}. Let $\sigma(x) := \max\{x,0\}$ be the ReLU activation function. For a vector $v = (v_1,...,v_r)\in \IR^{r}$, put
\[
    \sigma_v:\IR^r \to \IR^r, \quad \sigma_v(x) := (\sigma(x_1 - v_1),...,\sigma(x_r - v_r))'.
\]
Let $(L,p)$ denote the network architecture where $L \in \IN_0$ denotes the number of hidden layers and $p = (p_0,...,p_{L+1}) \in \IN^{L+2}$ denotes the number of hidden layers. A neural network with network architecture $(L,p)$ is a function of the form
\begin{equation}
    f:\IR^{p_0} \to \IR^{p_{L+1}}, \quad f(x) = W^{(L)} \sigma_{v^{(L)}} W^{(L-1)} \sigma_{v^{(L-1)}} \dots W^{(1)} \sigma_{v^{(1)}} W^{(0)} x\label{form_neuralnetwork}
\end{equation}
where $W^{(i)} \in \IR^{p_i \times p_{i+1}}$ are weight matrices and $v^{(i)} \in \IR^{p_i}$ are bias vectors. For $L_1 \in \{1,...,L\}$, let
\begin{eqnarray*}
    \sF_{ed}(L,L_1,p) &:=& \big\{f:\IR^{p_0} \to \IR^{p_{L+1}}\text{ is of the form \reff{form_neuralnetwork}}:\\
    &&\quad\quad\quad\quad\quad\quad\quad\quad \max_{k=0,...,L}|W^{(j)}|_{\infty}\vee |v^{(j)}|_{\infty} \le 1, p_{L_1} = \tilde d\big\},
\end{eqnarray*}
be a network, where the $L_1$-th hidden layer is $\tilde d$-dimensional. Since we aim to approximate $f_0$, it has to hold that $p_0 = dr$ and $p_{L+1} = d$.

As an empirical counterpart of the prediction error \reff{definition_predictionerror}, define
\begin{equation}
    \hat R_n(f) := \frac{1}{n}\sum_{i=r+1}^{n}\frac{1}{d}|X_i - f(\IX_{i-1})|_2^2 \IW(\IX_{i-1}).\label{empirical_prediction_error}
\end{equation}
It turns out to be the case that in practice, a neural network $\hat f \in \sF_{ed}(L,L_1,p)$ obtained by minimizing $\hat R_n(f)$ with a stochastic gradient descent method contains weight matrices and bias vectors in which a lot of entries are not relevant for the evaluation $\hat f(x)$ of $x \in [0,1]^d$. This behavior can be explained by the random initialization of the weight matrices and large step sizes of the gradient method. In fact, by employing dropout techniques during the learning process or imposing some additional penalties we can force $W^{(j)},v^{(j)}$, $j=0,...,L$, to be sparse. To indicate this type of sparsity in the model class, we introduce for $s \in \IN$ and $F > 0$,
\[
    \sF(L,L_1,p,s,F) := \big\{f \in \sF_{ed}(L,L_1,p): \sum_{j=0}^{L}|W^{(j)}|_0 + |v^{(j)}|_0 \le s, \| f \|_{\infty} \le F\big\}
\]
and define the final neural network estimator via
\begin{equation}
    \hat f^{net} \in \argmin_{f\in \sF(L,L_1,p,s,F)}\hat R_n(f).\label{definition_network_estimator}
\end{equation}
In particular, the resulting network $\hat f^{net}$ (with estimated weight matrices $\hat W^{(j)}$ and bias vectors $\hat v^{(j)}$) can provide an estimator of the encoder function $f_{enc}$ by only using its representation up to the $L_1$-th layer, that is,
\[
    \hat f_{enc}^{net}(x) := \hat W^{(L_1)}\sigma_{\hat v^{(L_1)}}\hat W^{(L_1 - 1)}\sigma_{\hat v^{(L_1-1)}} \dots \hat W^{(1)}\sigma_{\hat v^{(1)}} W^{(0)}x.
\]
Another typical observation made is that fitted neural networks $\hat f^{net}$ tend to be rather smooth functions. This can be enforced by adding a gradient penalty in the learning procedure (common, for instance, in the training of WGANs, where a restricted Lipschitz constant is part of the optimization functional, cf. \cite{wgan_gradientpenalty}). We will see in Section \ref{sec_theory} that we also formally need a bound on the Lipschitz constant when quantifying dependence with the functional dependence measure. We therefore introduce a second neural network estimator based on the function class
\[
    \sF(L,L_1,p,s,F,\mathrm{Lip}) := \big\{f \in \sF(L,L_1,p,s,F): \|f\|_{Lip} \le \mathrm{Lip}\big\}
\]
where $\|f\|_{Lip} := \sup_{x\in \IR^d}\frac{|f(x)-f(x')|_{\infty}}{|x-x'|_{\infty}}$. This estimator reads
\begin{equation}
    \hat f^{net,lip} \in \argmin_{f\in \sF(L,L_1,p,s,F,\mathrm{Lip})}\hat R_n(f).\label{definition_network_estimator_lip}
\end{equation}

\subsection{Smoothness assumptions}

To state convergence rates of $\hat f$, we have to quantify smoothness assumptions of the underlying true function $f_0$ and its components $g_{enc,1}, g_{enc,0}$ and $f_{dec}$. We measure smoothness with the well-known H\"older balls. A function has H\"older smoothness index $\beta$ if all partial derivatives up to order $\lfloor \beta\rfloor := \max\{k \in\IN_0: k < \beta\}$ exist, are bounded and the partial derivative of order $\lfloor \beta \rfloor$ are $\beta-\lfloor \beta\rfloor$. The ball of $\beta$-H\"older functions with radius $K > 0$ and domain of definition $P \subset \IR
^r$ reads
\begin{eqnarray*}
    C^{\beta}(P,K) &:=& \{f:P \to \IR:\\
    &&\quad\quad \sum_{\alpha:|\alpha|\le \beta}\|\partial^{\alpha}f\|_{\infty} + \sum_{\alpha:|\alpha| = \lfloor \beta\rfloor}\underset{x\not=y}{\sup_{x,y\in P}}\frac{|\partial^{\alpha}f(x) - \partial^{\alpha}f(y)|}{|x-y|_{\infty}^{\beta-\lfloor \beta\rfloor}} \le K\}
\end{eqnarray*}
where $\alpha = (\alpha_1,...,\alpha_r) \in \IN_0^r$ is a multi-index and $\partial^{\alpha} := \partial^{\alpha_1}...\partial^{\alpha_r}$, $|\alpha| := \alpha_1 + ... + \alpha_r$.

We now pose the following assumption.

\begin{assum}[Smoothness assumption]\label{ass_smoothness}
    Suppose that for some constant $K \ge 1$ and $\beta_{dec}, \beta_{enc,1}, \beta_{enc,0} \ge 1$, %and $a_{enc,0}, b_{enc,0}, a_{enc,1},b_{enc,1} \in \IR$,
    \begin{itemize}
    \item $g_{enc,0} \in C^{\beta_{enc,0}}([0,1]^{dr},K)$ and $g_{enc,0}([0,1]^{dr}) \subset [0,1]^{D}$,
    \item $g_{enc,1} \in C^{\beta_{enc,1}}([0,1]^{D},K)$ and $g_{enc,1}([0,1]^{D}) \subset [0,1]^{\tilde d}$,
    \item $f_{dec} \in C^{\beta_{dec}}([a_{enc,1},b_{enc,1}]^{\tilde d},K)$.
    %\item $g_{enc,0} \in C^{\beta_{enc,0}}([0,1]^{dr},K)$ and $g_{enc,0}([0,1]^{dr}) \subset [a_{enc,0},b_{enc,0}]^{D}$,
    %\item $g_{enc,1} \in C^{\beta_{enc,1}}([a_{enc,0},b_{enc,0}]^{D},K)$ and $g_{enc,1}([a_{enc,0},b_{enc,0}]^{D}) \subset [a_{enc,1},b_{enc,1}]^{\tilde d}$
    %\item $f_{dec} \in C^{\beta_{dec}}([a_{enc,1},b_{enc,1}]^{\tilde d},K)$.
    \end{itemize}
\end{assum}

The restriction to the unit intervals for the domain of definition and image is only done for the sake of simplicity in our presentation and can be easily enlarged to compact sets by rescaling.

\section{Theoretical results under dependence}
\label{sec_theory}

To state the theoretical results about $\IE D(\hat f)$, we have to quantify the dependence structure of $X_i$, $i = 1,...,n$. We now shortly introduce the two dependence concepts we consider in this paper.

\subsection{Absolutely regular mixing coefficients}

Let $\beta^{mix}(k)$, $k\in\IN_0$, denote the absolutely regular mixing coefficients of $X_i$, that is,
\begin{equation}
    \beta^{mix}(k) := \beta^{mix}(\sigma(X_i: i \le 0), \sigma(X_i:i \ge k)),\label{definition_beta_mixing}
\end{equation}
where for two sigma fields $\sU, \sV$ over some probability space $\Omega$,
\[
    2\beta^{mix}(\sU,\sV) := \sup \sum_{(i,j) \in I \times J}|\IP(U_i \cap V_i) - \IP(U_i) \IP(V_i)|,
\]
and the supremum is taken over all finite partitions $(U_i)_{i\in I}$, $(V_j)_{j\in J}$ of $\Omega$ such that $(U_i)_{i\in I} \subset \sU$, $(V_j)_{j\in J} \subset \sV$. Graphically, $\beta^{mix}(k)$, $k\in\IN_0$, measures the dependence between $\sigma(X_i: i \le 0)$ and $\sigma(X_i:i \ge k)$ and decays to $0$ for $k \to \infty$ if $\sigma(X_i: i \le 0)$ contains no information about $X_k$ for large $k$. We refer to \cite[Section 1.3]{rio2013} or \cite{rio1995invariance} for a more detailed introduction. There are several results available which state that linear processes, GARCH or ARMA processes have absolutely summable $\beta^{mix}(k)$, cf. \cite{bradley2005}, \cite{fryzlewicz2011} or \cite{doukhan_mixingbook}. The impact of the dependence in Theorem \ref{thm_final_convergence_rate_mixing} is measured via $\Lambda
^{mix}(\cdot)$, which is obtained from $\beta
^{mix}(\cdot)$ via the following construction.

\begin{assum}[Compatability assumptions]\label{ass_compatibility}
    Let $X_i$ have $\beta$-mixing coefficients $\beta^{mix}(k)$, $k\in\IN_0$, which are submultiplicative, that is, there exists a constant $C_{\beta,sub} > 0$ such that for any $q_1,q_2 \in \IN$,
    \begin{equation}
        \beta^{mix}(q_1q_2) \le C_{\beta,sub} \beta^{mix}(q_1) \beta^{mix}(q_2).\label{definition_beta_submultiplicative}
    \end{equation}
    Let $\phi:[0,\infty) \to [0,\infty)$ be a function which satisfies
     \begin{itemize}
        \item[(i)] $\phi(0) = 0$, $\phi$ is convex and differentiable with $c_0 := \sup_{y\in\IR}\frac{\phi'(y)y}{\phi(y)} < \infty$,
        \item[(ii)] $(0,\infty)\to (0,\infty), y \mapsto \frac{y}{\phi(y)}$ is convex and decreasing,
        \item[(iii)] $\sum_{k=0}^{\infty}(\phi^{*}(k+1) - \phi^{*}(k))\beta^{mix}(k) < \infty$.
    \end{itemize}
\end{assum}

Based on $\phi$, we define
\begin{equation}
    \psi(x) := \phi^{*}(x)x, \qquad \qquad  \Lambda^{mix}(x) := \lceil \psi^{-1}(x^{-1})\rceil x.\label{definition_lambda_mixing}
\end{equation}

In the special case of polynomial decay and exponential decay of $\beta^{min}(\cdot)$, explicit representations of $\Lambda^{mix}(\cdot)$ are available via the following lemma.

\begin{lem}\label{lemma_varphi_specialcase}
    \begin{enumerate}
        \item Suppose that $\sum_{k=0}^{\infty}k^{\alpha-1} \beta^{mix}(k) < \infty$ for some $\alpha > 1$. Then Assumption \ref{ass_compatibility} is fulfilled with $\phi(x) = x^{\frac{\alpha}{\alpha-1}}$ and
        \[
            \Lambda^{mix}(x) \le c_{\alpha}\cdot (x^{\frac{\alpha}{\alpha+1}} \vee x)
        \]
        where $c_{\alpha} > 0$ is some constant only depending on $\alpha$.
        \item Suppose that $\beta^{mix}(k) \le \kappa \rho^k$ for some $\kappa > 0$, $\rho \in (0,1)$. Then Assumption \ref{ass_compatibility} is fulfilled with $\phi(x) = x \frac{\log(x+1)}{\log(a)}$ ($a = \frac{\rho+1}{2\rho}$) and
        \[
            \Lambda^{mix}(x) \le c_{\rho}\cdot (1 \vee \log(x^{-1}))x
        \]
        where $c_{\rho} > 0$ is some constant only depending on $\rho$.
    \end{enumerate}
\end{lem}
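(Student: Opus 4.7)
The plan is to verify, for each candidate $\phi$, conditions (i)--(iii) of Assumption \ref{ass_compatibility} by direct computation, and then to read off the bound on $\Lambda^{mix}$ by (asymptotically) inverting $\psi(x)=x\phi^{*}(x)$.

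\textbf{Part (1).} Take $\phi(x)=x^{\alpha/(\alpha-1)}$. Since $\alpha/(\alpha-1)>1$, $\phi$ is convex and differentiable with $\phi(0)=0$ and $\phi'(y)y/\phi(y)=\alpha/(\alpha-1)$ a constant, so (i) holds. The map $y/\phi(y)=y^{-1/(\alpha-1)}$ has negative exponent and is therefore decreasing and convex, so (ii) holds. The standard formula for the conjugate of a power function yields $\phi^{*}(y)=\frac{(\alpha-1)^{\alpha-1}}{\alpha^{\alpha}}\,y^{\alpha}$, so the mean value theorem gives $\phi^{*}(k+1)-\phi^{*}(k)\lsim(k+1)^{\alpha-1}$, and condition (iii) reduces to the hypothesis $\sum_{k}k^{\alpha-1}\beta^{mix}(k)<\infty$. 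Since $\psi(x)$ is a constant times $x^{\alpha+1}$, one has $\psi^{-1}(y)\asymp y^{1/(\alpha+1)}$. I would then split into the regimes in which $\psi^{-1}(x^{-1})\ge 1$ (where $\lceil\psi^{-1}(x^{-1})\rceil\le 2\psi^{-1}(x^{-1})\lsim x^{-1/(\alpha+1)}$, giving $\Lambda^{mix}(x)\lsim x^{\alpha/(\alpha+1)}$) and $\psi^{-1}(x^{-1})<1$ (where $\lceil\cdot\rceil=1$, giving $\Lambda^{mix}(x)=x$); their maximum is the stated bound.

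\textbf{Part (2).} Take $\phi(x)=x\log(x+1)/\log(a)$. Convexity and differentiability are routine; a direct computation gives
$\phi'(y)y/\phi(y)=1+\frac{y}{(y+1)\log(y+1)}$,
which is bounded by $2$ on $(0,\infty)$ (its limit at $y=0$ equals $2$ and it decreases to $1$ as $y\to\infty$), so $c_{0}\le 2$ and (i) holds. For (ii), $y/\phi(y)=\log(a)/\log(y+1)$ is clearly decreasing, and a second differentiation gives
$(y/\phi(y))''=\log(a)\,[\log(y+1)+2]/[(y+1)^{2}\log^{3}(y+1)]>0$
for $y>0$, so convexity holds. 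Since $\phi^{*}$ is not in closed form, I would use the first-order condition $\log(t+1)+t/(t+1)=y\log(a)$ to extract $t\asymp a^{y}$ for large $y$, which after substitution yields $\phi^{*}(y)\asymp a^{y}$. Consequently $\phi^{*}(k+1)-\phi^{*}(k)\asymp a^{k}$, and (iii) becomes $\sum_{k}(a\rho)^{k}<\infty$, which holds because $a\rho=(\rho+1)/2<1$. Finally $\psi(x)\asymp x a^{x}$ for large $x$, so $\psi^{-1}(y)\asymp\log(y)/\log(a)$ for large $y$. Splitting into small $x$ (where $\psi^{-1}(x^{-1})\asymp\log(x^{-1})$) and $x$ bounded away from $0$ (where $\lceil\psi^{-1}(x^{-1})\rceil=1$) produces $\Lambda^{mix}(x)\le c_{\rho}(1\vee\log(x^{-1}))x$.

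\textbf{Main obstacle.} The only non-routine ingredient is the asymptotic analysis of $\phi^{*}$ in Part (2): since the FOC is transcendental one must extract matching two-sided bounds $\phi^{*}(y)\asymp a^{y}$, translate them into a uniform discrete difference estimate $\phi^{*}(k+1)-\phi^{*}(k)\asymp a^{k}$, and then invert $\psi(x)=x\phi^{*}(x)$ with enough precision to produce a single bound valid in both the $x\to 0$ and $x\to\infty$ regimes. The explicit constants $c_{\alpha}$ and $c_{\rho}$ fall out of these computations.
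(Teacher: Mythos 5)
Your proposal is correct, and Part (1) follows the paper's proof essentially verbatim: compute $\phi^{*}(y)=C_{\alpha}y^{\alpha}$ explicitly, reduce (iii) to the hypothesis, and split the inversion of $\psi(x)=C_{\alpha}x^{\alpha+1}$ into the regimes $x\lessgtr 1$.

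In Part (2) you take a mildly different route at exactly the step you flag as the main obstacle. You extract $\phi^{*}(y)\asymp a^{y}$ by asymptotic analysis of the transcendental first-order condition $\log(t+1)+t/(t+1)=y\log(a)$. The paper avoids the FOC entirely: it uses the elementary fact that if $f$ is convex with $f(0)=0$ and $xw_{0}-f(w_{0})\le 0$, then $f^{*}(x)\le w_{0}f'(w_{0})\le c_{0}f(w_{0})$, applied to the explicit test point $w_{0}=a^{x}-1$ (for which $xw_{0}-\phi(w_{0})=0$), giving $\phi^{*}(x)\le 2x(a^{x}-1)$; the matching lower bound $\phi^{*}(x)\ge a^{x-1}-1$ comes from simply evaluating the supremum at $w_{1}=a^{x-1}-1$. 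This yields explicit two-sided bounds and explicit constants with no asymptotic matching, and the inversion of $\psi$ then goes through $\psi(x)\ge x(a^{x-1}-1)=:g(x)$ and $\psi^{-1}\le g^{-1}$ with an explicit bound $g^{-1}(y)\le 2+\log(y)/\log(a)$. Your argument is sound — for condition (iii) you only need the upper half of your $\asymp$, and $\phi^{*\prime}(y)=(\phi')^{-1}(y)\le a^{y}-1$ follows directly from the FOC, so the difference bound $\phi^{*}(k+1)-\phi^{*}(k)\lsim a^{k}$ is available — but if you want the explicit $c_{\rho}$ rather than an unquantified asymptotic constant, the test-point device is the cleaner path. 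Your verifications of (i) and (ii) (the identity $\phi'(y)y/\phi(y)=1+y/((y+1)\log(y+1))\le 2$ and the convexity of $\log(a)/\log(y+1)$) match the paper's claimed value $c_{0}=2$ and are correct.
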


\subsection{Functional dependence measure}

The functional dependence measure was introduced by \cite{wu2005anotherlook}. We assume that $X_i = (X_{ij})_{j=1,...,d}$, $i = 1,...,n$, has the form
\begin{equation}
    X_i = J(\sA_i)\label{representation_x}
\end{equation}
where $J:(\IR^{d})^{\N_0}\to \IR^d$ is some measurable function and $\sA_i = \sigma(\eps_i,\eps_{i-1},...)$ is the sigma-algebra generated by the i.i.d. sequence $\eps_i$, $i \in\IZ$.
For a copy $\eps_k^{*}$ of $\eps_k$, independent of $\eps_i, i\in\IZ$, we define $\sA_i^{*(i-k)} := (\eps_i,...,\eps_{i-k+1},\eps_{i-k}^{*},\eps_{i-k-1},...)$ and $X_i^{*(i-k)} := J(\sA_{i}^{*(i-k)})$. The functional dependence measure of $X_i$, $i\in\IZ$, for $q > 0$ is given by
\begin{equation}
    \delta^{X}_q(k) = \sup_{j=1,...,d}\big\|X_{ij} - X_{ij}^{*(i-k)}\big\|_q.\label{definition_uniform_functional_dependence_measure}
\end{equation}

\begin{rem}
    The representation \reff{representation_x} in terms of the i.i.d. sequence $\varepsilon_i$, which is also present in the recursion \reff{model_time_evolution}, is chosen for simplicity. Instead of $\sA_i = \sigma(\varepsilon_i,\varepsilon_{i-1},...)$ we could also choose $\sA_i = \sigma(\xi_i,\xi_{i-1},...)$ for some larger i.i.d. sequence $\xi_i \in \IR^{d_L}$, $i\in\IZ$, ($d_L > d$) which contains $\varepsilon_i$, $i\in\IZ$.
\end{rem}
In opposite to the case of absolutely regular mixing, the functional dependence measure in \reff{definition_uniform_functional_dependence_measure} requires the process $X_i$ to have at least a $q$-th moment. To transfer the dependence structure from $X_i$ to some function $g(X_i)$, we have to impose smoothness assumptions on $g$ (cf. \cite{empproc}) which also affect the dependence coefficients $\delta^{g(X)}$. We do this formally by the following assumption. 

\begin{assum}\label{ass_compatibility2}
    Let $X_i$ be of the form \reff{representation_x}. Given $L_{\sG} > 0$, let $\Delta(k)$, $k\in\IN_0$, be a decreasing sequence of real numbers such that for some $\theta \in (0,1]$,
    \begin{equation}
    L_{\sG}\cdot \sup_{l=1,...,r}\delta_{2\theta}^{X}(k-l)^{\theta} \le \Delta(k).\label{ass_compatibility2_eq1}
\end{equation}
\end{assum}

The parameter $\theta \in (0,1]$ in Assumption \ref{ass_compatibility2} can be chosen arbitrarily and regulates the number of moments which have to be imposed on $X_i$. A small $\theta$ however coincides with a slower decay rate of $\Delta(k)$ due to the exponent $\theta$ in \reff{ass_compatibility2_eq1}. The constant $L_{\sG}$ is specified below in Theorem \ref{thm_final_convergence_rate_depmeas} and Theorem \ref{theorem_oracle_inequality_dep_present}, respectively.

For $x \in [0,\infty)$, define
\begin{equation}
    \tilde V(x) = x^{1/2} + \sum_{j=0}^{\infty}\min\{x^{1/2}, \Delta(j)\}\label{definition_tildev_depmeasure}
\end{equation}
Let $\bar y(x) \in [0,\infty)$ be such that
\begin{equation}
    \tilde V(\sqrt{x} \bar y(x)) \le \bar y(x)\label{def_y_dependencemeasure}
\end{equation}
and put
\begin{equation}
    \Lambda^{dep}(x) = \sqrt{x}\bar y(x).\label{definition_lambda_depmeas}
\end{equation}

\begin{lem}[Special cases]\label{lemma_depmeas_raten_specialcase}
    \begin{enumerate}
        \item If $\Delta(j) \le \kappa  j^{-\alpha}$ with some $\kappa > 0, \alpha > 1$, then
        \[
            \Lambda^{dep}(x) \le c_{\kappa,\alpha}\max\{x^{\frac{\alpha}{\alpha+1}},x\}
        \]
        where $c_{\kappa,\alpha}$ is a constant only depending on $\kappa,\alpha$.
        \item If $\Delta(j) \le \kappa \rho^j$ with some $\kappa > 0, \rho \in (0,1)$, then
        \[
            \Lambda^{dep}(x) \le c_{\kappa,\rho} x \log(x^{-1} \vee 1)^2
        \]
        where $c_{\kappa,\rho}$ is a constant only depending on $\kappa,\rho$.
    \end{enumerate}
\end{lem}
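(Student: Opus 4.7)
The approach is to derive explicit upper bounds for $\tilde V(u)$ under the two assumed decay regimes of $\Delta(j)$, and then to invert the implicit defining relation $\tilde V(\sqrt{x}\bar y(x)) \le \bar y(x)$. Setting $u := \sqrt{x}\,\bar y(x) = \Lambda^{dep}(x)$, that relation is equivalent to $\sqrt{x}\,\tilde V(u) \le u$, so up to a universal constant it is enough to find the smallest $u$ that satisfies this inequality and then read off $\Lambda^{dep}(x) = u$.

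\emph{Step 1: bounding the truncated sum.} The core estimate is on $S(u) := \sum_{j=0}^{\infty}\min\{u^{1/2},\Delta(j)\}$, which I split at the cutoff index $J = J(u)$ determined (up to a constant) by $\Delta(J) \asymp u^{1/2}$. In the polynomial case $\Delta(j) \le \kappa j^{-\alpha}$ this gives $J \asymp (\kappa\, u^{-1/2})^{1/\alpha}$ and
\[
 S(u) \le J\,u^{1/2} + \sum_{j>J}\kappa j^{-\alpha} \lsim \kappa^{1/\alpha}\, u^{(\alpha-1)/(2\alpha)},
\]
hence $\tilde V(u) \lsim u^{1/2} + u^{(\alpha-1)/(2\alpha)}$. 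In the exponential case $\Delta(j) \le \kappa\rho^{j}$ one gets $J \asymp \log(u^{-1/2}\vee 1)/\log(\rho^{-1})$ and
\[
 S(u) \le J\,u^{1/2} + \frac{\kappa\rho^{J+1}}{1-\rho} \lsim u^{1/2}\bigl(1+\log(u^{-1}\vee 1)\bigr),
\]
hence $\tilde V(u) \lsim u^{1/2}\bigl(1+\log(u^{-1}\vee 1)\bigr)$.

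\emph{Step 2: inversion.} Plugging Step~1 into $\sqrt{x}\,\tilde V(u) \le u$ gives, in the polynomial case, the two conditions $\sqrt{x}\,u^{1/2} \le u$ (i.e.\ $u \ge x$) and $\sqrt{x}\,u^{(\alpha-1)/(2\alpha)} \le u$ (i.e.\ $u \ge x^{\alpha/(\alpha+1)}$). Choosing $u = c_{\kappa,\alpha}\max\{x,\, x^{\alpha/(\alpha+1)}\}$ for a sufficiently large constant verifies both, yielding the first claim. In the exponential case, the condition becomes $u \gsim x\bigl(1+\log(u^{-1}\vee 1)\bigr)^{2}$; substituting the ansatz $u = c_{\kappa,\rho}\, x\,\log(x^{-1}\vee 1)^{2}$ and checking that $\log(u^{-1}\vee 1) \lsim \log(x^{-1}\vee 1)$ (with constant absorbed into $c_{\kappa,\rho}$) verifies the inequality, giving the second claim.

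\emph{Main obstacle.} The delicate point is Step~2 in the exponential case: the ansatz for $u$ enters inside a logarithm in the required inequality, so one must bootstrap to verify that $\log(u^{-1}) \le C\,\log(x^{-1})$ holds uniformly in a neighborhood of $x=0$, and separately handle the regime $x \ge 1$ where the $\log(\cdot \vee 1)$ convention kicks in and the trivial choice $u = Cx$ suffices. The polynomial case is analogous but cleaner since no logarithmic bootstrap is needed.
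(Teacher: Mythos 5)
Your proof is correct and takes essentially the same route as the paper: the paper likewise establishes the bound by exhibiting an explicit admissible $\bar y(x)$ — namely $\bar y(x)=c\max\{x^{\frac{1}{2}\frac{\alpha-1}{\alpha+1}},x^{1/2}\}$ in the polynomial case and $\bar y(x)=c\,x^{1/2}\log(x^{-1}\vee 1)^2$ in the exponential case — and verifying $\tilde V(\sqrt{x}\,\bar y(x))\le \bar y(x)$ for a large enough constant $c$, exactly your Step 2 (including the case distinction $x\le 1$ versus $x>1$ and the absorption of $\log(c^{-1})$ into the constant). The only difference is that the paper imports your Step 1 bounds on $\tilde V$ from Lemma 8.13 of \cite{empproc} instead of re-deriving them by splitting the sum at the crossover index $\Delta(J)\asymp u^{1/2}$, which is what that cited lemma does anyway.
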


\subsection{Network conditions}
For the following theorems, we impose the following assumptions on the network class. These assumptions are mainly adapted from \cite[Theorem 1]{schmidthieber2017} and are necessary to control the approximation error of the class $\sF(L,L_1,p,s,F)$ as well as the size $H(\delta, \sF(L,L_1,p,s,F), \|\cdot\|_{\infty})$ of the corresponding covering numbers. The parameter $N$ therein is a parameter in the final theorems. 

\begin{assum}\label{ass_network}
    Fix $N\in\{1,...,n\}$. The parameters $L,L_1,p,s,F$ of $\sF(L,L_1,p,s,F)$ are chosen such that 
    \begin{enumerate}
        \item $K \le F$,
        \item $ \{\log_2(4(t_{enc,0} \vee \beta_{enc,0})) + \log_2(4(t_{enc,1} \vee \beta_{enc,1}))\}\log_2(n) \le L_1$ and\\
        $L_1  +  \log_2(4(t_{dec} \vee \beta_{dec}))\log_2(n) \le L \lsim \log_2(n)$,
        \item $N \lsim \min_{i\in \{1,...,L\}\backslash\{L_1\}}\{p_i\}$,
        \item $N \log_2(n) \asymp s $.
    \end{enumerate}
\end{assum}

We now give a small discussion on the conditions. As we will see below, the optimal $N$ is roughly of the size $n^{a}$, where $a$ depends on smoothness properties of the underlying function $f_0$. Assumption (i) encodes the necessary fact that the network class has to include networks which have a supremum norm larger than the true function $f_0$. The second condtion (ii) is a condition on the layer size. It should be chosen of order $L \asymp \log_2(n)$. In fact, the upper bound on $L$ is not necessary but produces the best convergence rates (cf. the proof of Theorem \ref{thm_final_convergence_rate_mixing} or Theorem \ref{thm_final_convergence_rate_depmeas}, respectively). Condition (iii) poses a lower bound on the size of the hidden layers in the network. From a practical point of view, it seems rather unusual to impose such a large dimension $\ge n^{a}$ to \emph{all} the hidden layers. This is due to the approximation technique used and surely can be improved. The last condition (iv) asks the number of nonzero parameters $s \asymp N\log_2(n)$ which for instance could be enforced by computational methods during the learning process. 

\subsection{Theoretical results}

During this section, let $\IW:\IR^{dr} \to [0,1]$ be an arbitrary (measurable) weight function with $\text{supp}(\IW) \subset [0,1]^{dr}$. The weight function occurs in the optimization functional \reff{empirical_prediction_error} and the corresponding prediction error \reff{definition_predictionerror}.

\begin{thm}[Mixing]\label{thm_final_convergence_rate_mixing} Suppose that Assumptions \ref{ass_subgaussian}, \ref{ass_autoencoder}, \ref{ass_smoothness} and  \ref{ass_compatibility} hold. If Assumption  \ref{ass_network} is satisfied for some $N \in \{1,...,n\}$, then
\[
    \IE D(\hat f^{net}) \lsim \Lambda^{mix}(\frac{N \log(n)^3}{n}) + N^{-2A},
\]
where $A := \min\{\frac{\beta_{dec}}{t_{dec}}, \frac{\beta_{enc,0}}{t_{enc,0}}, \frac{\beta_{enc,1}}{t_{enc,1}}\}$.
\end{thm}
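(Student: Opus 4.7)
My plan is to follow the standard bias-variance split for sieved M-estimators: decompose $\IE D(\hat f^{net})$ into a stochastic/estimation part handled by the oracle inequality promised in Section \ref{sec_oracle} (specialized to the $\beta$-mixing setting), plus an approximation part $\inf_{f^*\in \sF(L,L_1,p,s,F)}\|f^*-f_0\|_{\infty}^2$ on the support of $\IW$. The first term will drive the $\Lambda^{mix}(\cdot)$ contribution, the second will give $N^{-2A}$.

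\textbf{Step 1 (apply the oracle inequality).} I would apply the oracle inequality of Section \ref{sec_oracle} for $\hat R_n$-minimizers over $\sF = \sF(L,L_1,p,s,F)$, which under Assumption~\ref{ass_compatibility} should yield, up to constants,
\[
\IE D(\hat f^{net}) \;\lsim\; \inf_{f^*\in\sF}\|f^*-f_0\|_{\infty}^{2} \;+\; \Lambda^{mix}\!\Big(\tfrac{H(\delta_n,\sF,\|\cdot\|_\infty)+\log n}{n}\Big)
\]
for a suitably small discretization radius $\delta_n$ (chosen polynomially in $n$ so that the discretization bias is negligible against the other terms). The subgaussianity of the noise in Assumption~\ref{ass_subgaussian} is used at this step to handle the cross terms between $\eps_i$ and $(\hat f^{net}-f_0)(\IX_{i-1})$.

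\textbf{Step 2 (entropy bound).} The metric entropy of $\sF(L,L_1,p,s,F)$ in sup-norm admits the standard estimate $H(\delta,\sF,\|\cdot\|_\infty)\lsim s\,L\,\log\!\big(\tfrac{L \max_j p_j}{\delta}\big)$ inherited from \cite{schmidthieber2017}. Under Assumption~\ref{ass_network}, $s\asymp N\log n$, $L\asymp \log n$, and all $p_j$ are polynomial in $n$; picking $\delta_n$ inverse-polynomial in $n$ gives entropy of order $N\log(n)^3$, so the stochastic term becomes $\Lambda^{mix}\!\big(\tfrac{N\log(n)^3}{n}\big)$ as claimed.

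\textbf{Step 3 (approximation).} To bound $\inf_{f^*\in\sF}\|f^*-f_0\|_\infty$, I would build $f^*$ layer-by-layer matching the encoder-decoder decomposition $f_0=f_{dec}\circ g_{enc,1}\circ g_{enc,0}$. Apply the Hölder-approximation lemma of \cite{schmidthieber2017} to each of the three component functions separately: $g_{enc,0}$ needs only $t_{enc,0}$ active arguments per output and is $\beta_{enc,0}$-smooth, giving sup-error $\lsim N^{-\beta_{enc,0}/t_{enc,0}}$ with $\asymp N\log n$ parameters and $\asymp \log_2(4(t_{enc,0}\vee\beta_{enc,0}))\log_2 n$ layers, and analogously for $g_{enc,1}$ and $f_{dec}$. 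Stacking these subnetworks realizes the encoder in the first $L_1$ layers (matching $p_{L_1}=\tilde d$) and the decoder afterwards; the total depth fits inside $L\lsim \log n$ and the sparsity fits inside $s\asymp N\log n$ by Assumption~\ref{ass_network}(ii),(iv); the width requirement (iii) accommodates the parallel copies. Combining the component errors via the Hölder-composition inequality
\[
\|F\circ G-\tilde F\circ \tilde G\|_\infty \;\lsim\; \|F-\tilde F\|_\infty + \|G-\tilde G\|_\infty^{\min(1,\beta_F)},
\]
and using $\beta_{dec},\beta_{enc,1}\ge 1$, yields $\|f^*-f_0\|_\infty\lsim N^{-A}$, hence a squared contribution of order $N^{-2A}$.

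\textbf{Main obstacle.} The delicate point is compatibility: the approximating network $f^*$ must simultaneously (a) live in the class $\sF(L,L_1,p,s,F)$, respecting the architectural constraints of Assumption~\ref{ass_network}, in particular the width-$\tilde d$ bottleneck at the $L_1$-th layer, and (b) balance the three heterogeneous Hölder errors into the single rate $N^{-A}$. The first requires carefully budgeting depth between encoder and decoder halves (this is exactly what the two-part condition on $L_1$ and $L$ in Assumption~\ref{ass_network}(ii) encodes) and verifying that the Hölder-approximants of \cite{schmidthieber2017} can be glued without exceeding $s$; the second requires propagating errors through compositions using the Lipschitz/Hölder bounds on $f_{dec}$ and $g_{enc,1}$ that follow from Assumption~\ref{ass_smoothness}. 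Once these composition and sparsity accounting details are checked, plugging the approximation bound $N^{-2A}$ and the entropy estimate $N\log(n)^3/n$ back into the oracle inequality of Step~1 delivers the stated bound.
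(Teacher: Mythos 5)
Your proposal is correct and follows essentially the same route as the paper: the oracle inequality of Theorem~\ref{theorem_oracle_inequality_present} with $\delta=n^{-1}$, the bracketing-entropy bound of Proposition~\ref{covering_bound} combined with Assumption~\ref{ass_network} to get $H(n^{-1})\lsim N\log(n)^3$, and the encoder-decoder approximation result (Theorem~\ref{approx_error}) to obtain the $N^{-2A}$ term. Your Step~3 essentially re-derives the content of Theorem~\ref{approx_error}, which the paper simply invokes, and the only cosmetic difference is that you drop the harmless $N/n$ contribution from the approximation bound, which is absorbed into $\Lambda^{mix}(N\log(n)^3/n)$ anyway.
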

\begin{proof}[Proof of Theorem \ref{thm_final_convergence_rate_mixing}]
    Choose $\eta = 1$ and $\delta = n^{-1}$. By Theorem \ref{theorem_oracle_inequality_present} and Assumptions \ref{ass_compatibility} and \ref{ass_subgaussian},
    \begin{equation}
        \IE D(\hat f^{net}) \lsim \inf_{f\in \sF(L,L_1,p,s,F)}D(f) + \big( \Lambda^{mix}(\frac{H(n^{-1})}{n}) + n^{-1}\big).\label{thm_final_convergence_rate_mixing_eq1}
    \end{equation}
    By Theorem \ref{approx_error} and Assumptions  \ref{ass_autoencoder}, \ref{ass_smoothness} and \ref{ass_network},
    \begin{equation}
        \inf_{f\in \sF(L,L_1,p,s,F)}D(f) \le \inf_{f\in \sF(L,L_1,p,s,F)}\|f - f_0\|_{\infty} \lsim  \frac{N}{n} + N^{-2A}.\label{thm_final_convergence_rate_mixing_eq2}
    \end{equation}
    By Proposition \ref{covering_bound} and Assumption  \ref{ass_network},
    \begin{eqnarray}
        H(\delta) &\le& (s+1)\log(2^{2L+5}\delta^{-1}(L+1)p_0^2p_{L+1}^2 s^{2L}) \lsim s L \log(s)\nonumber\\
        &\lsim& N \log_2(n)\cdot \log_2(n)\log(n) \lsim N \log(n)^3.\label{thm_final_convergence_rate_mixing_eq3}
    \end{eqnarray}
    Insertion of \reff{thm_final_convergence_rate_mixing_eq2} and \reff{thm_final_convergence_rate_mixing_eq3} into \reff{thm_final_convergence_rate_mixing_eq1} yields the result.
\end{proof}

To formulate an analogeous result for the functional dependence measure, we have to assume that the weight function in \reff{empirical_prediction_error} is Lipschitz continuous in the sense that for some $\varsigma > 0$,
\[
    |\IW(x) - \IW(x')| \le \frac{1}{\varsigma}\cdot |x-x'|_{\infty}.
\]
A specific example is given by
\begin{equation}
    \IW(x) := 1 - \rho(\varsigma^{-1} d(x,[\varsigma,1-\varsigma]^{dr})) = \begin{cases} 1, & x \in [\varsigma,1-\varsigma]^{dr} \\
    0, & x \not\in [0,1]^{dr},\\
    \text{linear}, & \text{else}\end{cases}\label{definition_weight_function}
\end{equation}
where $\rho(z) := \max\{\min\{z,1\},0\}$ and $d_{\infty}(x,A) := \inf_{y\in A}|x - y|_{\infty}$.

\begin{thm}[Functional dependence]\label{thm_final_convergence_rate_depmeas}
Suppose that Assumptions \ref{ass_subgaussian}, \ref{ass_autoencoder}, \ref{ass_smoothness} hold. Let Assumption \ref{ass_compatibility2} hold with $L_{\sG} = 2dr\big(\frac{2}{\varsigma} + \frac{(\mathrm{Lip} + K)}{F}\big)$. Then there exists some constant $\IC_L > 0$ independent of $n$ such that if Assumption  \ref{ass_network} is satisfied for some $N \in \{1,...,n\}$ and $\mathrm{Lip} \ge \IC_L$, then
\[
    \IE D(\hat f^{net,lip}) \lsim \Lambda^{dep}(\frac{N \log(n)^3}{n}) + N^{-2A},
\]
where $A := \min\{\frac{\beta_{dec}}{t_{dec}}, \frac{\beta_{enc,0}}{t_{enc,0}}, \frac{\beta_{enc,1}}{t_{enc,1}}\}$.
\end{thm}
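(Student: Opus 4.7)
The plan is to follow the three-step scheme used in the proof of Theorem \ref{thm_final_convergence_rate_mixing}, substituting the mixing-based oracle inequality with its functional-dependence counterpart and verifying that the additional Lipschitz restriction embedded in $\sF(L,L_1,p,s,F,\mathrm{Lip})$ is compatible with both the approximation bound and the applicability of Assumption \ref{ass_compatibility2}.

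First, with $\eta = 1$ and $\delta = n^{-1}$, I apply Theorem \ref{theorem_oracle_inequality_dep_present} (the functional-dependence oracle inequality referenced after Assumption \ref{ass_compatibility2}) to $\hat f^{net,lip}$ and obtain, under Assumption \ref{ass_subgaussian},
\[
    \IE D(\hat f^{net,lip}) \lsim \inf_{f\in \sF(L,L_1,p,s,F,\mathrm{Lip})} D(f) + \Lambda^{dep}\Bigl(\frac{H(n^{-1})}{n}\Bigr) + n^{-1},
\]
where $H(\cdot)$ is the $\|\cdot\|_{\infty}$-covering entropy of the Lipschitz-restricted class. To invoke Assumption \ref{ass_compatibility2} one must first identify the constant $L_{\sG}$: it is the Lipschitz constant, with respect to the input $x\in[0,1]^{dr}$, of the loss integrand $x\mapsto \frac{1}{d}|y-f(x)|_2^2\,\IW(x)$ uniformly over $f\in\sF(L,L_1,p,s,F,\mathrm{Lip})$ and over $y$ in the range of the process. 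A routine product-rule computation, combining $\|\IW\|_{\mathrm{Lip}}\le 1/\varsigma$, $\|f\|_\infty\le F$, $\|f_0\|_\infty\le K$, $\|f\|_{\mathrm{Lip}}\le \mathrm{Lip}$, and the dimension factor $dr$ converting coordinatewise to $|\cdot|_\infty$ bounds on $\IR^{dr}$, yields exactly $L_{\sG}=2dr\bigl(\tfrac{2}{\varsigma}+\tfrac{\mathrm{Lip}+K}{F}\bigr)$, matching the constant stipulated in the theorem.

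Second, I control the approximation term. Theorem \ref{approx_error} produces, under Assumptions \ref{ass_autoencoder}, \ref{ass_smoothness} and \ref{ass_network}, a network $f^\star\in\sF(L,L_1,p,s,F)$ satisfying $\|f^\star-f_0\|_\infty \lsim N/n+N^{-2A}$. The new point relative to the mixing proof is that $f^\star$ must additionally obey $\|f^\star\|_{\mathrm{Lip}}\le \mathrm{Lip}$ in order to lie in the restricted class. Auditing the Schmidt--Hieber construction, in which $f^\star$ is a composition of $L\asymp \log_2(n)$ sparse layers with weights bounded by $1$ and with connectivities controlled by $t_{dec},t_{enc,0},t_{enc,1}$, one sees that the Lipschitz constant of $f^\star$ is bounded by some constant $\IC_L$ depending only on the structural parameters and smoothness indices and not on $n$ or $N$. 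Hence whenever $\mathrm{Lip}\ge \IC_L$, we have $f^\star\in\sF(L,L_1,p,s,F,\mathrm{Lip})$ and therefore
\[
    \inf_{f\in \sF(L,L_1,p,s,F,\mathrm{Lip})} D(f) \lsim \frac{N}{n}+N^{-2A}.
\]

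Third, the entropy bound carries over from the mixing case without change: since $\sF(L,L_1,p,s,F,\mathrm{Lip})\subset \sF(L,L_1,p,s,F)$, Proposition \ref{covering_bound} together with Assumption \ref{ass_network} gives $H(n^{-1})\lsim sL\log s \lsim N\log(n)^3$. Inserting this and the approximation bound into the oracle inequality, and using that $\Lambda^{dep}$ is non-decreasing, concludes the argument. The main obstacle is the second step: the Schmidt--Hieber approximation was not originally engineered to track Lipschitz constants, so producing a uniform-in-$n$ bound $\IC_L$ on $\|f^\star\|_{\mathrm{Lip}}$ without spoiling the approximation rate $N/n+N^{-2A}$ requires a careful layer-by-layer audit that exploits the sparsity $s\asymp N\log_2(n)$ and the weight bound $|W^{(j)}|_\infty\le 1$, and it is precisely this audit that motivates the condition $\mathrm{Lip}\ge \IC_L$ in the statement.
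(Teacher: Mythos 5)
Your proposal is correct and follows essentially the same route as the paper: oracle inequality (Theorem \ref{theorem_oracle_inequality_dep_present}) with $\eta=1$, $\delta=n^{-1}$, approximation bound from Theorem \ref{approx_error} over the Lipschitz-restricted class, and the entropy bound from Proposition \ref{covering_bound}. The ``layer-by-layer audit'' you flag as the main obstacle is exactly what the paper carries out (via \reff{lipschitz_constant_approx_network} and Lemma \ref{lemma_lipschitz_q3}, where $N2^{-m}\lsim 1$ makes the Lipschitz constant of the approximant uniform in $n$), so Theorem \ref{approx_error} already delivers the approximant inside $\sF(L,L_1,p,s,F,\mathrm{Lip})$ once $\mathrm{Lip}\ge\IC_L$.
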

\begin{proof}[Proof of Theorem \ref{thm_final_convergence_rate_depmeas}]
    Choose $\eta = 1$ and $\delta = n^{-1}$. By Theorem \ref{theorem_oracle_inequality_dep_present} and Assumptions \ref{ass_compatibility} and \ref{ass_subgaussian},
    \begin{equation}
        \IE D(\hat f^{net}) \lsim \inf_{f\in \sF(L,L_1,p,s,F)}D(f) + \big( \Lambda^{dep}(\frac{H(n^{-1})}{n}) + n^{-1}\big).\label{thm_final_convergence_rate_depmeas_eq1}
    \end{equation}
    By Theorem \ref{approx_error} and Assumptions \ref{ass_autoencoder}, \ref{ass_smoothness} and \ref{ass_network},
    \begin{equation}
        \inf_{f\in \sF(L,L_1,p,s,F, \mathrm{Lip})}D(f) \le \inf_{f\in \sF(L,L_1,p,s,F, \mathrm{Lip})}\|f - f_0\|_{\infty} \lsim \frac{N}{n} + N^{-2A}.\label{thm_final_convergence_rate_depmeas_eq2}
    \end{equation}
    By Proposition \ref{covering_bound} and Assumption  \ref{ass_network},
    \begin{eqnarray}
        H(\delta) &\le& (s+1)\log(2^{2L+5}\delta^{-1}(L+1)p_0^2p_{L+1}^2 s^{2L}) \lsim s L \log(s)\nonumber\\
        &\lsim& N L \log_2(n)\log(n) \lsim N \log(n)^3.\label{thm_final_convergence_rate_depmeas_eq3}
    \end{eqnarray}
    Insertion of \reff{thm_final_convergence_rate_depmeas_eq2} and \reff{thm_final_convergence_rate_depmeas_eq3} into \reff{thm_final_convergence_rate_depmeas_eq1} yields the result.
\end{proof}

A specific expression for $\IC_L$ is available but due to its complicated form we reduce the statement to its formal existence.

\begin{rem}
    Note that in the case of independent observations $X_i$, one can choose $\Lambda^{mix}(x) = \Lambda^{dep}(x) = x$ in Theorems \ref{thm_final_convergence_rate_mixing} and \ref{thm_final_convergence_rate_depmeas} which yields then the same result as Theorem 1 in \cite{schmidthieber2017}.
\end{rem}

To get a glimpse on the convergence rates which can be achieved, we formulate the following two corollaries of Theorem \ref{thm_final_convergence_rate_mixing}. Due to the similar form, an analogue is available in the case of the functional dependence measure. The first corollary is a simple consequence of Lemma \ref{lemma_varphi_specialcase} and Theorem \ref{thm_final_convergence_rate_mixing} in the case of polynomial decaying dependence.

\begin{cor}[Mixing and polynomial decay]\label{corollary_mixing1}
    Suppose that Assumptions \ref{ass_subgaussian}, \ref{ass_autoencoder} and \ref{ass_smoothness} hold and that $X_i$ is mixing with coefficients satisfying $\sum_{k=0}^{\infty}k^{\alpha-1}\beta^{mix}(k) < \infty$ for some $\alpha > 1$. Let
    \[
        A = \min\{\frac{\beta_{dec}}{t_{dec}}, \frac{\beta_{enc,0}}{t_{enc,0}}, \frac{\beta_{enc,1}}{t_{enc,1}}\}.
    \]
    If Assumption \ref{ass_network} is satisfied with
    \[
        N = \Big\lceil n^{\frac{\frac{\alpha}{\alpha+1}}{2A+\frac{\alpha}{\alpha+1}}} \Big\rceil,
    \]
    then
    \[
       \IE D(\hat f^{net}) \lsim n^{-\frac{2A\cdot \frac{\alpha}{\alpha+1}}{2A + \frac{\alpha}{\alpha+1}}} \log(n)^{\frac{3\alpha}{\alpha+1}}.
    \]
\end{cor}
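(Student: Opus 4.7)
The plan is to invoke Theorem \ref{thm_final_convergence_rate_mixing} and then substitute the explicit expression for $\Lambda^{mix}$ supplied by Lemma \ref{lemma_varphi_specialcase}(1), since the polynomial-decay hypothesis $\sum_{k\ge 0}k^{\alpha-1}\beta^{mix}(k) < \infty$ is exactly the hypothesis of that lemma. After substitution, the remaining task is a purely deterministic balancing of the two error contributions by choosing $N$ optimally.

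First, observe that the prescribed $N = \lceil n^{\alpha/((\alpha+1)(2A+\alpha/(\alpha+1)))} \rceil$ satisfies $N/n \to 0$ as $n\to\infty$, since the exponent on $n$ lies strictly between $0$ and $1$. Hence for all sufficiently large $n$ we have $N\log(n)^3/n \le 1$, so the maximum in the bound $\Lambda^{mix}(x) \lsim x^{\alpha/(\alpha+1)} \vee x$ is attained by the first term (since $\alpha/(\alpha+1) < 1$). Applied to $x = N\log(n)^3/n$ this gives
\[
    \Lambda^{mix}\Big(\tfrac{N\log(n)^3}{n}\Big) \lsim \Big(\tfrac{N}{n}\Big)^{\alpha/(\alpha+1)}\log(n)^{3\alpha/(\alpha+1)}.
\]
Combining with Theorem \ref{thm_final_convergence_rate_mixing}, we obtain
\[
    \IE D(\hat f^{net}) \lsim \Big(\tfrac{N}{n}\Big)^{\alpha/(\alpha+1)}\log(n)^{3\alpha/(\alpha+1)} + N^{-2A}.
\]

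The second step is to balance the two terms. Writing $\gamma := \alpha/(\alpha+1)$, we seek $N$ with $(N/n)^{\gamma} \asymp N^{-2A}$, which gives $N \asymp n^{\gamma/(2A+\gamma)}$, matching the prescribed choice. For this $N$, both error terms are of order $n^{-2A\gamma/(2A+\gamma)}$, and the logarithmic factor is inherited only from the first term. Substituting back yields
\[
    \IE D(\hat f^{net}) \lsim n^{-2A\gamma/(2A+\gamma)}\log(n)^{3\gamma},
\]
which is precisely the stated rate.

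There is no genuine obstacle here: the argument is a mechanical combination of Theorem \ref{thm_final_convergence_rate_mixing} with Lemma \ref{lemma_varphi_specialcase}(1) and an arithmetic optimization in $N$. The only things worth double-checking are (a) that the rounding $\lceil \cdot \rceil$ does not spoil the asymptotic equivalence (it only inflates $N$ by a constant factor, which is absorbed in $\lsim$), and (b) that the chosen $N$ is compatible with Assumption \ref{ass_network}, which is tacitly required as a hypothesis of the corollary.
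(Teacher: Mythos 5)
Your proposal is correct and matches the paper's intended argument: the paper presents this corollary as a direct consequence of Theorem \ref{thm_final_convergence_rate_mixing} combined with the bound $\Lambda^{mix}(x) \le c_\alpha (x^{\alpha/(\alpha+1)} \vee x)$ from Lemma \ref{lemma_varphi_specialcase}(1), followed by exactly the balancing of $(N/n)^{\alpha/(\alpha+1)}$ against $N^{-2A}$ that you carry out. Your additional checks (that $N\log(n)^3/n \le 1$ for large $n$ so the maximum is the sublinear branch, and that the ceiling only changes $N$ by a constant factor) are the right details to verify and are handled correctly.
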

%\begin{proof}[Proof of Corollary \ref{corollary_mixing1}]
 %   By Lemma \ref{lemma_varphi_specialcase}(i), $\Lambda^{mix}(x) \le c_{\alpha}(x^{\frac{\alpha}{\alpha+1}} \vee x)$. By Theorem \ref{thm_final_convergence_rate_mixing},
  %  \[
   %     \IE D(\hat f^{net}) \lsim \Big(\frac{N \log(n)^3}{n}\Big)^{\frac{\alpha}{\alpha+1}} + N^{-2A} \lsim n^{-\frac{2A\cdot \frac{\alpha}{\alpha+1}}{2A + \frac{\alpha}{\alpha+1}}}.
   % \]
%    with the given choice of $N$. 
%\end{proof}

We now investigate this rate for a specific model from Example \ref{ex_additive_model}(2) with only one lag $r = 1$. Suppose that $t_{enc,0} = \tilde d$ and
\[
    f_{dec}, \quad g_{i_1,...,i_{\tilde d}} \in C^{\beta}([0,1]^{\tilde d},K)
\]
with some $\beta > 0$. This means that the encoder function produces a compressed result of $\tilde d \le d$ components, where each of the $\tilde d$ components is constructed as follows: For each possibility to choose $\tilde d$ from $d$ arguments, a different function can be used to process the given values. These results are all summed up. Since the summation is infinitely often differentiable with bounded derivatives, in Corollary \ref{corollary_mixing1} we have
\[
    A = \min\{\frac{\beta}{\tilde d}, \frac{\infty}{d^{\tilde d}}, \frac{\beta}{\tilde d}\} = \frac{\beta}{\tilde d},
\]
which yields the following result.

\begin{cor}\label{corollary_mixing2}
        Suppose that Assumption \ref{ass_subgaussian} holds and that $X_i$ is mixing with coefficients satisfying $\sum_{k=0}^{\infty}k^{\alpha-1}\beta^{mix}(k) < \infty$ for some $\alpha > 1$. Let Assumption \ref{ass_network} be satisfied with
    \[
        N = \Big\lceil n^{\tilde d\cdot \frac{\frac{\alpha}{\alpha+1}}{2\beta+\tilde d\cdot \frac{\alpha}{\alpha+1}}} \Big\rceil.
    \]
    Then,
    \[
       \IE D(\hat f^{net}) \lsim n^{-\frac{2\beta\cdot \frac{\alpha}{\alpha+1}}{2\beta + \tilde d\cdot \frac{\alpha}{\alpha+1}}}\log(n)^{\frac{3\alpha}{\alpha+1}}.
    \]
\end{cor}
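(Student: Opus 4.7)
The plan is to apply Corollary \ref{corollary_mixing1} to the additive model of Example \ref{ex_additive_model}(2) specialized to $r = 1$ and $t_{enc,0} = \tilde d$. The work consists of (a) verifying that Assumptions \ref{ass_autoencoder} and \ref{ass_smoothness} hold with explicit structural parameters, (b) computing the corresponding quantity $A$, and (c) performing an algebraic simplification in the rate of Corollary \ref{corollary_mixing1}.

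First, I would verify Assumption \ref{ass_autoencoder}. Define $g_{enc,0} : [0,1]^{d} \to \IR^{D}$ with $D = \tilde d \cdot d^{\tilde d}$ as the concatenation, over $k \in \{1,\dots,\tilde d\}$ and multi-indices $(i_1,\dots,i_{\tilde d}) \in \{1,\dots,d\}^{\tilde d}$, of the values $g_{i_1,\dots,i_{\tilde d}}^{(k)}(x_{i_1},\dots,x_{i_{\tilde d}})$. Each coordinate of $g_{enc,0}$ depends on $t_{enc,0} = \tilde d$ arguments of $x$. Define $g_{enc,1} : \IR^{D} \to \IR^{\tilde d}$ to sum the $d^{\tilde d}$ entries in each $k$-block; each coordinate depends on $t_{enc,1} = d^{\tilde d}$ arguments. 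The decoder $f_{dec} : \IR^{\tilde d} \to \IR^{d}$ is arbitrary with $t_{dec} = \tilde d$. The image conditions $g_{enc,0}([0,1]^d) \subset [0,1]^{D}$ and $g_{enc,1}([0,1]^{D}) \subset [0,1]^{\tilde d}$ are achieved by rescaling the $g^{(k)}_{i_1,\dots,i_{\tilde d}}$ by a factor proportional to $d^{-\tilde d}$ and absorbing this rescaling into $f_{dec}$, which the excerpt explicitly permits.

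Second, I would verify Assumption \ref{ass_smoothness}. By hypothesis $g^{(k)}_{i_1,\dots,i_{\tilde d}}, f_{dec} \in C^{\beta}([0,1]^{\tilde d},K)$, so after rescaling $g_{enc,0}$ inherits a $C^{\beta}$-norm of the same order in a (possibly enlarged) constant $K'$, yielding $\beta_{enc,0} = \beta$. The summation $g_{enc,1}$ is linear, hence in $C^{\infty}$; its $\beta$-Hölder norm grows at most like $d^{\tilde d}$ in each component, so $\beta_{enc,1}$ may be chosen arbitrarily large at the cost of enlarging $K'$ by a factor polynomial in $d$. Finally, $\beta_{dec} = \beta$ directly from the assumption on $f_{dec}$. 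Importantly, all these constants are independent of $n$, so they are absorbed into the $\lsim$ in Corollary \ref{corollary_mixing1}.

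With these structural parameters,
\[
    A = \min\Bigl\{\tfrac{\beta_{dec}}{t_{dec}},\, \tfrac{\beta_{enc,0}}{t_{enc,0}},\, \tfrac{\beta_{enc,1}}{t_{enc,1}}\Bigr\} = \min\Bigl\{\tfrac{\beta}{\tilde d},\, \tfrac{\beta}{\tilde d},\, \tfrac{\beta_{enc,1}}{d^{\tilde d}}\Bigr\} = \tfrac{\beta}{\tilde d},
\]
the last term being harmless because $\beta_{enc,1}$ can be taken arbitrarily large. Substituting $A = \beta/\tilde d$ into the choice of $N$ from Corollary \ref{corollary_mixing1} and clearing denominators by multiplying numerator and denominator by $\tilde d$ gives the stated exponent $\tilde d \cdot \tfrac{\alpha/(\alpha+1)}{2\beta + \tilde d\cdot \alpha/(\alpha+1)}$; the same substitution in the rate gives the announced $n^{-2\beta\cdot\alpha/(\alpha+1)/(2\beta + \tilde d \cdot \alpha/(\alpha+1))} \log(n)^{3\alpha/(\alpha+1)}$ bound. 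There is no genuine obstacle here — the argument is pure bookkeeping. The only point that warrants care is the interaction between the summation structure and the unit-cube image requirement in Assumption \ref{ass_smoothness}, which is handled by the rescaling that the paper has already sanctioned.
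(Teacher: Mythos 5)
Your proposal is correct and follows essentially the same route as the paper: the paper also obtains this corollary by specializing Corollary \ref{corollary_mixing1} to the additive model of Example \ref{ex_additive_model}(2) with $r=1$ and $t_{enc,0}=\tilde d$, noting that the summation map is $C^{\infty}$ so that $A=\min\{\beta/\tilde d,\,\infty/d^{\tilde d},\,\beta/\tilde d\}=\beta/\tilde d$, and then substituting. Your additional care about rescaling to satisfy the unit-cube image conditions in Assumption \ref{ass_smoothness} only fills in bookkeeping the paper leaves implicit.
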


In contrast to the rate of a naive estimator mentioned in \reff{naive_nonparametric_rate} which suffers from the curse of the dimension $d$, we are therefore able to formulate structural conditions on the evolution of the time series to obtain much faster rates which only depend on the compressed dimension $\tilde d \in \{1,...,d\}$. Of course, the list in Example \ref{ex_additive_model} is not exhaustive and much more models are suitable for our theory.

\section{Oracle-type inequalities for minimum empirical risk estimators}
\label{sec_oracle}

In this section, we consider general properties of minimum empirical risk estimators
\[
    \hat f \in \argmin_{f\in \sF} \hat R_n(f), \quad\quad \hat R_n(f) = \frac{1}{n}\sum_{i=r+1}^{n}\frac{1}{d}\big|X_i - f(\IX_{i-1})\big|_2^2 \IW(\IX_{i-1})
\]
over function classes
\[
    \sF \subset \{f:\IR^{dr} \to \IR^{d} \text{ measurable}\}.
\]
Here, $\IW:\IR^{dr} \to [0,1]$ is an arbitrary (measurable) weight function. We ask $\sF$ to satisfy
\[
    \sup_{f\in\sF}\sup_{j\in \{1,...,d\}}\sup_{x\in \text{supp}(\IW)}|f(x)| \le F
\]
for some constant $F > 0$.

Let $N(\delta, \sF, \|\cdot\|_{\infty})$ denote the smallest number of $\delta$-brackets with respect to $\|f\|_{\infty} := \sup_{j\in \{1,...,d\}}\|f_j\|_{\infty}$ which is needed to cover $\sF$, and let $H(\delta) := \log N(\delta, \sF, \|\cdot\|_{\infty})$ denote the corresponding bracketing entropy. 

\subsection{Oracle inequalities under absolutely regular mixing} \label{oracle_arm}
In the case that the process $X_i$, $i = 1,...,n$, is $\beta$-mixing, we obtain the following result which is proven in Theorem \ref{theorem_oracle_inequality} of Section \ref{oracle_arm_proof}.

\begin{thm}\label{theorem_oracle_inequality_present}
    Let Assumptions \ref{ass_subgaussian} and  \ref{ass_compatibility} hold and let $\Lambda^{mix}(\cdot)$ be the function defined in \ref{definition_lambda_mixing}. Then, for any $\delta \in (0,1), \eta >0$ there exists a constant $\IC = \IC(\eta,c_0,r,C_{\beta^{mix}}, C_{\varepsilon},F)$ such that
    \[
        \IE D(\hat f) \le (1+\eta)^2 \inf_{f\in \sF}D(f) + \IC\cdot \big\{ \Lambda^{mix}(\frac{H(\delta)}{n}) + \delta\big\}.
    \]
\end{thm}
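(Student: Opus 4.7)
The plan is to reduce the oracle inequality to a uniform deviation bound for a centered empirical process and then feed in a Bernstein-type concentration inequality for $\beta$-mixing sequences whose rate is captured by $\Lambda^{mix}$. Introduce the excess-loss functional
\[
    g_f(\IX_{i-1},\eps_i) := \frac{1}{d}\Bigl(|f(\IX_{i-1})-f_0(\IX_{i-1})|_2^2 - 2\eps_i^\top(f(\IX_{i-1})-f_0(\IX_{i-1}))\Bigr)\IW(\IX_{i-1}),
\]
so that $\IE g_f = D(f)-D(f_0)$ and $\hat R_n(f)-\hat R_n(f_0) = \frac{1}{n}\sum_{i=r+1}^{n}g_f(\IX_{i-1},\eps_i)$. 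Choose $f^*\in\sF$ with $D(f^*)\le \inf_{f\in\sF}D(f)+\delta$; then $\hat R_n(\hat f)\le \hat R_n(f^*)$ rearranges into
\[
    D(\hat f) - D(f^*) \le \nu_n(\hat f) - \nu_n(f^*),\qquad \nu_n(f):=\IE g_f - \frac{1}{n}\sum_{i=r+1}^{n} g_f(\IX_{i-1},\eps_i),
\]
and the remainder of the argument controls the right-hand side uniformly over the random $\hat f$.

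Next I would split $g_f = g_f^{(q)} + g_f^{(\eps)}$ into the quadratic part (depending only on $\IX_{i-1}$, bounded by $4F^2/d$, with variance $\lsim F^2(D(f)-D(f_0))$) and the noise-interaction part (conditionally mean zero given $\IX_{i-1}$ with conditional Subgaussian tails of scale $\lsim C_\eps\sqrt{D(f)-D(f_0)}$ by Assumption \ref{ass_subgaussian}). For each piece I would run a peeling argument on the shells $\sF_j:=\{f\in\sF: 2^{j-1}r_0< D(f)-D(f^*)+\delta \le 2^j r_0\}$ for a threshold $r_0$ of order $\Lambda^{mix}(H(\delta)/n)+\delta$, and on each shell cover $\sF$ by the $e^{H(\delta)}$ many $\delta$-brackets: within a bracket $\hat f$ is replaced by a bracket representative at additive cost $O(\delta)$, and across brackets one takes a union bound.

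The probabilistic core is a Bernstein-Orlicz inequality under $\beta$-mixing. Block the sum into alternating blocks of length $q$ and invoke Berbee-type coupling (at total variation cost $\lsim n\beta^{mix}(q)$) to reduce the tails to those of independent block sums; the Young-pair covariance bound $|\Cov(Z,Z')|\lsim(\phi^*(k+1)-\phi^*(k))\beta^{mix}(k)$, finite by Assumption \ref{ass_compatibility}(iii), then yields a tail estimate of the form $\IP(|\nu_n(f)|>t)\le 2\exp(-cn\psi(t/\sigma^2))$ whenever $\sigma^2$ dominates the variance proxy, with $\psi(x)=\phi^*(x)x$. Inverting $\psi$, combining with entropy $H(\delta)$, and optimizing over $q$ (which is where $\lceil\psi^{-1}(\cdot^{-1})\rceil$ enters via the submultiplicativity \reff{definition_beta_submultiplicative}) produces a deviation of order $\sqrt{\sigma^2\,\Lambda^{mix}(H(\delta)/n)}$ at each peeling level. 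Since $\sigma^2\lsim D(f)-D(f_0)+\delta$ on each shell, the elementary identity $2\sqrt{ab}\le \eta a+\eta^{-1}b$ absorbs $D(\hat f)-D(f^*)$ into the left-hand side and also handles the slack in passing from $D(f^*)$ to $D(f_0)$; applying the absorbing step to each of the two pieces yields the claimed $(1+\eta)^2$ inflation of $\inf_\sF D(f)$ together with the additive $\IC\{\Lambda^{mix}(H(\delta)/n)+\delta\}$ after integrating tails into expectations and summing geometrically over the peeling levels.

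The main obstacle is engineering the Bernstein-Orlicz chaining bound so that the deviation at entropy level $H(\delta)$ is \emph{exactly} $\sqrt{\sigma^2\,\Lambda^{mix}(H(\delta)/n)}$ rather than a looser expression: the three Young-type conditions in Assumption \ref{ass_compatibility} are calibrated precisely so that $\psi^{-1}$ is the correct inverse rate in both the polynomial and exponential mixing regimes of Lemma \ref{lemma_varphi_specialcase}, and simultaneously threading the variance scaling $\sigma^2\propto D(f)-D(f_0)$ and the Subgaussian envelope for $g_f^{(\eps)}$ through the peeling without losing constants is where the bookkeeping becomes delicate. Once that deviation bound is in place, the localization argument and the absorbing identity outlined above deliver the stated oracle inequality.
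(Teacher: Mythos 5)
Your high-level skeleton matches the paper's: the basic inequality obtained from $\hat R_n(\hat f)\le\hat R_n(f^*)$, the split of the excess loss into a quadratic part depending only on $\IX_{i-1}$ and a noise-interaction (martingale) part, a $\delta$-covering at entropy cost $H(\delta)$, Berbee-type coupling into independent blocks, Bernstein's inequality for the block sums, and an absorption step via $2\sqrt{ab}\le\eta a+\eta^{-1}b$. The paper carries all of this out at the level of expectations (Lemmas \ref{convergenceproof_part1} and \ref{convergenceproof_part2}, built on the maximal inequalities of Lemmas \ref{bernstein_mixing} and \ref{bernstein_mixing_martingale}, with Cauchy--Schwarz and Jensen applied at the random bracket index $j^*$) rather than via tail bounds and peeling, and it resolves the localization through the recursive inequality of Lemma \ref{lemma_recursive_risk_general}; your peeling route is a legitimate alternative architecture in principle.

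There is, however, a genuine gap at the probabilistic core, exactly where you flag ``the main obstacle,'' and it is not merely bookkeeping. First, under $\beta$-mixing the variance proxy for a blocked sum of $g$ is $\|g\|_{2,\beta}^2$ (Lemma \ref{lemma_mixing_norm}), and the only available link to the risk is $\|g\|_{2,\beta}\le C_{\beta}\varphi(\|g\|_1)$ with $\varphi(x)=\phi^{-1}(x^{-1})^{-1/2}$; this is in general much larger than $\sqrt{\|g\|_1}$ (for polynomial mixing $\varphi(x)=x^{(\alpha-1)/(2\alpha)}$, exponent strictly below $1/2$). Your shell-wise claim $\sigma^2\lsim D(f)-D(f_0)+\delta$ is therefore not justified; the correct statement is $\sigma^2\lsim\varphi(D(f)-D(f_0)+\delta)^2$, and closing the resulting self-bounding inequality is precisely what forces the paper to introduce $(\varphi^{-1})^{*}$ and Lemma \ref{lemma_recursive_risk_general}. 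Second, the tail you posit, $\IP(|\nu_n(f)|>t)\le 2\exp(-cn\psi(t/\sigma^2))$ with $\psi(x)=\phi^{*}(x)x$, is internally inconsistent with the deviation $\sqrt{\sigma^2\,\Lambda^{mix}(H/n)}$ you then feed into the peeling: in the polynomial case $\psi(x)\asymp x^{\alpha+1}$, so equating the exponent to $H$ gives $t\asymp\sigma^2(H/n)^{1/(\alpha+1)}$, which at the fixed point $D\asymp(H/n)^{\alpha/(\alpha+1)}$ yields $t\asymp H/n$ rather than $(H/n)^{\alpha/(\alpha+1)}$; such a bound is also stronger than what Rio's variance bound combined with blocking can deliver, so it cannot be established by the argument you describe. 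Third, $\Lambda^{mix}$ actually enters through two distinct channels that your sketch conflates: the coupling/block-length cost $q^{*}(x)x$ (controlled by Lemma \ref{lemma_varphi2} using submultiplicativity) and the Young-conjugate cost $(\varphi^{-1})^{*}(\sqrt{x})$ arising from the variance--risk recursion (Lemma \ref{lemma_varphi}); Assumption \ref{ass_compatibility} is calibrated so that both are dominated by $\Lambda^{mix}(x)$. Until these points are repaired, the proposal does not deliver the stated oracle inequality.
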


\subsection{Oracle inequalities under functional dependence}

Suppose that $X_i$, $i = 1,...,n$, is of the form \reff{representation_x}. In this case, we have to impose smoothness assumptions on the underlying function class $\sF$, on $f_0$ and on $\IW$ in order to quantify the dependence of functions of $X_i$. Suppose that there exist $\varsigma, K, L_{\sF} > 0$ such that for all $f\in \sF$, $x,x' \in \IR^{dr}$,
\begin{eqnarray*}
    |\IW(x) - \IW(x')| &\le& \frac{1}{\varsigma}|x-x'|_{\infty},\\
    |f_{0}(x) - f_{0}(x')|_{\infty} &\le& K |x-x'|_{\infty},\\
    |f(x) - f(x')|_{\infty} &\le& L_{\sF}|x-x'|_{\infty}.
\end{eqnarray*}

The following theorem is proven in Theorem \ref{theorem_oracle_inequality_dep} in the Appendix. An example for an appropriate $\IW$ with support $[0,1]^{dr}$ is given in \reff{definition_weight_function}.

\begin{thm}\label{theorem_oracle_inequality_dep_present}
    Suppose that Assumption \ref{ass_subgaussian} and Assumption \ref{ass_compatibility2} hold with $L_{\sG} = 2dr\big(\frac{2}{\varsigma} + \frac{(L_{\sF} + K)}{F}\big)$. Let $\Lambda^{dep}(\cdot)$ be the function defined in \reff{definition_lambda_depmeas}. Then, for any $\delta \in (0,1),\eta > 0$ there exists a constant $\IC = \IC(\eta, C_{\varepsilon}, F)$ such that
    \[
        \IE D(\hat f) \le (1+\eta)^2 \inf_{f\in \sF}D(f) + \IC\cdot \big\{ \Lambda^{dep}(\frac{H(\delta)}{n}) + \delta\big\}.
    \]
\end{thm}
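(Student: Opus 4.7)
The plan is to mirror the proof of Theorem \ref{theorem_oracle_inequality_present} for the mixing case, modifying only the deviation inequalities to ones derived from the functional dependence measure. Let $f^{*} \in \sF$ nearly attain $\inf_{f\in\sF}D(f)$ and define the centered excess-loss
\[
    g_f(y,x) := \tfrac{1}{d}\bigl(|y-f(x)|_2^2 - |y-f_0(x)|_2^2\bigr)\IW(x),
\]
so that $D(f) - D(f_0) = \IE g_f(X_{r+1},\IX_r)$ and $\hat R_n(f)-\hat R_n(f_0) = \frac{1}{n}\sum_{i=r+1}^{n} g_f(X_i,\IX_{i-1})$. Writing $X_{r+1} = f_0(\IX_r) + \varepsilon_{r+1}$, the identity $|y-f|^2 - |y-f_0|^2 = -2(y-f_0)\cdot(f-f_0) + |f-f_0|_2^2$ splits $g_f$ into a mean-zero ``noise'' part linear in $(f-f_0)\cdot\varepsilon$ and the quadratic part $\frac{1}{d}|f-f_0|_2^2\IW$, whose expectation is precisely $D(f)-D(f_0)$. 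The basic inequality $\hat R_n(\hat f)\le\hat R_n(f^{*})$ then gives
\[
    D(\hat f)-D(f^{*}) \le \Bigl\{\IE - \tfrac{1}{n}\sum_{i=r+1}^n\Bigr\}(g_{\hat f} - g_{f^{*}}).
\]

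The first substantive step is to verify that for each $f\in\sF$, the map $(z_0,z_1,\dots,z_r)\mapsto g_f(z_0,z_1,\dots,z_r)$ is Lipschitz on $\{|z_0|_\infty\le 2F\}\cap\mathrm{supp}(\IW)$. Differentiating the product $|y-f(x)|_2^2\IW(x)$ and invoking $\|f\|_{\mathrm{Lip}}\le L_{\sF}$, $\|f_0\|_{\mathrm{Lip}}\le K$, $\|\IW\|_{\mathrm{Lip}}\le \varsigma^{-1}$ together with $|y-f(x)|_\infty\le 2F$ on this set yields a Lipschitz constant of order $L_{\sG} = 2dr\bigl(\tfrac{2}{\varsigma} + \tfrac{L_{\sF}+K}{F}\bigr)$; the subgaussian tail of $\varepsilon$ from Assumption \ref{ass_subgaussian} controls the negligible contribution of $\{|X_i|_\infty > 2F\}$. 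Via Assumption \ref{ass_compatibility2} this transfers the dependence from $X_i$ to $\{g_f(X_i,\IX_{i-1})\}$, which inherits functional-dependence coefficients bounded by $\Delta(k)$ uniformly in $f\in\sF$.

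The second step is to control the uniform empirical process $\sup_{f\in\sF}\{\IE g_f - \frac{1}{n}\sum_i g_f(X_i,\IX_{i-1})\}$ via a $\delta$-bracketing of $\sF$ (bracketing entropy $H(\delta)$) combined with a Bernstein/Nagaev-type maximal inequality for partial sums under the functional dependence measure. The weak-variance term in that inequality is precisely $\tilde V(x)$ from \reff{definition_tildev_depmeasure}, and the fixed-point condition \reff{def_y_dependencemeasure} defining $\Lambda^{dep}$ is calibrated to produce the rate $\Lambda^{dep}(H(\delta)/n) + \delta$ after chaining over the brackets; the bracket approximation and the residual subgaussian tail contribute the additive $\delta$. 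Finally, a peeling argument on the weighted norm $\|f - f^{*}\|_{L^2(\IW)}$, combined with Young's inequality $ab\le\frac{\eta}{2}a^2+\frac{1}{2\eta}b^2$ applied to the cross term $\varepsilon\cdot(f-f_0)\IW$, absorbs a fraction of $D(\hat f)-D(f^{*})$ into the left-hand side and gives rise to the multiplicative factor $(1+\eta)^2$ in front of $\inf_{f\in\sF}D(f)$. The main obstacle I anticipate is the functional-dependence maximal inequality itself: one must show that the variance expression arising from chaining truly matches $\tilde V$ (without picking up an uncontrolled moment factor from the subgaussian noise) and composes cleanly with the $\delta$-bracketing, so that the fixed point defining $\Lambda^{dep}$ delivers the claimed rate rather than a strictly worse one.
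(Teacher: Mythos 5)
Your overall architecture is the same as the paper's: mirror the mixing proof, swap in maximal inequalities driven by the functional dependence measure, transfer the dependence to the loss class via a Lipschitz bound with constant $L_{\sG}=2dr\bigl(\tfrac{2}{\varsigma}+\tfrac{L_{\sF}+K}{F}\bigr)$, let the weak-variance function $\tilde V$ and the fixed point defining $\Lambda^{dep}$ unify the resulting terms, and use Young's inequality plus a self-bounding step to produce the factor $(1+\eta)^2$. (The paper uses the recursion Lemma \ref{lemma_recursive_risk_general} and concavity of $\tilde V$ with Jensen rather than peeling, but that is an equivalent localization device, not a substantive difference.)

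The one step that would fail is your treatment of the noise part of the loss. You propose to make the full excess loss $g_f(y,x)=\tfrac1d\bigl(|y-f(x)|_2^2-|y-f_0(x)|_2^2\bigr)\IW(x)$ Lipschitz jointly in $(y,x)$ by restricting to $\{|y|_\infty\le 2F\}$ and declaring the complement ``negligible'' by subgaussianity. Since $X_i=f_0(\IX_{i-1})+\varepsilon_i$ with $\varepsilon_i$ unbounded subgaussian and $F$ a \emph{fixed} constant, $\IP(|X_i|_\infty>2F)$ is a constant bounded away from zero, not a quantity vanishing in $n$; on that event the loss is neither bounded nor Lipschitz in $y$, so the fluctuation of the empirical process restricted to the tail is not controlled, and truncating at a growing level would both distort the centering and inject extra factors into the rate. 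The paper avoids this entirely: the quadratic part $\tfrac1d|f-f_0|_2^2\IW$ depends only on $\IX_{i-1}$ (bounded and Lipschitz, handled with a ghost sample in Lemma \ref{convergenceproof_dep_part1}), while the cross term $\tfrac1n\sum_i\tfrac1d\langle\varepsilon_i,g(\IX_{i-1})\rangle$ is treated as a martingale in Lemma \ref{bernstein_depmeas_martingale}, where Rio's martingale moment inequality lets the subgaussian moments of $\varepsilon_i$ enter only through the Bernstein constants and the functional-dependence transfer \reff{standard_bound_depmeas_g} is applied only to $g=\tfrac1F(f_j-f_0)\IW$, a bounded Lipschitz function of $\IX_{i-1}$ alone. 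This is exactly the resolution of the ``uncontrolled moment factor'' obstacle you flag at the end of your proposal; without the martingale decomposition your version of the maximal inequality does not go through.
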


We give some short remarks.

\begin{rem}
    \begin{enumerate}
        \item While in Theorem \ref{theorem_oracle_inequality_present}, the parameter $r$ is directly contained in the constant $\IC$, in Theorem \ref{theorem_oracle_inequality_dep_present} it is contained in $\Lambda$ via Assumption \ref{ass_compatibility2}. Additionally, in the latter theorem the dimension $d$ is incorporated through \reff{ass_compatibility2_eq1} and may be incorporated through $L_{\sG}$. Besides these facts, both theorems are rather similar.
        \item Theorems \ref{theorem_oracle_inequality_present} and \ref{theorem_oracle_inequality_dep_present} are rather general and can be applied to any function class which allow for a measurement of their size via brackets with respect to the $\|\cdot\|_{\infty}$-norm. It may therefore be of interest for other nonparametric estimators.
        \item Theorems \ref{theorem_oracle_inequality_present} and \ref{theorem_oracle_inequality_dep_present} can be seen as generalizations of Lemma 4 in \cite{schmidthieber2017} for dependent observations.
    \end{enumerate}
\end{rem}

\section{Simulations}
\label{sec_simulation}
In this section, we discuss the behavior of the estimator $\hat f$ from \reff{definition_network_estimator}, respectively its approximation obtained with stochastic gradient descent. During the presentation, $v'$ denotes the transpose of a vector or matrix $v$.

\subsection{Simulated data}

We first consider a low-dimensional example given by
\[
	X_i = f_0(X_{i-1})+\varepsilon_i
\]
where $\varepsilon_i \sim \mathcal{N}(0,0.5 I_{5\times 5})$ ($I_{5\times 5}$ denoting the $5$-dimensional identity matrix) and
\begin{equation}
	f_0:\IR^5 \to \IR^5, \quad f_0(x) = vax \label{low_d_model}
\end{equation}
for $a = (0.5,0.6,0.2,0.3,0.5) \in \IR^5$ and $v=(0.4,0.6,0.5,-0.2,0.5)' \in \IR^5$, that is,
\[
    X_i = v\cdot \sum_{j=1}^5 a_j x_j.
\]
We generate $n = 1000$ observations $X_1,...,X_n$ following the above recursion and use $n_{test} - n = 1000$ further realizations of the time series to quantify the true prediction error. For the fitting process, we use an encoder-decoder network of the form
\[
	p = (5,20,10,1,10,20,5), \quad\quad L = 5,
\]
that is, the network encodes the given information to one dimension and afterwards spreads the value again to 5 dimensions. The network is learned with a standard stochastic gradient descent method of learning rate $\gamma = 0.003$ for the first 30 epochs and $\gamma = 0.0002$ afterwards. Furthermore, we use $\lambda = 0.00001$ and the ReLU activation function. We can deduce from Figure \ref{ex_ar_low} that the neural network can easily learn the underlying function $f$ already after $\approx 40$ epochs. We surmise that for low dimensional data the testing error can be seen on par with the training error, converging rapidly towards the optimal prediction error $\frac{1}{5}\IE[|\varepsilon_1|^2] = 0.5^2 = 0.25$.
%In this example it is possible to reduce the learning rate after $10$ epochs.

We now turn to an example which is of higher dimension. We therefore take the same model but with the function
\begin{equation}
	f_0:\IR^{30} \to \IR^{30}, \quad f_0(x) = vax, \label{high_d_model}
\end{equation}
where we define the vector $s = (0.05, -0.05 , ... , 0.05 , -0.05) \in \IR^{24}$ that alternates between the values $0.05$ and $-0.05$ and put
\begin{eqnarray*}
    a &=& \begin{pmatrix}
        0.3 & 0.6 & 0.5 & s & 0 & -1 & 0.4 \\
        0.5 & -0.6 & 0.2 & s & 0.4 & 0.9 & 1
    \end{pmatrix} \in \IR^{2 \times 30},\\
    v &=& \begin{pmatrix}
        0.4 & 0.4 & \dots & 0.4 & 0.4 \\
        0.5 & -0.3 & \dots & 0.5 & -0.3
    \end{pmatrix}^t \in \IR^{30 \times 2}.
\end{eqnarray*}
for alternating values $0.5$ and $-0.3$.
The network architecture is adjusted to
\[
	p = (30,60,30,2,30,60,30), \quad\quad L = 5,
\]
using again a stochastic gradient descent method with learning rate $\gamma = 0.0002$ for the first 50 epochs and $\gamma = 0.00002$ afterwards. Furthermore, we set $\lambda = 0.00001$ and employ the ReLU activation function. Although the network is dealing with an input and output of dimension 30, Figure \ref{ex_ar_high} shows that a good prediction already can be realized and most of the information can be preserved despite the data passing a layer of only two dimensions.

\begin{figure} \label{ex_ar_low}
\centering
\includegraphics[width=12cm]{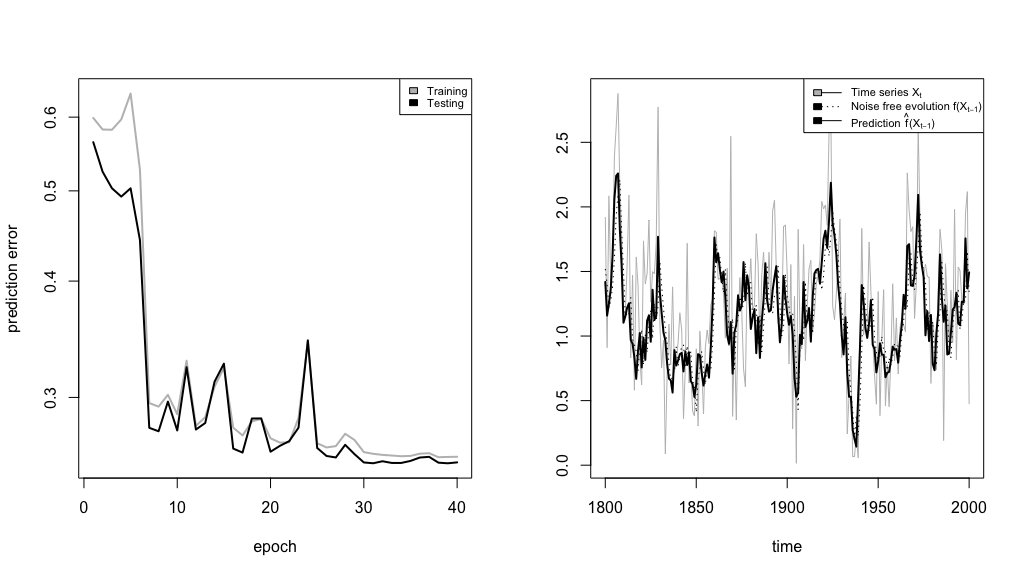}
\caption{We depict the learning process under model \reff{low_d_model}. After 40 epochs the neural network learned the underlying function $f_0$ provided by a noisy version of the data. We can clearly see that the neural network is able to predict the noise free evolution of the times series.}

\end{figure}

\begin{figure} \label{ex_ar_high}
\centering
\includegraphics[width=12cm]{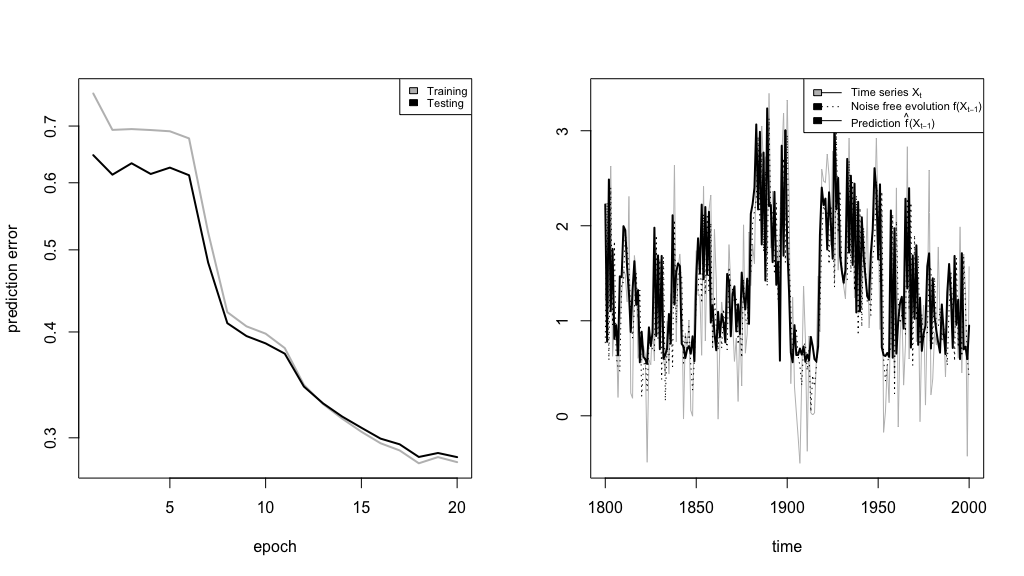}
\includegraphics[width=12cm]{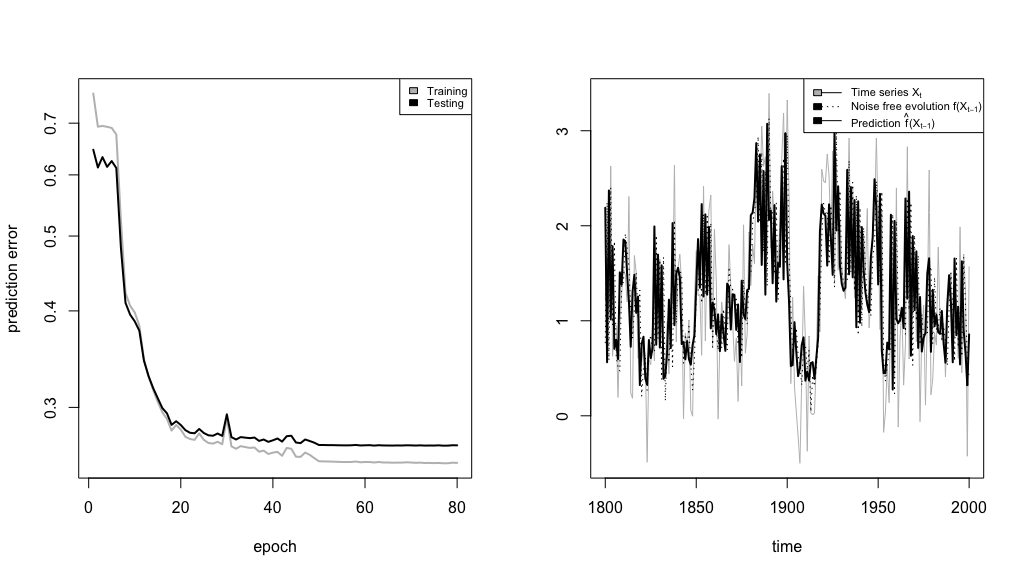}
\caption{The underlying model here is given by \reff{high_d_model}. After 20 epochs the neural network learned the overall behavior of the function $f_0$. The network has still the potential to improve for the lower peaks. After about 40 epochs the learning process can be seen as completed. Due to overfitting, the testing error now begins to slowly increase.}
\end{figure}

\subsection{Real data application}

For a simulation study we consider the weather data of $d=32$ German cities provided by the Deutscher Wetterdienst (German Meteorological Service). Note that the cities chosen are spread throughout Germany which can be seen in Figure \ref{fig:germany_map}. The data we are interested in is the daily mean of temperature and can be found on \url{https://opendata.dwd.de/climate_environment/CDC/observations_germany/climate/daily/kl/historical}. In total we observe 4779 temperature values for each city over the period of 2006/07/01 to 2019/08/01.
A subset of $n_\text{train} = 4415$ values serves as training data for the network and represents the data from 2006/07/01 to 2018/07/31. We validate our prediction on the year 2018/08/01 to 2019/07/31 which contains $n - n_\text{train} = 354$ values. For fitting, we use a network with architecture
\[
    \sF(5,(rd,rd,24,m,24,d,d))
\]
where $r \in \{1,2,3,5\}$ and $m \in \{4,6,8,10\}$, apply the stochastic gradient descent for learning the approximation $\hat f^{\approx}$ of $\hat f$ over 150 epochs. The learning rate is chosen to be $\gamma = 0.000002$ until epoch 45 and $\gamma = 0.0000002$ thereafter. We let the simulation run 5 times over every step $r$ for each network described by $m$.

In Figure \ref{sim_error} we summarize the prediction errors $D(\hat f^{\approx})$ obtained during the testing process.
The smallest prediction error value can be found for $r = 2$ (that is, using $X_{i-1}, X_{i-2}$ for predicting $X_i$) with a bottleneck layer of $10$ hidden units. However, it is also possible to take a layer with $m \in \{6,8\}$ hidden units and still obtain a comparable result. %A prediction could also be employed although the error margin tends to be a little higher than during the $2$-step prediction.
Thus, we surmise that according to our model when considering the errors, the weather should be predicted based on the two previous days. Taking the day or more than three days before the date of interest does not seem to yield a good prediction.

In comparison, the naive prediction method $\hat f^{naive}$ of taking the temperature value of the current day as it is
to forecast the next day's value yields an error of $D(\hat f^{naive}) \approx 4.99$. We therefore see that employing encoder-decoder neural networks produces more accurate predictions.

For $r=2, m = 6$, we depict the development of the training and testing error for the network $\sF(5,(2d,2d,24,6,24,d,d))$ in Figure \ref{fig:error}. After 45 epochs the testing error already drops down to a magnitude of $4$ which means that we anticipate a deviation of $2$ Kelvin for the prediction itself. The fitting process is displayed for the city of Mannheim in Figure \ref{fig:mannheim}.

Additionally, the $1$-step predictor can be used to forecast $k$-steps ahead in time by applying the learned neural network $k$-times, accordingly. In our example, we applied this to the next week's temperature, i.e. $k=7$. The chosen predictor with architecture $\sF(5,(d,d,24,6,24,d,d))$ yields a deviation of around 4.44 Kelvin.

\begin{figure}[ht]
    \centering
    \includegraphics[width=7.5cm]{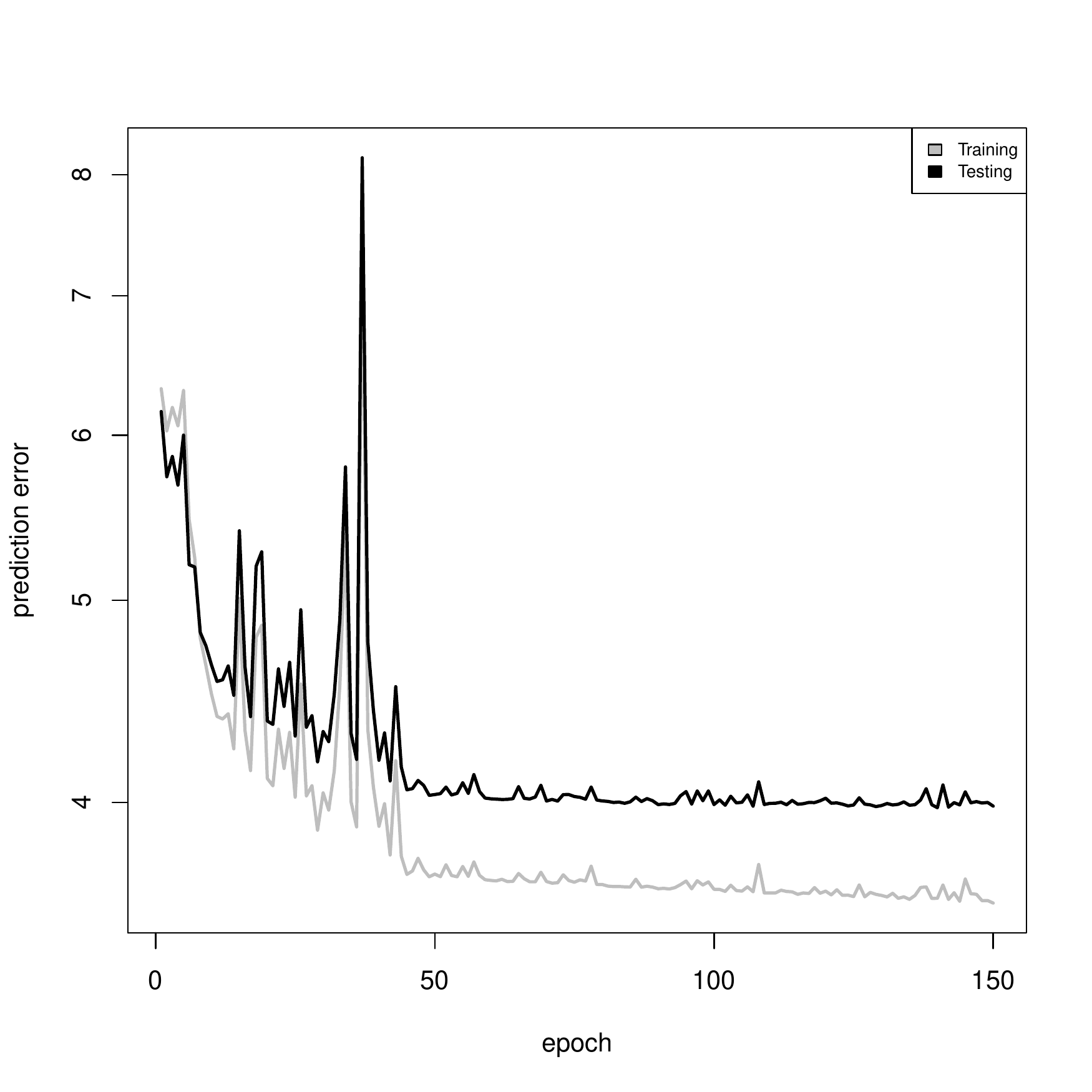}
    \caption{Depicted is the training and testing error in the learning process of the network $\sF(5,(2d,2d,24,6,24,d,d))$ applied to the weather data. We clearly see that consistently, as expected, the testing error is higher than the training error. At an early stage the network already learns basic properties of the evolution scheme of the time series because the testing error rapidly drops. After 45 epochs the error is in the range of 4.}
    \label{fig:error}
\end{figure}

\begin{figure}[ht]
    \centering
    \includegraphics[width=10cm]{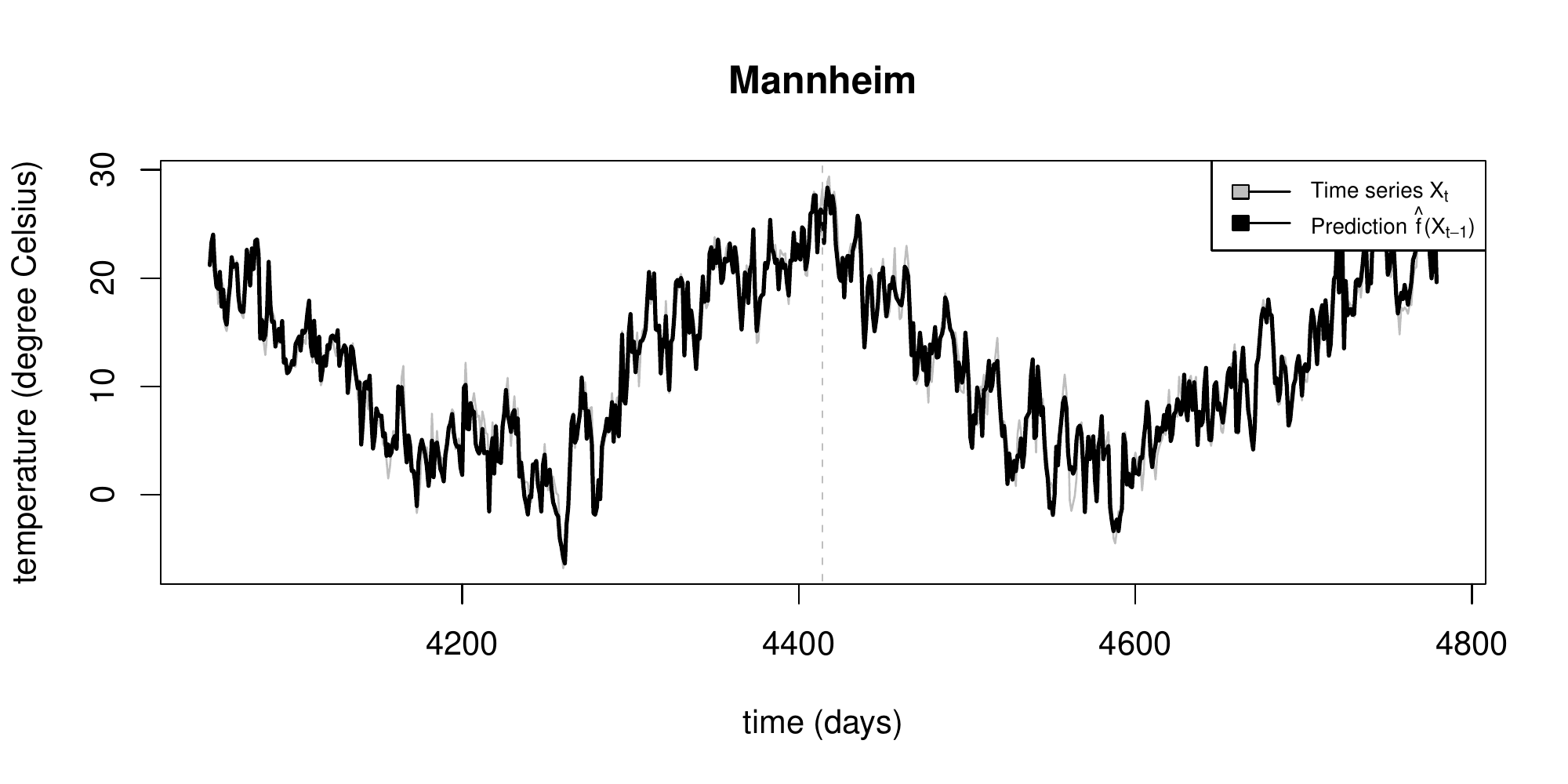}
    \caption{The graphic shows the daily mean temperature data from 2017/08/01 to 2019/07/31 measured in Mannheim. Note that we continuously count the days from 1 to 4779 beginning on 2006/07/01 (day 1). The training process ends on 2018/31/07 (day 4414), indicated by the gray vertical dashed line in the middle. Beginning on 2018/08/01 we see the values predicted $\hat f^{\approx}(X_{i-1},X_{i-2})$ by the learned neural network on top of the actual data observed.}
    \label{fig:mannheim}
\end{figure}

\begin{figure}
\centering
\fbox{\begin{tabular}{|ll|lllll|}
\hline
\multicolumn{2}{|l|}{$p=1$} & \multicolumn{5}{l|}{prediction error upon validation} \\ \hline
layer m    &     $4$      &  4.81 & 4.77 & 4.63 & 4.68 & 4.86\\
           &     $6$      &  4.41 & 4.65 & 4.75 & 4.83 & 4.64\\
           &     $8$      &  4.41 & 4.49 & 4.42 & 4.47 & 4.45\\
           &     $10$     &  4.41 & 4.55 & 4.44 & 4.45 & 4.45\\ \hline \hline
\multicolumn{2}{|l}{$p=2$} & \multicolumn{5}{l|}{} \\ \hline
layer m    &     $4$      &  4.63 & 4.22 & 4.41 & 4.19 & 4.03\\
           &     $6$      &  3.98 & 4.10 & 4.21 & 4.16 & 4.22\\
           &     $8$      &  3.98 & 3.95 & 4.23 & 4.05 & 4.01\\
           &     $10$     &  4.11 & 4.09 & 3.93 & 3.93 & 4.02\\ \hline \hline
\multicolumn{2}{|l}{$p=3$} & \multicolumn{5}{l|}{} \\ \hline
layer m   &     $4$      &  4.30 & 4.79 & 4.11 & 4.72 & 4.10\\
           &     $6$      &  4.27 & 4.46 & 4.18 & 4.04 & 4.18\\
           &     $8$      &  4.36 & 4.29 & 4.08 & 4.24 & 4.28\\
           &     $10$     &  4.27 & 4.12 & 4.08 & 4.15 & 4.28\\ \hline \hline
\multicolumn{2}{|l}{$p=5$} & \multicolumn{5}{l|}{} \\ \hline
layer m    &     $4$      &  4.28 & 4.75 & 4.83 & 4.48 & 4.85\\
           &     $6$      &  4.10 & 4.27 & 4.28 & 4.34 & 4.71\\
           &     $8$      &  4.81 & 4.09 & 4.06 & 4.42 & 4.45\\
           &     $10$     &  4.24 & 4.47 & 4.28 & 4.37 & 4.36\\ \hline \hline
\end{tabular}}
\caption{The testing errors obtained during the simulation. For each of the 4 distinct network architectures and each of $r$-step predictions we ran the simulation 5 times.}
\label{sim_error}
\end{figure}

\begin{figure}[ht]
    \centering
    \includegraphics[width=5.5cm]{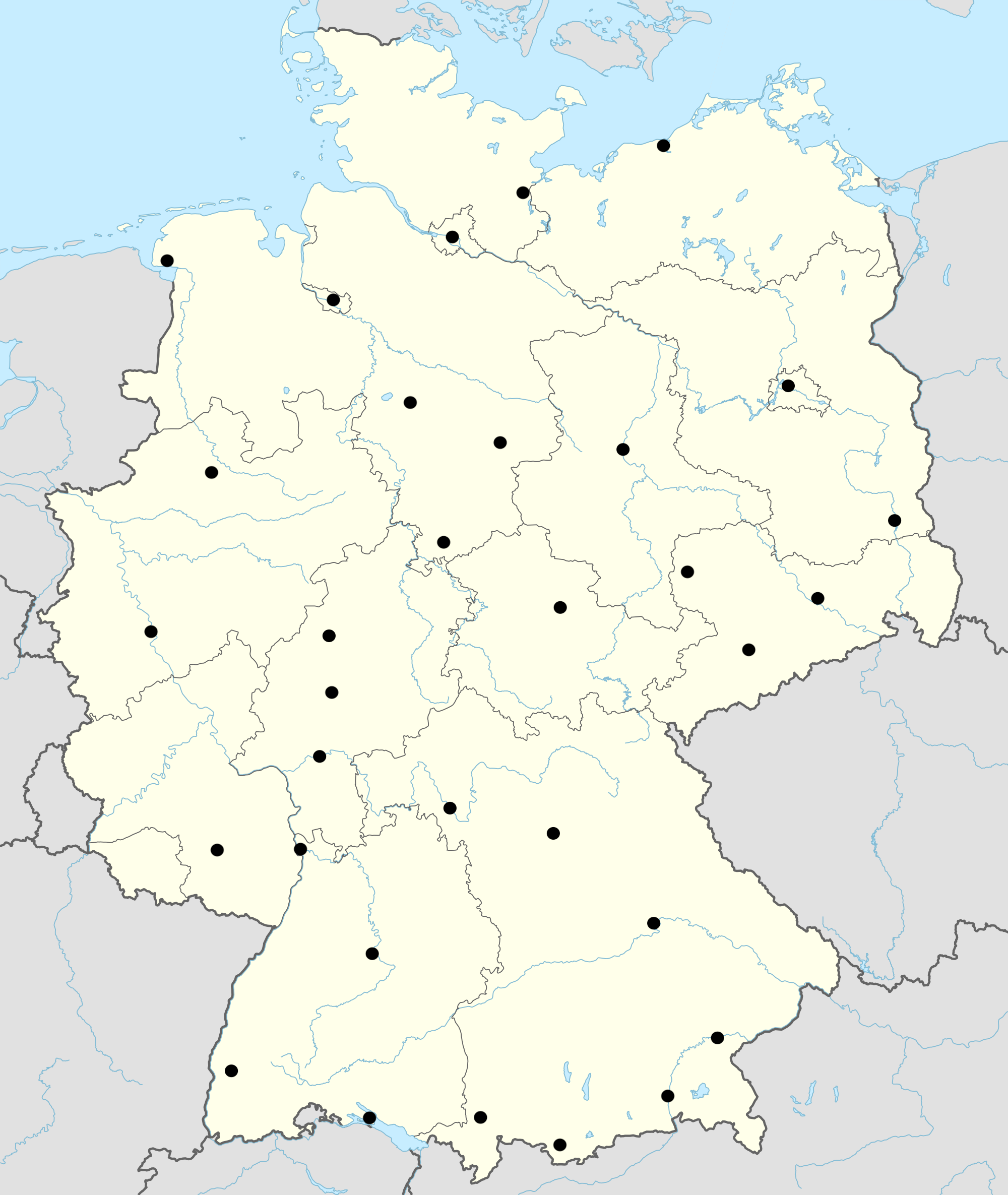}
    \caption{We collected weather data from the cities of Berlin, Braunschweig, Bremen, Chemnitz, Cottbus, Dresden, Erfurt, Frankfurt, Freiburg, Garmisch-Patenkirchen, Göttingen, Münster, Hamburg, Hannover, Kaiserslautern, Kempten, Köln, Konstanz, Leipzig, Lübeck, Magdeburg, Cölbe, Mühldorf, München, Nürnberg, Regensburg, Rosenheim, Rostock, Stuttgart, Würzburg, Emden and Mannheim.}
    \label{fig:germany_map}
\end{figure}

\section{Conclusion}
\label{sec_conclusion}

In this work, we have proposed a method to forecast high-dimensional time series with encoder-decoder neural networks and quantified their prediction abilities theoretically by a convergence rate. The encoder-decoder structure we used is fundamental to circumvent the curse of dimension. Besides the fact that the corresponding neural network is required to have a similar encoder-decoder structure to avoid overfitting in practice, we formulated conditions on the network parameters such as bounds for the number of layers or active parameters. These conditions are similar to \cite{schmidthieber2017} since we have used the same approximation results. 

Our theory can be seen as an extension of the upper bounds found in \cite{schmidthieber2017} to dependent observations and high-dimensional outputs. 
To prove the results, we derived oracle-type inequalities for minimizers of the empirical prediction error under mixing or functional dependence. These results may be of independent interest.

We studied the performance of our neural network estimators with simulated data and saw that the estimators could detect and adapt to a specific encoder-decoder structure of the true evolution function quite successfully. We applied our procedure to temperature data and have shown that without too much tuning we were able to outperform the naive forecast which proposes today's temperature for tomorrow.

A natural extension of our work would be the proof of lower bounds under given structural assumptions. Furthermore, a more general model of ARCH-type
\[
    X_i = f_0(\IX_{i-1}) + \sigma(\IX_{i-1})\eps_i, \quad\quad i = r+1,...,n,
\]
with additional function $\sigma:\IR^{dr} \to \IR^{d\times d}$ of matrix form could be considered. We conjecture that in such models, similar convergence rates could be obtained under appropriate structural assumptions on $\sigma(\cdot)$. Finally, it may be interesting to give more precise results about approximations of the estimators which are obtained via stochastic gradient descent. Similar to the theory of Boosting, one could hope for explicit or adaptive stopping rules. 

\section{Appendix: Selected proofs of Section \ref{oracle_arm}} \label{oracle_arm_proof}

Recall the definition of the $\beta$-mixing coefficients $\beta^{mix}(k)$, $k\in\IN_0$ from \reff{definition_beta_mixing}. In this section, we use the abbreviation $\beta(\cdot) = \beta^{mix}(\cdot)$.

We now introduce the $\|\cdot\|_{2,\beta}$-norm which originally was defined in \cite{rio1995invariance}.

Define $\beta(t) = \beta(\lfloor t\rfloor)$ for $t \ge 1$ and $\beta(t) = 1$, otherwise. For some cadlag function $g:I \to \IR$ defined on a domain $I \subset \IR$, the cadlag inverse is defined as
\[
    g^{-1}(u) := \inf\{s\in I: f(s) \le u\},
\]
which we especially use for $\beta^{-1}(u)$. For any measurable $h:\IR \to \IR$, let $Q_h(u)$ denote the quantile function of $h(X_1)$, that is, $Q_h(u)$ is the cadlag inverse of $t \mapsto \IP(h(X_1) \le t)$. Let
\[
    \|h\|_{2,\beta} := \Big(\int_0^{1}\beta^{-1}(u) Q_h(u)^2 du\Big)^{1/2}.
\]

This norm can be used to upper bound the variance of a sum $\sum_{i=1}^k h(X_i)$. Furthermore, it is possible to upper bound $\|h\|_{2,\beta}$ in terms of $\|h\|_1 = \IE|h(X_1)|$ and $\|h\|_{\infty}$ which we will need in the proofs to relate the variance of the empirical risk with the risk itself. Let
\begin{eqnarray*}
    \Phi := \{\phi:[0,\infty) \to [0,\infty)&|& \text{$\phi$ increasing, convex, differentiable,}\\
    && \text{$\phi(0) = 0$ and $\lim_{x\to\infty}\frac{\phi(x)}{x} = \infty$}\}.
\end{eqnarray*}
For $\phi \in \Phi$, let $\phi^{*}(y) := \sup_{y > 0}\{xy - \phi(x)\}$ be the convex dual function. Define the Orlicz norm associated to $\phi(x^2)$ via
\[
    \|h\|_{\phi,2} := \inf\{c > 0: \IE\phi\Big(\big(\frac{|h(X_1)|}{c}\big)^2\Big) \le 1\}.
\]

The following two results are from  \cite[Proposition 1 and Lemma 2]{rio1995invariance}.

\begin{lem}[Variance bounds and bound of $\|\cdot\|_{2,\beta}$-norm]\label{lemma_mixing_norm}
    For $k\in\IN$,
    \[
        \Var\Big(\sum_{i=1}^{k}h(X_i)\Big) \le 4k \|h\|_{2,\beta}^2.
    \]
    For $\phi \in \Phi$, assume that $\int_0^{1} \phi^{*}(\beta^{-1}(u)) < \infty$. Then,
    \[
        \|h\|_{2,\beta} \le C_{\beta}\cdot \|h\|_{\phi,2}, \quad\quad C_{\beta}:=\Big( 1 + \int_{0}^{1}\phi^{*}(\beta^{-1}(u)) du\Big)^{1/2}.
    \]
    If $\|h\|_{\infty} \le 1$, then
    %If $\phi^{-1}$ is submultiplicative in the sense that there exists some $K > 0$ such that for all $u,v > 0$, $\phi^{-1}(uv) \le K \phi^{-1}(u) \phi^{-1}(v)$, then
    %\[
    %    \|h\|_{\phi,2} \le  \varphi^{\infty}(\|h\|_{\infty})\cdot \varphi(\|h\|_1),
    %\]
    %$\varphi^{\infty}(x) = x \phi^{-1}(x)^{-1/2}$
    \begin{equation}
         \|h\|_{\phi,2} \le C_{\beta} \varphi(\|h\|_1)\label{lemma_mixing_norm_res1}
    \end{equation}
    where $\varphi(x) := \phi^{-1}(x^{-1})^{-1/2}$.
\end{lem}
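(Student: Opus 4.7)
The three statements are classical results about beta-mixing processes and the Orlicz norm $\|\cdot\|_{\phi,2}$. I would prove them separately, using Rio's covariance inequality for (1), Young's inequality for convex duality for (2), and scaling/convexity of $\phi$ for (3). For \textbf{Step 1}, the starting point is Rio's covariance inequality: for $U,V$ measurable with respect to $\sigma$-algebras $\sU,\sV$,
\[
    |\Cov(U,V)| \le 2 \int_0^{\beta(\sU,\sV)} Q_U(u) Q_V(u) \, du.
\]
Applied with $U = h(X_i), V = h(X_j)$ and using stationarity gives
\[
    \Var\Big(\sum_{i=1}^{k} h(X_i)\Big) \le 4k \sum_{j=0}^{k-1} \int_0^{\beta(j)} Q_h(u)^2\, du.
\]
I would then rewrite the outer sum as $\int_0^1 \beta^{-1}(u) Q_h(u)^2\, du$ via the layer-cake identity $\sum_{j \ge 0} \mathbf{1}\{\beta(j) \ge u\} = \beta^{-1}(u)$ (Fubini), yielding exactly $4k\|h\|_{2,\beta}^2$.

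For \textbf{Step 2}, set $c = \|h\|_{\phi,2}$. Young's inequality $ab \le \phi(a) + \phi^*(b)$ applied pointwise under the integral gives
\[
    \frac{\|h\|_{2,\beta}^2}{c^2} = \int_0^1 \beta^{-1}(u) \cdot \frac{Q_h(u)^2}{c^2}\, du \le \int_0^1 \phi^*(\beta^{-1}(u))\, du + \int_0^1 \phi\Big(\frac{Q_h(u)^2}{c^2}\Big) du.
\]
The second integral equals $\IE[\phi((h(X_1)/c)^2)] \le 1$ by the definition of $c$, while the first equals $C_\beta^2 - 1$, so the sum is at most $C_\beta^2$ and $\|h\|_{2,\beta} \le C_\beta \|h\|_{\phi,2}$ follows after taking square roots.

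For \textbf{Step 3}, assume $\|h\|_\infty \le 1$. Then $h(X_1)^2 \le |h(X_1)|$ pointwise and monotonicity of $\phi$ gives $\phi(h(X_1)^2/c^2) \le \phi(|h(X_1)|/c^2)$. Convexity of $\phi$ with $\phi(0)=0$ yields $\phi(tx) \le t\phi(x)$ for $t \in [0,1]$; taking $t = |h(X_1)|$ and $x = 1/c^2$ and integrating gives
\[
    \IE\phi\Big(\frac{|h(X_1)|}{c^2}\Big) \le \phi(1/c^2)\, \|h\|_1,
\]
so the defining constraint $\IE \phi(h(X_1)^2/c^2) \le 1$ holds as soon as $c \ge \phi^{-1}(\|h\|_1^{-1})^{-1/2} = \varphi(\|h\|_1)$, which bounds the infimum defining $\|h\|_{\phi,2}$.

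The main obstacle is Step 1: obtaining the sharp constant $4$ requires the exact form of Rio's covariance inequality and a careful justification of the Fubini exchange introducing $\beta^{-1}$ into the integrand; Steps 2 and 3 are essentially one-line applications of Young's inequality and convex scaling, respectively.
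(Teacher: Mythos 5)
Your proof is correct, but note that the paper itself only proves the last inequality \reff{lemma_mixing_norm_res1}: the variance bound and the bound $\|h\|_{2,\beta}\le C_{\beta}\|h\|_{\phi,2}$ are imported directly from the cited reference of Rio (Proposition 1 and Lemma 2 there). Your Steps 1 and 2 therefore reconstruct the arguments behind the citation rather than replace anything in the paper: the covariance inequality combined with the layer-cake identity $\sum_{j\ge 0}\Ii_{\{u<\beta(j)\}}=\beta^{-1}(u)$ is exactly how Rio's variance bound is obtained (strictly speaking the covariance inequality carries $2\alpha$ as the upper integration limit, so you should invoke $2\alpha\le\beta$ to get your version), and the Fenchel--Young split of $\beta^{-1}(u)Q_h(u)^2/c^2$ into $\phi^{*}(\beta^{-1}(u))+\phi(Q_h(u)^2/c^2)$, with the two integrals bounded by $C_{\beta}^2-1$ and $1$ respectively, is the standard proof of the second claim (modulo the routine monotone-convergence remark that the Orlicz constraint holds at $c=\|h\|_{\phi,2}$ itself). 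For the part the paper does prove, your route is a minor variant of theirs: the paper bounds $\IE\,\phi\big((|h(X_1)|/c)^2\big)\le \phi\big((\|h\|_{\infty}/c)^2\big)\|h\|_1/\|h\|_{\infty}$ using monotonicity of $x\mapsto\phi(x)/x$ and only uses $\|h\|_{\infty}\le 1$ at the very end, whereas you first replace $h^2$ by $|h|$ (valid since $\|h\|_{\infty}\le 1$) and then apply $\phi(tx)\le t\phi(x)$ for $t\in[0,1]$; both arguments rest on the same convexity-at-zero property of $\phi$, and both in fact deliver the slightly stronger conclusion $\|h\|_{\phi,2}\le\varphi(\|h\|_1)$ without the harmless factor $C_{\beta}\ge 1$ appearing in the statement.
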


Only the last statement needs to be proven and is postponed to the Appendix in the Supplementary Material. The main goal of this section is to prove Theorem \ref{theorem_oracle_inequality}. To do so, we use techniques and decomposition ideas from \cite{Dedeck02}, \cite{rio1995} and \cite{liebscher1996}. We begin by establishing maximal inequalities under mixing. The proofs can be found the Appendix of the Supplementary Material, as well.

\subsection{Maximal inequalities under mixing} 
Let $\sG \subset \{g:\IR^{dr} \to \IR \text{ measurable}\}$ be a finite class of functions and \[
    S_n(g) := \sum_{i=r+1}^{n}\{g(\IX_{i-1}) - \IE g(\IX_{i-1})\}.
\]
In the following, let $H = 1 \vee \log |\sG|$. Recall 
\[
    q^{*}(x) = q^{*,mix}(x) = \min\{q\in\IN: \beta^{mix}(q) \le qx\}.
\]

\begin{lem}[Maximal inequalities for mixing sequences]\label{bernstein_mixing}
    Suppose that $\sup_{g\in \sG}\|g\|_{\infty} \le 1$ and that there exists $\nu(g) > 0$ such that $\sup_{g\in \sG}\|\frac{g}{\nu(g)}\|_{2,\beta} \le 1$. Then there exists another process $S_n^{\circ}(g)$ and some universal constant $c > 0$ such that
    \begin{equation}
        \IE \sup_{g\in \sG}|S_n(g) - S_n^{\circ}(g)| \le cnr\cdot  q^{*}(\frac{H}{n})\frac{H}{n}\label{bernstein_mixing_res1}.
    \end{equation}
    Furthermore, with $N(g) := q^{*}(\frac{H}{n})\sqrt{\frac{H}{n}}\vee \nu(g)$,
    \begin{enumerate}
         \item \begin{equation}
         \IE \sup_{g\in \sG}|\frac{S_n^{\circ}(g)}{N(g)}| \le c \sqrt{nrH},\label{bernstein_mixing_res2}
    \end{equation}
        \item \begin{equation}
         \IE\big[\sup_{g\in \sG}|\frac{S_n^{\circ}(g)}{N(g)}|^2\big] \le c nr^2H.\label{bernstein_mixing_res3}
    \end{equation}
    \end{enumerate}
\end{lem}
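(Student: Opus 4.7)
The plan is to use the classical big-block/small-block decomposition of $S_n(g)$ together with Berbee's coupling lemma, followed by a Bernstein inequality for the coupled (independent) sum and a union bound over $\sG$.

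Setting $q := q^{*}(H/n)$, so that $\beta(q) \le qH/n$ by definition of $q^{*}$, I would first partition $\{r+1,\dots,n\}$ into alternating big blocks of length $q$ and small blocks of length $r$. The role of the small blocks is to create a mixing gap of width $r$ between consecutive big blocks: since $\IX_{i-1}$ only involves $X_{i-r},\dots,X_{i-1}$, such a gap ensures that the vectors $\IX_{i-1}$ arising in distinct big blocks are measurable with respect to $\sigma$-algebras separated by $r$ indices of the underlying process, so that Berbee's lemma allows replacement of each big-block sum by an independent copy with failure probability bounded by $\beta(r)\le \beta(q)$ (assuming $q\ge r$; the reverse case is trivial). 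Let $S_n^\circ(g)$ denote the resulting sum of the independent coupled big-block sums, with the small-block contributions discarded.

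For \reff{bernstein_mixing_res1}, I would collect two sources of error: (a) the dropped small blocks, of total length $\le rn/(q+r)$, with each term bounded by $2\|g\|_{\infty}\le 2$; and (b) the coupling error on the big blocks, where on the non-coupling event the discrepancy on one block is bounded by $2q$ (by $\|g\|_\infty\le1$). Summing over the $\le n/(q+r)$ blocks and plugging in $\beta(q)\le qH/n$ produces the claimed order $nr\cdot q^{*}(H/n)\cdot H/n$; the supremum over $g$ is absorbed because the coupling can be chosen simultaneously for all $g\in\sG$. For \reff{bernstein_mixing_res2} and \reff{bernstein_mixing_res3}, I would fix $g$ and apply Bernstein's inequality to the independent block sums $B_k^\circ(g)$: each satisfies $|B_k^\circ(g)|\le 2q$, and Lemma \ref{lemma_mixing_norm} bounds its variance by $4q\,\|g\|_{2,\beta}^{2}\le 4q\,\nu(g)^{2}$. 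This yields a Bernstein tail with a Gaussian regime of order $\sqrt{nH}\,\nu(g)$ and a sub-exponential regime of order $qH$. A union bound over $|\sG|\le e^{H}$ upgrades this to a bound on $\sup_{g}|S_n^{\circ}(g)|$; after dividing by $N(g)=q\sqrt{H/n}\vee \nu(g)$ both regimes reduce to order $\sqrt{nH}$, and a factor $\sqrt{r}$ is produced by the overlap in consecutive $\IX_{i-1}$ inside a block, which inflates the effective variance. Integrating the squared tail in the same manner gives \reff{bernstein_mixing_res3}, where the second power of $r$ comes out naturally.

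The main obstacle will be the bookkeeping of the factor $r$: it enters simultaneously through the size of the small-block gap, through the overlap between consecutive $\IX_{i-1}$ within a block, and through the effective variance of a single block. The whole purpose of the normalization $N(g)=q^{*}(H/n)\sqrt{H/n}\vee\nu(g)$ is to balance the two Bernstein regimes after division, and verifying that the exact powers of $r$ come out as $r$ in \reff{bernstein_mixing_res2} and $r^{2}$ in \reff{bernstein_mixing_res3} is the delicate part; everything else is a standard composition of Berbee's coupling, Lemma \ref{lemma_mixing_norm}, and Bernstein's inequality.
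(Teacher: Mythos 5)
Your overall architecture (coupling, blockwise independence, Bernstein's inequality, a union bound over $\sG$) matches the paper's, but the specific block scheme you propose breaks down at the coupling step, and the factor $r$ enters the true proof for a different reason than you assume. With big blocks of length $q$ separated by small blocks of length $r$, Berbee's lemma couples each big block at the price of the mixing coefficient evaluated at the \emph{gap width}, i.e.\ $\beta(r)$; since $\beta(\cdot)$ is decreasing, $\beta(r)\ge\beta(q)$ for $r\le q$, not "$\beta(r)\le\beta(q)$" as you wrote. Your coupling error is therefore of order $\frac{n}{q+r}\cdot 2q\cdot\beta(r)\asymp n\beta(r)$, a constant multiple of $n$, which is useless. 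Independently of this, discarding the small blocks costs $\asymp rn/(q+r)$ deterministically (this is exactly the bound you compute); with $q=q^{*}(\frac{H}{n})$ this is \emph{not} $\lsim nr\,q^{*}(\frac{H}{n})\frac{H}{n}$ in general --- under geometric mixing $q^{*}(\frac{H}{n})\asymp\log(n/H)$, so $rn/q^{*}$ vastly exceeds $r\,q^{*}H$ unless $H$ is nearly of order $n$.

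The paper's proof avoids both problems: it keeps \emph{all} observations, partitions them into contiguous blocks of length $q$ split into even- and odd-indexed families (so that same-parity blocks are separated by a full block of length $q$), and uses a coupling with $\IP(\IX_{i-1}\ne\IX_{i-1}^{\circ})\le\tilde\beta(q)$ where $\tilde\beta(k):=\beta((k-r)\vee 0)$, giving total replacement error $4n\tilde\beta(q)$. The factor $r$ in \reff{bernstein_mixing_res1} thus comes not from a small-block gap but from the $r$-fold overlap of $\IX_{i-1}=(X_{i-1},\dots,X_{i-r})$, which shifts the mixing coefficients of the blocked process and yields $\tilde q^{*}(x)\le r\,q^{*}(x)$; likewise the factors $r$ in \reff{bernstein_mixing_res2} and $r^{2}$ in \reff{bernstein_mixing_res3} come from the explicit inequality $\|g\|_{2,\tilde\beta}\le r\|g\|_{2,\beta}$ entering the per-block variance bound ($\le 4qr$ after normalization by $N(g)$), not from a heuristic variance-inflation argument. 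To repair your proof you would need to (a) make the separating blocks of length $q$ rather than $r$, (b) retain and couple them rather than discard them, and (c) track the $r$-shift of the mixing coefficients of $(\IX_{i-1})_i$ explicitly through both the coupling probability and the $\|\cdot\|_{2,\beta}$-norm.
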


Now, for $\sG \subset \{g:\IR^{dr} \to \IR^d \text{ measurable}\}$, define
\[
    M_n(g) := \sum_{i=1}^{n}\frac{1}{d}\langle \varepsilon_i,g(\IX_{i-1})\rangle.
\]

\begin{lem}[Maximal inequalities for mixing martingale sequences]\label{bernstein_mixing_martingale}
    Let Assumption \ref{ass_subgaussian} hold. Furthermore, assume that $X_i$ is $\beta$-mixing and $\beta^{mix}(\cdot)$ is submultiplicative in the sense of \reff{definition_beta_submultiplicative}. Suppose that $\sG$ is such that $\sup_{g\in \sG}\|g\|_{\infty} \le 1$ and that there exists $\nu(g) > 0$ such that $\sup_{g\in \sG}\|\frac{|g(\IX_{r})|_2}{\sqrt{d}\nu(g)}\|_2 \le 1$. 
    Then, there exists another process $M_n^{\circ}(g)$ and some universal constant $c > 0$ such that
    \begin{equation}
        \IE \sup_{g\in \sG}|M_n(g) - M_n^{\circ}(g)| \le cr C_{\varepsilon}C_{\beta,sub}n\cdot q^{*}(\frac{H}{n})\frac{H}{n}\label{bernstein_mixing_martingale_res1}.
    \end{equation}
    Furthermore, with $N(g) := q^{*}(\frac{H}{n})\sqrt{\frac{H}{n}} \vee \nu(g)$, 
    \begin{enumerate}
         \item \begin{equation}
         \IE \sup_{g\in \sG}|\frac{M_n^{\circ}(g)}{N(g)}| \le c C_{\varepsilon}\sqrt{nH},\label{bernstein_mixing_martingale_res2}
    \end{equation}
        \item \begin{equation}
         \IE\big[\sup_{g\in \sG}|\frac{M_n^{\circ}(g)}{N(g)}|^2\big] \le c C_{\varepsilon}^2 nH.\label{bernstein_mixing_martingale_res3}
    \end{equation}
    \end{enumerate}
\end{lem}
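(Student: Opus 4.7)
The argument will parallel the blocking-and-coupling strategy of Lemma~\ref{bernstein_mixing}, but it must be adapted to accommodate the multiplicative factor $\varepsilon_i$ (Subgaussian, not bounded) and to retain the martingale structure provided by the independence of $\varepsilon_i$ from $\IX_{i-1}$. I fix $q := q^{*}(H/n)$, so that $\beta^{mix}(q) \le q H/n$, and partition $\{r+1,\ldots,n\}$ into consecutive blocks of length $q$. Since $\IX_{i-1}$ involves $r$ lags of $X$, I split the sum into odd- and even-indexed blocks so that after removing the buffer of width $r$ from each side, non-adjacent blocks are separated by at least $q$ lags. Berbee's coupling, applied on an enlarged probability space, then produces a process $(\IX_{i-1}^{\circ})$ whose decoupled blocks are mutually independent and for which the per-block failure event has probability at most $C_{\beta,\mathrm{sub}}\beta^{mix}(q)$; the constant $C_{\beta,\mathrm{sub}}$ from \eqref{definition_beta_submultiplicative} is needed to convert $\beta^{mix}(q-r)$ to $\beta^{mix}(q)$ after the buffer removal.

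For \eqref{bernstein_mixing_martingale_res1}, I set $M_n^{\circ}(g) := \sum_{i=r+1}^n \tfrac{1}{d}\langle \varepsilon_i, g(\IX_{i-1}^{\circ})\rangle$, using the same $\varepsilon_i$ as in $M_n(g)$. Because $\|g\|_{\infty} \le 1$, coordinatewise bounding gives
\[
    \sup_{g\in\sG}|M_n(g)-M_n^{\circ}(g)| \le \frac{2}{d}\sum_{i=r+1}^n |\varepsilon_i|_1 \, \Ii\{\IX_{i-1}\neq \IX_{i-1}^{\circ}\}.
\]
Arranging the coupling so that the failure event is measurable with respect to the $X$-process and some auxiliary randomness independent of $\varepsilon_i$ lets me factor expectations, yielding $\IE|\varepsilon_i|_1 \lesssim d C_{\varepsilon}$ times the total failure-probability sum, which is of order $r C_{\beta,\mathrm{sub}} n \beta^{mix}(q) \lesssim r C_{\beta,\mathrm{sub}} n \cdot q \cdot \tfrac{H}{n}$, matching the required bound.

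For \eqref{bernstein_mixing_martingale_res2} and \eqref{bernstein_mixing_martingale_res3}, I exploit independence of the decoupled blocks: $M_n^{\circ}(g)$ decomposes into two sums of mutually independent block contributions. Conditioning on $(\IX_{i-1}^{\circ})$, each block contribution is a linear combination of Subgaussian $\varepsilon_i$ with conditional variance proxy $\lesssim C_{\varepsilon}^2/d^2 \cdot \sum_{i\in\mathrm{block}} |g(\IX_{i-1}^{\circ})|_2^2$, so by the hypothesis on $\nu(g)$ the full sum $M_n^{\circ}(g)$ is Subgaussian with parameter bounded by $c C_{\varepsilon}\sqrt{n}\,\nu(g)/\sqrt{d}$. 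The normalization $N(g)=q^{*}(H/n)\sqrt{H/n}\vee\nu(g)$ absorbs the case when $\nu(g)$ is too small for genuine concentration (the approximation error from Step 2 then dominates), exactly as in Lemma~\ref{bernstein_mixing}. A union-bound/maximal inequality for Subgaussian variables over $|\sG|\le e^H$ functions delivers $\IE\sup_{g\in\sG}|M_n^{\circ}(g)/N(g)| \lesssim C_{\varepsilon}\sqrt{nH}$, and applying the same argument at the level of squares (e.g.\ via the tail integral representation) yields $\IE \sup_{g\in\sG}|M_n^{\circ}(g)/N(g)|^2 \lesssim C_{\varepsilon}^2 n H$.

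The principal obstacle I anticipate is in Step 2: the coupling must be constructed so that (a) the same $\varepsilon_i$ appear on both sides, and (b) the failure event is sufficiently independent of $\varepsilon_i$ to justify the factorization $\IE[|\varepsilon_i|_1\,\Ii\{\mathrm{fail}\}] = \IE|\varepsilon_i|_1 \cdot \IP(\mathrm{fail})$. If one were forced to fall back on Cauchy--Schwarz, the failure probability would enter only through its square root, overshooting the target bound by a factor $\sqrt{n/(qH)}$. Managing the $r$-fold overlap in $\IX_{i-1}$ through the submultiplicativity constant $C_{\beta,\mathrm{sub}}$ is a secondary bookkeeping issue that mirrors the corresponding step in Lemma~\ref{bernstein_mixing}.
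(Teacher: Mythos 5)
There is a genuine gap, and it sits exactly where you anticipate it. Your bound for \reff{bernstein_mixing_martingale_res1} rests on the factorization $\IE[|\varepsilon_i|_1\,\Ii\{\IX_{i-1}\neq\IX_{i-1}^{\circ}\}]=\IE|\varepsilon_i|_1\cdot\IP(\IX_{i-1}\neq\IX_{i-1}^{\circ})$, which you justify only by asserting the coupling ``can be arranged'' so that the failure event is independent of $\varepsilon_i$. It cannot, at least not with Berbee's construction: $\IX_{i-1}^{\circ}$ is built as a function of the \emph{entire} block containing index $i-1$ (plus auxiliary randomness), and that block extends to observations $X_j$ with $j\ge i$, which depend on $\varepsilon_i$ through the recursion \reff{model_time_evolution}. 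So the failure event is not independent of $\varepsilon_i$, and without the factorization you are thrown back on Cauchy--Schwarz with the $\sqrt{n/(qH)}$ overshoot you yourself flag. The paper resolves this differently: it couples the \emph{pair} process $(\varepsilon_i,\IX_{i-1})$ (whose sigma-fields coincide with those of $X$, so it is $\beta$-mixing up to an $r$-shift), accepts the Cauchy--Schwarz square root $\tilde\beta(q)^{1/2}$, and compensates by taking the block length $q=q^{*}(\frac{H}{n})^2$ and invoking submultiplicativity, so that $\tilde\beta(q^{*2})^{1/2}\lsim C_{\beta,sub}\,\tilde\beta(q^{*})\le C_{\beta,sub}\,q^{*}\frac{H}{n}$. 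That is the actual role of the hypothesis \reff{definition_beta_submultiplicative} in this lemma; your reading of $C_{\beta,sub}$ as buffer-removal bookkeeping is not what happens (the $r$-shift is absorbed via $\tilde q^{*}\le r q^{*}$ without it).

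The same independence problem infects your argument for \reff{bernstein_mixing_martingale_res2} and \reff{bernstein_mixing_martingale_res3}: once you keep the original $\varepsilon_i$ but replace $\IX_{i-1}$ by $\IX_{i-1}^{\circ}$, you can no longer claim $\varepsilon_i$ is independent of $\IX_{i-1}^{\circ}$ (nor of the coupled covariates in other blocks), so neither the conditional variance computation nor the mutual independence of block contributions is available. The paper keeps the martingale structure by coupling the pairs jointly — each coupled block then carries the original joint law, so $(\langle\varepsilon_i^{\circ},g(\IX_{i-1}^{\circ})\rangle)_i$ is a martingale difference sequence within the block — controls within-block moments via Rio's Burkholder-type inequality combined with Assumption \ref{ass_subgaussian}, and only then applies Bernstein's inequality across the independent alternating blocks; the choice $q=q^{*2}$ makes the resulting scale parameter $q^{1/2}a^{-1}$ with $a=q^{*}\sqrt{H/n}$ come out to $\sqrt{n/H}$ as needed. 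To repair your write-up you would either have to adopt this pair-coupling route or supply a genuinely different coupling with the independence property you assume.
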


\subsection{Oracle inequalities under mixing}
Recall and define
\begin{eqnarray*}
    D(f) &=& \frac{1}{d}\IE[|f(\IX_r) - f_0(\IX_r)|_2^2 \IW(\IX_r)],\\
    \hat D_n(f) &:=& \frac{1}{n}\sum_{i=r+1}^{n}\frac{1}{d}|f(\IX_{i-1}) - f_0(\IX_{i-1})|_2^2 \IW(\IX_{i-1})
\end{eqnarray*}
where $\IW: \IR^{dr} \to [0,1]$ denotes an arbitrary weight function. In this section, we show an oracle-type inequality  for minimum empirical risk estimators
\[
    \hat f \in \argmin_{f\in \sF}\hat R_n(f), \quad\quad \hat R_n(f) = \frac{1}{n}\sum_{i=r+1}^{n}\frac{1}{d}|X_i - f(\IX_{i-1})|_2^2 \IW(\IX_{i-1})
\]
where $\IW:\IR^{dr} \to [0,1]$ is any weight function. The function classes considered are of the form
\[
    \sF \subset \{f = (f_j)_{j=1,...,d}:\IR^{dr} \to \IR^d \text{ measurable}\}
\]
and have to satisfy $\sup_{f\in\sF}\sup_{j\in \{1,...,d\}}\sup_{x\in \text{supp}(\IW)}|f_j(x)| \le F$. For the proof of the following theorem we require Lemma \ref{convergenceproof_part1} and Lemma \ref{convergenceproof_part2} which are shown below.

\begin{thm}\label{theorem_oracle_inequality}
    Let Assumptions \ref{ass_subgaussian} and  \ref{ass_compatibility} hold. Suppose that each $f = (f_j)_{j=1,...,d} \in \sF$ satisfies $\sup_{j=1,...d}\sup_{x\in \text{supp}(\IW)}|f_j(x)| \le F$. Let $\delta \in (0,1)$ and $H = \log N(\delta, \sF, \|\cdot\|_{\infty})$. Then, for any $\eta > 0$ there exists a constant $\IC = \IC(\eta,c_0,r,C_{\beta^{mix}},C_\eps,F)$ such that
    \[
        \IE D(\hat f) \le (1+\eta)^2 \inf_{f\in \sF}D(f) + \IC\cdot \big\{ \Lambda(\frac{H}{n}) + \delta\big\}.
    \]
\end{thm}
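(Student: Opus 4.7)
\textbf{Proof sketch for Theorem \ref{theorem_oracle_inequality}.}

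The plan is a standard comparison-plus-peeling argument, but carried out with the mixing maximal inequalities of Lemmas \ref{bernstein_mixing} and \ref{bernstein_mixing_martingale} in place of their independent-case analogues. The starting point is the identity
\[
    \hat R_n(f) - \hat R_n(f_0) = \hat D_n(f) - \frac{2}{nd}\sum_{i=r+1}^n \langle \varepsilon_i, (f - f_0)(\IX_{i-1})\rangle \IW(\IX_{i-1}),
\]
so that the defining inequality $\hat R_n(\hat f) \le \hat R_n(f^*)$ for any $f^* \in \sF$ yields
\[
    \hat D_n(\hat f) \le \hat D_n(f^*) + \frac{2}{nd}\sum_{i=r+1}^n \langle \varepsilon_i, (\hat f - f^*)(\IX_{i-1})\rangle \IW(\IX_{i-1}).
\]
Choosing $f^*$ to be a near-minimizer of $D(\cdot)$ over $\sF$, we reduce to controlling (a) the deterministic-to-empirical deviation $D(\hat f)-\hat D_n(\hat f)$, (b) the reverse deviation $\hat D_n(f^*)-D(f^*)$, and (c) the stochastic martingale term.

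First I would replace $\sF$ by a $\delta$-bracketing cover $\{[g^L_k,g^U_k]\}$ of cardinality $N(\delta)=e^H$, picking up an additive $\delta$ from the bracketing error (the factor $F$ enters here, since differences are uniformly bounded). For each cover element, apply Lemma \ref{bernstein_mixing} to the centred functions $h_f(x):=d^{-1}|f(x)-f_0(x)|_2^2\IW(x)$ and Lemma \ref{bernstein_mixing_martingale} to $g_f(x):=(f-f^*)(x)\IW(x)$. The crucial point is the choice of the variance proxy $\nu(\cdot)$: because $\|h_f\|_\infty\lsim F^2$ and $\|h_f\|_1=D(f)$, Lemma \ref{lemma_mixing_norm} (in particular \reff{lemma_mixing_norm_res1}) gives $\|h_f\|_{2,\beta}\lsim \varphi(D(f))$ with $\varphi(x)=\phi^{-1}(x^{-1})^{-1/2}$, and similarly $\||g_f(\IX_r)|_2^2/d\|_1\lsim D(f)+D(f^*)$ controls the martingale variance.

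Plugging these variance bounds into the weighted maximal inequalities (\ref{bernstein_mixing_res2}) and (\ref{bernstein_mixing_martingale_res2}) produces, up to the approximation term from the bracketing, an estimate of the shape
\[
    D(\hat f) \le D(f^*) + C\cdot\Big\{q^*\!\Big(\tfrac{H}{n}\Big)\tfrac{H}{n} + \sqrt{D(\hat f)+D(f^*)}\cdot\sqrt{q^*\!\Big(\tfrac{H}{n}\Big)\tfrac{H}{n}} + \delta\Big\},
\]
after identifying $q^*(H/n)\cdot H/n$ with $\Lambda^{mix}(H/n)=\lceil\psi^{-1}((H/n)^{-1})\rceil\cdot H/n$ via the equivalence between the quantity $q^*$ appearing in the maximal inequalities and $\psi^{-1}$, which follows from the definition of $\psi(x)=\phi^*(x)x$ together with the submultiplicativity \reff{definition_beta_submultiplicative}. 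A Young-type split $2ab \le \eta a^2 + \eta^{-1} b^2$ applied to the cross term, followed by absorbing $\eta D(\hat f)$ into the left-hand side, yields the advertised $(1+\eta)^2$ factor and the error term $\IC\{\Lambda^{mix}(H/n)+\delta\}$.

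The main obstacle, and the part that most needs care, is the self-bounding step: the variance proxy depends on $D(f)$ which is the quantity to be bounded, so a peeling over level sets $\{f\in\sF:2^{k-1}\delta\le D(f)\le 2^k\delta\}$ is required to make the argument uniform in $f$ rather than pointwise. A second delicate point is translating the $q^*(\cdot)$ factors produced by the blocking construction behind Lemma \ref{bernstein_mixing} into the cleaner $\Lambda^{mix}$ notation of \reff{definition_lambda_mixing}; here one uses Assumption \ref{ass_compatibility}(iii), the convexity condition (ii) on $y/\phi(y)$, and the submultiplicativity \reff{definition_beta_submultiplicative} to ensure that $q^*(x)\asymp \psi^{-1}(x^{-1})$ up to constants depending only on $c_0$ and $C_{\beta,\mathrm{sub}}$, so that the final error term can be consolidated into a single function $\Lambda^{mix}(H/n)$. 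All other contributions ($\|\varepsilon_1\|_{\psi_2}$, $F$, $r$) enter only as constants and can be absorbed into $\IC$.
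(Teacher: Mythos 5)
Your overall architecture matches the paper's: the basic inequality from $\hat R_n(\hat f)\le\hat R_n(f^*)$, a $\delta$-covering of $\sF$, Lemma \ref{bernstein_mixing} for the quadratic (empirical-process) term and Lemma \ref{bernstein_mixing_martingale} for the martingale term, variance proxies via Lemma \ref{lemma_mixing_norm}, and a final absorption of the $D(\hat f)$-dependent cross term. However, there is a genuine gap at the quantitative heart of the argument. You correctly note that $\|h_f\|_{2,\beta}\lsim C_\beta\,\varphi(\|h_f\|_1)=C_\beta\,\varphi(D(f))$ with $\varphi(x)=\phi^{-1}(x^{-1})^{-1/2}$, but your displayed intermediate inequality then replaces this by a $\sqrt{D(\hat f)+D(f^*)}$ cross term. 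Under mixing these are not the same: $\varphi(x)=\sqrt{x}$ only in the degenerate case $\phi(x)=x^2$, and for, say, polynomial mixing one has $\varphi(x)\asymp x^{(\alpha-1)/(2\alpha)}\gg\sqrt{x}$ for small $x$. The correct self-referencing inequality for the empirical-process part is of the form
\[
    \big|\IE D(\hat f)-\IE\hat D_n(\hat f)\big|\ \lsim\ q^{*}\Big(\tfrac{H}{n}\Big)\tfrac{H}{n}\;+\;\sqrt{\tfrac{H}{n}}\,\varphi\big(c\,\IE D(\hat f)\big)\;+\;\sqrt{\tfrac{H}{n}}\,\varphi(c\,\delta^2),
\]
and the elementary split $2ab\le\eta a^2+\eta^{-1}b^2$ does not close it. The paper resolves it with a generalized Young inequality for the convex conjugate of $\varphi^{-1}$ (Lemma \ref{lemma_recursive_risk_general}), which produces the additional quantity $(\varphi^{-1})^{*}(\mathrm{const}\cdot\sqrt{H/n})$; showing that this quantity is $\lsim\Lambda^{mix}(H/n)$ is precisely Lemma \ref{lemma_varphi}(iii) and is one of the two nontrivial unification steps (the other being $q^{*}(x)x\lsim\Lambda(x)$, Lemma \ref{lemma_varphi2}, which relies on Assumption \ref{ass_compatibility}(iii) rather than on submultiplicativity as you suggest). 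Your sketch omits $(\varphi^{-1})^{*}$ entirely, so as written the plan would only go through for the martingale term, where the variance proxy genuinely is $D(f)^{1/2}$ and the simple Young split suffices.

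Two smaller points. First, the peeling over level sets $\{2^{k-1}\delta\le D(f)\le 2^k\delta\}$ that you flag as the main obstacle is not needed: the paper handles the randomness of $\hat f$ by Cauchy--Schwarz against the net representative $g_{j^*}$, using the second-moment maximal inequalities \reff{bernstein_mixing_res3} and \reff{bernstein_mixing_martingale_res3} together with Jensen's inequality and the concavity of $\varphi$ to pull the expectation inside $\varphi$. Second, to even write $D(\hat f)-\hat D_n(\hat f)$ as an empirical process one needs the coupling with an independent copy $(X_i')_{i\in\IZ}$ of the series (so that $\IE D(\hat f)$ becomes an expectation over the primed variables with $\hat f$ held fixed); this device, and the attendant passage from $\beta$ to $\tilde\beta(q)=\beta(q-r)$, should appear explicitly.
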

\begin{proof}[Proof of Theorem \ref{theorem_oracle_inequality}]
It holds that $\IE \hat D_n(f) = D(f)$, and
\[
    \IE \hat D_n(\hat f) = \IE\Big[\frac{1}{n}\sum_{i=r+1}^{n}\frac{1}{d}|\hat f(\IX_{i-1}) - f_0(\IX_{i-1})|_2^2 \IW(\IX_{i-1})\Big].
\]

By the model equation \reff{model_time_evolution}, it holds for any $f\in \sF$ that
\begin{eqnarray*}
    \hat R_n(f) &:=&\frac{1}{n}\sum_{i=r+1}^{n}\frac{1}{d}|X_i - f(\IX_{i-1})|_2^2 \IW(X_{i-1})\\
    &=& \frac{1}{n}\sum_{i=r+1}^{n}\frac{1}{d}|\varepsilon_i + (f_0(\IX_{i-1}) - f(\IX_{i-1}))|_2^2\IW(\IX_{i-1})\\
    &=& \frac{1}{n}\sum_{i=r+1}^{n}\frac{1}{d}|\varepsilon_i|_2^2 \IW(\IX_{i-1}) + \frac{1}{n}\sum_{i=r+1}^{n}\frac{1}{d}|f_0(\IX_{i-1}) - f(\IX_{i-1})|_2^2 \IW(\IX_{i-1})\\
    &&\quad\quad + \frac{2}{n}\sum_{i=r+1}^{n}\frac{1}{d}\langle \varepsilon_i, f_0(\IX_{i-1}) - f(\IX_{i-1})\rangle \IW(\IX_{i-1})\\
    &=& \frac{1}{n}\sum_{i=r+1}^{n}\frac{1}{d}|\varepsilon_i|_2^2 \IW(\IX_{i-1})  + \frac{2}{n}\sum_{i=r+1}^{n}\frac{1}{d}\langle \varepsilon_i, f_0(\IX_{i-1}) - f(\IX_{i-1})\rangle \IW(\IX_{i-1}) + \hat D_n(f).
\end{eqnarray*}
Since $\hat f = \arg\min_{f\in \sF}\hat R_n(f)$, we have for all $f\in \sF$, 
\begin{eqnarray*}
    \hat D_n(\hat f) &=& \underbrace{\hat R_n(\hat f)}_{\le \hat R_n(f)} - \frac{1}{n}\sum_{i=r+1}^{n}\frac{1}{d}|\varepsilon_i|_2^2 \IW(\IX_{i-1}) - \frac{2}{n}\sum_{i=r+1}^{n}\frac{1}{d}\langle \varepsilon_i, f_0(\IX_{i-1}) - \hat f(\IX_{i-1})\rangle \IW(\IX_{i-1})\\
    &\le& \hat D_n(f) + \frac{2}{n}\sum_{i=r+1}^{n}\frac{1}{d}\langle \varepsilon_i, f_0(\IX_{i-1}) - f(\IX_{i-1})\rangle \IW(\IX_{i-1})\\
    &&\quad\quad - \frac{2}{n}\sum_{i=r+1}^{n}\frac{1}{d}\langle \varepsilon_i, f_0(\IX_{i-1}) - \hat f(\IX_{i-1})\rangle \IW(\IX_{i-1}).
\end{eqnarray*}
Since $\IE[\varepsilon_i|\sA_{i-1}] = \IE \varepsilon_i = 0$ for $\sA_{i-1} = \sigma(\eps_{i-1}, \eps_{i-2},...)$,
\[
    \IE \hat D_n(\hat f) \le \underbrace{\IE \hat D_n(f)}_{=D(f)} +  \IE\Big[\frac{2}{n}\sum_{i=r+1}^{n}\langle \varepsilon_i, \hat f(\IX_{i-1})\rangle \IW(\IX_{i-1})\Big],
\]
that is,
\begin{equation}
    \IE \hat D_n(\hat f) \le \inf_{f \in \sF}D(f) + 2\IE\Big[\frac{1}{n}\sum_{i=r+1}^{n}\langle \varepsilon_i, \hat f(\IX_{i-1})\rangle \IW(\IX_{i-1})\Big].\label{riskproof_eq1}
\end{equation}
Let $\eta > 0$. Define
\begin{eqnarray*}
    R_{1,n} &:=& (1+\eta)crF^2 q^{*}(\frac{H}{n})\frac{H}{n} + \frac{\eta F^2}{2} (\varphi^{-1})^{*}\Big(2\frac{1+\eta}{\eta F^2}rC_{\beta}\sqrt{\frac{H}{n}}\Big),\\
    R_{1,\delta} &:=& crF^2C_{\beta}\sqrt{\frac{H}{n}}\varphi(2F^{-2}\delta^2),\\
    R_{2,n} &:=& cC_\eps C_{\beta,sub}rFq^{*}(\frac{H}{n})\frac{H}{n},\\
    R_{2,\delta} &:=& C_{\varepsilon}\delta + cC_\eps C_{\beta,sub}rF \sqrt{\frac{H}{n}}\delta.
\end{eqnarray*}
By Lemma \ref{convergenceproof_part1}, \reff{riskproof_eq1} and Lemma \ref{convergenceproof_part2},  
\begin{eqnarray*}
    \IE D(\hat f) &\le& (1+\eta)\IE \hat D_n(\hat f) + R_{1,n} + (1+\eta)R_{1,\delta}\\
    &\le& (1+\eta)\Big\{\inf_{f \in \sF}D(f) + 2\IE\Big[\frac{1}{n}\sum_{i=r+1}^{n}\langle \varepsilon_i, \hat f(\IX_{i-1})\rangle \IW(\IX_{i-1})\Big] + R_{1,\delta}\Big\} + R_{1,n} \\
    &\le& (1+\eta)\Big\{\inf_{f \in \sF}D(f) + 2cC_\eps C_{\beta,sub}rF \sqrt{\frac{H}{n}}\IE[D(\hat f)]^{1/2} + R_{2,n} + R_{2,\delta} + R_{1,\delta} \Big\} + R_{1,n} .
\end{eqnarray*}
Due to $2ab \le a^2 + b^2$ with $a := (1+\eta)cC_\eps C_{\beta,sub}rF\sqrt{\frac{H}{n}}(\frac{1+\eta}{\eta})^{1/2}$, $b:= (\frac{\eta}{1+\eta})^{1/2}\IE[D(\hat f)]^{1/2}$, we obtain
\begin{eqnarray*}
    \IE D(\hat f) &\le& (1+\eta)\inf_{f \in \sF}D(f) + \frac{(1+\eta)^3}{\eta}(cC_\eps C_{\beta,sub}rF)^2\frac{H}{n} + \frac{\eta}{1+\eta}\IE[D(\hat f)]\\
    &&\quad\quad\quad\quad + (1+\eta)(R_{2,n} + R_{2,\delta} +  R_{1,\delta}) + R_{1,n}.
\end{eqnarray*}
This implies
\begin{eqnarray}
    \IE D(\hat f) &\le& (1+\eta)^2 \inf_{f \in \sF}D(f) + (1+\eta)R_{1,n} \nonumber\\
    &&\quad\quad\quad\quad + (1+\eta)^2 (R_{2,n} + R_{2,\delta} + R_{1,\delta}) + \frac{(1+\eta)^4}{\eta}(cC_\eps C_{\beta,sub}rF)^2\frac{H}{n}.\label{riskproof_eq2}
\end{eqnarray}

Using Young's inequality applied to $\varphi^{-1}$ ($\varphi^{-1}$ is convex) and Lemma \ref{lemma_varphi}, we obtain
\[
    R_{1,\delta} \le crF^2 C_{\beta}(\varphi^{-1})^{*}\big( \sqrt{\frac{H}{n}}\big) + 2crC_{\beta} \delta^2 \le crF^2 C_{\beta}(4c_0)^2\Lambda(\frac{H}{n}) + 2crC_{\beta} \delta^2.
\]
By Lemma \ref{lemma_varphi2}, $R_{2,n} \le 2cC_\eps C_{\beta,sub} F \Lambda(\frac{H}{n})$, and
\[
    R_{1,n} \le (1+\eta)crF^2 \Lambda(\frac{H}{n}) + \frac{\eta F^2}{2} \Big(2\frac{1+\eta}{\eta F^2}rC_{\beta}\Big)^2 (4c_0)^2\Lambda(\frac{H}{n}).
\]
Furthermore,
\[
    R_{2,\delta} \le C_{\varepsilon}\delta + cC_\eps C_{\beta,sub}rF\delta^2 + cC_\eps C_{\beta,sub}rF\frac{H}{n}.
\]
Insertion of these results into \reff{riskproof_eq2} yields
\begin{eqnarray*}
    \IE D(\hat f) &\le& (1+\eta)^2 \inf_{f\in \sF}D(f)\\
    &&\quad\quad + \Lambda(\frac{H}{n})\cdot \Big\{(1+\eta)^2 cF^2 + 32(\frac{(1+\eta)^3}{\eta F^2})C_{\beta}^2r^2 c_0^2\\
    &&\quad\quad\quad\quad\quad\quad\quad\quad+ 2(1+\eta)^2 crC_\eps C_{\beta,sub}F + 16(1+\eta)^2c_0^2crC_\eps C_{\beta,sub} F^2 \Big\}\\
    &&\quad\quad + \delta^2 \cdot (1+\eta)^2 cC_\eps C_{\beta,sub}rF + C_{\varepsilon}\delta \cdot (1+\eta)^2 \\
    &&\qquad + \frac{(1+\eta)^4}{\eta}(cC_\eps C_{\beta,sub}rF)^2\frac{H}{n},
\end{eqnarray*}
which shows the assertion.
\end{proof}

%We now prove an analogue of (I) in \cite{schmidthieber2017}. 

\begin{lem}\label{convergenceproof_part1}
Suppose that Assumption \ref{ass_subgaussian}, \ref{ass_compatibility} hold. Assume that each $f\in \sF$ satisfies $\sup_{x\in \text{supp}(\IW)}|f(x)|_{\infty} \le F$. Let $H = \log N(\delta, \sF, \|\cdot\|_{\infty})$. Then there exists an universal constant $c > 0$ such that for any $\delta \in (0,1), \eta > 0$, 
\begin{eqnarray*}
    \IE D(\hat f) &\le& (1+\eta)\IE \hat D_n(\hat f) + \Big\{(1+\eta)crF^2 q^{*}(\frac{H}{n})\frac{H}{n} + \frac{\eta F^2}{2} (\varphi^{-1})^{*}\Big(2\frac{1+\eta}{\eta F^2}rC_{\beta}\sqrt{\frac{H}{n}}\Big)\Big\}\\
    &&\quad\quad + (1+\eta)crF^2C_{\beta}\sqrt{\frac{H}{n}}\varphi(2F^{-2}\delta^2).
\end{eqnarray*}
\end{lem}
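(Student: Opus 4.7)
The plan is to establish a uniform bound of the form $D(f) \le (1+\eta)\hat D_n(f) + R$ for every $f \in \sF$ and then specialize to $f = \hat f$. Setting $Z_n(f) := \hat D_n(f) - D(f) = \frac{1}{n}S_n(g_f)$ with $g_f(x) := \frac{1}{d}|f(x) - f_0(x)|_2^2 \IW(x)$, the algebraic identity
\[
D(f) - (1+\eta)\hat D_n(f) = -(1+\eta)Z_n(f) - \eta D(f) \le (1+\eta)|Z_n(f)| - \eta D(f)
\]
reduces the task to bounding $\IE \sup_{f \in \sF}\{(1+\eta)|Z_n(f)| - \eta D(f)\}$. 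Using that $|f|_\infty, |f_0|_\infty \le F$ on $\text{supp}(\IW)$, the normalization $\tilde g_f := g_f/(4F^2)$ satisfies $\|\tilde g_f\|_\infty \le 1$ and $\|\tilde g_f\|_1 = D(f)/(4F^2)$, so Lemma~\ref{bernstein_mixing} and inequality~\eqref{lemma_mixing_norm_res1} of Lemma~\ref{lemma_mixing_norm} apply directly.

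Next I would introduce bracketing: let $\sF_\delta$ be a minimal $\delta$-bracket cover of $\sF$ in $\|\cdot\|_\infty$ of cardinality at most $e^H$, and for each $f \in \sF$ pick $f^* \in \sF_\delta$ with $\|f-f^*\|_\infty \le \delta$. Decompose $|S_n(\tilde g_f)| \le |S_n(\tilde g_{f^*})| + |S_n(\tilde g_f - \tilde g_{f^*})|$. For the first piece I apply Lemma~\ref{bernstein_mixing} to $\{\tilde g_{f^*}: f^* \in \sF_\delta\}$ with weights $\nu(g) = C_\beta\varphi(\|g\|_1)$ supplied by \eqref{lemma_mixing_norm_res1}; the coupling error \eqref{bernstein_mixing_res1} produces the deterministic term $crF^2 q^{*}(H/n)(H/n)$. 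For the bracketing residual I use the polarization identity
\[
|f - f_0|_2^2 - |f^* - f_0|_2^2 = 2\langle f^* - f_0, f - f^*\rangle + |f - f^*|_2^2
\]
together with $\|f - f^*\|_\infty \le \delta$ and Cauchy--Schwarz to obtain $\|\tilde g_f - \tilde g_{f^*}\|_1 \lsim \delta^2/F^2 + \delta\sqrt{D(f^*)}/F^2$ alongside $\|\tilde g_f - \tilde g_{f^*}\|_\infty \lsim \delta/F$; a second application of \eqref{lemma_mixing_norm_res1} combined with the maximal inequality yields the bracketing term $(1+\eta)crF^2 C_\beta \sqrt{H/n}\,\varphi(2F^{-2}\delta^2)$.

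The crucial balancing step concerns the remainder $(1+\eta)(4F^2/n)\bar N(f^*)M - \eta D(f^*)$, where $M := \sup_{f^* \in \sF_\delta}|S_n^\circ(\tilde g_{f^*})/N(f^*)|$ satisfies $\IE M \lsim \sqrt{nrH}$ by \eqref{bernstein_mixing_res2} and $\bar N(f^*) = q^{*}(H/n)\sqrt{H/n} \vee C_\beta\varphi(\|\tilde g_{f^*}\|_1)$. I would split by regime: when $C_\beta\varphi(\|\tilde g_{f^*}\|_1) \le q^{*}(H/n)\sqrt{H/n}$, the deterministic weight combines with $\IE M$ to produce a contribution absorbed by the first term of the bound. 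Otherwise $\bar N(f^*) = C_\beta\varphi(D(f^*)/(4F^2))$, and since $\varphi$ is concave the inverse $\varphi^{-1}$ is convex with Legendre conjugate $(\varphi^{-1})^*$, so Young's inequality gives $b\varphi(t) - t \le (\varphi^{-1})^*(b)$; applying this with $t = D(f^*)/(4F^2)$ and $b = 2(1+\eta)rC_\beta\sqrt{H/n}/(\eta F^2)$ precisely extracts $-\eta D(f^*)$ and leaves the $(\eta F^2/2)(\varphi^{-1})^*(\cdot)$ term.

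The main obstacle will be that Jensen's inequality pushes the wrong way inside the convex $(\varphi^{-1})^*$: we know $\IE M \lsim \sqrt{nrH}$ but need a bound of the form $\IE(\varphi^{-1})^*(cM) \lsim (\varphi^{-1})^*(c\sqrt{nrH})$. I expect to resolve this by invoking Young's inequality deterministically on each realization of $M$, then truncating $M$ at the scale of its expectation and controlling the tail via the second-moment bound $\IE M^2 \lsim nr^2 H$ from \eqref{bernstein_mixing_res3}; the factor $2$ inside $(\varphi^{-1})^*$ in the statement precisely accommodates such a truncation constant. An alternative is to exploit the weighted form $|S_n^\circ(g)/N(g)|$ of the maximal inequality pointwise in $f^*$, which absorbs the $\bar N(f^*)$-dependence before the expectation is taken and sidesteps the Jensen direction entirely.
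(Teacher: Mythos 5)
Your architecture is sound and you have located all the ingredients the paper uses (bracketing, Lemma \ref{bernstein_mixing}, the Orlicz bound \reff{lemma_mixing_norm_res1}, conjugation of $\varphi^{-1}$), but you organize the balancing step differently, and that difference is exactly what creates the obstacle you flag at the end. The paper never forms the uniform quantity $\IE\sup_{f\in\sF}\{(1+\eta)|Z_n(f)|-\eta D(f)\}$. It bounds $|\IE D(\hat f)-\IE\hat D_n(\hat f)|$ directly: after replacing $\hat f$ by the bracket $f_{j^{*}}$ (paying $O(\delta F)$) and coupling, it writes $\IE|S_n^{\circ}(g_{j^{*}})|\le \|S_n^{\circ}(g_{j^{*}})/N(g_{j^{*}})\|_2\,\IE[\|g\|_{2,\beta}^2|_{g=g_{j^{*}}}]^{1/2}+\cdots$ by Cauchy--Schwarz (this is where the second-moment bound \reff{bernstein_mixing_res3} enters), then uses concavity of $\varphi^2$ (Lemma \ref{lemma_varphi}(ii)) and Jensen to pull the expectation \emph{inside} $\varphi$, arriving at a bound of the form $crF^2C_{\beta}\sqrt{H/n}\,[\varphi(2F^{-2}\delta^2)+\varphi(2F^{-2}\IE D(\hat f))]$ whose argument is a deterministic scalar. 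Only then is Young's inequality applied, via the purely deterministic Lemma \ref{lemma_recursive_risk_general}, to solve the implicit inequality $|a-b|\le r_1\varphi(r_2a)+P$ for $a=\IE D(\hat f)$. Because the conjugation happens at the level of numbers rather than pathwise, the Jensen direction never arises. The ``alternative'' you sketch in your final sentence --- absorbing the $N(f^{*})$-dependence before taking expectations --- is in fact the paper's actual proof, not a fallback.

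Your primary route (pathwise Young's inequality followed by a bound on $\IE(\varphi^{-1})^{*}(cM)$) can be made to work, but not quite as written. Truncating $M$ ``at the scale of its expectation'' fails, because $\IE M$ admits only an upper bound and the ratio $\IE M^2/(\IE M)^2$ is therefore uncontrolled. You must instead truncate at the deterministic level $a\asymp r\sqrt{nH}$ supplied by \reff{bernstein_mixing_res3}: then $(\varphi^{-1})^{*}(cM)\le(\tfrac{M}{a}\vee 1)^2(\varphi^{-1})^{*}(ca)$ by Lemma \ref{lemma_varphi}(i), $\IE[(\tfrac{M}{a}\vee 1)^2]\lsim 1$, and the level $a$ is what places the factor $r$ inside the conjugate in the statement of the lemma. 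With that correction your argument closes; it is genuinely more delicate than the paper's, whose ordering (Cauchy--Schwarz, then Jensen for the concave $\varphi^2$, then deterministic conjugation) avoids the issue entirely.
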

\begin{proof}[Proof of Lemma \ref{convergenceproof_part1}]
    Let $(f_j)_{j = 1,...,\sN_n}$ be a $\delta$-covering of $\sF$, where $\sN_n := N(\delta, \sF, \|\cdot\|_{\infty})$. Let $j^{*} \in \{1,...,\sN_n\}$ be such that $\|\hat f - f_{j^{*}}\|_{\infty} \le \delta$ for all $k = 1,...,d$. Without loss of generality, assume that  $\delta \le F$.
    
    Let $(X_i')_{i\in\IZ}$ be an independent copy of the original time series $(X_i)_{i\in\IZ}$. Then $(\IX_{i-1},\IX_{i-1}')$ is still $\beta$-mixing with the coefficients $2\tilde \beta(q) = 2 \beta(q-r)$. Then,
    \begin{eqnarray}
        &&\big|\IE D(\hat f) - \IE \hat D_n(\hat f)|\nonumber\\
        &=& \Big| \IE \Big[\frac{1}{nd}\sum_{i=r+1}^{n}|\hat f(\IX_{i-1}') - f_0(\IX_{i-1}')|_2^2\IW(\IX_{i-1}')\nonumber\\
        &&\quad\quad\quad\quad\quad - \frac{1}{nd}\sum_{i=r+1}^{n}|\hat f(\IX_{i-1}) - f_0(\IX_{i-1})|_2^2\IW(\IX_{i-1})\Big]\Big|\nonumber\\
        &\le& \Big| \IE \Big[\frac{1}{nd}\sum_{i=r+1}^{n}|f_{j^{*}}(\IX_{i-1}') - f_0(\IX_{i-1}')|_2^2\IW(\IX_{i-1}')\nonumber\\
        &&\quad\quad\quad\quad\quad- \frac{1}{nd}\sum_{i=r+1}^{n}|f_{j^{*}}(\IX_{i-1}) - f_0(\IX_{i-1})|_2^2\IW(\IX_{i-1})\Big]\Big| + 10\delta F\nonumber\\
        &\le& \IE \Big|\frac{1}{n}\sum_{i=r+1}^{n}g_{j^{*}}(\IX_{i-1},\IX_{i-1}')\Big| + 10\delta F\nonumber\\
        &=& \frac{F^2}{n}\IE|S_n(g_{j^{*}})| + 10\delta F,\label{convergenceproof_part1_eq1}
    \end{eqnarray}
    where we have used that for $a = \hat f(\IX_{i-1}) - f_{j^{*}}(\IX_{i-1}),b = f_{j^{*}}(\IX_{i-1}) - f_0(\IX_{i-1}) \in \IR^d$, 
    \[
        \big| |a+b|_2^2 - |b|_2^2\big| = |a|_2^2 + 2|\langle a,b\rangle| \le |a|_2^2 + 2|a|_2|b|_2 \le d\delta^2 + 4d \delta F \le 5d \delta F,
    \]
    defined for $x,x' \in \IR^{d}$,
    \[
        g_j(x,x') := \frac{1}{dF^2}|f_{j}(x') - f_0(x')|_2^2\IW(x') - \frac{1}{dF^2}|f_{j}(x) - f_0(x)|_2^2\IW(x),
    \]
    and $S_n(\cdot)$ is from Lemma \ref{bernstein_mixing}. By Lemma \ref{bernstein_mixing}, there exists another process $S_n^{\circ}(\cdot)$ and some universal constant $c > 0$ such that
    \begin{equation}
        \IE|S_n(g_{j^{*}}) - S_n^{\circ}(g_{j^{*}})| \le \IE \sup_{g\in \sG}|S_n(g) - S_n^{\circ}(g)| \le cr q^{*}(\frac{H}{n})\frac{H}{n}.\label{convergenceproof_part1_eq2}
    \end{equation}
    Note that $\norm{g}_{2,\tilde\beta} \le r\norm{g}_{2,\beta}$.
    Put
    \[
        N(g) := q^{*}(\frac{H}{n})\sqrt{\frac{H}{n}} \vee 2\|g\|_{2,\beta}.
    \]
    We use the Cauchy Schwarz inequality and Lemma \ref{bernstein_mixing}, which yields a universal constant $c > 0$ such that
    \begin{eqnarray}
        \IE|S_n^{\circ}(g_{j^{*}})| &=& \IE\Big|\frac{S_n^{\circ}(g_{j^{*}})}{N(g_{j^{*}})}\cdot N(g_{j^{*}})\Big| \le \Big\|\frac{S_n^{\circ}(g_{j^{*}})}{N(g_{j^{*}})}\Big\|_2 \IE[ \|g\|_{2,\beta}^2\big|_{g=g_{j^{*}}}]^{1/2} + \IE\Big|\frac{S_n^{\circ}(g_{j^{*}})}{N(g_{j^{*}})}\Big|\cdot q^{*}(\frac{H}{n})\sqrt{\frac{H}{n}}\nonumber\\
        &\le& c\Big[r\sqrt{nH} \IE[ \|g\|_{2,\beta}^2\big|_{g=g_{j^{*}}}]^{1/2} + \sqrt{r}q^{*}(\frac{H}{n})H\Big].\label{convergenceproof_part1_eq3}
    \end{eqnarray}
    Insertion of \reff{convergenceproof_part1_eq2} and \reff{convergenceproof_part1_eq3} into \reff{convergenceproof_part1_eq1} yields
    \begin{equation}
        |\IE D(\hat f) - \IE \hat D_n(\hat f)| \le crF^2\Big[q^{*}(\frac{H}{n})\frac{H}{n} + \sqrt{\frac{H}{n}}\IE[ \|g\|_{2,\beta}^2\big|_{g=g_{j^{*}}}]^{1/2}\Big].\label{convergenceproof_part1_eq4}
    \end{equation}
    By Lemma \ref{lemma_mixing_norm},
    \[
        \|g\|_{2,\beta} \le C_{\beta} \|g\|_{\phi,2} \le C_{\beta}\varphi( \|g\|_1),
    \]
    where $\varphi(x)^2 = \phi^{-1}(x^{-1})^{-1}$ is concave. Thus by Jensen's inequality and due to the fact that $\varphi$ is concave (therefore  subadditive),
    \begin{eqnarray*}
        \IE[ \|g\|_{2,\beta}^2\big|_{g=g_{j^{*}}}]^{1/2} &\le& C_{\beta}\IE[\varphi(\|g\|_1\big|_{g= g_{j^{*}}})^2]^{1/2} \le C_{\beta}\varphi(\IE[ \|g\|_1 \big|_{g = g_{j^{*}}}]) \le C_{\beta} \varphi(F^{-2}\IE D(f_{j^{*}}))\\
        &\le& C_{\beta}[\varphi(2F^{-2}\delta^2) + \varphi(2F^{-2}\IE D(\hat f))].
    \end{eqnarray*}
    Insertion into \reff{convergenceproof_part1_eq4} yields \[
        \big|\IE D(\hat f) - \IE \hat D_n(\hat f)| \le cF^2r\Big[q^{*}(\frac{H}{n})\frac{H}{n} + C_{\beta}\sqrt{\frac{H}{n}}[\varphi(2F^{-2}\delta^2) + \varphi(2F^{-2}\IE D(\hat f))]\Big].
    \]
    By Lemma \ref{lemma_recursive_risk_general},
    \begin{eqnarray*}
        \IE D(\hat f) &\le& (1+\eta) \Big[\IE \hat D_n(\hat f) + crF^2 q^{*}(\frac{H}{n})\frac{H}{n} + crF^2C_{\beta}\sqrt{\frac{H}{n}}\varphi(2F^{-2}\delta^2)\Big]\\
        &&\quad\quad + \frac{\eta F^2}{2} (\varphi^{-1})^{*}\Big(2\frac{1+\eta}{\eta F^2}rC_{\beta}\sqrt{\frac{H}{n}}\Big),
    \end{eqnarray*}
    which yields the assertion.
\end{proof}

%We now prove an analogue of (II) in  \cite{schmidthieber2017}.

\begin{lem}\label{convergenceproof_part2}
    Suppose that Assumption \ref{ass_subgaussian}, \ref{ass_compatibility} hold. Let each $f \in \sF$ satisfy $\sup_{x\in \text{supp}(\IW)}|f(x)|_{\infty} \le F$. Then there exists an universal constant $c > 0$ such that for any $\delta \in (0,1)$,
    \begin{eqnarray*}
        &&\Big|\IE\Big[\frac{1}{nd}\sum_{i=r+1}^{n}\langle \varepsilon_i, \hat f(\IX_{i-1})\rangle \IW(\IX_{i-1})\Big]\Big|\\
        &\le& C_{\varepsilon}\delta +  c C_{\varepsilon}C_{\beta,sub}rF\Big[q^{*}(\frac{H}{n})\frac{H}{n} + \sqrt{\frac{H}{n}}(\IE[D(\hat f)]^{1/2} + \delta)\Big].
    \end{eqnarray*}
\end{lem}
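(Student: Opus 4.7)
The plan is to discretize $\sF$ by a $\delta$-covering, treat the approximation error using the Subgaussian moment bound on $\varepsilon_i$, and apply the martingale-type maximal inequality (Lemma \ref{bernstein_mixing_martingale}) on the finite cover. Let $(f_j)_{j=1,\dots,\sN_n}$ be a $\delta$-covering of $\sF$ in $\|\cdot\|_\infty$ with $\sN_n = N(\delta,\sF,\|\cdot\|_\infty)$ and let $j^*$ be a (measurably chosen) index with $\|\hat f - f_{j^*}\|_\infty \le \delta$. Write
\[
\frac{1}{nd}\sum_{i=r+1}^n\langle \varepsilon_i,\hat f(\IX_{i-1})\rangle\IW(\IX_{i-1}) = T_1 + T_2,
\]
where $T_1$ uses $\hat f - f_{j^*}$ and $T_2$ uses $f_{j^*}$. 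Since $\IE[\varepsilon_i\mid\sA_{i-1}]=0$, I may freely subtract $f_0$ inside $T_2$ and so, with $g_j(x):=(f_j(x)-f_0(x))\IW(x)/(2F)$, write $T_2 = (2F/n)M_n(g_{j^*})$, where $M_n$ is the process of Lemma \ref{bernstein_mixing_martingale}.

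For $T_1$, the inequality $|\langle \varepsilon_i,v\rangle|\le|\varepsilon_i|_1|v|_\infty$ together with Assumption \ref{ass_subgaussian} (first moment of each coordinate) yields $\IE|T_1|\le C_\varepsilon\delta$. For $T_2$, note $\|g_j\|_\infty\le 1$ on $\text{supp}(\IW)$, and
\[
\IE\big[|g_j(\IX_r)|_2^2\big]/d\le \IE\big[|f_j-f_0|_2^2\,\IW\big]/(4F^2 d) = D(f_j)/(4F^2),
\]
so $\nu(g_j):=D(f_j)^{1/2}/(2F)$ is admissible in Lemma \ref{bernstein_mixing_martingale}. Decompose $M_n(g_{j^*})=(M_n-M_n^\circ)(g_{j^*})+M_n^\circ(g_{j^*})$. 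The first piece is handled by \reff{bernstein_mixing_martingale_res1}, which after division by $n$ gives a bound of order $rC_\varepsilon C_{\beta,sub}\,q^*(H/n)H/n$. For the second piece, Cauchy–Schwarz together with \reff{bernstein_mixing_martingale_res3} gives
\[
\frac{1}{n}\IE|M_n^\circ(g_{j^*})| \;\le\; \IE\Big[\sup_j\Big|\tfrac{M_n^\circ(g_j)}{N(g_j)}\Big|^2\Big]^{1/2}\frac{1}{n}\,\IE[N(g_{j^*})^2]^{1/2} \;\lsim\; C_\varepsilon\sqrt{\tfrac{H}{n}}\Big(q^*(\tfrac{H}{n})\sqrt{\tfrac{H}{n}}+\IE[\nu(g_{j^*})^2]^{1/2}\Big),
\]
where $N(g):=q^*(H/n)\sqrt{H/n}\vee\nu(g)$ and I used $(a\vee b)^2\le a^2+b^2$.

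The crucial step is bounding $\IE[D(f_{j^*})]^{1/2}\lsim \IE[D(\hat f)]^{1/2}+\delta$. This follows from the triangle inequality for the $L^2(\IP)$-seminorm $f\mapsto d^{-1/2}\|\IW^{1/2}(f-f_0)\|_{L^2}=D(f)^{1/2}$:
\[
D(f_{j^*})^{1/2}\le D(\hat f)^{1/2} + d^{-1/2}\|\IW^{1/2}(\hat f - f_{j^*})\|_{L^2}\le D(\hat f)^{1/2}+\delta,
\]
so squaring, taking expectations and applying $\sqrt{a+b}\le\sqrt a+\sqrt b$ yields $\IE[D(f_{j^*})]^{1/2}\le\sqrt 2\,(\IE[D(\hat f)]^{1/2}+\delta)$. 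Plugging this into the bound on $\IE|M_n^\circ(g_{j^*})|/n$, multiplying by $2F$ to undo the $g_j$-normalization, adding the $T_1$-bound and the $(M_n-M_n^\circ)$-contribution, and absorbing absolute constants gives the stated inequality.

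The main obstacle is the last step: obtaining a clean additive $\delta$ (rather than $\sqrt{\delta F}$) in the variance proxy $\nu(g_{j^*})$, which is why one works with $D^{1/2}$ directly via the $L^2$-triangle inequality. A secondary technicality is that $j^*$ is random, which forces the use of the supremum-form of the martingale maximal inequality from Lemma \ref{bernstein_mixing_martingale} rather than the plain version of Lemma \ref{bernstein_mixing}.
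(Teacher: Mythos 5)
Your proposal is correct and follows essentially the same route as the paper's proof: a $\delta$-covering with random index $j^*$, splitting off the $\hat f-f_{j^*}$ part via $\IE|\varepsilon_{1k}|\le C_\varepsilon$, centering by $f_0$ using $\IE[\varepsilon_i\mid\sA_{i-1}]=0$, invoking the coupling and Bernstein bounds of Lemma \ref{bernstein_mixing_martingale}, and passing from $\IE[D(f_{j^*})]^{1/2}$ to $\IE[D(\hat f)]^{1/2}+\delta$ by the weighted $L^2$ triangle inequality. The only (harmless) deviations are your $1/(2F)$ normalization, which in fact makes $\|g_j\|_\infty\le 1$ cleaner than the paper's $1/F$ choice, and your handling of both parts of $N(g)=a\vee\nu(g)$ with a single Cauchy--Schwarz and the second-moment bound \reff{bernstein_mixing_martingale_res3}, where the paper additionally uses \reff{bernstein_mixing_martingale_res2}; both yield the stated rate.
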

\begin{proof}[Proof of Lemma \ref{convergenceproof_part2}]
    Let $(f_j)_{j=1,...,\sN_n}$ denote a $\delta$-covering of $\sF$ w.r.t. $\|\cdot\|_{\infty}$. Let $j^{*} \in \{1,...,\sN_n\}$ be such that $\|\hat f - f_{j^{*}}\|_{\infty} \le \delta$. Let $H = H(\delta) = \log N(\delta, \sF, \|\cdot\|_{\infty})$.  Since $\varepsilon_i$ is independent of $\IX_{i-1}$ and $\IE \varepsilon_i = 0$, we have
    \begin{equation}
         \Big|\IE\Big[\frac{1}{nd}\sum_{i=r+1}^{n}\langle \varepsilon_i, \hat f(\IX_{i-1})\rangle \IW(\IX_{i-1})\Big]\Big| \le \delta \cdot \underbrace{\frac{1}{nd}\sum_{i=r+1}^{n}\IE|\varepsilon_i|_1}_{\le \frac{1}{d}\sum_{k=1}^{d}\IE|\varepsilon_{1k}| \le C_{\varepsilon}} + \frac{F}{n}|\IE M_n(g_{j^{*}})|,\label{convergenceproof_part2_eq1}
    \end{equation}
    where $g_j(x) = \frac{1}{F}(f_{j}(x) - f_0(x))\IW(x)$ and $M_n$ is from Lemma \ref{bernstein_mixing_martingale}. By Lemma \ref{bernstein_mixing_martingale}, there exists a process $M_n^{\circ}(\cdot)$ and some universal constant $c > 0$ such that
    \begin{equation}
        \frac{1}{n}|\IE\{M_n(g_{j^{*}}) - M_n^{\circ}(g_{j^{*}})\}| \le \frac{1}{n}\IE \sup_{g\in \sG}|M_n(g) - M_n^{\circ}(g)| \le cr C_{\varepsilon} C_{\beta,sub}q^{*}(\frac{H}{n})\frac{H}{n}.\label{convergenceproof_part2_eq2}
    \end{equation}
    
    Define $N(g) := \|\frac{1}{\sqrt{d}}|g(\IX_r)|_2\|_2 \vee q^{*}(\frac{H}{n})\sqrt{\frac{H}{n}}$. Note that
    \begin{eqnarray*}
        \IE|M_n^{\circ}(g_{j^{*}})| &=& \IE\Big|\frac{M_n^{\circ}(g_{j^{*}})}{N(g_{j^{*}})}\cdot N(f_{j^{*}})\Big|\\
        &\le& \Big\|\frac{M_n^{\circ}(g_{j^{*}})}{N(g_{j^{*}})}\Big\|_2 \underbrace{\IE[ \|\frac{1}{\sqrt{d}}|g(\IX_r)|_2\|_2^2 \big|_{g = g_{j^{*}}}]^{1/2}}_{=F^{-1}\IE[D(f_{j^{*}})]^{1/2}} + \IE \Big|\frac{M_n^{\circ}(g_{j^{*}})}{N(g_{j^{*}})}\Big|\cdot q^{*}(\frac{H}{n})\sqrt{\frac{H}{n}}\\
        &\le& \IE\Big[\sup_{j=1,...,\sN_n}\Big|\frac{M_n^{\circ}(g_{j})}{N(g_{j})}\Big|^2\Big]^{1/2}\cdot F^{-1}(\IE[D(\hat f)]^{1/2} + \delta)\\
        &&\quad\quad + \IE\Big[\sup_{j=1,...,\sN_n}\Big|\frac{M_n^{\circ}(g_{j})}{N(g_{j})}\Big|\Big]\cdot q^{*}(\frac{H}{n})\sqrt{\frac{H}{n}}.
    \end{eqnarray*}
    By Lemma \ref{bernstein_mixing_martingale}, there exists some universal constant $c > 0$ such that
    \begin{equation}
        \IE |M_n^{\circ}(g_{j^{*}})| \le cC_{\varepsilon}\Big[\sqrt{nH}\cdot F^{-1}(\IE[D(\hat f)]^{1/2} + \delta) + q^{*}(\frac{H}{n})H\Big].\label{convergenceproof_part2_eq3}
    \end{equation}
    Insertion of \reff{convergenceproof_part2_eq2} and \reff{convergenceproof_part2_eq3} into \reff{convergenceproof_part2_eq1} yields
    \[
         \Big|\IE\Big[\frac{1}{nd}\sum_{i=1}^{n}\langle \varepsilon_i, \hat f(X_{i-1})\rangle \IW(X_{i-1})\Big| \le C_{\varepsilon}\delta + 2cC_{\varepsilon}\Big[F r C_{\beta,sub} q^{*}(\frac{H}{n})\frac{H}{n} + \sqrt{\frac{H}{n}}(\IE[D(\hat f)]^{1/2} + \delta)\Big].
    \]
\end{proof}

\section{Approximation error} \label{sec_approx_error}

We consider a network $\tilde f_0$ approximating the true regression function $f_0$. The network $\tilde f_0$ is assumed to have the form
\begin{equation} \label{dec_enc_net}
    \tilde f_0 = \tilde f_{dec} \circ \tilde f_{enc} : \IR^{dr} \to \IR^d
\end{equation} where $\tilde f_{enc} : \IR^{dr} \to \IR^{\tilde d}$ and $\tilde f_{dec}:\IR^{\tilde d} \to \IR^d$, $\tilde d \in \{1,...,d\}$, and $\tilde f_{enc}$ has the additional network structure
\[
    \tilde f_{enc} = \tilde g_{enc,1} \circ \tilde g_{enc,0},
\] where $\tilde g_{enc,0}: \IR^{dr} \to \IR^{D}$, $D \in \IN$ depending on at most $t_{enc,0} \in \{1,...,dr\}$ arguments in each component, and $\tilde g_{enc,1}:\IR^{D} \to \IR^{\tilde d}$ depending on at most $t_{enc,1} \in \{1,...,D\}$ arguments in each component. We denote by
\begin{eqnarray*}
    \sF(L,p,s) &:=& \big\{f:\IR^{p_0} \to \IR^{p_{L+1}} \text{ is of the form \reff{form_neuralnetwork}}: \\
    && \qquad \qquad \max_{k=0,...,L}|W^{(j)}|_{\infty}\vee |v^{(j)}|_{\infty} \le 1,  \sum_{j=0}^{L}|W^{(j)}|_0 + |v^{(j)}|_0 \le s\}
\end{eqnarray*}
the set of all networks where we explicitly do not ask for the presence of an encoder-decoder structure and an intermediate hidden layer at position $L_1$.

Now, let $\tb:=(t_{dec},t_{enc,0},t_{enc,1})$ and $\betab:=(\beta_{dec},\beta_{enc,0},\beta_{enc,1})$ where $t_{dec} \in \{1,...,\tilde d\}$ $t_{enc,1} \in \{1,...,D\}$ and $t_{enc,0} \in \{1,...,dr\}$.

\begin{thm} \label{approx_error}
    Consider the $d$-dimensional time series that follow the recursion \reff{model_time_evolution} and Assumptions \ref{ass_autoencoder}, \ref{ass_smoothness}. Let $N \in \{1,...,n\}$. Suppose that the parameters of  $\sF(L,L_1,p,s,F,\mathrm{Lip})$ satisfy 
    \begin{enumerate}
        \item $K \le F$,
        \item $\sum_{i \in \{enc,0;enc,1\}} \log_2(4(t_i \vee \beta_i))\log_2(n) \le L_1$ and \\
        $L_1  +  \log_2(4(t_{dec} \vee \beta_{dec}))\log_2(n) \le L$,
        \item $N \lsim \min_{i}\{p_i\}$,
        \item $N \log_2(n) \lsim s$,
        \item $\mathrm{Lip} \gsim 1$.
    \end{enumerate}
    Then,
    \[
        \inf_{f^\ast \in \sF(L,L_1,p,s,F,\mathrm{Lip})} \norm{f^* - f_0}_{\infty}^2 \le  C \max_{k \in \{dec;enc,0;enc,1\}} \big\{\frac{N}{n} + N^{-\frac{2\beta_k}{t_k}}\big\}
    \]
    for a large enough constant $C$ that only depends on $\tilde d, d,\tb, \betab$.
\end{thm}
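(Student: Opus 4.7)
\textbf{Proof plan for Theorem \ref{approx_error}.} The strategy is to construct $\tilde f_0 \in \sF(L,L_1,p,s,F,\mathrm{Lip})$ piece by piece from the decomposition $f_0 = f_{dec} \circ g_{enc,1} \circ g_{enc,0}$ supplied by Assumption \ref{ass_autoencoder}, building one ReLU sub-network for each of the three Hölder components and then concatenating them so that the output of the approximated encoder appears as the $L_1$-th hidden layer of width $\tilde d$. For each $h \in \{g_{enc,0},g_{enc,1},f_{dec}\}$, which is a vector-valued Hölder function of smoothness $\beta_h$ on a compact cube with each coordinate depending on at most $t_h$ arguments, I would apply the ReLU approximation result of \cite{schmidthieber2017} (their Theorem 5) componentwise and stack. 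This produces a sub-network $\tilde h$ with depth $\asymp \log_2(4(t_h\vee \beta_h))\log_2(n)$, width $\lsim N$, number of nonzero parameters $\lsim N\log_2(n)$, weights and biases bounded by $1$, and
\[
   \|\tilde h - h\|_\infty \lsim \tfrac{N}{n} + N^{-\beta_h/t_h},
\]
where the $N/n$ summand arises from the residual error in approximating local monomials by networks of depth $\asymp \log_2(n)$ and the second summand is the standard nonparametric rate for $\beta_h$-Hölder functions.

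Next I would set $\tilde f_0 := \tilde f_{dec} \circ \tilde g_{enc,1} \circ \tilde g_{enc,0}$, viewed as a single ReLU network in which the hidden layer produced by $\tilde g_{enc,1}$ has width $\tilde d$ at position $L_1$. The lower bound on $L_1$ in Assumption \ref{ass_network}(ii) accommodates the combined depth of $\tilde g_{enc,0}$ and $\tilde g_{enc,1}$, the bound $L_1 + \log_2(4(t_{dec}\vee \beta_{dec}))\log_2(n) \le L$ accommodates the decoder, (iii) gives the width budget at every non-bottleneck layer, (iv) gives the sparsity budget, and (i) gives the amplitude bound. I would then propagate the errors through the composition by Lipschitz continuity: since each $\beta_h \ge 1$, each $h$ is $K$-Lipschitz in the $|\cdot|_\infty$-norm, whence
\[
\|\tilde f_0 - f_0\|_\infty \le \|\tilde f_{dec}-f_{dec}\|_\infty + K\|\tilde g_{enc,1}-g_{enc,1}\|_\infty + K^2\|\tilde g_{enc,0}-g_{enc,0}\|_\infty.
\]
Squaring and using $(a+b+c)^2 \le 3(a^2+b^2+c^2)$ together with $N/n \le 1$ (so that $(N/n)^2 \le N/n$) yields the claimed bound, with the constant $C$ absorbing factors that depend only on $K, \tilde d, d, \tb, \betab$.

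The main obstacle is enforcing the Lipschitz constraint $\|\tilde f_0\|_{Lip} \le \mathrm{Lip}$, since the Schmidt-Hieber construction controls the magnitudes of the weights but not directly the Lipschitz seminorm of the resulting network. I would argue that because each target $h$ is itself $K$-Lipschitz and $\tilde h$ is sup-norm close to $h$, the sparsity of $\tilde h$ together with a pruning or mild perturbation step allows one to obtain $\|\tilde h\|_{Lip} \lsim K$ without spoiling the sup-norm rate; applying the product chain rule for Lipschitz constants of compositions then gives $\|\tilde f_0\|_{Lip} \lsim K^3$, which is $\le \mathrm{Lip}$ as soon as $\mathrm{Lip}$ exceeds the implicit universal constant hidden in hypothesis (v). A secondary but purely technical point is ensuring that the range of $\tilde g_{enc,0}$ lies in $[0,1]^D$ and that of $\tilde g_{enc,1}$ lies in $[a_{enc,1},b_{enc,1}]^{\tilde d}$, which can be arranged by composing with an extra pair of ReLU layers implementing a coordinatewise clip; the added depth is absorbed into the slack in (ii) and introduces no new error.
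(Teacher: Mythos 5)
Your overall architecture matches the paper's proof: approximate each of $g_{enc,0}$, $g_{enc,1}$, $f_{dec}$ componentwise via \cite[Theorem 5]{schmidthieber2017}, compose the three sub-networks so that the $\tilde d$-dimensional layer sits at position $L_1$, pad with identity layers to meet the depth constraints, propagate the sup-norm errors through the composition using the Lipschitz continuity guaranteed by $\beta_k \ge 1$, and enforce $\|f\|_\infty \le F$ at the end (the paper does this by rescaling, $f_n^\ast := (\tfrac{\|f_0\|_\infty}{\|\tilde f_n\|_\infty}\wedge 1)\tilde f_n$, which only costs a factor $2$ in the sup-norm error). Up to that point your plan is the paper's proof.

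The genuine gap is your treatment of the constraint $\|\tilde f_0\|_{Lip} \le \mathrm{Lip}$. Your argument --- that since each target $h$ is $K$-Lipschitz and $\tilde h$ is sup-norm close to $h$, a ``pruning or mild perturbation step'' yields $\|\tilde h\|_{Lip} \lsim K$ --- does not go through: uniform closeness to a Lipschitz function places no bound whatsoever on the Lipschitz seminorm of the approximant (a piecewise-linear function can stay within $\delta$ of a constant while having arbitrarily large slope on intervals of length $\ll \delta$), and there is no known generic pruning of a sparse ReLU network that shrinks its Lipschitz constant while preserving the sup-norm approximation rate. This is precisely why the paper proves the additional quantitative statement \reff{lipschitz_constant_approx_network} in Theorem \ref{theorem_approximation_network}: it reopens the explicit Schmidt-Hieber construction ($\mathrm{mult}_m$, $\IM_m$, the localized Taylor polynomials $Q_1,Q_2,Q_3$) and bounds the derivative of the resulting piecewise-linear network pointwise, via Lemmas \ref{lemma_mult_derivative}, \ref{lemma_produkt_approx} and \ref{lemma_lipschitz_q3}, obtaining $\mathrm{Lip}(N,m) \lsim N2^{-m} + 1 \lsim 1$ for $m = \lceil\log_2 n\rceil$ and $N \le n$. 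That computation is the nontrivial new ingredient of this theorem relative to \cite{schmidthieber2017}, and your proposal replaces it with an assertion that would need its own (currently missing) proof. A secondary, fixable imprecision: your error-propagation inequality uses the Lipschitz constants of the true outer functions evaluated at the outputs of the inner \emph{approximating} networks, so you must first guarantee (as you note, by clipping) that those outputs land in the cubes on which Assumption \ref{ass_smoothness} provides H\"older regularity; the paper's invocation of \cite[Lemma 3]{schmidthieber2017} handles this bookkeeping.
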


The proof can be found in the Appendix.

\bibliographystyle{plain}
\bibliography{reference}

\newpage

\begin{center}
{\Large \bf Supplementary Material}\\
\end{center}

\section{Appendix}
\label{sec_appendix}

In this section, we provide some details of the proofs for the oracle inequality under mixing.

\subsection{Results for mixing time series}
\label{sec_appendix_mixing}

\subsubsection{Variance bound for mixing}

\begin{proof}[Proofs of Lemma \ref{lemma_mixing_norm}]
    We only have to prove the last inequality. Note that for any $c > 0$, due to monotonicity of $x \mapsto \frac{\phi(x)}{x}$,
    \[
        \IE \phi\Big( \big(\frac{|h(X_1)|}{c}\big)^2\Big) = \IE \Big[\frac{\phi\Big( \big(\frac{|h(X_1)|}{c}\big)^2\Big)}{|h(X_1)|}\cdot |h(X_1)|\Big] \le \frac{\phi\Big( \big(\frac{\|h\|_{\infty}}{c}\big)^2\Big)}{\|h\|_{\infty}}\cdot \|h\|_1.
    \]
    This upper bound attains the value $1$ for
    \[
        c = \|h\|_{\infty}\cdot \phi^{-1}(\frac{\|h\|_{\infty}}{\|h\|_1})^{-1/2},
    \]
    which shows
    \begin{equation}
        \|h\|_{\phi,2} \le \|h\|_{\infty}\cdot \phi^{-1}(\frac{\|h\|_{\infty}}{\|h\|_1})^{-1/2}.\label{lemma_mixing_norm_res3}
    \end{equation}
    The result \reff{lemma_mixing_norm_res1} now follows from $\|h\|_{\infty} \le 1$.
    % \|h\|_{\phi,2} \le \|f\|_{\infty}\phi^{-1}(\|h\|_{\infty})^{-1/2} \cdot \phi^{-1}(\|h\|_{1}^{-1})^{-1/2}
\end{proof}

%There are two prominent special cases (cf. \cite{Dedeck02}):
%\begin{itemize}
 %   \item If $\sum_{k=0}^{\infty}\beta(k) k^{b-1}< \infty$ for some $b > 1$, one can choose $\phi(x) = x^{\frac{b}{b-1}}$ which yields with some constant $C_{\beta} > 0$ that
  %  \[
  %      \|h\|_{2,\beta} \le C_{\beta}\cdot \|h\|_{\infty}^{\frac{b+1}{2b}}\cdot \|h\|_{1}^{\frac{b-1}{2b}}.
  %  \]
   % \item If $\beta(k) = O(\rho^{k})$ for some $\rho \in (0,1)$, then one can choose $\phi(x) = x \log(1+|x|)$, which yields with some constant $C_{\beta} > 0$ that
 %   \[
 %       \|h\|_{2,\beta} \le C_{\beta}\cdot \|h\|_{\infty}^{1/2}\log(1+\|h\|_{\infty})^{1/2}\cdot \|h\|_{1}^{1/2}\log(1+\|h\|_1^{-1})^{1/2}.
 %   \]
%\end{itemize}

During the proofs for the oracle inequalities under mixing, there will occur two quantities:
\begin{equation}
    (\varphi^{-1})^{*}(\sqrt{x}) \quad\quad \text{ and }\quad\quad q^{*}(x)x,\label{definition_mixing_quantitiestoupperbound}
\end{equation}
where $q^{*}(x) = q^{*,mix}(x) = \min\{q\in \IN: \beta^{mix}(q) \le qx\}$. For $x$, we have to plug in a specific rate of the form $\frac{H}{n}$. It is therefore of interest to upper bound both quantities in \reff{definition_mixing_quantitiestoupperbound} by one common quantity.

Recall the definitions $\psi(x) := \phi^{*}(x)x$ and $\Lambda(x) := \lceil \psi^{-1}(x^{-1})\rceil x$ from  \reff{definition_lambda_mixing}.

In Lemma \ref{lemma_varphi}, we show that $(\varphi^{-1})^{*}(\sqrt{x})$ is upper bounded by a constant times $\Lambda(x)$. Lemma \ref{lemma_varphi2} shows that $q^{*}(x)x$ is upper bounded by a constant times $\Lambda(x)$. Thus, $\Lambda(x)$ serves as a common upper bound for both quantities in \reff{definition_mixing_quantitiestoupperbound}.

\subsubsection{Unification of \reff{definition_mixing_quantitiestoupperbound}}

\begin{lem}\label{lemma_varphi}
    Let Assumption \ref{ass_compatibility} hold. Then $\varphi(x) = \phi^{-1}(\frac{1}{x})^{-1/2}$ and $\psi(x) = \phi^{*}(x)x$ satisfy:
    \begin{enumerate}
        \item for any $C \ge 1$, $(\varphi^{-1})^{*}(Cx) \le C^2(\varphi^{-1})^{*}(x)$,
        \item $\varphi^2, \varphi$ are concave and thus subadditive,
        \item $(\varphi^{-1})^{*}(\sqrt{x}) \le (4c_0)^2\psi^{-1}(\frac{1}{x})x \le (4c_0)^2 \Lambda(x)$.
    \end{enumerate}
\end{lem}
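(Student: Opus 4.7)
The three claims are proved in the order (2), (1), (3), with each building on the previous.

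For (2), the plan is to parametrize by $y := \phi^{-1}(1/x)$, so that $x = 1/\phi(y)$ and $\varphi^2(x) = 1/y$. The chain rule gives
\[
\frac{d\varphi^2}{dx} = \frac{\phi(y)^2}{y^2\phi'(y)},
\]
so $\varphi^2$ is concave in $x$ iff the map $y \mapsto \phi(y)^2/(y^2\phi'(y))$ is non-decreasing. A direct computation of this derivative and of $g''(y)$ for $g(y) = y/\phi(y)$ shows that both conditions reduce to the same inequality $2\phi'(y)(y\phi'(y) - \phi(y)) \ge y\phi(y)\phi''(y)$. Hence Assumption~\ref{ass_compatibility}(ii) makes $\varphi^2$ concave. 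Then $\varphi = \sqrt{\varphi^2}$ is concave as the composition of the concave increasing $\sqrt{\cdot}$ with a concave function. Both vanish at zero since $\phi^{-1}(1/x) \to \infty$ as $x \to 0^+$, and a concave function vanishing at $0$ is automatically subadditive on $[0,\infty)$.

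For (1), I would leverage (2) via the standard fact that $\varphi^2(x)/x$ is non-increasing, which yields $\varphi^2(Cr) \le C\varphi^2(r)$ and hence $\varphi(Cr) \le \sqrt{C}\varphi(r)$ for any $C \ge 1$. Setting $r = \varphi^{-1}(t)$ and applying $\varphi^{-1}$ (which is increasing) gives $\varphi^{-1}(Kt) \ge K^2 \varphi^{-1}(t)$ for $K \ge 1$. Substituting $y = Cw$ in the definition of the convex conjugate then gives
\[
(\varphi^{-1})^*(Cx) = \sup_{w \ge 0}\bigl\{C^2 x w - \varphi^{-1}(Cw)\bigr\} \le \sup_{w \ge 0}\bigl\{C^2 xw - C^2 \varphi^{-1}(w)\bigr\} = C^2 (\varphi^{-1})^*(x).
\]

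For (3), inverting $\varphi$ gives $\varphi^{-1}(t) = 1/\phi(1/t^2)$; substituting $u = 1/t^2$ rewrites the conjugate as
\[
(\varphi^{-1})^*(\sqrt{x}) = \sup_{u > 0}\Bigl\{\sqrt{x/u} - \frac{1}{\phi(u)}\Bigr\}.
\]
The first-order condition at an interior maximizer $u^*$ reads $\sqrt{x/u^*} = 2c^*/\phi(u^*)$ with $c^* := u^*\phi'(u^*)/\phi(u^*) \in [1,c_0]$, so the value equals $(2c^* - 1)/\phi(u^*)$. I would then introduce $q^* := \phi'(u^*)$ and use the Legendre identity $\phi^*(q^*) = u^*\phi'(u^*) - \phi(u^*) = (c^*-1)\phi(u^*)$ together with the FOC to derive $\psi(q^*) = 4(c^*)^3(c^*-1)/x$ and $(\varphi^{-1})^*(\sqrt{x}) = (2c^*-1)\, x\, q^*/(4(c^*)^3)$. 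Because $\psi(y)/y = \phi^*(y)$ is non-decreasing, the scaling inequality $\psi^{-1}(az) \le a\psi^{-1}(z)$ holds for $a \ge 1$, and $\psi^{-1}$ is monotone. Splitting into the regimes $4(c^*)^3(c^*-1) \ge 1$ (apply the scaling to $q^* = \psi^{-1}(4(c^*)^3(c^*-1)/x)$) and $<1$ (apply monotonicity to get $q^* \le \psi^{-1}(1/x)$) yields $(\varphi^{-1})^*(\sqrt{x}) \le (4c_0)^2 x \psi^{-1}(1/x)$ in both cases. The final bound $x\psi^{-1}(1/x) \le \Lambda(x)$ is immediate from $\psi^{-1}(1/x) \le \lceil\psi^{-1}(1/x)\rceil$.

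The main obstacle is the bookkeeping in (3): extracting clean $\psi$-data from the implicit maximizer $u^*$ via the conjugate substitution $q^* = \phi'(u^*)$, and absorbing the prefactor $4(c^*)^3(c^*-1)$ into $\psi^{-1}(1/x)$ without ever dividing by $c^*-1$ (which may be arbitrarily small when $\phi$ is close to linear). The two-regime split on whether $4(c^*)^3(c^*-1) \ge 1$ is what resolves this cleanly and lets the single constant $(4c_0)^2$ dominate both cases.
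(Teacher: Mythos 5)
Your argument is correct in substance, but it takes a genuinely different route from the paper, most visibly in part (iii). The paper never touches the maximizer of the conjugate directly: it proves two abstract facts about convex conjugates (a tangent-line bound $f^{*}(x)\le w_0f'(w_0)$ at any $w_0$ with $xw_0-f(w_0)\le 0$, and $f(x)\le xf'(x)$), introduces the comparison function $g(x)=c_0x^2(\phi')^{-1}(c_0x)$ with $\psi\le g$ so that $g^{-1}\le\psi^{-1}$, and then relates the implicitly defined zero $z_0(x)$ of $z\mapsto\sqrt{x}z-\phi(1/z^2)^{-1}$ to $g^{-1}(\tfrac{1}{4c_0x})$ through the self-bounding inequality $z\,\partial_z(\phi(1/z^2)^{-1})\le 2c_0\,\phi(1/z^2)^{-1}$. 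You instead solve the first-order condition for the interior maximizer $u^{*}$ of $\sup_u\{\sqrt{x/u}-\phi(u)^{-1}\}$ and exploit exact Legendre duality, $\phi^{*}(\phi'(u^{*}))=u^{*}\phi'(u^{*})-\phi(u^{*})$, to obtain the closed-form identities $\psi(q^{*})=4(c^{*})^3(c^{*}-1)/x$ and $(\varphi^{-1})^{*}(\sqrt{x})=(2c^{*}-1)xq^{*}/(4(c^{*})^3)$; the two-regime split on $4(c^{*})^3(c^{*}-1)\gtrless 1$, using $\psi^{-1}(az)\le a\psi^{-1}(z)$ for $a\ge1$ in one case and plain monotonicity in the other, then absorbs everything into $(4c_0)^2$ without ever dividing by $c^{*}-1$. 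This is arguably more transparent (you get equalities at the critical point rather than a chain of one-sided bounds), at the price of needing the supremum to be attained at an interior point where the first-order condition holds; that attainment does follow here from $\phi(0)=0$, $\phi(y)/y\to\infty$ and differentiability, but it should be said. For (i) and (ii) the content is essentially the paper's, reached computationally rather than structurally.

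One caveat worth fixing: Assumption~\ref{ass_compatibility} only grants that $\phi$ is convex and (once) differentiable, so your verification of (ii) via $g''$ and the derivative of $y\mapsto\phi(y)^2/(y^2\phi'(y))$ uses a second derivative that is not guaranteed to exist. The paper avoids this by invoking the equivalence ``$x\mapsto f(1/x)$ convex iff $x\mapsto xf(x)$ convex'' applied to $y\mapsto y/\phi(y)$, which needs no differentiation at all; either adopt that device or note that the common second-order inequality $2\phi'(y\phi'-\phi)\ge y\phi\phi''$ can be interpreted distributionally (convex functions are twice differentiable a.e.). This is a repairable regularity gap, not a flaw in the strategy.
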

\begin{proof}[Proof of Lemma \ref{lemma_varphi}]
    \begin{enumerate}
        \item Since $y \mapsto \frac{\phi(y)}{y}$ is increasing, $\frac{\phi(y)}{y} \le \frac{\phi(C^2y)}{C^2y}$. Thus $\frac{1}{\phi(y)} \ge \frac{C^2}{\phi(C^2y)}$. We obtain
        \begin{eqnarray*}
            (\varphi^{-1})^{*}(Cx) &=& \sup_{z > 0}\{Cxz - \phi(\frac{1}{z^2})^{-1}\}\\
            &=& C^2 \sup_{z > 0}\{x\frac{z}{C} - \frac{\frac{1}{C^2}}{\phi(\frac{1}{z^2})}\}\\
            &\le& C^2 \sup_{z > 0}\{x\frac{z}{C} - \frac{1}{\phi(\frac{C^2}{z^2})}\}\\
            &\overset{u:= \frac{z}{C}}{=}& C^2\sup_{u > 0}\{xu - \phi(\frac{1}{u^2})^{-1}\} = C^2 (\varphi^{-1})^{*}(x).
        \end{eqnarray*}
        \item We have
        \begin{equation}
            (\varphi^{2})^{-1}(y) = \phi(\frac{1}{y})^{-1}.\label{lemma_varphi_proof_eq0}
        \end{equation}
        By assumption, $y \mapsto \frac{y}{\phi(y)}$ is convex. Since $x \mapsto f(\frac{1}{x})$ is convex on $(0,\infty)$ if and only if $x\cdot f(x)$ is convex on $(0,\infty)$ (cf. \cite{convex_equivalent}, page 1), we obtain that $y \mapsto \phi(\frac{1}{y})^{-1}$ is convex. By \reff{lemma_varphi_proof_eq0}, its inverse $\varphi^2$ is concave. By concavity of $\sqrt{\cdot}$, $\varphi$ is concave. Since concavity implies subadditivity, the claim follows. 
        \item \emph{First Claim:} If $f:[0,\infty) \to \IR$ is a convex function with $f(0)=0$ and $w_0 \in [0,\infty)$ is such that $xw_0 - f(w_0) \le 0$, then
        \[
            f^{*}(x) \le w_0f'(w_0).
        \]
        \emph{Proof:} $F:[0,\infty)\to\IR, F(w) = xw-f(w)$ is concave with $F(0) = 0$, $F(w_0) \le 0$. Thus, $F$ attains its global maximum in $[0,w_0]$. Since $F$ is concave, the tangent $t(w) := F'(w_0)(w-w_0) + F(w_0)$ at $w_0$ satisfies
        \begin{eqnarray*}
            f^{*}(x) &=& \sup_{w > 0}F(w) \le \sup_{w > 0}t(w) = t(0) = -F'(w_0)w_0 + F(w_0)\\
            &=& -(x - f'(w_0))w_0 + (xw_0 - f(w_0))\\
            &=& f'(w_0)w_0 - f(w_0) \le f'(w_0)w_0.
        \end{eqnarray*}
        This proves the claim.
        
        \emph{Second Claim:} If $f:[0,\infty) \to \IR$ is convex with $f(0)=0$, then $f(x) \le f'(x) x$ for all $x > 0$.
        
        \emph{Proof:} Since $f$ is convex, the tangent $t(y) = f'(x)(y-x) + f(x)$ at $x$ satisfies $t(0) \le f(0) = 0$, which gives the result.
        
        Let $y_0(x) := (\phi')^{-1}(c_0x)$. Then,
        \[
            x = \frac{1}{c_0}\phi'(y_0(x)) \le \frac{\phi(y_0(x))}{y_0(x)}.
        \]
        Application of the first claim to $\phi^{*}$ and $y_0(x)$ yields
        \[
            \psi(x) = \phi^{*}(x)x \le x y_0(x) = c_0 x^2 (\phi')^{-1}(c_0 x) =: g(x).
        \]
        Since $g,\psi:[0,\infty) \to [0,\infty)$ are strictly increasing, we conclude that for any $y \in [0,\infty)$,
        \[
             g^{-1}(y) = \psi^{-1}(\psi(g^{-1}(y))) \le \psi^{-1}(g(g^{-1}(y))) = \psi^{-1}(y).
        \]
        Especially, we obtain for any $x > 0$,
        \begin{equation}
            \psi^{-1}(\frac{1}{x})x \ge g^{-1}(\frac{1}{x})x.\label{lemma_varphi_proof_eq1}
        \end{equation}
        
        As in (ii), we obtain that $y \mapsto \phi(\frac{1}{y})^{-1}$ is convex. Additionally, it is increasing, thus $y \mapsto \phi(\frac{1}{y^2})^{-1}$ is convex. Moreover, for all $z > 0$,
        \begin{equation}
            z\cdot \partial_z(\phi(\frac{1}{z^2})^{-1}) = \frac{2\phi'(\frac{1}{z^2})\frac{1}{z^2}}{\phi(\frac{1}{z^2})^2} \le 2c_0 \phi(\frac{1}{z^2})^{-1}.\label{lemma_varphi_proof_eq2}
        \end{equation}
        Choose $z_0(x) > 0$ such that
        \begin{equation}
            \sqrt{x}z_0(x) - \phi(\frac{1}{z_0(x)^2})^{-1} = 0.\label{lemma_varphi_proof_eq3}
        \end{equation}
        We obtain from the first claim and \reff{lemma_varphi_proof_eq2} that
        \begin{eqnarray}
            (\varphi^{-1})^{*}(\sqrt{x}) &=& \sup_{y > 0}\{xy - \phi(\frac{1}{y^2})^{-1}\} \le z_0(x) \partial_z(\phi(\frac{1}{z^2})^{-1})\big|_{z=z_0(x)} \le 2c_0 \phi(\frac{1}{z_0(x)^2})^{-1}\nonumber\\
            &=& 2c_0\cdot \sqrt{x} z_0(x).\label{lemma_varphi_proof_eq5}
        \end{eqnarray}
        By the second claim, we obtain
        \begin{equation}
            \sqrt{x} = \frac{\phi(\frac{1}{z_0(x)^2})^{-1}}{z_0(x)} \le \partial_z(\phi(\frac{1}{z^2})^{-1})\big|_{z=z_0(x)} = \frac{2\phi'(\frac{1}{z_0(x)^2})}{\phi(\frac{1}{z_0(x)^2})^2 z_0(x)^3}.\label{lemma_varphi_proof_eq4}
        \end{equation}
        By \reff{lemma_varphi_proof_eq3}  and \reff{lemma_varphi_proof_eq4},
        \[
            \frac{z_0(x)}{\sqrt{x}} = \sqrt{x}z_0(x)^3 \phi(\frac{1}{z_0(x)^2})^2 \le 2 \phi'(\frac{1}{z_0(x)^2}).
        \]
        Thus,
        \[
            g(\frac{1}{c_0}\frac{z_0(x)}{2\sqrt{x}}) = \frac{1}{c_0}(\frac{z_0(x)}{2\sqrt{x}})^2\cdot (\phi')^{-1}\Big(\frac{z_0(x)}{2\sqrt{x}}\Big) \le \frac{1}{c_0}(\frac{z_0(x)}{2\sqrt{x}})^2\cdot \frac{1}{z_0(x)^2} = \frac{1}{4c_0}\frac{1}{x}.
        \]
        Since $g$ is increasing, 
        \[
            g^{-1}(\frac{1}{4c_0}\frac{1}{x}) \ge \frac{1}{c_0}\frac{z_0(x)}{2\sqrt{x}}, \quad\text{ and thus }\quad g^{-1}(\frac{1}{4c_0}\frac{1}{x})x \ge \frac{1}{2c_0}\sqrt{x} z_0(x).
        \]
        By \reff{lemma_varphi_proof_eq1} and \reff{lemma_varphi_proof_eq5}, $g$ is increasing. Due to the fact that $c_0 \ge 1$ (see the second claim), we have
        \[
            (\varphi^{-1})^{*}(\sqrt{x}) \le 2c_0\cdot \sqrt{x}z_0(x) \le (4c_0)^2 g^{-1}(\frac{1}{4c_0}\frac{1}{x})x \le (4c_0)^2 g^{-1}(\frac{1}{x})x \le (4c_0)^2 \psi^{-1}(\frac{1}{x})x.
        \]
    \end{enumerate}
\end{proof}

\begin{lem}\label{lemma_varphi2}
    Let Assumption \ref{ass_compatibility} hold. Then,
    \[
        q^{*}(x)x \le 2C \Lambda(x)
    \]
    where $C \le \sum_{k=0}^{\infty}\{\phi^{*}(k+1) - \phi^{*}(k)\}\beta(k)$.
\end{lem}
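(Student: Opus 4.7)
The plan is to convert the summability hypothesis $\sum_{k}\{\phi^{*}(k+1)-\phi^{*}(k)\}\beta(k) \le C$ into a pointwise decay estimate for $\beta(q)$, then translate that decay into a bound on $q^{*}(x)$ through the definition of $\psi$. The key monotonicity facts I will use are: (a) $\beta(\cdot)$ is nonincreasing by definition of $\beta$-mixing; (b) $\phi^{*}$ is convex and nondecreasing with $\phi^{*}(0)=0$, because $\phi(0)=0$ and $\phi\ge 0$; and (c) consequently $y\mapsto \phi^{*}(y)/y$ is nondecreasing on $(0,\infty)$, so that $\phi^{*}(cy)\ge c\,\phi^{*}(y)$ whenever $c\ge 1$.

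\textbf{Step 1: Pointwise bound on $\beta$.} Using (a), (b), and telescoping, for any integer $q\ge 0$,
\[
    C \;\ge\; \sum_{k=0}^{q}\{\phi^{*}(k+1)-\phi^{*}(k)\}\beta(k) \;\ge\; \beta(q)\sum_{k=0}^{q}\{\phi^{*}(k+1)-\phi^{*}(k)\} \;=\; \beta(q)\,\phi^{*}(q+1),
\]
so $\beta(q)\le C/\phi^{*}(q+1)$.

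\textbf{Step 2: Lower bound on $\phi^{*}$ past $q_{0}$.} Set $q_{0}:=\lceil \psi^{-1}(1/x)\rceil$, so $\Lambda(x)=q_{0}\,x$ and $\psi(q_{0})=\phi^{*}(q_{0})q_{0}\ge 1/x$, i.e.\ $\phi^{*}(q_{0})\ge 1/(q_{0}x)$. By (c), for any $q\ge q_{0}$,
\[
    \phi^{*}(q) \;\ge\; \frac{q}{q_{0}}\,\phi^{*}(q_{0}) \;\ge\; \frac{q}{q_{0}^{2}\,x}.
\]

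\textbf{Step 3: Choosing $q$ and concluding.} Without loss of generality assume $C\ge 1$ (otherwise replace $C$ by $\max\{C,1\}$, which is still a valid upper bound for the series). Take $q:=\lceil \sqrt{C}\,q_{0}\rceil$, so that $q\ge q_{0}$ and $q\ge \sqrt{C}\,q_{0}$. Combining Steps~1 and~2,
\[
    \beta(q) \;\le\; \frac{C}{\phi^{*}(q+1)} \;\le\; \frac{C\,q_{0}^{2}\,x}{q+1} \;\le\; \frac{C\,q_{0}^{2}}{q}\,x \;\le\; q\,x,
\]
where the final inequality uses $q^{2}\ge C\,q_{0}^{2}$. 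Hence $q^{*}(x)\le q\le \sqrt{C}\,q_{0}+1\le 2\sqrt{C}\,q_{0}\le 2C\,q_{0}$, the last steps using $q_{0}\ge 1$ and $C\ge 1$. Multiplying through by $x$ yields $q^{*}(x)\,x\le 2C\,q_{0}\,x = 2C\,\Lambda(x)$.

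\textbf{Main obstacle.} Steps~1 and~2 are essentially bookkeeping; the only delicate point is Step~3, namely squeezing the ceiling-induced slack into the stated constant $2C$. The super\-homogeneity $\phi^{*}(cy)\ge c\,\phi^{*}(y)$ for $c\ge 1$, which follows from convexity together with $\phi^{*}(0)=0$, is what makes the single bound $\phi^{*}(q_{0})\ge 1/(q_{0}x)$ propagate into the uniform estimate $\phi^{*}(q)\ge q/(q_{0}^{2}x)$ on $[q_{0},\infty)$; without this, one would not be able to choose $q$ as a fixed multiple of $q_{0}$, and the final constant would depend on $x$ rather than on $C$ alone.
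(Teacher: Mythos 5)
Your proof is correct and follows essentially the same strategy as the paper's: both convert the summability hypothesis into the pointwise decay $\beta(q)\,\phi^{*}(q)\lsim C$ and then invert using the superhomogeneity $\phi^{*}(cy)\ge c\,\phi^{*}(y)$ (the paper packages this as $\psi^{-1}(Cy)\le C\psi^{-1}(y)$ and bounds $q^{*}(x)\le\lceil\psi^{-1}(Cx^{-1})\rceil$ directly, whereas you exhibit an explicit admissible $q=\lceil\sqrt{C}\,q_{0}\rceil$). The only cosmetic difference is that you obtain the pointwise bound by telescoping rather than the paper's Markov-inequality argument with an auxiliary random variable $Z$ satisfying $\IP(Z\ge k)=\beta(k)$, and your normalization $C\ge 1$ is implicitly needed in the paper's final ceiling estimate as well.
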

\begin{proof}[Proof of Lemma \ref{lemma_varphi2}] Since $\beta^{-1}(u) = \sum_{i=0}^{\infty}\Ii_{\{u < \beta(i)\}}$ and $\beta(0)=1$, we have
    \begin{eqnarray}
        \int_0^{1} \phi^{*}(\beta^{-1}(u)) du &=& \sum_{i=0}^{\infty}\int_{\beta(i+1)}^{\beta(i)}\phi^{*}(i+1)du = \sum_{i=0}^{\infty}(\beta(i) - \beta(i+1))\phi^{*}(i+1)\nonumber\\
        &=& \sum_{i=0}^{\infty}\sum_{k=0}^{i}\{\phi^{*}(k+1) - \phi^{*}(k)\}(\beta(i) - \beta(i+1))\nonumber\\
        &=& \sum_{k=0}^{\infty}\{\phi^{*}(k+1) - \phi^{*}(k)\}\cdot \sum_{i=k}^{\infty}(\beta(i) - \beta(i+1))\nonumber\\
        &=& \sum_{k=0}^{\infty}\{\phi^{*}(k+1) - \phi^{*}(k)\}\beta(k) < \infty.\label{lemma_varphi2_proof_eq1}
    \end{eqnarray}
    Let $Z$ be a nonnegative $\IN_0$-valued random variable with $\IP(Z \ge k) = \beta(k)$. Then, $\IP(Z=k) = \beta(k) - \beta(k+1)$, so \reff{lemma_varphi2_proof_eq1} shows that $C := \IE \phi^{*}(Z) \le \IE \phi^{*}(Z+1) < \infty$. Markov's inequality implies
    \[
        \beta(k) = \IP(Z\ge k) \le \frac{\IE \phi^{*}(Z)}{\phi^{*}(k)},
    \]
    that is, $\beta(k) \phi^{*}(k) \le C$. We then obtain
\begin{eqnarray*}
    q^{*}(x) &=& \min\{q\in\IN: \frac{\beta(q)}{q} \le x\} \le \min\{q\in\IN: \frac{C}{\phi^{*}(q)q} \le x\}\\
    &=& \min\{q\in\IN: Cx^{-1} \le \phi^{*}(q)q\} \le  \lceil\psi^{-1}(Cx^{-1})\rceil,
\end{eqnarray*}
whence
\begin{equation}
    q^{*}(x)x \le \lceil\psi^{-1}(Cx^{-1})\rceil x.\label{lemma_varphi2_proof_eq0}
\end{equation}
Since $\phi^{*}$ is increasing, it holds that $\psi(Cx) = Cx \phi^{*}(Cx) \ge C x \phi^{*}(x) = C \psi(x)$. This implies for any $y > 0$ and $z := \psi^{-1}(y)$,
\[
    \psi^{-1}(Cy) = \psi^{-1}(C\psi(z)) \le \psi^{-1}(\psi(Cz)) \le Cz = C \psi^{-1}(y).
\]
From \reff{lemma_varphi2_proof_eq0} we obtain
\[
    q^{*}(x)x \le \lceil C \psi^{-1}(x^{-1})\rceil x \le \lceil C \lceil\psi^{-1}(x^{-1})\rceil\rceil x \le 2C\lceil\psi^{-1}(x^{-1})\rceil x .
\]
\end{proof}

The following proof shows the announced special forms for $\Lambda = \Lambda^{mix}$ in Lemma \ref{lemma_varphi_specialcase}.

\begin{proof}[Proof of Lemma \ref{lemma_varphi_specialcase}]
    \begin{enumerate}
        \item Let $\phi(x) = x^{\frac{\alpha}{\alpha-1}}$ with $\alpha > 1$. Then obviously, Assumption \ref{ass_compatibility} (i),(ii) are fulfilled with $c_0 = \frac{\alpha}{\alpha-1}$. Furthermore,
        \[
            \phi^{*}(x) = \sup_{y > 0}\{xy - \phi(y)\} = C_{\alpha}x^{\alpha},\quad\quad C_{\alpha} := (1 - \frac{1}{\alpha})^{\alpha}\cdot \frac{1}{\alpha-1},
        \]
        which implies $\phi^{*}(k+1) - \phi^{*}(k) = O(k^{\alpha-1})$ and thus proves $\sum_{k=0}^{\infty}(\phi^{*}(k+1)-\phi^{*}(k))\beta(k) = O(\sum_{k=0}^{\infty}k^{\alpha-1}\beta(k)) < \infty$.
        
        In this case, we have $\psi(x) = \phi^{*}(x)x = C_{\alpha} x^{\alpha+1}$, $\psi^{-1}(x) = (C_{\alpha}^{-1} x)^{\frac{1}{\alpha+1}}$ and
        \[
            \Lambda(x) = \lceil\psi^{-1}(x^{-1})\rceil x = \lceil C_{\alpha}^{-\frac{1}{\alpha+1}} x^{-\frac{1}{\alpha+1}}\rceil x.
        \]
        For $x > 1$, the above is bounded by $2C_{\alpha}^{-\frac{1}{\alpha+1}}x$, for $x < 1$, the above is bounded by $2C_{\alpha}^{-\frac{1}{\alpha+1}} x^{-\frac{1}{\alpha}}x$. This yields the result with $c_{\alpha} = 2C_{\alpha}^{-\frac{1}{\alpha+1}}$.
        \item Let $a := \frac{\rho+1}{2\rho} > 1$. Then $a\rho < 1$. Define $\phi(x) = x \frac{\log(x+1)}{\log(a)}$. Obviously, Assumption \ref{ass_compatibility} (i),(ii) are fulfilled with $c_0 = 2$. Furthermore, by the first claim in the proof of Lemma \ref{lemma_varphi} applied to $w_0 := a^{x}-1$, 
        \[
            \phi^{*}(x) \le w_0\cdot \phi'(w_0) \le 2 \phi(w_0)  = 2x \{a^{x}-1\}.
        \]
        On the other hand, for $x \ge 1$, $0 \le w_1 := a^{x-1}-1$, thus
        \begin{equation}
            \phi^{*}(x) \ge x w_1 - \phi(w_1) = a^{x-1}-1.\label{lemma_varphi_specialcase_eq0}
        \end{equation}
        We obtain
        \[
            \sum_{k=1}^{\infty}(\phi^{*}(k+1) - \phi^{*}(k))\beta(k) \le \sum_{k=1}^{\infty}\big(2(k+1)(a^{k+1}-1) - a^{k-1} + 1)\kappa\rho^k = O(\sum_{k=1}^{\infty}k (\rho a)^{k}) < \infty.
        \]
        To upper bound the rate function, we use \reff{lemma_varphi_specialcase_eq0} to obtain
        \[
            \psi(x) = \phi^{*}(x)x \ge x (a^{x-1}-1) =: g(x)
        \]
        where $g:[1,\infty) \to [0,\infty)$ is bijective. Thus, for any $y \ge 0$, we obtain
        \[
            \psi^{-1}(y) = \psi^{-1}(g(g^{-1}(y))) \le \psi^{-1}(\psi(g^{-1}(y))) = g^{-1}(y).
        \]
        We conclude that $\Lambda(x) \le \lceil g^{-1}(\frac{1}{x})\rceil x$. Here,
        \[
            g(\frac{\log(2a(y\vee e))}{\log(a)}) = \frac{\log(2a(y\vee e))}{\log(a)}(\frac{2a(y \vee e)}{a}-1) \ge 2(y \vee e) - 1 \ge y \vee e.
        \]
        Thus for $y \ge e$,
        \[
            g^{-1}(y) \le \frac{\log(2ay)}{\log(a)} \le 2+ \frac{\log(y)}{\log(a)}.
        \]
        We obtain for $x \le e^{-1}$,
        \begin{equation}
            \Lambda(x) \le 2\Big(2 + \frac{\log(x^{-1})}{\log(a)}\Big)x.\label{lemma_varphi_specialcase_eq1}
        \end{equation}
        Note that for $y \le e$, $c := 1 + \frac{e}{a-1}$ satisfies  $\phi^{*}(c) = \sup_{y > 0}\{cy - \phi(y)\} \overset{y=a-1}{=} (a-1)[c - \frac{\log((a-1)+1)}{\log(a)}] = e$, and $\psi(c) = \phi^{*}(c)c \ge e c \ge e \ge y$. Thus
        \[
            \psi^{-1}(y) \le c,
        \]
        so for $x \ge \frac{1}{e}$, \begin{equation}
           \Lambda(x) = \lceil \psi^{-1}(x^{-1})\rceil x \le 2c x. \label{lemma_varphi_specialcase_eq2}
        \end{equation} A combination of \reff{lemma_varphi_specialcase_eq1} and \reff{lemma_varphi_specialcase_eq2} gives the result.
    \end{enumerate}
\end{proof}

\subsubsection{Proofs of maximal inequalities under mixing}

In this section, we prove, as announced, maximal inequalities under mixing. To do so, we use techniques and decomposition ideas from \cite{Dedeck02}, \cite{rio1995} and \cite{liebscher1996}.

\begin{proof}[Proof of Lemma \ref{bernstein_mixing}] During the proof, let $q \in \{1,...,n\}$ be arbitrary. Later we will choose $q = q^{*}(\frac{H}{n}) \le n$. Note that 
\begin{eqnarray*}
    \beta^\IX(k) &:=& \beta(\sigma(\IX_{i-1}:i \le 0), \sigma (\IX_{i-1}:i \ge k)) \\
    &=& \beta(\sigma(X_{i-1}:i \le 0), \sigma (X_{i-1}:i \ge k - r + 1)) = \beta^X(k-r+1) \le \beta^X(k-r).
\end{eqnarray*} We define $\tilde \beta(k) := \beta^X((k-r)\vee 0)$. Now, following \cite{Dedeck02}, there exist random variables $\IX_{i-1}^{\circ}$ with the following properties:
\begin{itemize}
    \item for all $i \ge 0$, $U_{i-1}^{\circ} = (\IX_{(i-1)q+1}^{\circ},...,\IX_{(i-1)q+q}^{\circ})$ and $U_{i-1} = (\IX_{(i-1)q+1},...,\IX_{(i-1)q+q})$ have the same distribution,
    \item $(U_{2(i-1)}^{\circ})_{i \ge 1}$ and $(U_{2i}^{\circ})_{i \ge 1}$ are i.i.d.,
    \item for all $i \ge 1$, $\IP(\IX_{i-1} \not=\IX_{i-1}^{\circ}) \le \IP(U_{i-1} \not= U_{i-1}^\circ) \le \tilde \beta(q)$.
\end{itemize}
Put
\[
    S_n^{\circ}(g) := \sum_{i=r+1}^{n}\{g(\IX_{i-1}^{\circ}) - \IE g(\IX_{i-1}^{\circ})\}.
\]
Then,
\begin{equation}
    |S_n(g) - S_n^{\circ}(g)| \le 2\|g\|_{\infty}\Big|\sum_{i=r+1}^{n}(\Ii_{\{\IX_{i-1} \not= \IX_{i-1}^{\circ}\}} + \IP(\IX_{i-1} \not= \IX_{i-1}^{\circ}))\Big|.\label{proof_bernstein_mixing_eqminus1}
\end{equation}
We now proceed with the proof of the announced inequalities. First, we have
\[
    \IE \sup_{g\in \sG}|S_n(g) - S_n^{\circ}(g)| \le 4\sum_{i=r+1}^{n}\IP(\IX_{i-1} \not= \IX_{i-1}^{\circ}) = 4n\tilde \beta(q).
\]
Let $\tilde q^{*}(x) = \min\{q\in\IN: \tilde \beta(q) \le qx\}$.
For $q = \tilde q^{*}(\frac{H}{n}) \le n$, we obtain
\[
    \IE \sup_{g\in \sG}|S_n(g) - S_n^{\circ}(g)| \le 4n \tilde \beta(q^{*}(\frac{H}{n})) \le 4n \tilde q^{*}(\frac{H}{n})\frac{H}{n}.
\]
Now, let $\tilde q := q^\ast(x) + r$. Then,
\begin{eqnarray*}
    \beta(\tilde q - r) = \beta(q^\ast(x)) \le q^\ast(x)x = (\tilde q - r)x \le \tilde q x.
\end{eqnarray*}
This yields $\tilde q ^\ast (x) \le \tilde q = q^\ast(x) + r \le rq^\ast(x)$. Finally,
\[
    \IE \sup_{g\in \sG}|S_n(g) - S_n^{\circ}(g)| \le 4n \tilde q^{*}(\frac{H}{n})\frac{H}{n} \le 4n r q^{*}(\frac{H}{n})\frac{H}{n}
\]
which proves \reff{bernstein_mixing_res1}.

We now show \reff{bernstein_mixing_res2} and \reff{bernstein_mixing_res3}. It holds that
\[
    S_n^{\circ}(g) = \sum_{k=r+1, k \text{ even}}^{\lfloor \frac{n}{q}\rfloor + 1}Y_k^{\circ}(g) + \sum_{k=r+1, k \text{ odd}}^{\lfloor \frac{n}{q}\rfloor + 1}Y_k^{\circ}(g)
\]
where
\[
    Y_k^{\circ}(g) := \sum_{i=(k-1)q+1}^{kq \wedge n}\{g(\IX_{i-1}^{\circ}) - \IE g(\IX_{i-1}^{\circ})\}.
\]
Furthermore, $(Y_k)_{k \text{ even}}$ and $(Y_k)_{k \text{ odd}}$ are independent with 
\[
    \|\frac{Y_k^{\circ}(g)}{N(g)}\|_{\infty} \le 2q N(g)^{-1}, \quad\quad \|\frac{Y_k^{\circ}(g)}{N(g)}\|_2^2 \le \frac{1}{N(g)^2}\Var(\sum_{i=(k-1)q+1}^{kq\wedge n}g(\IX_{i-1})) \le 4q\frac{\|g\|_{2,\tilde \beta}^2}{N(g)^2}.
\]

Next,
\begin{eqnarray*}
    \norm{g}_{2,\tilde\beta} &=& \int_{0}^1 \tilde{\beta}^{-1}(u)Q_g(u)du = \sum_{i=1}^\infty \int_{\tilde \beta (i+1)}^{\tilde \beta(i)} i Q_g(u) du \\
    &=& \sum_{i=1}^\infty \int_{\beta ((i-r+1) \vee 0)}^{\beta ((i-r) \vee 0))} i Q_g(u) du \\
    &=& \sum_{i=r+1}^\infty  \int_{\beta ((i-r+1) \vee 0)}^{\beta ((i-r) \vee 0))} i Q_g(u) du \\
    &=& \sum_{j=1}^\infty  \int_{\beta (j+1)}^{\beta (j)} (j+r) Q_g(u) du \\
    &\le& r \sum_{j=1}^\infty  \int_{\beta (j+1)}^{\beta (j)} j Q_g(u) du \\
    &=& r \norm{g}_{2,\beta}
\end{eqnarray*}

Hence,
\[
    \|\frac{Y_k^{\circ}(g)}{N(g)}\|_2^2 \le 4q\frac{\|g\|_{2,\tilde \beta}^2}{N(g)^2} \le 4qr\frac{\|g\|_{2,\beta}^2}{N(g)^2} \le 4qr.
\]

We obtain by Bernstein's inequality that
\begin{eqnarray}
    &&\IP\Big(\big|\frac{S_n^{\circ}(g)}{N(g)}\big| > x\Big)\nonumber\\
    &\le& \IP\Big(\big|\sum_{k=r+1,k \text{ even}}^{\lfloor \frac{n}{q}\rfloor + 1}\frac{Y_k^{\circ}(g)}{N(g)}\big| > \frac{x}{2}\Big) + \IP\Big(\big|\sum_{k=r+1,k \text{ odd}}^{\lfloor \frac{n}{q}\rfloor + 1}\frac{Y_k^{\circ}(g)}{N(g)}\big| > \frac{x}{2}\Big)\nonumber\\
    &\le& 4\exp\Big(-\frac{1}{2}\frac{(x/2)^2}{4nr + 2qN(g)^{-1} x/2}\Big)\nonumber\\
    &\le& 4 \exp\Big(-\frac{1}{32}\frac{x^2}{nr + q N(g)^{-1}x}\Big).\label{proof_bernstein_mixing_eq2}
\end{eqnarray}

\begin{enumerate}
    \item
    Using standard arguments (cf. \cite{Vaart98}, Lemma 19.35), we obtain from   \reff{proof_bernstein_mixing_eq2} that there exists a universal constant $c > 0$ such that
    \[
        \IE \sup_{g\in \sG}|\frac{S_n^{\circ}(g)}{N(g)}| \le c \big[\sqrt{nrH} + qN(g)^{-1}H\big].
    \]
    For $q = q^{*}(\frac{H}{n}) \le n$, we obtain
    \[
        \IE \sup_{g\in \sG}|\frac{S_n^{\circ}(g)}{N(g)}| \le c \big[\sqrt{nrH} + q^{*}(\frac{H}{n})N(g)^{-1}H\big] \le 2c \sqrt{nrH}.
    \]
    This shows \reff{bernstein_mixing_res2}.
    \item Here, we use
\begin{eqnarray*}
    \IE\big[\sup_{g}|\frac{S_n^{\circ}(g)}{N(g)}|^2\big] = \int_0^{\infty}\IP\Big(\sup_g |\frac{S_n^{\circ}}{N(g)}| > \sqrt{t}\Big) dt.
\end{eqnarray*}
Put $a := q^{*}(\frac{H}{n})\sqrt{\frac{H}{n}}$. Choose $G := 64\frac{nr}{q}a$. Then for $t \ge G^2$, $\frac{q}{a}\sqrt{t} \ge nr$. With \reff{proof_bernstein_mixing_eq2} and  $\int_{b^2}^{\infty}\exp(-b_2\sqrt{t}) dt = \int_b^{\infty}2s \exp(-b_2 s) ds = 2(b_2b+1)b_2^{-2}\exp(-b_2b)$, we obtain
\begin{eqnarray}
    &&\int_0^{\infty}\IP\Big(\sup_g |\frac{S_n^{\circ}(g)}{N(g)}| > \sqrt{t}\Big) dt\nonumber\\
    &=&G^2 + \int_{G^2}^{\infty}\IP\Big(\sup_g |\frac{S_n^{\circ}(g)}{N(g)}| > \sqrt{t}\Big) dt\nonumber\\
    &\le& G^2 + 4|\sG|\int_{G^2}^{\infty}\exp\Big(-\frac{1}{32}\frac{t}{nr + qN(g)^{-1}\sqrt{t}}\Big) dt\nonumber\\
    &\le& G^2 + 4|\sG|\int_{G^2}^{\infty}\exp\Big(-\frac{1}{32}\frac{t}{nr + q a^{-1}\sqrt{t}}\Big) dt\nonumber\\
    &\le& G^2 + 4|\sG| \int_{G^2}^\infty \exp(-\frac{1}{64}\frac{\sqrt{t}}{qa^{-1}}) dt\nonumber\\
    &\le& G^2 + 8|\sG|\Big(\frac{Ga}{q} + 1\Big)(64qa^{-1})^2 \exp(-\frac{1}{64}\frac{Ga}{q})\nonumber\\
    &\le& 2^{15}\Big[\Big(\frac{nar}{q}\Big)^2 + |\sG|\cdot \exp\Big(-nr(\frac{a}{q})^2\Big)\cdot (nr + (qa^{-1})^2)\Big].\label{proof_bernstein_mixing_eq7}
\end{eqnarray}
We receive
\begin{eqnarray*}
    \IE\big[\sup_{g}|\frac{S_n^{\circ}(g)}{N(g)}|^2\big] &\le& 2^{16}\Big[\Big(\frac{nar}{q}\Big)^2 + |\sG|\cdot \exp\Big(-nr(\frac{a}{q})^2\Big)\cdot (nr + (qa^{-1})^2)\Big].
\end{eqnarray*}
With $q = q^{*}(\frac{H}{n})$, the latter is upper bounded by
\begin{eqnarray*}
    2^{16}\Big[nr^2H +  nr + \frac{n}{H}\Big] = 2^{18}n r^2 H,
\end{eqnarray*}
proving \reff{bernstein_mixing_res3}.
\end{enumerate}
\end{proof}

\begin{proof}[Proof of Lemma \ref{bernstein_mixing_martingale}]
  %On the first glance, it looks promising to use a Bernstein inequality for martingales. However, the bound for the conditional variance raises a new condition $A$ which has to dealt with separately. Especially when calculating expectations of $\IE \sup_{g}|M_n(g)| = \IE \sup_g |M_n(g)|\Ii_A + \IE \sup_g |M_n(g)|\Ii_{A^c}$, there is no easy way to get an optimal bound for the second term, even when using deterministic bounds for $|M_n(g)|$ in terms of $|M_n(g)| \le (\sum_{i=1}^{n}\frac{1}{d}|\varepsilon_i|_2^2)^{1/2}\cdot (\sum_{i=1}^{n}\frac{1}{d}|g(X_{i-1})|_2^2)^{1/2}$. 
   
    %Our aim therefore is to directly apply a Bernstein inequality for mixing sequences under the knowledge that here, the variance is of an easier form. We use similar coupling technique as in Lemma \ref{bernstein_mixing}.
    
    Note that $(\varepsilon_i,\IX_{i-1})$ is still $\beta$-mixing with coefficients $\tilde \beta(k) := \beta(k-r)$ . This is due to the following argument: The model equation yields $X_i = f_0(\IX_{i-1}) + \varepsilon_i$, that is, $\varepsilon_i = X_i - f_0(\IX_{i-1})$. Thus, the generated sigma fields fulfill
    \[
        \sigma( (\varepsilon_i, \IX_{i-1}): i \le 0) = \sigma(X_{i}:i \le 0)
    \]
    and
    \[
        \sigma( (\varepsilon_i, \IX_{i-1}):i \ge k) = \sigma(X_{i-1}:i\ge k-r+1) = \sigma(X_i: i \ge k-r).
    \]
    Similar to the proof of Lemma \ref{bernstein_mixing}, for each $q \in \{1,...,n\}$ we can construct
    define coupled versions $(\varepsilon_i^{\circ},\IX_{i-1}^{\circ})$ of $(\varepsilon_i,\IX_{i-1})$ and define
    \[
        M_n^{\circ}(g) := \sum_{i=1}^{n}\frac{1}{d}\langle \varepsilon_i^{\circ},g(\IX_{i-1}^{\circ})\rangle.
    \]
    We will apply the following theory to $q = q^{*}(\frac{H}{n})^2$. Since $\sum_{q\in\IN}\beta(q) < \infty$, $q^{*}(\frac{H}{n}) \le \sqrt{\frac{n}{H}}$ and thus $q = q^{*}(\frac{H}{n})^2 \le n$.
    
    Now we have
    \begin{eqnarray}
        |M_n(g) - M_n^{\circ}(g)| &\le& \sum_{i=1}^{n}\frac{1}{d}\big(|\varepsilon_i^{\circ}|_2 |g(\IX_{i-1}^{\circ})|_2 + |\varepsilon_i|_2 |g(\IX_{i-1})|_2\big)\Ii_{\{(\varepsilon_i,\IX_{i-1}) \not= (\varepsilon_i^{\circ},\IX_{i-1}^{\circ})\}}\nonumber\\
        &\le& 2\sum_{i=1}^{n}\frac{1}{\sqrt{d}}(|\varepsilon_i|_2 + |\varepsilon_i^{\circ}|_2)\Ii_{\{(\varepsilon_i,\IX_{i-1}) \not= (\varepsilon_i^{\circ},\IX_{i-1}^{\circ})\}}.\label{bernstein_mixing_martingale_proof_eqminus1}
    \end{eqnarray}
    With \reff{bernstein_mixing_martingale_proof_eqminus1} and the Cauchy Schwarz inequality, we obtain
        \begin{equation}
            \IE[\sup_{g\in \sG}|M_n(g) - M_n^{\circ}(g)|] \le 2 n \frac{1}{\sqrt{d}}\||\varepsilon_1|_2\|_2 \|\Ii_{\{(\varepsilon_i,\IX_{i-1}) \not= (\varepsilon_i^{\circ},\IX_{i-1}^{\circ})\}}\|_2 \le 4n C_{\varepsilon} \tilde \beta(q)^{1/2} .\label{bernstein_mixing_martingale_proof_eq6}
        \end{equation}
    With $q = q^{*}(\frac{H}{n})^2$, we obtain from \reff{bernstein_mixing_martingale_proof_eq6} that
    \begin{eqnarray*}
         \IE[\sup_{g\in \sG}|M_n(g) - M_n^{\circ}(g)|] &\le& 4n C_{\varepsilon}\tilde \beta\Big(q^{*}(\frac{H}{n})^2\Big)^{1/2} \le 4n C_{\varepsilon} C_{\beta,sub}(\tilde \beta(q^{*}(\frac{H}{n}))^2)^{1/2}\\
         &\le& 4n C_{\varepsilon} C_{\beta,sub} \tilde q^{*}(\frac{H}{n})\frac{H}{n}
    \end{eqnarray*}
    where $\tilde q^{*}(x) = \min\{q\in\IN: \tilde \beta(q) \le qx\}$. With a similar argument as discussed in Lemma \ref{bernstein_mixing},
    \[
        \IE[\sup_{g\in \sG}|M_n(g) - M_n^{\circ}(g)|] \le 4nr C_{\varepsilon}C_{\beta,sub} q^{*}(\frac{H}{n})\frac{H}{n},
    \]
    which shows \reff{bernstein_mixing_martingale_res1}.
    
    We now show \reff{bernstein_mixing_martingale_res2} and \reff{bernstein_mixing_martingale_res3}. Therefore, we decompose
    \begin{equation}
        M_n^{\circ}(g) = \sum_{k=r+1, \text{ $k$ even}}^{\lfloor \frac{n}{q}\rfloor + 1}Y_k^{\circ}(g) + \sum_{k=r+1, \text{ $k$ odd}}^{\lfloor \frac{n}{q}\rfloor + 1}Y_k^{\circ}(g)\label{bernstein_mixing_martingale_proof_eq0}
    \end{equation}
    where
    \[
        Y_{k}^{\circ}(g) := \sum_{i=(k-1)q + 1}^{kq \wedge n}\frac{1}{d}\langle \varepsilon_i^{\circ}, g(\IX_{i-1}^{\circ})\rangle
    \]
    are independent. Since $(\langle \varepsilon_i, g(\IX_{i-1})\rangle)_i$ is a martingale, it holds by Theorem 2.1 in \cite{rio2009} that
    \begin{equation}
        \|Y_k^{\circ}(g)\|_m \le (m-1)^{1/2}\Big(\sum_{i=(k-1)q+1}^{kq\wedge n}\|\frac{1}{d}\langle \varepsilon_i^{\circ}, g(\IX_{i-1}^{\circ})\rangle\|_m^2\Big)^{1/2} \le (m-1)^{1/2}q^{1/2}\frac{1}{d}\|\langle \varepsilon_1^{\circ}, g(\IX_{r}^{\circ})\rangle\|_m.\label{bernstein_mixing_martingale_proof_eq1}
    \end{equation}
    By independence,
    \begin{equation}
        \Big(\frac{1}{d}\|\langle \varepsilon_1^{\circ}, g(\IX_{r}^{\circ})\rangle\|_m\Big)^m \le \Big\| \frac{|\varepsilon_1|_2}{\sqrt{d}} \Big\|_m^m \cdot \frac{1}{d}\IE[|g(\IX_{r})|_2^2]\cdot \|g\|_{\infty}^{m-2}.\label{bernstein_mixing_martingale_proof_eq2}
    \end{equation}
    Furthermore,
    \begin{eqnarray}
        \Big\| \frac{|\varepsilon_1|_2}{\sqrt{d}} \Big\|_m^m &\le& \IE\Big[\Big(\frac{1}{d}\sum_{j=1}^{d}\varepsilon_{1j}^2\Big)^{m/2}\Big] = \Big\|\frac{1}{d}\sum_{j=1}^{d}\varepsilon_{1j}^2\Big\|_{m/2}^{m/2} \le \Big(\frac{1}{d}\sum_{j=1}^{d}\|\varepsilon_{1j}^2\|_{m/2}\Big)^{m/2}\nonumber\\
        &\le& \|\varepsilon_{11}\|_m^m \le C_{\varepsilon}^{m}m^{m/2}.\label{bernstein_mixing_martingale_proof_eq3}
    \end{eqnarray}
    Insertion of \reff{bernstein_mixing_martingale_proof_eq3} into \reff{bernstein_mixing_martingale_proof_eq2} and afterwards into \reff{bernstein_mixing_martingale_proof_eq1} yields with $a := q^{*}(\frac{H}{n})\sqrt{\frac{H}{n}}$,
    \begin{eqnarray}
        \|\frac{Y_k^{\circ}(g)}{N(g)}\|_m^m &\le& (m-1)^{m/2}m^{m/2}\cdot (C_{\varepsilon}a^{-1}q^{1/2})^{m-2} \cdot qC_{\varepsilon}^2\IE[\frac{\frac{1}{d}|g(\IX_r)|_2^2}{\nu(g)^2}]\nonumber\\
        &\le& \frac{m!}{2}\cdot 2e^2 qC_{\varepsilon}^2 (C_{\varepsilon}e \cdot a^{-1}q^{1/2})^{m-2}. \label{bernstein_mixing_martingale_proof_eq100}
    \end{eqnarray}
    By Bernstein's inequality for independent variables, we conclude from \reff{bernstein_mixing_martingale_proof_eq100} that
    \[
        \IP\Big(\Big|\frac{1}{N(g)}\sum_{k=r+1, \text{ $k$ even}}^{\lfloor \frac{n}{q}\rfloor + 1}Y_k^{\circ}(g)\Big| > x\Big) \le 2\exp\Big(-\frac{1}{2}\frac{x^2}{\frac{n}{q}\cdot 2e^2 C_{\varepsilon}^2 q + e C_{\varepsilon} q^{1/2}a^{-1} x}\Big).
    \]
    Insertion into \reff{bernstein_mixing_martingale_proof_eq0} yields 
    \begin{eqnarray}
        \IP(|\frac{M_n^{\circ}(g)}{N(g)}| > x) &\le& 4\exp\Big(-\frac{1}{2}\frac{(x/2)^2}{2e^2 C_{\varepsilon}^2 n + e C_{\varepsilon} q^{1/2}a^{-1} (x/2)}\Big)\nonumber\\
        &\le& 4\exp\Big(-\frac{1}{8}\frac{x^2}{2e^2 C_{\varepsilon}^2 n + e C_{\varepsilon} q^{1/2}a^{-1} x}\Big) .\label{bernstein_mixing_martingale_proof_eq4}
    \end{eqnarray}
    \begin{enumerate}
        \item Standard arguments (cf. \cite{Vaart98}, Lemma 19.35) applied to \reff{bernstein_mixing_martingale_proof_eq4} yield that there exists some universal constant $c > 0$ such that
        \begin{equation}
            \IE \sup_{g\in \sG}|\frac{M_n^{\circ}(g)}{N(g)}| \le cC_{\varepsilon}\Big[\sqrt{nH} + q^{1/2}a^{-1}H\Big].\label{bernstein_mixing_martingale_proof_eq7}
        \end{equation}
        With $q = q^{*}(\frac{H}{n})^2$, we obtain
        \[
            \IE[\sup_{g\in \sG}|M_n^{\circ}(g)|] \le cC_{\varepsilon}\Big[\sqrt{nH} + q^{*}(\frac{H}{n})a^{-1}H\Big] \le 2cC_{\varepsilon} \sqrt{nH},
        \]
        which shows \reff{bernstein_mixing_martingale_res2}.
        \item Here, we use
\begin{eqnarray*}
    \IE\big[\sup_{g}|\frac{M_n^{\circ}(g)}{N(g)}|^2\big] = \int_0^{\infty}\IP\Big(\sup_g |\frac{M_n^{\circ}(g)}{N(g)}| > \sqrt{t}\Big) dt.
\end{eqnarray*}
Choose $G := 16eC_{\varepsilon}\frac{n a}{q^{1/2}}$. Then for $t \ge G^2$, $q^{1/2}a^{-1}\sqrt{t} \ge 2e^2 C_{\varepsilon}^2 n$. With \reff{bernstein_mixing_martingale_proof_eq4} and $\int_{b^2}^{\infty}\exp(-b_2\sqrt{t}) dt = \int_b^{\infty}2s \exp(-b_2 s) ds = 2(b_2b+1)b_2^{-2}\exp(-b_2b)$,
\begin{eqnarray}
    &&\int_0^{\infty}\IP\Big(\sup_g |\frac{M_n^{\circ}(g)}{N(g)}| > \sqrt{t}\Big) dt\nonumber\\
    &&G^2 + \int_{G^2}^{\infty}\IP\Big(\sup_g |\frac{M_n^{\circ}(g)}{N(g)}| > \sqrt{t}\Big) dt\nonumber\\
    &\le& G^2 + 4|\sG|\int_{G^2}^{\infty}\exp\Big(-\frac{1}{8}\frac{t}{2e^2 C_{\varepsilon}^2 n + eC_{\varepsilon}q^{1/2}a^{-1}\sqrt{t}}\Big) dt\nonumber\\
    &\le& G^2 + 4|\sG| \int_{G^2}\exp(-\frac{1}{16}\frac{\sqrt{t}}{eC_{\varepsilon}q^{1/2}a^{-1}}) dt\nonumber\\
    &\le& G^2 + 8|\sG|\Big(\frac{Ga}{eC_{\varepsilon}q^{1/2}} + 1\Big)(16eC_{\varepsilon}q^{1/2}a^{-1})^2 \exp(-\frac{1}{8}\frac{Ga}{eC_{\varepsilon}q^{1/2}})\nonumber\\
    &\le& 2^{10}e^2 C_{\varepsilon}^2\Big[\Big(\frac{na}{q^{1/2}}\Big)^2 + |\sG|\cdot \exp\Big(-n(\frac{a}{ q^{1/2}})^2\Big)\cdot (n + ( q^{1/2}a^{-1})^2)\Big].\label{proof_bernstein_mixing_martingale_eq7}
\end{eqnarray}
We obtain
\begin{eqnarray*}
    \IE\big[\sup_{g}|M_n^{\circ}(g)|^2\big] &\le& 2^{10}e^2C_{\varepsilon}^2\Big[ \Big(\frac{ na}{q^{1/2}}\Big)^2 + |\sG|\cdot \exp\Big(-n(\frac{a}{q^{1/2}})^2\Big)\cdot (n + (q^{1/2}a^{-1})^2)\Big].
\end{eqnarray*}
For $q = q^{*}(\frac{H}{n})^2$ this reads
\begin{eqnarray*}
    \IE\big[\sup_{g}|\frac{M_n^{\circ}(g)}{N(g)}|^2\big] &\le& 2^{10}e^2C_{\varepsilon}^2\Big[ \Big(\frac{ na}{q^{*}(\frac{H}{n})}\Big)^2 + |\sG|\cdot \exp\Big(-n(\frac{a}{q^{*}(\frac{H}{n})})^2\Big)\cdot (n + (q^{*}(\frac{H}{n})a^{-1})^2)\Big]\\
    &\le& 2^{10}e^2C_{\varepsilon}^2\Big[nH +  n + \frac{n}{H}\Big] \le 2^{11}e^2C_{\varepsilon}^2 n H,
\end{eqnarray*}
proving \reff{bernstein_mixing_martingale_res3}.
\end{enumerate}
\end{proof}

\begin{lem} \label{sub_mult_mod_beta}
    Let $\tilde \beta (k) := \beta ((k - r) \vee 0)$. Suppose that $\beta^{mix}(\cdot)$ is submultiplicative in the sense of \reff{definition_beta_submultiplicative}. Then for any $q_1,q_2,r \in \N$ there exists $C_{\beta,sub}$, such that
    \[
        \tilde \beta (q_1q_2) \le C_{\beta,sub} \tilde \beta(q_1)\tilde \beta(q_2).
    \]
\end{lem}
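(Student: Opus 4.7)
The plan is a short case analysis based on whether both indices exceed $r$. The main ingredients will be the monotonicity of $\beta^{mix}$, the submultiplicativity hypothesis \reff{definition_beta_submultiplicative}, and the trivial bound $\beta^{mix}(k) \le \beta^{mix}(0) \le 1$.

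In the generic regime $q_1, q_2 > r$, I would argue as follows. By definition $\tilde\beta(q_i) = \beta^{mix}(q_i - r)$ for $i = 1,2$ and $\tilde\beta(q_1 q_2) = \beta^{mix}(q_1 q_2 - r)$. The elementary algebraic inequality
\[
    q_1 q_2 - r \ge (q_1 - r)(q_2 - r),
\]
which reduces (after expanding the right-hand side) to $q_1 + q_2 \ge r + 1$ and therefore holds since $q_1, q_2 \ge r + 1$, combined with monotonicity of $\beta^{mix}$, gives $\tilde\beta(q_1 q_2) \le \beta^{mix}((q_1 - r)(q_2 - r))$. Applying \reff{definition_beta_submultiplicative} to the right-hand side then yields the desired bound $\tilde\beta(q_1 q_2) \le C_{\beta,sub}\tilde\beta(q_1)\tilde\beta(q_2)$.

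The only mildly delicate point is the degenerate regime where at least one $q_i \le r$. Here $\tilde\beta(q_i) = \beta^{mix}(0)$ is a fixed positive constant; I would use $q_1 q_2 \ge q_1 \vee q_2$ together with monotonicity of $\beta^{mix}$ to bound $\tilde\beta(q_1 q_2)$ by a single $\tilde\beta(q_j)$, and then absorb the missing $\beta^{mix}(0)$ factors into the constant. Concretely, replacing the constant in the conclusion by $\max\{C_{\beta,sub}, \beta^{mix}(0)^{-2}\}$ covers all regimes simultaneously. No significant obstacle is anticipated; the whole argument amounts to a few lines of arithmetic and two invocations of monotonicity.
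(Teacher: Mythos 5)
Your proof is correct and, in the main regime $q_1,q_2>r$, it is essentially the paper's argument: both rest on the elementary inequality $q_1q_2-r\ge(q_1-r)(q_2-r)$ followed by monotonicity of $\beta^{mix}$ and one application of \reff{definition_beta_submultiplicative}. Where you differ is the degenerate regime $q_i\le r$: the paper compresses everything into the single inequality $((q_1-r)\vee 0)((q_2-r)\vee 0)\le(q_1q_2-r)\vee 0$ and then applies submultiplicativity to arguments that may equal zero, which \reff{definition_beta_submultiplicative} (stated only for $q_1,q_2\in\IN$) does not literally cover -- indeed $\beta^{mix}(0)\le C_{\beta,sub}\,\beta^{mix}(0)\beta^{mix}(q)$ fails for large $q$ whenever $\beta^{mix}(q)\to 0$. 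Your explicit case split, bounding $\tilde\beta(q_1q_2)$ by a single $\tilde\beta(q_j)$ via $q_1q_2\ge q_1\vee q_2$ and enlarging the constant to $\max\{C_{\beta,sub},\beta^{mix}(0)^{-1}\}$ (your $\beta^{mix}(0)^{-2}$ also works since $\beta^{mix}(0)\le 1$), is the cleaner and more rigorous way to close that gap; the only implicit assumption is $\beta^{mix}(0)>0$, and if $\beta^{mix}(0)=0$ the process is degenerate and the claimed inequality is trivially $0\le 0$.
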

\begin{proof}[Proof of Lemma \ref{sub_mult_mod_beta}]
   By case distinction it is elementary to prove 
   \[
    ((q_1-r) \vee 0)((q_2-r) \vee 0) \le (q_1q_2 - r) \vee 0.
    \]Since $\beta$ is decreasing, we directly have
        \begin{eqnarray*}
        \tilde \beta(q_1q_2) &=& \beta((q_1q_2 - r) \vee 0) \\
        &\le& \beta(((q_1-r) \vee 0)((q_2-r) \vee 0)) \\
        &\le& C_{\beta,sub} \beta((q_1-r) \vee 0)\beta((q_2-r) \vee 0) = C_{\beta,sub} \tilde \beta(q_1)\tilde \beta(q_2)
        \end{eqnarray*}
    % \begin{enumerate}
    %     \item Let $q_1,q_2 \ge r$.
    %     We have
    %     \begin{eqnarray*}
    %         q_1q_2 - r \ge  q_1q_2 - r^2 = q_1q_2 + r^2 - 2r^2 \ge q_1q_2 + r^2 - r(q_1+q_2) = (q_1-r)(q_2-r).
    %     \end{eqnarray*}
    %     \item Let $q_1 \ge r$ and $q_2 < r$.
    %     We have $(q_1-r)((q_2-r) \vee 0) = 0$.
    %     On the other hand, since $1 \le q_2 < r$, $0 \le rq_2 - r \le q_1q_2 - r$
    % \end{enumerate}
\end{proof}

\subsubsection{Auxiliar results for oracle inequalities under mixing}

The following lemma is applied to $\varphi(x) = \phi^{-1}(\frac{1}{x})^{-1/2}$ in the proof of Theorem \ref{theorem_oracle_inequality}.

\begin{lem}\label{lemma_recursive_risk_general}
    Let $r_1,r_2,b,P > 0$ and $\varphi$ some concave function with $\varphi(0)=0$. If $a \ge 0$ satisfies $|a-b| \le r_1 \varphi(r_2 a)+ P$, then for any $\eta > 0$, 
    \[
        a \le \frac{\eta}{r_2} (\varphi^{-1})^{*}(\frac{1+\eta}{\eta}r_1r_2) + (1+\eta)(b+P).
    \]
\end{lem}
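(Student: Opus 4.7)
The hypothesis $|a-b|\le r_1\varphi(r_2 a)+P$ in particular gives the one-sided bound $a\le b+P+r_1\varphi(r_2 a)$. The plan is to decouple $a$ from the nonlinear term $\varphi(r_2 a)$ on the right via a Fenchel--Young inequality applied to the convex function $\varphi^{-1}$, and then optimize the free parameter to recover the constants $(1+\eta)$ and $\tfrac{1+\eta}{\eta}$ stated in the lemma.

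Since $\varphi$ is concave with $\varphi(0)=0$ (and increasing, as is the case in the applications to $\varphi(x)=\phi^{-1}(1/x)^{-1/2}$), the inverse $\varphi^{-1}$ is convex on $[0,\infty)$ with $\varphi^{-1}(0)=0$, so its convex dual $(\varphi^{-1})^{*}(y)=\sup_{x\ge 0}\{xy-\varphi^{-1}(x)\}$ is well-defined and Young's inequality reads $xy\le \varphi^{-1}(x)+(\varphi^{-1})^{*}(y)$ for all $x,y\ge 0$. Plugging in $x=\varphi(r_2 a)$ and $y=c r_1$ for a free parameter $c>0$, and using $\varphi^{-1}(\varphi(r_2 a))=r_2 a$, I obtain
\[
    r_1\varphi(r_2 a)\;\le\;\frac{r_2}{c}\,a+\frac{1}{c}(\varphi^{-1})^{*}(c r_1).
\]
Substituting this into the one-sided bound yields
\[
    a\;\le\;b+P+\frac{r_2}{c}\,a+\frac{1}{c}(\varphi^{-1})^{*}(c r_1).
\]

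Setting $\alpha:=r_2/c\in(0,1)$ and rearranging gives
\[
    a\;\le\;\frac{1}{1-\alpha}(b+P)+\frac{\alpha}{r_2(1-\alpha)}\,(\varphi^{-1})^{*}\!\Bigl(\frac{r_1 r_2}{\alpha}\Bigr).
\]
The final step is to choose $\alpha=\eta/(1+\eta)$, which produces $1/(1-\alpha)=1+\eta$, $\alpha/(1-\alpha)=\eta$, and $r_1 r_2/\alpha=\tfrac{1+\eta}{\eta}r_1 r_2$, exactly matching the target inequality. There is no real obstacle: the argument is a textbook Fenchel--Young splitting followed by a one-parameter optimization. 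The only mild thing to keep track of is that one must be allowed to choose $c$ large enough that $\alpha<1$, which is automatic for any $\eta>0$, and that $\varphi^{-1}$ is well-defined on the range swept out by $\varphi(r_2 a)$, which is ensured by monotonicity of $\varphi$ in the intended applications.
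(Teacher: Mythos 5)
Your proof is correct and follows essentially the same route as the paper's: both apply Fenchel--Young to the convex function $\varphi^{-1}$ (the paper bakes the constant $\tfrac{\eta}{(1+\eta)r_2}$ into a rescaled $g = \tfrac{\eta}{(1+\eta)r_2}\varphi^{-1}$ up front, whereas you keep a free parameter $c$ and optimize at the end, but the computations are identical). Your remark that $\varphi$ must be strictly increasing for $\varphi^{-1}$ to make sense is a fair observation; the paper makes the same implicit assumption, which holds in the intended application $\varphi(x)=\phi^{-1}(1/x)^{-1/2}$.
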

\begin{proof}[Proof of Lemma \ref{lemma_recursive_risk_general}]
The mapping $g(x) := \frac{\eta}{1+\eta}\frac{1}{r_2}\varphi^{-1}(x)$ is convex. By Young's inequality and denoting with $g^{*}$ the convex conjugate of $g$, 
\[
    r_1 \varphi(r_2 a) \le g^{*}(r_1) + g(\varphi(r_2 a)) = \frac{\eta}{(1+\eta)r_2}(\varphi^{-1})^{*}(\frac{1+\eta}{\eta}r_1 r_2) + \frac{\eta}{1+\eta}a.
\]
We therefore have
\[
    a \le |a-b| + b \le r_1 \varphi(a) + (b+P) \le \frac{\eta}{(1+\eta)r_2}(\varphi^{-1})^{*}(\frac{1+\eta}{\eta}r_1r_2) + \frac{\eta}{1+\eta}a + (b+P).
\]
Rearranging terms leads to
\[
    a \le \frac{\eta}{r_2} (\varphi^{-1})^{*}(\frac{1+\eta}{\eta}r_1r_2) + (1+\eta)(b+P).
\]

\end{proof}

% \subsection{Examples}

% Let $g:\IR^d \to \IR^d$, $(x_1,...,x_d)^t \mapsto (|x_1|,...,|x_d|)^t$ and $v,a \in \IR^d$. Our regression function of interest is then defined as $f:\IR^d \to \IR^d, x \mapsto v \cdot a^t g(x)$. In order for our model to converge we require $\Vert v \Vert_2 \Vert a \Vert_2 < 1$. This can readily be seen by
% \[
%     \Vert f(x) - f(y) \Vert_2 = \Vert v \cdot a^t (g(x) - g(y)) \Vert_2 \le \Vert v \cdot a^t \Vert_{spec} \Vert x - y \Vert_2 = \Vert v \Vert_2 \Vert a \Vert_2 \Vert x - y \Vert_2
% \]
% where $\Vert . \Vert_{spec}$ denotes the spectral norm of a matrix.

\subsection{Results for the functional dependence measure}

\subsubsection{Dependence measure}

Recall the definition of the functional dependence measure coefficients $\delta^{X}_q(k)$, $k\in\IN_0$ from \reff{definition_uniform_functional_dependence_measure}.

During the proofs for the oracle inequalities under functional dependence, there will occur two quantities:
\begin{equation}
    (\tilde V^{-1})^{*}(\sqrt{x}) \quad\quad \text{ and }\quad\quad q^{*}(\sqrt{x})x,\label{definition_depmeasure_quantitiestoupperbound}
\end{equation}
where
\begin{equation}
    q^{*}(x) = q^{*,dep}(x) = \min\{q\in \IN: \beta^{dep}(q) \le qx\}\label{definition_qstar_depmeas}
\end{equation}
and
\begin{equation}
    \beta^{dep}(q) = \sum_{j=q}^{\infty}\Delta(j).\label{definition_beta_depmeas}
\end{equation}
Here, $\Delta(k)$, $k\in\IN_0$, is an upper bound chosen dependent on the function class of interest and specified below.
For $x$, we have to plug in a specific rate of the form $\frac{H}{n}$. It is therefore of interest to upper bound both quantities in \reff{definition_depmeasure_quantitiestoupperbound} by one common quantity.

Recall the definitions
\[
    \tilde V(x) = x^{1/2} + \sum_{j=0}^{\infty}\min\{x^{1/2}, \Delta(j)\}
\]
and $ \Lambda(x) = \sqrt{x}\bar y(x)$ as well as $\bar y(x)$ from \reff{def_y_dependencemeasure} and \reff{definition_lambda_depmeas}. 

In Lemma \ref{lemma_depmeas_raten}, we show that both terms in \reff{definition_depmeasure_quantitiestoupperbound} are bounded by a constant times $\Lambda(x)$. Thus, $\Lambda(x)$ serves as a common upper bound for both quantities in \reff{definition_depmeasure_quantitiestoupperbound}. Besides that, we show in Lemma \ref{V_concave} that $\tilde V$ is a concave function which is needed to obtain meaningful upper bounds in Theorem \ref{theorem_oracle_inequality_dep}.

\subsubsection{Unification of \reff{definition_depmeasure_quantitiestoupperbound}}

\begin{lem}\label{lemma_depmeas_raten}
Let $\tilde V'$ denote the left derivative of $\tilde V$.
\begin{enumerate}
    \item Let $h,\delta \ge 0$. Then $\tilde V(h) \le \delta$ implies $\sqrt{h} \le r(\delta)$.
    \item If there exists $C > 0$ such that for all $q\in\IN$, $\beta^{dep}(q) \le C q \cdot \Delta(q)$, then
    \[
        \inf_{x\in[0,\infty)}\frac{\tilde V'(x) x}{\tilde V(x)} \ge \frac{1}{2(1+C)}.
    \]
    \item Under the assumptions of (ii), it holds that
    \begin{equation}
        (\tilde V^{-1})^{*}(\sqrt{x}) \le 2(1+C)\cdot \Lambda(x), \quad\quad q^{*,dep}(\sqrt{x})x \le \Lambda(x).\label{vupperbound}
    \end{equation}
    \end{enumerate}
\end{lem}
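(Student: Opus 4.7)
The whole argument parallels Lemmas \ref{lemma_varphi} and \ref{lemma_varphi2} from the mixing case, with $\tilde V$ playing the role of $\phi$. Part (i) is immediate from the definition: since each summand $\min\{\sqrt h,\Delta(j)\}\ge 0$, one has $\tilde V(h)\ge \sqrt h$, so $\tilde V(h)\le \delta$ forces $\sqrt h\le \delta$ and $r(\delta)=\delta$ suffices. For part (ii), I introduce the threshold index
\[
q(x):=\min\{q\in\IN_0 : \Delta(q)\le \sqrt x\},
\]
which is finite because $\Delta(j)\to 0$. Monotonicity of $\Delta$ then gives the closed-form expressions
\[
\tilde V(x) = (1+q(x))\sqrt x + \beta^{dep}(q(x)), \qquad \tilde V'(x) = \frac{1+q(x)}{2\sqrt x},
\]
the derivative being a left derivative, piecewise constant on the intervals where $q(x)$ is constant. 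Plugging in the hypothesis $\beta^{dep}(q(x))\le Cq(x)\Delta(q(x))\le C(1+q(x))\sqrt x$, the ratio $\tilde V'(x)x/\tilde V(x)$ is bounded below by $1/(2(1+C))$.

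For the first inequality in part (iii), I would reuse the convex-duality ``first claim'' from the proof of Lemma \ref{lemma_varphi}: for convex $f:[0,\infty)\to\IR$ with $f(0)=0$ and $w_0\ge 0$ satisfying $yw_0 - f(w_0)\le 0$, one has $f^*(y)\le w_0 f'(w_0)$. Apply this with $f=\tilde V^{-1}$ (convex because $\tilde V$ is concave by Lemma \ref{V_concave}) and $w_0=\bar y(x)$: the required condition $\sqrt x\,\bar y(x)\le \tilde V^{-1}(\bar y(x))$ is exactly the defining relation $\tilde V(\sqrt x\,\bar y(x))\le \bar y(x)$ after applying $\tilde V^{-1}$. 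This gives
\[
(\tilde V^{-1})^*(\sqrt x)\le \bar y(x)(\tilde V^{-1})'(\bar y(x)) = \frac{\tilde V(z)}{\tilde V'(z)}, \qquad z:=\tilde V^{-1}(\bar y(x)),
\]
which part (ii) upgrades to $\le 2(1+C)z$. Taking $\bar y(x)$ as the minimal solution to the defining inequality, continuity and monotonicity of $\tilde V$ force equality $\tilde V(\sqrt x\,\bar y(x))=\bar y(x)$, so $z=\sqrt x\,\bar y(x)=\Lambda(x)$ and the first bound follows.

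For the second inequality in part (iii), set $y=\bar y(x)$, $z=\Lambda(x)$ and $q_z=q(z)$; the identity $\tilde V(z)=(1+q_z)\sqrt z+\beta^{dep}(q_z)\le y$ separates into the quadratic relation $(1+q_z)^2\sqrt x\le y$ and $\beta^{dep}(q_z)\le y$. Taking $q^{\sharp}:=\lceil y/\sqrt x\rceil$, the quadratic bound ensures $q^{\sharp}\ge (1+q_z)^2\ge q_z$, hence by monotonicity of $\beta^{dep}$,
\[
\beta^{dep}(q^{\sharp})\le \beta^{dep}(q_z)\le y\le q^{\sharp}\sqrt x,
\]
so $q^{*,dep}(\sqrt x)\le q^{\sharp}$. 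Combining with $\Lambda(x)\ge x$ (which itself follows from $\tilde V(\sqrt x\,y)\ge (\sqrt x\,y)^{1/2}$ applied to the defining inequality, forcing $\bar y(x)\ge\sqrt x$) then absorbs the ceiling cleanly into $\Lambda(x)$. The main obstacle is precisely this second bound: threading the integer-valued $q^{*,dep}$ through the real quantity $\Lambda(x)$ requires the quadratic relation $(1+q_z)^2\sqrt x\le y$ rather than a naive linear one, and correctly absorbing the ceiling into the stated constant parallels the rather technical manipulations of Lemma \ref{lemma_varphi2} in the mixing case.
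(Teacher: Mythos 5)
Your parts (ii) and the first inequality of (iii) are essentially the paper's own argument: the closed-form $\tilde V(x)=(1+q(x))\sqrt x+\beta^{dep}(q(x))$ with the piecewise left derivative, the bound $\beta^{dep}(q(x))\le Cq(x)\Delta(q(x))\le C(1+q(x))\sqrt x$, and then the convex-duality claim applied to $\tilde V^{-1}$ at $w_0=\bar y(x)$ together with (ii) to get $(\tilde V^{-1})^{*}(\sqrt x)\le 2(1+C)\sqrt x\,\bar y(x)$. These steps are correct.

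The genuine gap is in part (i). The function $r(\cdot)$ is not a free parameter you may set to $r(\delta)=\delta$: in the paper it is the specific auxiliary quantity satisfying $q^{*,dep}(r(\delta))\,r(\delta)\le\delta$ (equivalently $r(\delta)=\sup\{u\ge 0: q^{*,dep}(u)u\le\delta\}$), and it obeys $r(\delta)\le\delta$. Hence your conclusion $\sqrt h\le\delta$ is strictly \emph{weaker} than the claim $\sqrt h\le r(\delta)$, and the elementary bound $\tilde V(h)\ge\sqrt h$ cannot deliver it. The paper instead uses the exact representation $\tilde V(h)=a^{*}\sqrt h+\beta^{dep}(a^{*})$ for some $a^{*}\in\IN$: from $\tilde V(h)\le\delta$ one gets both $\beta^{dep}(a^{*})\le a^{*}\cdot\frac{\delta}{a^{*}}$ (so $q^{*,dep}(\frac{\delta}{a^{*}})\frac{\delta}{a^{*}}\le\delta$, whence $r(\delta)\ge\frac{\delta}{a^{*}}$) and $a^{*}\sqrt h\le\delta$, giving $\sqrt h\le\frac{\delta}{a^{*}}\le r(\delta)$. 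This matters because the paper's proof of the second inequality in (iii) runs precisely through (i): $\tilde V(\sqrt x\,y_0)=y_0$ gives $\sqrt x\,y_0\le r(y_0)^2$, then $r(y_0)\le y_0$ gives $\sqrt x\le r(y_0)$, and monotonicity of $z\mapsto q^{*,dep}(z)z$ plus $q^{*,dep}(r(y_0))r(y_0)\le y_0$ yields $q^{*,dep}(\sqrt x)x\le y_0\sqrt x\le\Lambda(x)$ with constant $1$.

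Your alternative route for that second inequality (choosing $q^{\sharp}=\lceil \bar y(x)/\sqrt x\rceil$ and verifying $\beta^{dep}(q^{\sharp})\le q^{\sharp}\sqrt x$ via the quadratic relation $(1+q_z)^2\sqrt x\le \bar y(x)$) is sound as far as it goes, but the ceiling cannot be ``absorbed cleanly'': $\lceil u\rceil\le u+1$ only gives $q^{*,dep}(\sqrt x)x\le \sqrt x\,\bar y(x)+x\le 2\Lambda(x)$ (using $\bar y(x)\ge\sqrt x$), i.e.\ constant $2$ rather than the stated constant $1$. This is immaterial for Theorems \ref{theorem_oracle_inequality_dep_present} and \ref{thm_final_convergence_rate_depmeas}, which only use $\Lambda$ up to constants, but it does not prove \reff{vupperbound} as stated, and it does not repair part (i).
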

\begin{proof}[Proof of Lemma \ref{lemma_depmeas_raten}]
    \begin{enumerate}
        \item It can be shown as in the proof of Lemma 3.6(ii) in \cite{empproc} that for any $h \in [0,\infty)$,
    \begin{equation}
        \tilde V(h) = \sqrt{h}\cdot a^{*} + \beta^{dep}(a^{*})\label{lemma_depmeas_raten_eq6}
    \end{equation}
    with some $a^{*}\in\IN$ dependent on $h$.
    
    Let $\delta > 0$. If $\tilde V(h) \le \delta$, then $\beta^{dep}(a^{*}) \le \delta$, that is, $\frac{\beta^{dep}(a^{*})}{a^{*}} \le \frac{\delta}{a^{*}}$. By definition of $q^{*,dep}$, $q^{*,dep}(\frac{\delta}{a^{*}}) \le a^{*}$. This implies $q^{*,dep}(\frac{\delta}{a^{*}})\frac{\delta}{a^{*}} \le \delta$ and thus by definition of $r(\cdot)$ and \reff{lemma_depmeas_raten_eq6},
    \[
        r(\delta) \ge \frac{\delta}{a^{*}} \ge \sqrt{h}.
    \]
        \item Let $x\in[0,\infty)$. If $\Delta(N) < \sqrt{x} < \Delta(N-1)$ for some $N \in\IN$. Then we have
    \[
        \tilde V(x) = x^{1/2} + \sum_{j=0}^{N-1}x^{1/2} + \sum_{j=N}^{\infty}\Delta(j) = (N+1) \sqrt{x} + \beta^{dep}(N),
    \]
    and thus $x\cdot  \partial_x \tilde V(x) = (N+1)\cdot \frac{1}{2}x^{1/2}$.
    %We see that
    %\begin{equation}
     %   \frac{x\cdot \partial_x \tilde V(x)}{\tilde V(x)} = \frac{1}{2}\cdot \frac{N\sqrt{x}}{N\sqrt{x} + \beta^{dep}(N)} \le \frac{1}{2},\label{lemma_depmeas_raten_eq1}
    %\end{equation}
    By assumption $\beta^{dep}(N) \le N \Delta(N)$ and $\sqrt{x} > \Delta(N)$, 
    \begin{equation}
        \frac{x\cdot \partial_x \tilde V(x)}{\tilde V(x)} \ge \frac{1}{2}\cdot \frac{(N+1)\sqrt{x}}{(N+1)\sqrt{x} + C N\Delta(N)} \ge \frac{1}{2}\frac{(N+1)\Delta(N)}{(N+1) \Delta(N) + CN \Delta(N)} \ge \frac{1}{2(1+C)}.\label{lemma_depmeas_raten_eq2}
    \end{equation}
    Writing the left derivative as a limit of $\partial_x \tilde V(x)$, the result follows.
    %From \reff{lemma_depmeas_raten_eq1} and \reff{lemma_depmeas_raten_eq2}  we obtain
    %\begin{equation}
     %    \frac{x\cdot \partial_x \tilde V(x)}{\tilde V(x)} \in [\frac{1}{2(1+C)},\frac{1}{2}].\label{lemma_depmeas_raten_eq3}
    %\end{equation}
        
        \item Fix $x \ge 0$. Define $y_0(x) \in[0,\infty)$ such that it solves
        \begin{equation}
            \tilde V(\sqrt{x} y_0(x)) = y_0(x).\label{vup2}
        \end{equation}
        Since $z \mapsto \frac{\tilde V(z)}{z}$ is decreasing, $y_0(x) \le \bar y(x)$. It is therefore enough to show that the quantities in \reff{vupperbound} are bounded by multiples of $\sqrt{x}y_0(x) \le \Lambda(x)$.

        Let $\tilde W$ denote the right derivative of $\tilde V^{-1}$. By the First Claim in the proof of Lemma \ref{lemma_varphi}(iii) (which also holds for left or right derivatives), we obtain
        \begin{equation}
            (\tilde V^{-1})^{*}(\sqrt{x}) \le \tilde W(y_0)\cdot y_0.\label{lemma_depmeas_raten_eq7}
        \end{equation}
        For any $y \ge 0$ with $z = \tilde V^{-1}(y)$,
        \[
            \frac{\tilde W(y) y}{\tilde V^{-1}(y)} \le  \frac{\tilde V(z)}{z \tilde V'(z)} \le 2(1+C).
        \]
        Insertion into \reff{lemma_depmeas_raten_eq7} and using the definition of $y_0$ yields
        \[
            (\tilde V^{-1})^{*}(\sqrt{x}) \le \tilde W(y_0)\cdot y_0 \le  \tilde V^{-1}(y_0) = 2(1+C) y_0 \sqrt{x}.
        \]
        By \reff{vup2} and (i), we have
    \[
        \sqrt{x}y_0(x) \le r(y_0(x))^2.
    \]
    Together with $r(\delta)\le\delta$ for any $\delta \ge 0$, this implies
    \[
        \sqrt{x} = \frac{\sqrt{x}y_0}{y_0} \le \frac{r(y_0)^2}{y_0} \le r(y_0).
    \]
    Since $z \mapsto q^{*,dep}(z)z$ is increasing,
    \[
        q^{*,dep}(\sqrt{x})x \le q^{*,dep}(r(y_0)) r(y_0)\sqrt{x} \le y_0 \sqrt{x}. 
    \]
    \end{enumerate}
\end{proof}

\begin{lem}\label{V_homogen}
    For $c >0$, $(\tilde V^{-1})^\ast(cx) \le c^2(\tilde V^{-1})^\ast(x)$.
\end{lem}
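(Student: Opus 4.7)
The plan is to reduce the claim to an auxiliary scaling property of $\tilde V$ itself and then transport it through the Legendre transform by a change of variable, in precise analogy with Lemma \ref{lemma_varphi}(i). The crux is the regime $c \ge 1$, where $c^2$ is genuinely larger than $c$ and so the bound is not a consequence of convexity alone.

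First I would establish the auxiliary scaling estimate
\[
\tilde V(c^2 h) \le c\, \tilde V(h) \qquad \text{for all } c \ge 1,\; h \ge 0.
\]
Starting from $\tilde V(h) = h^{1/2} + \sum_{j=0}^{\infty}\min\{h^{1/2}, \Delta(j)\}$, the leading term scales exactly as $(c^2 h)^{1/2} = c\, h^{1/2}$, and a short term-by-term case distinction on whether $c h^{1/2} \le \Delta(j)$, or $h^{1/2} \le \Delta(j) < c h^{1/2}$, or $\Delta(j) < h^{1/2}$ shows that $\min\{c h^{1/2}, \Delta(j)\} \le c\, \min\{h^{1/2}, \Delta(j)\}$ in each of the three regimes. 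Summing over $j$ gives the claim.

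Since $\tilde V$ is nondecreasing with $\tilde V(0) = 0$, its (generalized) inverse $\tilde V^{-1}$ is nondecreasing as well. Substituting $y = \tilde V(h)$ in the previous display translates it into the dual scaling
\[
\tilde V^{-1}(cy) \ge c^2 \tilde V^{-1}(y) \qquad \text{for all } c \ge 1,\; y \ge 0.
\]
The final step applies this inside the Legendre conjugate: the change of variable $y = cu$ yields
\[
(\tilde V^{-1})^{\ast}(cx) = \sup_{y > 0}\{c x y - \tilde V^{-1}(y)\} = \sup_{u > 0}\{c^2 x u - \tilde V^{-1}(c u)\} \le \sup_{u > 0}\{c^2 x u - c^2 \tilde V^{-1}(u)\} = c^2 (\tilde V^{-1})^{\ast}(x),
\]
as required. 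The only nontrivial ingredient is the scaling inequality for $\tilde V$, which reduces to the three-case min comparison above; I do not anticipate any genuine obstacle, the rest of the argument being purely formal manipulation of the Legendre transform.
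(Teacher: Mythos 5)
Your argument is essentially the paper's own proof: both rest on the scaling bound $\tilde V(c^2h)\le c\,\tilde V(h)$ for $c\ge 1$ (your three-case comparison of $\min\{c h^{1/2},\Delta(j)\}$ with $c\min\{h^{1/2},\Delta(j)\}$), transported into the Legendre supremum by a change of variables; the paper simply substitutes $y=\tilde V(z)$ and then $z=c^2w$ directly instead of first passing to the dual inequality $\tilde V^{-1}(cy)\ge c^2\tilde V^{-1}(y)$, and your write-up is in fact a little more careful, since the paper's displayed identity for $\tilde V(c'a)$ drops the leading $a^{1/2}$ term and the square roots. One caution: your framing suggests that the regime $c<1$ is handled by convexity, but convexity of $(\tilde V^{-1})^{*}$ with $(\tilde V^{-1})^{*}(0)=0$ only gives the factor $c$, not $c^2$, and the $c^2$ bound can genuinely fail there — e.g.\ with $\Delta(0)=1$, $\Delta(j)=0$ for $j\ge1$ one computes $(\tilde V^{-1})^{*}(2)=3$ while $(1/2)^2(\tilde V^{-1})^{*}(4)=2$. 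This restriction to $c\ge1$ is shared by the paper's proof (its auxiliary inequality also requires $c'\ge1$), and the lemma is only ever invoked with $c=2(1+\eta)/\eta>1$, so nothing downstream is affected; it would just be cleaner to state the lemma for $c\ge1$.
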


\begin{proof}[Proof of Lemma \ref{V_homogen}]
We have
    \begin{eqnarray*}
        (\tilde V^{-1})^\ast(cx) &=& \sup_{y>0}\{cxy - \tilde V^{-1}(y)\} \\
        &=& \sup_{z>0}\{cx\tilde V(z) - z \} \\
        &=& \sup_{w>0}\{cx\tilde V(c^2w) - c^2w \} \\
        &\le& \sup_{w>0}\{c^2x\tilde V(w) - c^2w \} \\
        &\le& c^2 \sup_{w>0}\{x\tilde V(w) - w \} = c^2(\tilde V^{-1})^\ast(x)
    \end{eqnarray*} due to
    \[
        \tilde V(c'a) = \sum_{j = 0}^\infty \min\{c'a,\Delta(j)\} \le \sqrt{c'}\tilde V(a).
    \] for $c' > 0$.
\end{proof}

The following lemma shows that $\tilde V(\cdot)$ defined in \reff{definition_tildev_depmeasure} is concave. It is furthermore needed in the next section to get a good upper bound in the maximal inequalities.

\begin{lem} \label{V_concave}
    Let $(a_k)_{k\in\N}$ be a decreasing nonnegative sequence of real numbers for which $\sum_{k=0}^\infty a_k < \infty$. Then,
    \[
        y \mapsto v(y) := \Big( \sum_{k = 1}^N \min\{\sqrt{y},a_k\}\Big)^2, \qquad N\in\N \cup \{\infty\}
    \]
    is a concave map.
\end{lem}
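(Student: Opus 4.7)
The plan is to establish concavity of $v$ by a direct piecewise analysis of $y \mapsto v(y)$, using the fact that each summand $y \mapsto \min\{\sqrt{y}, a_k\}$ is piecewise of the form $\sqrt{y}$ or a constant, with the switch occurring at $y = a_k^2$. Since the sequence $(a_k)$ is decreasing, for $y$ in the interval $I_j := [a_{j+1}^2, a_j^2]$ (with the conventions $a_0 := \infty$ and, in the finite case, $a_{N+1} := 0$) we have $a_k \ge \sqrt{y}$ exactly for $k \le j$, so
\[
    \sum_{k=1}^{N} \min\{\sqrt{y}, a_k\} = j\sqrt{y} + c_{j+1}, \qquad c_{j+1} := \sum_{k > j} a_k.
\]
Finiteness of $c_{j+1}$ in the case $N = \infty$ is ensured by the assumption $\sum_k a_k < \infty$. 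Squaring gives on $I_j$ the explicit expression $v(y) = j^2 y + 2 j c_{j+1} \sqrt{y} + c_{j+1}^2$.

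From here two routine checks suffice. First, on the interior of each $I_j$ the second derivative
\[
    v''(y) = -\tfrac{1}{2} j c_{j+1} y^{-3/2}
\]
is nonpositive, so $v$ is concave on each piece. Second, at each breakpoint $y_j := a_j^2 > 0$, continuity of $v$ is immediate from the formula (both sides evaluate to $(j a_j + c_{j+1})^2$), so I need only verify that the left derivative at $y_j$ (coming from $I_j$) is at least the right derivative (coming from $I_{j-1}$). A short calculation using the recursion $c_j = a_j + c_{j+1}$ yields
\[
    v'_-(y_j) - v'_+(y_j) = \Big(j^2 + \frac{j c_{j+1}}{a_j}\Big) - \Big((j-1)^2 + \frac{(j-1) c_j}{a_j}\Big) = j + \frac{c_{j+1}}{a_j} \ge 0.
\]

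Combining these two facts, $v$ is continuous on $[0, \infty)$ with nonincreasing one-sided derivative, which is equivalent to concavity on any subinterval that contains only finitely many breakpoints. I do not anticipate a substantive obstacle; the only point that requires a remark is the case $N = \infty$, in which the breakpoints $a_j^2$ may accumulate at $0$. This is handled by running the piecewise argument on $[\varepsilon, \infty)$ for each $\varepsilon > 0$, where only finitely many breakpoints survive, and then extending concavity to $[0, \infty)$ using continuity of $v$ at $0$ together with $v(0) = 0$.
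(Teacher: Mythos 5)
Your argument is correct and follows essentially the same route as the paper's proof: the same piecewise expansion of $v$ on the intervals between consecutive $a_k^2$, concavity of each quadratic-in-$\sqrt{y}$ piece, and the same comparison of one-sided derivatives at the breakpoints. The only (cosmetic) differences are in the gluing step — you invoke the standard ``continuous with nonincreasing one-sided derivative'' characterization and localize to $[\varepsilon,\infty)$ for $N=\infty$, whereas the paper writes $v$ as an integral of a nonincreasing function and passes to the pointwise limit of concave functions — and both are fine.
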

\begin{proof}[Proof of Lemma \ref{V_concave}]
    It is obvious that $v$ is concave on $y \in (a_j^2,a_{j-1}^2]$ because $v$ can be represented as a sum of concave functions, namely
    \begin{eqnarray*}
        v(y) = \Big( (j-1)\sqrt{y} + \sum_{k=j}^N a_k \Big)^2 = (j-1)^2y + 2(j-1)\sum_{k=j}^Na_k\sqrt{y} + (\sum_{k=j}^N a_k)^2.
    \end{eqnarray*}
    We investigate the slope's behavior on the interval's open boundary.
    The derivative's left limit at $a_j$ yields
    \begin{eqnarray*}
        \lim_{y \to a_j^2 , y < a_j^2} \partial_yv(y) =  \lim_{y \to a_j^2 , y < a_j^2} \frac{j}{\sqrt{y}}\Big(j\sqrt{y} + \sum_{k = j+1}^Na_k \Big) &=& \frac{j}{a_j}\Big(ja_j + \sum_{k = j+1}^Na_k \Big) \\
        &=& \frac{j}{a_j}\Big((j-1)a_j + \sum_{k = j}^Na_k \Big).
    \end{eqnarray*}
    On the other hand, the right limit is given by
    \[
        \lim_{y \to a_j^2 , y > a_j^2} \partial_yv(y) =  \lim_{y \to a_j^2 , y < a_j^2} \frac{j-1}{\sqrt{y}}\Big((j-1)\sqrt{y} + \sum_{k = j}^Na_k \Big) = \frac{j-1}{a_j}\Big((j-1)a_j + \sum_{k = j}^Na_k \Big).
    \] Hence,
    \[
         \lim_{y \to a_j^2 , y < a_j^2} \partial_yf(y) \ge  \lim_{y \to a_j^2 , y > a_j^2} \partial_yf(y).
    \]
    Since $f$ is concave on intervals of the form $(a_j^2,a_{j-1}^2]$, the just proven inequality for the derivative implies that $f$ has a representation as $f(y) = \int_0^y \tilde v (x) dx$.
    Now let $\lambda \in (0,1)$. Since
    \begin{eqnarray*}
        \tilde v(x + \lambda (y-x)) =  \int_0^{x + \lambda (y-x)} \tilde v (z) dz &=& \int_0^x \tilde v (z) dz + \int_x^{x + \lambda (y-x)}  \tilde v (z) dz \\
        &=& \int_0^x \tilde v (z) dz + \int_0^{\lambda(y-x)} \tilde v (x+u)du.
    \end{eqnarray*}
    and
    \begin{eqnarray*}
        \tilde v(x) + \lambda (\tilde v(y) - \tilde v(x)) &=& \int_0^x \tilde v (z) dz + \lambda \int_x^y \tilde v(z) dz \\
        &=& \int_0^x \tilde v (z) dz + \lambda\int_0^{x-y} \tilde v(x+u)du
    \end{eqnarray*}
    for $x,y >0$, we conclude that $v$ is concave.
    
    The result for $N = \infty$ can be obtained since the limit of concave functions is concave.
\end{proof}

In this lemma we prove the upper bounds from Lemma \ref{lemma_depmeas_raten_specialcase} for $\Lambda(\cdot)$ which arise in the special cases of polynomial and exponential decay.

\begin{proof}[Proof of Lemma \ref{lemma_depmeas_raten_specialcase}]
    \begin{enumerate}
        \item In Lemma 8.13 in \cite{empproc} it was shown that
        \[
            \tilde V(z) \le C_{\kappa,\alpha}\cdot \max\{z^{\frac{1}{2}\frac{\alpha-1}{\alpha}},z^{\frac{1}{2}}\},
        \]
        where $C_{\kappa,\alpha} > 0$ is some constant only depending on $\kappa,\alpha$. Fix $x \in [0,\infty)$. With $\bar y(x) = c \max\{x^{\frac{1}{2}\frac{\alpha-1}{\alpha+1}},x^{\frac{1}{2}}\}$, we have
        \[
            \sqrt{x}\bar y(x) = c \max\{x^{\frac{\alpha}{\alpha+1}},x\}
        \]
        and by case distinction $x > 1$, $x \le 1$,
        \begin{eqnarray*}
            \tilde V(\sqrt{x} \bar y(x)) &\le& C_{\kappa,\alpha}\cdot \max\{c^{\frac{1}{2}\frac{\alpha-1}{\alpha}}, c^{\frac{1}{2}}\} \cdot  \max\{x^{\frac{1}{2}\frac{\alpha-1}{\alpha+1}},x^{\frac{1}{2}\frac{\alpha-1}{\alpha}},x^{\frac{1}{2}\frac{\alpha}{\alpha+1}},x^{\frac{1}{2}}\}\\
            &\le& C_{\kappa,\alpha} \max\{c^{\frac{1}{2}\frac{\alpha-1}{\alpha}}, c^{\frac{1}{2}}\}\cdot  \max\{x^{\frac{1}{2}\frac{\alpha-1}{\alpha+1}}, x^{\frac{1}{2}}\} = C_{\kappa,\alpha} \max\{c^{-\frac{1}{2}\frac{\alpha+1}{\alpha}}, c^{-\frac{1}{2}}\} \bar y(x),
        \end{eqnarray*}
        so choosing $c = \max\{C_{\kappa,\alpha}^{\frac{2\alpha}{\alpha+1}}, C_{\kappa,\alpha}^{2}\}$ yields the result.
        \item In Lemma 8.13 in \cite{empproc} it was shown that
        \[
            \tilde V(z) \le C_{\kappa,\rho}\cdot z^{\frac{1}{2}}\log(z^{-1} \vee 1),
        \]
        where $C_{\kappa,\rho} > 1$ is some constant only depending on $\kappa,\rho$. Fix $x \in [0,\infty)$. With $\bar y(x) = c x^{\frac{1}{2}}\log(x^{-1} \vee 1)^2$, we have
        \[
            \sqrt{x}\bar y(x) = c x\log(x^{-1}\vee 1)^2
        \]
        and by case distinction $x > 1$, $x \le 1$,
        \begin{eqnarray*}
            \tilde V(\sqrt{x} \bar y(x)) &\le& c^{\frac{1}{2}}\log(c^{-1} \vee e) C_{\kappa,\rho}\cdot \max\{x^{\frac{1}{2}}\log(x^{-1} \vee 1) (\log(x^{-1} \vee 1) - 2\log (\log(x^{-1} \vee 1) \vee 1)),\\
            &&\quad\quad x^{\frac{1}{2}}\log(x^{-1} \vee 1), x^{\frac{1}{2}}\}\\
            &\le& \frac{1}{4}c^{\frac{1}{2}}\log(c^{-1} \vee e)C_{\kappa,\rho} \cdot \max\{x^{\frac{1}{2}}\log(x^{-1} \vee 1)^2, x^{\frac{1}{2}}\}\\
            &=& \frac{1}{4}c^{-\frac{1}{2}}\log(c^{-1} \vee e)C_{\kappa,\rho}\bar y(x),
        \end{eqnarray*}
        so choosing $c = 16C_{\kappa,\rho}^2$ yields the result. 
    \end{enumerate}
\end{proof}

\subsubsection{Maximal inequalities under functional dependence}

The following empirical process results are based on the theory from \cite{empproc}.

Let $\sG \subset \{g:\IR^{dr} \to \IR\text{ measurable}\}$ be a finite class of  Lipschitz continuous functions in the sense that there exists $b \in (0,1]$ such that for $x_j \in \IR^d, x_j'\in \IR^d$, $j=1,...,r$,
\begin{equation}
    |g(x_1,...,x_r) - g(x_1',...,x_r')| \le L_{\sG}\cdot \max_{j=1,...,r}|x_j - x_j'|_{\infty},\label{cond_depmeas_lipschitz1}
\end{equation}
and for some $G > 0$,
\begin{equation}
    \sup_{g\in \sG}\|g\|_{\infty} \le G.\label{cond_depmeas_lipschitz2}
\end{equation}

For $\theta \in (0,1]$, it holds that
    \begin{eqnarray*}
        |g(x_1,...,x_r) - g(x_1',...,x_r')| &\le& \min\{2G,L_{\sG}\max_{i=1,...,r}|x_i-x_i'|_\infty\} \\
        &\le& (2G)^{1-\theta}L_{\sG}^{\theta}(\max_{i=1,...,r}|x_i-x_i'|_\infty)^{\theta}.
    \end{eqnarray*}
    
Then, using the notation from \reff{representation_x} and \reff{definition_uniform_functional_dependence_measure},
\begin{eqnarray}
    \delta_2^{g(\IX_{\cdot-1})}(k) &=& \|g(\IX_{i-1}) - g(\IX_{i-1}^{*(i-k)})\|_2\nonumber\\
    &\le& (2G)^{1-\theta}L_{\sG}^{\theta}\cdot \|\max_{j=1,...,r}|X_{i-j} - X_{i-j}^{*(i-k)}|_{\infty}^{\theta}\|_2\nonumber\\
    &\le& dr(2G)^{1-\theta}L_{\sG}^{\theta} \max_{j=\{1,...,r\},l\in \{1,...,d\}}\|X_{i-j,l} - X_{i-j,l}^{*(i-k)}\|_{2\theta}^{\theta}\nonumber\\
    &\le& dr(2G)^{1-\theta}L_{\sG}^{\theta} \cdot \sup_{l=1,...,r}\sup_{j=1,...,r}\delta_{2\theta}^{X_{\cdot,l}}(k-j)^{\theta}.\label{standard_bound_depmeas_g}
\end{eqnarray}

In the following, we suppose that $\Delta(k)$, $k\in\IN_0$ is a decreasing sequence chosen such that 
\begin{equation}
    dr(2G)^{1-\theta}L_{\sG}^{\theta} \cdot \sup_{l=1,...,r}\sup_{j=1,...,r}\delta_{2\theta}^{X_{\cdot,l}}(k-j)^{\theta} \le \Delta(k).\label{depmeas_condition_delta}
\end{equation}
% \begin{rem}
%     Note that we simply have the original assumptions of Assumption 2.5 in \cite{empproc} where $\nu = 2$, $s=1$, $p=\infty$ and $\mathbb{D}_n = 1$, $D_n^\infty(u) = 1$, $\mathbb{D}_{\nu,n}^\infty = 1$ as well as $R \equiv 1$.
% \end{rem}

Recall the definition of $q^{*,dep}$ and $\beta^{dep}$ from \reff{definition_qstar_depmeas} and \reff{definition_beta_depmeas}.

Put
\[
    S_n(g) := \sum_{i=r+1}^{n}\{g(\IX_{i-1}) - \IE g(\IX_{i-1})\}.
\]
To prove maximal inequalities for $S_n(g)$, we use the decomposition technique from \cite{empproc}, equation (3.1) therein. For $j \ge 1$ define
\[
    S_{n,j}(g) := \sum_{i=r+1}^{n}W_{i,j}(g), \quad\quad W_{i,j}(g) := \IE[g(\IX_{i-1})|\varepsilon_{i-j},...,\varepsilon_{i-1}].
\]
Then,
\begin{equation}
    S_n(g) = S_n(g) - S_{n,q}(g) + \sum_{l=1}^{L}(S_{n,\tau_l} - S_{n,\tau_{l-1}}) + S_{n,0}(g)\label{bernstein_depmeas_decomp1}
\end{equation}
where $L = \lfloor \frac{\log(q)}{\log(2)}\rfloor$ and  $\tau_l = 2^{l}$ ($l=0,...,L-1$), $\tau_L=q$ for arbitrary $q \in\{1,...,n\}$. 
Set
\[
    S_n^{\circ}(g) := S_{n,q}(g)
\]
and
\[
    S_{n,\tau_l}(g) - S_{n,\tau_{l-1}}(g) = \sum_{i=1}^{\lfloor \frac{n}{\tau_l}\rfloor+1}T_{i,l}(g), \quad\quad T_{i,l}(g) := \sum_{k=(i-1)\tau_l+1}^{(i\tau_l)\wedge n}\big[W_{k,\tau_l}(g) - W_{k,\tau_{l-1}}(g)\big].
\]
Hence,
\begin{eqnarray*}
    S_n^{\circ}(g) = \sum_{l=1}^{L}\Big[\underset{i \text{ even}}{\sum_{i=1}^{\lfloor \frac{n}{\tau_l}\rfloor + 1}}T_{i,l}(g) + \underset{i \text{ odd}}{\sum_{i=1}^{\lfloor \frac{n}{\tau_l}\rfloor + 1}}T_{i,l}(g)\Big] + S_{n,0}(g).
\end{eqnarray*}

\begin{lem}[Maximal inequalities under functional dependence]\label{bernstein_depmeas}
    Assume that $X_i$ is of the form \reff{representation_x}. Suppose that $\sG$ satisfies \reff{cond_depmeas_lipschitz1} and \reff{cond_depmeas_lipschitz2} with $G = 1$ and some $L_{\sG} > 0$. Let $\theta \in (0,1]$. Then, for any decreasing sequence $\Delta(k)$, $k\in\IN_0$, satisfying
    \[
        dr L_{\sG}^{\theta}\cdot \sup_{l=1,...,d}\sup_{j=1,...,r}\delta_{2\theta}^{X_{\cdot,l}}(k-j)^{\theta} \le \Delta(k)
    \]
    there exists some universal constant $c > 0$ such that
    \begin{equation}
         \IE \sup_{g\in \sG}|S_n(g) - S_n^{\circ}(g)| \le c  H q^{\ast,dep} (\sqrt{\frac{H}{n}}). \label{bernstein_depmeas_res1}
    \end{equation}
    Furthermore, for any estimator $\hat g \in \sG$ we have with some universal constant $c > 0$
    \begin{equation}
        \E |S_n^\circ(\hat g)| \le c(\sqrt{nH}\tilde V (\IE[\norm{g(\IX_r)}_1|_{g=\hat g}]) + qH).
        \label{bernstein_depmeas_res2}
    \end{equation}
    %%%%q = q^{*,dep}(\sqrt{\frac{H}{n}}) \in \{1,...,n\} wegen $q^{*,dep}(x) \le \lceil x^{-1}\rceil$ for $x < 1$.
\end{lem}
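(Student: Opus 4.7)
The plan is to exploit the telescoping martingale decomposition
\[
    g(\IX_{i-1}) - W_{i,q}(g) = \sum_{k=q+1}^{\infty}P_{i-k}g(\IX_{i-1}),
\]
where $P_j Y := \IE[Y|\sF_j] - \IE[Y|\sF_{j-1}]$ with $\sF_j := \sigma(\varepsilon_l: l \le j)$. The crucial input is $\|P_{i-k}g(\IX_{i-1})\|_2 \le \delta_2^{g(\IX_{\cdot-1})}(k-1) \le \Delta(k-1)$, obtained from $G=1$, the Lipschitz hypothesis \reff{cond_depmeas_lipschitz1}, and the compatibility condition \reff{depmeas_condition_delta}, together with the standard inequality $\|P_j Y\|_2 \le \|Y - Y^{*(j)}\|_2$ for projection operators.

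For \reff{bernstein_depmeas_res1}, I would sum the telescoping identity over $i$ to write $S_n(g)-S_n^{\circ}(g)=\sum_{k>q}M_{n,k}(g)$, where $M_{n,k}(g):=\sum_{i=r+1}^n P_{i-k}g(\IX_{i-1})$ is a sum of martingale differences in $i$ with $L_2$-norm at most $\sqrt{n}\,\Delta(k-1)$ and uniformly bounded increments. A Freedman-type maximal inequality combined with a union bound over the finite class $\sG$ (of cardinality at most $e^H$) then yields $\IE\sup_g|M_{n,k}(g)|\lsim \sqrt{nH}\,\Delta(k-1)$. Summing over $k>q$ produces $\sqrt{nH}\,\beta^{dep}(q)$, and the choice $q := q^{*,dep}(\sqrt{H/n})$, for which $\beta^{dep}(q) \le q\sqrt{H/n}$, transforms this into the claimed $qH$.

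For \reff{bernstein_depmeas_res2}, I would use the dyadic decomposition set up before the lemma, splitting $S_n^{\circ}(g)$ into the even- and odd-indexed sums of the blocks $T_{i,l}(g)$ across levels $l=1,\ldots,L$, plus the deterministic term $S_{n,0}(g)$. Since $T_{i,l}(g)$ is measurable with respect to $\sigma(\varepsilon_j: j \in ((i-2)\tau_l, i\tau_l))$, the even-indexed blocks are mutually independent (and likewise the odd-indexed ones). Each $T_{i,l}(g)$ admits two complementary $L_2$-bounds: a variance bound scaling like $\sqrt{\tau_l}\,\norm{g(\IX_r)}_2$ via orthogonality of the inner martingale differences, and a coupling bound scaling like $\sqrt{\tau_l}\sum_{j=\tau_{l-1}+1}^{\tau_l}\Delta(j-1)$ via the identity $W_{k,\tau_l}(g)-W_{k,\tau_{l-1}}(g)=\sum_{j=\tau_{l-1}+1}^{\tau_l}P_{k-j}g(\IX_{k-1})$. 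Applying Bernstein within each independent stream, taking the minimum of the two bounds at every dyadic scale, and summing over $l$ telescopes into precisely the function $\tilde V$ from \reff{definition_tildev_depmeasure}. Converting $\norm{g(\IX_r)}_2^2$ to $\norm{g(\IX_r)}_1$ via the uniform bound $\norm{g}_\infty\le 1$ gives, for each fixed $g\in\sG$, the pointwise estimate $\sqrt{nH}\,\tilde V(\norm{g(\IX_r)}_1)+qH$.

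The main obstacle will be upgrading this pointwise estimate to one valid at the random estimator $\hat g$: a peeling argument, stratifying $\sG$ by the dyadic size of $\IE\norm{g(\IX_r)}_1$ and applying the Bernstein bound on each peel, will be needed to localize the variance level of $T_{i,l}(\hat g)$ without an outer supremum over $\sG$. Jensen's inequality can then pull the outer expectation inside $\tilde V$ through the concavity established in Lemma \ref{V_concave}, producing $\tilde V(\IE[\norm{g(\IX_r)}_1|_{g=\hat g}])$ rather than $\tilde V$ evaluated at a supremum. A secondary subtlety is that the specific choice $\tau_l=2^l$ is exactly what makes the dyadic sum $\sum_l\min\{\sqrt{\cdot},\sum_{j\in(\tau_{l-1},\tau_l]}\Delta(j)\}$ match the defining expression of $\tilde V$ up to a universal constant; any coarser discretization would yield a strictly worse bound.
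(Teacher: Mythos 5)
Your overall architecture coincides with the paper's: for \reff{bernstein_depmeas_res1} the telescoping projection decomposition $S_n(g)-S_n^{\circ}(g)=\sum_{k>q}\sum_i P_{i-k}g(\IX_{i-1})$ is exactly the mechanism behind the intermediate bound $c\sqrt{nH}\beta^{dep}(q)$ that the paper imports from the proof of Theorem 3.2 in \cite{empproc}, and for \reff{bernstein_depmeas_res2} the paper uses the same dyadic blocks $T_{i,l}$, the same even/odd independence, the same two complementary $L_2$-bounds (uniform and coupling) whose combination over dyadic scales reassembles $\tilde V$, and the same appeal to Lemma \ref{V_concave} plus Jensen's inequality to move the expectation inside $\tilde V$. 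The one genuinely different ingredient is how you localize at the random index $\hat g$: you propose peeling $\sG$ into dyadic shells according to $\IE[\norm{g(\IX_r)}_1]$, whereas the paper normalizes by $N_l(g)=\tau_l\sqrt{H/n}\vee V_l(g)$, proves first- and second-moment maximal inequalities for $\sup_g|(S_{n,\tau_l}(g)-S_{n,\tau_{l-1}}(g))/N_l(g)|$, and then recovers the bound at $\hat g$ in a single Cauchy--Schwarz step, $\IE|Z(\hat g)N_l(\hat g)|\le\|Z(\hat g)\|_2\,\IE[V_l(\hat g)^2]^{1/2}+\IE|Z(\hat g)|\cdot\tau_l\sqrt{H/n}$. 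Both routes are viable; the self-normalization buys a shorter argument and avoids tracking constants across peels.

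Two concrete points need repair. First, in \reff{bernstein_depmeas_res1} your per-level claim $\IE\sup_g|M_{n,k}(g)|\lsim\sqrt{nH}\,\Delta(k-1)$ does not follow from a Freedman-type inequality plus a union bound as stated: the increments $P_{i-k}g(\IX_{i-1})$ are only bounded by $2\|g\|_{\infty}\le 2$ in sup-norm, so the exponential inequality yields $\sqrt{nH}\,\Delta(k-1)+cH$, and the additive term $cH$ does not sum over the infinitely many levels $k>q$. You need either a maximal inequality driven purely by the $L_2$-size of the whole tail martingale (as in the treatment of the term $A_1$ in \cite{empproc}) or a grouping of the tail levels into blocks before applying the exponential bound. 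Second, $S_{n,0}(g)$ is not deterministic: it is a sum of independent variables $W_{i,0}(g)$, and its Bernstein bound is what produces the leading $\sqrt{nH}\,\IE[\norm{g(\IX_r)}_1|_{g=\hat g}]^{1/2}$ contribution, i.e.\ the $x^{1/2}$ summand in the definition \reff{definition_tildev_depmeasure} of $\tilde V$; discarding it as deterministic leaves that part of the target bound unproven.
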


\begin{proof}[Proof of Lemma \ref{bernstein_depmeas}] Let $q \in \{1,...,n\}$ be arbitrary.
Then, as in the proof of Theorem 3.2 in \cite{empproc} (cf. the term $A_1$ in equation (8.13) therein), there exists a universal constant $c > 0$ such that
\[
    \IE \sup_{g\in \sG}|S_n(g) - S_n^{\circ}(g)| \le c \sqrt{nH}\beta^{dep}(q).
\] If $q := q^{\ast,dep} (\sqrt{\frac{H}{n}})$,
\[
    \IE \sup_{g\in \sG}|S_n(g) - S_n^{\circ}(g)| \le c \sqrt{nH} \beta(q^{\ast,dep} (\sqrt{\frac{H}{n}}))) \le c \sqrt{nH} q^{\ast,dep} (\sqrt{\frac{H}{n}})\sqrt{\frac{H}{n}} = c  H q^{\ast,dep} (\sqrt{\frac{H}{n}})
\] which proves \reff{bernstein_depmeas_res1}.

We employ a similar strategy as in the proof of Lemma \ref{bernstein_mixing}.
Let $N_l(g) := \tau_l\sqrt{\frac{H}{n}}\vee V_l(g)$ for $V_l(g):=\sum_{j = \tau_{l-1}+1}^{\tau_l}\min\{\norm{g}_2,\Delta(\lfloor \frac{j}{2} \rfloor)\}$.
We show the following two inequaltities first, where $c$ denote universal constants.
\begin{enumerate}[(i)]
    \item \begin{equation}
         \IE \sup_{g\in \sG}|\frac{S_{n,\tau_l}(g) - S_{n,\tau_{l-1}}(g)}{N_l(g)}| \le c \sqrt{nH},\label{bernstein_depmeas_res3}
    \end{equation}
        \item \begin{equation}
         \IE\big[\sup_{g\in \sG}|\frac{S_{n,\tau_l}(g) - S_{n,\tau_{l-1}}(g)}{N_l(g)}|^2\big] \le c nH.\label{bernstein_depmeas_res4}
    \end{equation}
\end{enumerate}
For $g\in \sG$, we have
\[
    \Big|\frac{T_{i,l}(g)}{N_l(g)}\Big| \le 2 \tau_l\norm{g}_\infty N_l(g)^{-1} \le 2 \tau_l \tau_l^{-1}\sqrt{\frac{n}{H}} = 2\sqrt{\frac{n}{H}}
\]
and by the same calculation as in the proof of Theorem 3.2 in \cite{empproc},
\[
   \frac{1}{\frac{n}{\tau_l}} \underset{i \text{ even}}{\sum_{i=1}^{\lfloor \frac{n}{\tau_l}\rfloor + 1}}\norm{\frac{T_{i,l}(g)}{N_l(g)}}_{2}^2 \le \frac{1}{N_l(g)^2}\Big(\sqrt{\tau_l}\sum_{j = \tau_{l-1}+1}^{\tau_l}\min\{\norm{g}_2,\Delta(\lfloor \frac{j}{2} \rfloor)\}\Big)^2 \le \tau_l.
\]
By Bernstein's inequality we obtain
\begin{eqnarray}
    &&\IP\Big(\big|\frac{S_{n,\tau_l}(g) - S_{n,\tau_{l-1}}(g)}{N_l(g)}\big| > x\Big)\nonumber\\
    &\le& \IP\Big(\Big|\underset{i \text{ even}}{\sum_{i=1}^{\lfloor \frac{n}{\tau_l}\rfloor + 1}}\frac{T_{i,l}(g)}{N_l(g)}\Big| > \frac{x}{2}\Big) + \IP\Big(\Big|\underset{i \text{ even}}{\sum_{i=1}^{\lfloor \frac{n}{\tau_l}\rfloor + 1}}\frac{T_{i,l}(g)}{N_l(g)}\Big| > \frac{x}{2}\Big)\nonumber\\
    &\le& 4\exp\Big(-\frac{1}{2}\frac{(x/2)^2}{n + 2\sqrt{\frac{n}{H}} x/2}\Big)\nonumber\\
    &\le& 4 \exp\Big(-\frac{1}{8}\frac{x^2}{n + \sqrt{\frac{n}{H}}x}\Big).\label{proof_bernstein_depmeas_eq2}
\end{eqnarray}

\begin{enumerate}
    \item
    Using standard arguments (cf. \cite{Vaart98}, Lemma 19.35), we derive from   \reff{proof_bernstein_depmeas_eq2} that there exists a universal constant $c > 0$ such that
    \[
        \IE \sup_{g\in \sG}\big|\frac{S_{n,\tau_l}(g) - S_{n,\tau_{l-1}}(g)}{N_l(g)}\big| \le c\sqrt{nH}.
    \]
    This shows \reff{bernstein_depmeas_res3}.
    
    \item
     Next, we use
    \begin{eqnarray*}
        \IE\big[\sup_{g \in \sG}|\frac{S_{n,\tau_l}(g) - S_{n,\tau_{l-1}}(g)}{N_l(g)}|^2\big] = \int_0^{\infty}\IP\Big(\sup_{g\in\sG} |\frac{S_{n,\tau_l}(g) - S_{n,\tau_{l-1}}(g)}{N_l(g)}| > \sqrt{t}\Big) dt.
    \end{eqnarray*}
    Put $a := \sqrt{\frac{H}{n}}$ and choose $G := 16na$. Then for $t \ge G^2$, $a^{-1}\sqrt{t} \ge n$. With \reff{proof_bernstein_depmeas_eq2} and  $\int_{b^2}^{\infty}\exp(-b_2\sqrt{t}) dt = \int_b^{\infty}2s \exp(-b_2 s) ds = 2(b_2b+1)b_2^{-2}\exp(-b_2b)$, we obtain
    \begin{eqnarray}
        &&\int_0^{\infty}\IP\Big(\sup_g |\frac{S_{n,\tau_l}(g) - S_{n,\tau_{l-1}}(g)}{N_l(g)}| > \sqrt{t}\Big) dt\nonumber\\
        &=&G^2 + \int_{G^2}^{\infty}\IP\Big(\sup_g |\frac{S_{n,\tau_l}(g) - S_{n,\tau_{l-1}}(g)}{N_l(g)}| > \sqrt{t}\Big) dt\nonumber\\
        &\le& G^2 + 4|\sG|\int_{G^2}^{\infty}\exp\Big(-\frac{1}{8}\frac{t}{n + \sqrt{\frac{n}{H}}\sqrt{t}}\Big) dt\nonumber\\
        &\le& G^2 + 4|\sG|\int_{G^2}^{\infty}\exp\Big(-\frac{1}{8}\frac{t}{n + a^{-1}\sqrt{t}}\Big) dt\nonumber\\
        &\le& G^2 + 4|\sG| \int_{G^2}^\infty \exp(-\frac{1}{16}\frac{\sqrt{t}}{a^{-1}}) dt\nonumber\\
        &\le& G^2 + 8|\sG|\Big(Ga + 1\Big)(16a^{-1})^2 \exp(-\frac{1}{16}Ga)\nonumber\\
        &\le& 2^{11}\Big[\Big(na\Big)^2 + |\sG|\cdot \exp\Big(-na^2\Big)\cdot (n + a^{-2})\Big].\label{proof_bernstein_depmeas_eq7}
    \end{eqnarray}
    We have
    \begin{eqnarray*}
        \IE\big[\sup_{g}|\frac{S_{n,\tau_l}(g) - S_{n,\tau_{l-1}}(g)}{N_l(g)}|^2\big] &\le& 2^{12}\Big[(na)^2 + |\sG|\cdot \exp\big(-na^2\big)\cdot (n + a^{-2})\Big].
    \end{eqnarray*}
    which can be upper bounded by
    \begin{eqnarray*}
        \IE\big[\sup_{g}|\frac{S_{n,\tau_l}(g) - S_{n,\tau_{l-1}}(g)}{N_l(g)}|^2\big] \le 2^{12}\Big[nH + n + \frac{n}{H}\Big] = 2^{14}n H,
    \end{eqnarray*}
    proving \reff{bernstein_depmeas_res3}.
\end{enumerate}
Moving on to $ \IE|S_n^{\circ}(\hat g)|$, the Cauchy-Schwarz inequality yields
    \begin{eqnarray}
        \IE|S_n^{\circ}(\hat g)| &\le& \sum_{l=1}^L \Big[ \Big\|\frac{S_{n,\tau_l}(\hat g) - S_{n,\tau_{l-1}}( \hat g)}{N_l(\hat g)}\Big\|_2 \IE[ V_l(\hat g)^2]^{1/2} \nonumber\\
        && + \IE\Big|\frac{S_{n,\tau_l}(\hat g) - S_{n,\tau_{l-1}}( \hat g)}{N_l(\hat g)}\Big|\cdot \tau_l\sqrt{\frac{H}{n}} \Big]\nonumber\\
        && + \IE[S_{n,0}(\hat g)] \nonumber \\
        &\le& c(\sqrt{nH}\sum_{l = 1}^L\IE[ V_l(\hat g)^2]^{1/2} + qH) + \IE[|S_{n,0}(\hat g)|].\label{convergenceproof_part1_depmeas_eq3}
    \end{eqnarray}
The last summand can be discussed as follows.
Let $N(g):= \sqrt{\frac{H}{n}} \vee \norm{g}_2$,
    \begin{eqnarray}
        \IE[\big|S_{n,0}(\hat g)\big|] &=& \IE[\big|\frac{S_{n,0}(\hat g)}{N(\hat g)} \cdot N(\hat g)\big|]\nonumber\\
        &\le& \norm{\frac{S_{n,0}(\hat g)}{N(\hat g)}}_2 \IE[\norm{g}_2^2\big|_{g=\hat g}]^{1/2}+ \IE\big[\big|\frac{S_{n,0}(\hat g)}{N(\hat g)}\big|\big]\sqrt{\frac{H}{n}}. \label{proof_bernstein_depmeas_eq9}
    \end{eqnarray}
Since $S_{n,0}(g) = \sum_{i=r+1}^{n}W_{i,0}(g)$ is a sum of independent variables with $|W_{i,0}(g)| \le \|g\|_{\infty} \le 1$ and $\|W_{i,0}(g)\|_2 \le 2\|g\|_2$, the Bernstein inequality yields
\[
    \IP\Big(\big|\frac{S_{n,0}(g)}{N(g)}\big| > x\Big) \le 2 \exp\big(-\frac{1}{4}\frac{x^2}{n+N(g)^{-1}x/2}\big),
\] from which we derive
\[
    \IE\big[\sup_{g\in\G}\big|\frac{S_{n,0}(g)}{N(g)}\big|\big] \le c(\sqrt{nH}+N(g)^{-1}H) \le c\sqrt{nH}
\] for some universal constant $c > 0$.
In analogy to the calculation of equation \reff{proof_bernstein_depmeas_eq7},
\[
    \IE\Big[\sup_{g \in \sG}\Big|\frac{S_{n,0}(g)}{N(g)}\Big|^2\Big] \le 2^{10}nH.
\]
Therefore, equation \reff{proof_bernstein_depmeas_eq9} can be bounded by
\begin{equation}
     \IE\sup_{g \in \sG}|S_{n,0}(\hat g)| \le c(\sqrt{nH}\IE[\norm{g(\IX_r)}_1|_{g=\hat g}]^{1/2}+H). \label{proof_bernstein_depmeas_eq10}
\end{equation}
Now, let us define $v_l(x) := \sum_{j = \tau_{l-1}+1}^{\tau_l}\min\{\sqrt{x},\Delta(\lfloor \frac{j}{2} \rfloor)\}$. Then,
\begin{eqnarray*}
     V_l(h)^2 &=& \Big(\sum_{j = \tau_{l-1}+1}^{\tau_l}\min\{\norm{h(\IX_r)}_2,\Delta(\lfloor \frac{j}{2} \rfloor)\}\Big)^2 \\
     &\le& \Big(\sum_{j = \tau_{l-1}+1}^{\tau_l}\min\{\norm{h(\IX_r)}_1^{1/2},\Delta(\lfloor \frac{j}{2} \rfloor)\}\Big)^2  = v_l(\norm{h}_1)^2.
\end{eqnarray*}
This implies the first bound of the following inequality. The second bound follows from Jensen's inequality while taking into account that $v_l^2$ is concave by Lemma \ref{V_concave}:
\begin{equation}
    \IE[ V_l(\hat g)^2 ]^{1/2} \le \IE[v_l(\norm{g(\IX_r)}_1)^2\big|_{g = \hat g}]^{1/2} \le v_l(\IE[\norm{g(\IX_r)}_1 \big|_{g=\hat g}]). \label{proof_bernstein_depmeas_eq8}
\end{equation}
Inserting equations \reff{proof_bernstein_depmeas_eq8}, \reff{proof_bernstein_depmeas_eq10} into \reff{convergenceproof_part1_depmeas_eq3} and applying  \cite[Lemma 8.2]{empproc} afterwards (which allows to replace $\Delta(\lfloor \frac{j}{2}\rfloor)$ in $v_l(\cdot)$ by $\Delta(j)$) gives
\begin{eqnarray*}
     \IE\sup_{g \in \sG}|S_n^{\circ}(\hat g)| &\le& c\Big(\sqrt{nH}(\sum_{l=1}^Lv_l(\IE[\norm{g(\IX_r)}_1 \big|_{g=\hat g}]) + \IE[\norm{g(\IX_r)}_1|_{g=\hat g}]^{1/2} + (q+1)H \Big) \\
     &\le& c2\Big(\sqrt{nH}(2\sum_{j=1}^\infty \min\{\IE[\norm{g(\IX_r)}_1 \big|_{g=\hat g}]^{1/2},\Delta(j)\} + \IE[\norm{g(\IX_r)}_1|_{g=\hat g}]^{1/2}) + qH \Big) \\
     &\le& c(\sqrt{nH}\tilde V (\IE[\norm{g(\IX_r)}_1|_{g=\hat g}]) + qH).
\end{eqnarray*}
\end{proof}

For $g:\IR^{dr} \to \IR^d$, let
\[
    M_n(g) := \sum_{i=r+1}^{n}\frac{1}{d}\langle \varepsilon_i,g(\IX_{i-1})\rangle.
\]
\begin{lem}[Maximal inequalities for martingale sequences under functional dependence]\label{bernstein_depmeas_martingale}
    Assume that $X_i$ is of the form \reff{representation_x} and that Assumption \ref{ass_subgaussian} holds. Suppose that any component of $g \in \sG$ satisfies \reff{cond_depmeas_lipschitz1} and \reff{cond_depmeas_lipschitz2} with $G = 1$ and some $L_{\sG} > 0$. Let $\theta \in (0,1]$. Then, with any decreasing sequence $\Delta(k)$, $k\in\IN_0$, satisfying
    \[
        dr L_{\sG}^{\theta} \cdot \sup_{l=1,...,r}\sup_{j=1,...,r}\delta_{2\theta}^{X_{\cdot,l}}(k-j)^{\theta} \le \Delta(k),
    \]
    there exists another process $M_n^{\circ}(g)$ and some universal constant $c > 0$ such that
    \begin{equation}
        \IE \sup_{g\in \sG}|M_n(g) - M_n^{\circ}(g)| \le cC_\eps \sqrt{nH}\beta^{dep}(q)\label{bernstein_depmeas_res5}.
    \end{equation}
    Furthermore for an estimator $\hat g:\IR^{dr} \to \IR^d$,
    \begin{equation}
        \E[|M_n^\circ(\hat g)|] \le cC_{\varepsilon}(\sqrt{nH}(\sqrt{\log(q)}+1)\IE[ \| |g(\IX_1)|_2 \|_2\big|_{g = \hat g} ]^{1/2} + q^{1/2}H)
        \label{bernstein_depmeas_res6}
    \end{equation}
    %%%%q = q^{*,dep}(\sqrt{\frac{H}{n}})+1 \in \{1,...,n\} wegen $q^{*,dep}(x) \le \lceil x^{-1}\rceil$ for $x < 1$,
    
    %%%abschaetzen q^(1/2) \le q,
    %%%\sqrt{\log(q)} \le \sqrt{q}
\end{lem}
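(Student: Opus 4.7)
The plan is to mirror the coupling-plus-dyadic-decomposition argument used in Lemma~\ref{bernstein_depmeas}, and to combine it with the martingale Bernstein inequality and the conditional sub-Gaussianity exploited in Lemma~\ref{bernstein_mixing_martingale}. The key observation is that, because $\varepsilon_i$ is independent of $\sA_{i-1}$ and mean zero, the summands $\frac{1}{d}\langle\varepsilon_i,g(\IX_{i-1})\rangle$ form a martingale difference sequence with respect to the filtration $(\sA_i)$, which we exploit both to bound the coupling error in $L^2$ and to apply sharper maximal inequalities than in the pure mixing case.

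\textbf{Step 1: coupling.} For $q\in\{1,\dots,n\}$ (to be optimized later as $q\asymp q^{*,dep}(\sqrt{H/n})$), set
\[
W_{i,j}(g) := \IE\bigl[g(\IX_{i-1})\mid \varepsilon_{i-1},\dots,\varepsilon_{i-j}\bigr],
\quad
M_n^{\circ}(g) := \sum_{i=r+1}^{n}\tfrac{1}{d}\langle \varepsilon_i,W_{i,q}(g)\rangle.
\]
The increments of $M_n(g)-M_n^\circ(g)$ are then martingale differences because $g(\IX_{i-1})-W_{i,q}(g)$ is $\sA_{i-1}$-measurable. By the projection decomposition $g_k(\IX_{i-1})-W_{i,q,k}(g)=\sum_{j>q}P_{i-j}g_k(\IX_{i-1})$, the orthogonality of the $P_\ell$, and the condition on $\Delta$ in the lemma, each coordinate satisfies $\|g_k(\IX_{i-1})-W_{i,q,k}(g)\|_2\le\beta^{\mathrm{dep}}(q)$. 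Combined with Assumption~\ref{ass_subgaussian} and conditional independence, one obtains $\|M_n(g)-M_n^\circ(g)\|_{\psi_2}\lesssim C_\varepsilon\sqrt{n}\,\beta^{\mathrm{dep}}(q)$ for each fixed $g$, and a union bound over the $e^H$ elements of $\sG$ delivers \reff{bernstein_depmeas_res5}.

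\textbf{Step 2: dyadic decomposition of $M_n^\circ$.} Following the template of \reff{bernstein_depmeas_decomp1}, set $L=\lfloor\log_2(q)\rfloor$, $\tau_0=0$, $\tau_\ell=2^\ell$ for $\ell<L$ and $\tau_L=q$, and decompose
\[
M_n^{\circ}(g) = M_{n,0}(g) + \sum_{\ell=1}^{L}\bigl(M_{n,\tau_\ell}(g)-M_{n,\tau_{\ell-1}}(g)\bigr),
\quad
M_{n,j}(g):=\sum_{i}\tfrac{1}{d}\langle \varepsilon_i, W_{i,j}(g)\rangle.
\]
At each level $\ell$, the increments $U_{i,\ell}(g):=\frac{1}{d}\langle\varepsilon_i, W_{i,\tau_\ell}(g)-W_{i,\tau_{\ell-1}}(g)\rangle$ form a martingale difference sequence with conditional sub-Gaussian parameter of order $C_\varepsilon\,d^{-1/2}\|W_{i,\tau_\ell}(g)-W_{i,\tau_{\ell-1}}(g)\|_2$ and bounded range $\lesssim C_\varepsilon$. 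Splitting the index set into two alternating block-sequences of length $\tau_\ell$ renders the blocks conditionally independent enough that the Bernstein inequality for martingales (Theorem~2.1 of \cite{rio2009}, as in the proof of Lemma~\ref{bernstein_mixing_martingale}) produces a tail bound of the form \reff{bernstein_mixing_martingale_proof_eq4} with the cluster size $q$ replaced by $\tau_\ell$.

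\textbf{Step 3: combination of levels.} Set
\[
N_\ell(g):=\tau_\ell^{1/2}\sqrt{H/n}\,\vee\, V_\ell(g),
\quad
V_\ell(g):=\sum_{j=\tau_{\ell-1}+1}^{\tau_\ell}\min\bigl\{\|g(\IX_r)\|_2,\Delta(\lfloor j/2\rfloor)\bigr\},
\]
as in Lemma~\ref{bernstein_depmeas}. The tail bound of Step~2, via the standard argument of \cite[Lemma 19.35]{Vaart98}, gives $\IE\sup_{g\in\sG}|(M_{n,\tau_\ell}-M_{n,\tau_{\ell-1}})(g)/N_\ell(g)|\lesssim C_\varepsilon\sqrt{nH}$ and, by squaring and integrating the tail (cf.\ \reff{proof_bernstein_depmeas_eq7}), also the matching $L^2$ bound. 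Cauchy--Schwarz against $N_\ell(\hat g)$ then yields
\[
\IE|M_{n,\tau_\ell}(\hat g)-M_{n,\tau_{\ell-1}}(\hat g)|
\;\lesssim\; C_\varepsilon\bigl(\sqrt{nH}\,V_\ell(\hat g)+\tau_\ell^{1/2}H\bigr),
\]
and Jensen applied to $V_\ell(\cdot)^2$ using the concavity from Lemma~\ref{V_concave} replaces $V_\ell(\hat g)$ by $v_\ell(\IE[\|g(\IX_r)\|_1|_{g=\hat g}])$. Summing over the $L\asymp\log q$ levels, handling $M_{n,0}(\hat g)$ by the same Cauchy--Schwarz plus sub-Gaussian Bernstein argument as in the proof of \reff{bernstein_depmeas_res2}, and using Lemma~8.2 of \cite{empproc} to collapse $\sum_\ell v_\ell$ into $\tilde V(\cdot)$ yields \reff{bernstein_depmeas_res6}; the $\sqrt{\log q}$ prefactor arises from Cauchy--Schwarz across the $L$ levels.

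\textbf{Main obstacle.} The delicate step is Step~3: one must choose the normalisations $N_\ell(g)$ at each level so that the sum over $\ell$ of the resulting bounds collapses into a single $\tilde V$-term (plus a clean $q^{1/2}H$ remainder) rather than an unwieldy sum of $L$ unrelated contributions. This requires combining the concavity of $\tilde V$ (Lemma~\ref{V_concave}), the summability identity of \cite[Lemma 8.2]{empproc}, and a careful tracking of the sub-Gaussian conditional variance bound against the quantity $V_\ell(g)$. Everything else is bookkeeping analogous to Lemma~\ref{bernstein_depmeas} and Lemma~\ref{bernstein_mixing_martingale}.
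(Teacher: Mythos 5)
Your Steps 1 and 2 follow the paper's proof essentially verbatim: the same coupling $M_n^{\circ}(g)=M_{n,q}(g)$ built from $W_{i,j}(g)=\IE[g(\IX_{i-1})\mid\varepsilon_{i-j},\dots,\varepsilon_{i-1}]$, the same telescoping control of $M_n-M_n^{\circ}$ via $\Delta(j)$ to obtain \reff{bernstein_depmeas_res5}, and the same dyadic decomposition with even/odd blocks of length $\tau_l$ handled by Rio's martingale moment inequality and Bernstein. The problem is Step 3, which as written does not prove the stated bound \reff{bernstein_depmeas_res6}.

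Concretely, you normalize each level by $N_\ell(g)=\tau_\ell^{1/2}\sqrt{H/n}\vee V_\ell(g)$ with $V_\ell$ as in Lemma \ref{bernstein_depmeas}, and then propose to sum the levels by collapsing $\sum_\ell v_\ell$ into $\tilde V(\IE[\|g(\IX_r)\|_1])$ via Lemma 8.2 of the empirical-process reference. That route produces a bound of the form $c C_{\varepsilon}(\sqrt{nH}\,\tilde V(\IE[\|g(\IX_r)\|_1|_{g=\hat g}])+q^{1/2}H)$, i.e.\ the analogue of \reff{bernstein_depmeas_res2}, not the inequality \reff{bernstein_depmeas_res6}, whose right-hand side involves $(\sqrt{\log q}+1)\,\IE[\,\||g(\IX_1)|_2\|_2\,|_{g=\hat g}]^{1/2}$ and is in general strictly sharper (e.g.\ under polynomial decay of $\Delta$, $\tilde V(x)\asymp x^{\frac{\alpha-1}{2\alpha}}\gg\sqrt{x}\,\sqrt{\log q}$ for small $x$). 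Your closing remark that "the $\sqrt{\log q}$ prefactor arises from Cauchy--Schwarz across the $L$ levels" is also inconsistent with simultaneously collapsing $\sum_\ell v_\ell$ into $\tilde V$: these are two mutually exclusive ways of handling the sum over levels, and neither, applied to the $V_\ell$'s, yields the $L^2$-norm of $g$. The missing idea is the paper's choice of normalizer $D_l(g)=\IE[\tfrac1d|W_{1,\tau_l}(g)-W_{1,\tau_{l-1}}(g)|_2^2]^{1/2}$ in place of $V_l(g)$, combined with the observation that $(W_{1,\tau_l}(g)-W_{1,\tau_{l-1}}(g))_l$ is a martingale difference sequence in $l$, so that the squares telescope:
\begin{equation*}
\sum_{l=1}^{L}\IE\big[|W_{1,\tau_l}(g)-W_{1,\tau_{l-1}}(g)|_2^2\big]=\IE\big[|W_{1,q}(g)-W_{1,0}(g)|_2^2\big]\le 4\,\IE\big[|g(\IX_1)|_2^2\big].
\end{equation*}
Cauchy--Schwarz across levels then gives $\sum_l D_l(\hat g)\le\sqrt{L}\,(\sum_l D_l(\hat g)^2)^{1/2}\lesssim\sqrt{\log q}\cdot\IE[\tfrac1d|g(\IX_1)|_2^2|_{g=\hat g}]^{1/2}$, which is exactly where the $\sqrt{\log q}$ and the $L^2$-norm of $g$ in \reff{bernstein_depmeas_res6} come from; no appeal to $\tilde V$, Lemma \ref{V_concave}, or Lemma 8.2 of the reference is needed at this stage. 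You should replace your Step 3 by this orthogonality argument.
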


\begin{proof}[Proof of Lemma \ref{bernstein_depmeas_martingale}]
    We use a similar decomposition as in the proof of Lemma \ref{bernstein_depmeas}. For $j \ge 1$ define
    \[
    M_{n,j}(g) := \sum_{i=r+1}^{n}\bar W_{i,j}(g), \quad\quad \bar W_{i,j}(g) := \IE[\frac{1}{d}\langle \varepsilon_i, g(\IX_{i-1})\rangle|\varepsilon_{i-j},...,\varepsilon_{i}] = \frac{1}{d}\langle \varepsilon_i, W_{i,j}(g)\rangle,
    \]
    where $W_{i,j}(g) := \IE[g(\IX_{i-1})|\varepsilon_{i-j},...,\varepsilon_{i-1}]$. Define
    \[
        M_n^{\circ}(g) := M_{n,q}(g).
    \]
    Note that $(\langle \varepsilon_i, g(\IX_{i-1})\rangle)_i$ is a martingale and for fixed $j$, the sequence
    \begin{eqnarray*}
        (E_{i,j}(g))_{g\in \sG} &=& \big((\bar W_{i,j+1}(g) - \bar W_{i,j}(g))\big)_{g\in \sG}
    \end{eqnarray*}
    is a  $|\sG|$-dimensional martingale difference vector with respect to $\sA^{i} := \sigma(\varepsilon_{i-j},\varepsilon_{i-j+1},...)$.
    Since
    \[
        \sup_{g\in \sG}|E_{i,j}(g)| = \sup_{g\in \sG}|\bar W_{i,j+1}(g) - \bar W_{i,j}(g)| \le \frac{1}{d}|\varepsilon_i|_2\cdot \sup_{g\in \sG}|W_{i,j+1}(g) - W_{i,j}(g)|_2,
    \]
    we have by \reff{standard_bound_depmeas_g} (which also holds with $\sup_{g}$ inside the $\|\cdot\|_2$-norm),
    \begin{eqnarray*}
    \big\| \sup_{g\in\sG}|E_{i,j}(g)|\, \big\|_2 &\le& \frac{1}{d}\sum_{k=1}^d \norm{\varepsilon_{ik}}_2 \norm{ \sup_{g\in\sG} |g(\IX_{i-1})_k-g(\IX_{i-1}^{\ast(i-j)})_k| }_2 \le C_\eps\Delta(j).\label{mart_diff_eq3}
    \end{eqnarray*}
    Therefore, in analogy to the proof of Theorem 3.2 found in \cite{empproc},
    \[
        \IE \sup_{g\in \sG}|M_n(g) - M_n^{\circ}(g)| \le \sum_{j=q}^\infty \norm{\sup_{g\in \sG}\Big|\sum_{i=r+1}^n E_{i,j}(g)\Big|}_2 \le cC_\eps \sqrt{nH}\beta^{dep}(q).
    \]
    for some universal constant $c>0$.

    For $q \in \{1,...,n\}$, it holds that
    \[
        M_{n}^{\circ}(g) =  \sum_{l=1}^{L}\Big[\underset{i \text{ even}}{\sum_{i=1}^{\lfloor \frac{n}{\tau_l}\rfloor + 1}}\bar T_{i,l}(g) + \underset{i \text{ odd}}{\sum_{i=1}^{\lfloor \frac{n}{\tau_l}\rfloor + 1}}\bar T_{i,l}(g)\Big] + M_{n,0}(g)
    \]
    where
    \[
        \bar T_{i,l}(g) := \sum_{k=(i-1)\tau_l+1}^{(i\tau_l)\wedge n}\big[\bar W_{k,\tau_l}(g) - \bar W_{k,\tau_{l-1}}(g)\big] = \frac{1}{d}\sum_{k=(i-1)\tau_l+1}^{(i\tau_l)\wedge n}\langle \varepsilon_k, W_{k,\tau_l}(g) - W_{k,\tau_{l-1}}(g)\rangle.
    \]

Next, let
\[
    N_l(g) = \max\{\tau_l^{1/2}\cdot\sqrt{\frac{H}{n}}, D_l(g)\}, \quad\quad D_l(g) := \IE[\frac{1}{d}|W_{1,\tau_l}(g) - W_{1,\tau_{l-1}}(g)|_2^2]^{1/2}.
\]
We show the following two inequalities first, where $c$ denotes a universal constant.
\begin{enumerate}[(i)]
    \item \begin{equation}
         \IE \sup_{g\in \sG}|\frac{M_{n,\tau_l}(g) - M_{n,\tau_{l-1}}(g)}{N_l(g)}| \le cC_{\varepsilon}\sqrt{nH}\label{bernstein_depmeas_mod_res2}
    \end{equation}
        \item \begin{equation}
         \IE\big[\sup_{g\in \sG}|\frac{M_{n,\tau_l}(g) - M_{n,\tau_{l-1}}(g)}{N_l(g)}|^2\big] \le cC_{\varepsilon}^2 nH.\label{bernstein_depmeas_mod_res3}
    \end{equation}
\end{enumerate}
We have, similar to \reff{bernstein_mixing_martingale_proof_eq100}, by Theorem 2.1 in \cite{rio2009} and Assumption \ref{ass_subgaussian},
\begin{eqnarray}
    &&\frac{1}{\lfloor\frac{n}{\tau_l}\rfloor}\sum_{i=1}^{\lfloor\frac{n}{\tau_l}\rfloor+1}\IE[|\bar T_{i,l}(g )|^m]\nonumber\\
    &\le& (m-1)^{m/2}\frac{1}{\lfloor \frac{n}{\tau_l}\rfloor}\sum_{i=1}^{\lfloor \frac{n}{\tau_l}\rfloor+1}\frac{1}{d}\Big(\sum_{k=(i-1)\tau_l+1}^{(i\tau_l)\wedge n}\IE[|\langle \varepsilon_k, W_{k,\tau_l}(g) - W_{k,\tau_{l-1}}(g)\rangle|^m]^2\Big)^{1/2}\nonumber\\
    &\le& (m-1)^{m/2} \frac{1}{d}\tau_l^{m/2}\IE[|\varepsilon_1|_2^m]\cdot \IE[|W_{1,\tau_l}(g) - W_{1,\tau_{l-1}}(g)|_2^m]\nonumber\\
    &\le& (m-1)^{m/2}\tau_l^{m/2}C_{\varepsilon}^m\cdot \frac{1}{d}\IE[|W_{1,\tau_l}(g) - W_{1,\tau_{l-1}}( g)|_2^2]\norm{g}_\infty^{m-2} \nonumber \\
    &\le& \frac{m!}{2} \cdot 2e^2C_{\varepsilon}^2\tau_l\IE[\frac{1}{d}|W_{1,\tau_l}(g) - W_{1,\tau_{l-1}}(g)|_2^2] \cdot (eC_{\varepsilon}\tau_l^{1/2})^{m-2} \label{bernstein_depmeas_martingale_proof_eq100}
\end{eqnarray}
With $\tilde a:=\tau_l^{1/2}\cdot\sqrt{\frac{H}{n}}$, 
\[
    \frac{1}{\lfloor\frac{n}{\tau_l}\rfloor}\sum_{i=1}^{\lfloor\frac{n}{\tau_l}\rfloor+1}\frac{1}{N_l(g)}\IE[|T_{i,l}(g )|^m] \le \frac{m!}{2} \cdot 2e^2 C_{\varepsilon}^2\tau_l (\tilde a^{-1}eC_{\varepsilon}\tau_l^{1/2})^{m-2}
\]
By Bernstein's inequality for independent variables, we conclude that
\begin{eqnarray}
    &&\IP\Big(\big|\frac{M_{n,\tau_l}(g) - M_{n,\tau_{l-1}}(g)}{N_l(g)}\big| > x\Big)\nonumber\\
    &\le& \IP\Big(\Big|\underset{i \text{ even}}{\sum_{i=1}^{\lfloor \frac{n}{\tau_l}\rfloor + 1}}\frac{T_{i,l}(g)}{N_l(g)}\Big| > \frac{x}{2}\Big) + \IP\Big(\Big|\underset{i \text{ even}}{\sum_{i=1}^{\lfloor \frac{n}{\tau_l}\rfloor + 1}}\frac{T_{i,l}(g)}{N_l(g)}\Big| > \frac{x}{2}\Big)\nonumber\\
    &\le& 4 \exp\Big(-\frac{1}{8}\frac{x^2}{2(e C_{\varepsilon})^2 n + eC_{\varepsilon}\sqrt{\frac{n}{H}}x}\Big).\label{proof_bernstein_depmeas_mod_eq2}
\end{eqnarray}

\begin{enumerate}
    \item
    Using standard arguments (cf. \cite{Vaart98}, Lemma 19.35), we derive from   \reff{proof_bernstein_depmeas_eq2} that there exists a universal constant $c > 0$ such that
    \[
        \IE \sup_{g\in \sG}\big|\frac{M_{n,\tau_l}(g) - M_{n,\tau_{l-1}}(g)}{N_l(g)}\big| \le cC_{\varepsilon}\sqrt{nH}.
    \]
    This shows \reff{bernstein_depmeas_mod_res2}.
    
    \item
     Next, we use
    \begin{eqnarray*}
        \IE\big[\sup_{g \in \sG}|\frac{M_{n,\tau_l}(g) - M_{n,\tau_{l-1}}(g)}{N_l(g)}|^2\big] = \int_0^{\infty}\IP\Big(\sup_{g\in\sG} |\frac{M_{n,\tau_l}(g) - M_{n,\tau_{l-1}}(g)}{N_l(g)}| > \sqrt{t}\Big) dt.
    \end{eqnarray*}
    Put $a := \sqrt{\frac{H}{n}}$. Choose $G := 16(eC_{\varepsilon})na$. Then for $t \ge G^2$, $a^{-1}\sqrt{t} \ge n$. With \reff{proof_bernstein_depmeas_mod_eq2} and  $\int_{b^2}^{\infty}\exp(-b_2\sqrt{t}) dt = \int_b^{\infty}2s \exp(-b_2 s) ds = 2(b_2b+1)b_2^{-2}\exp(-b_2b)$, we obtain
    \begin{eqnarray}
        &&\int_0^{\infty}\IP\Big(\sup_g |\frac{M_{n,\tau_l}(g) - M_{n,\tau_{l-1}}(g)}{N_l(g)}| > \sqrt{t}\Big) dt\nonumber\\
        &=&G^2 + \int_{G^2}^{\infty}\IP\Big(\sup_g |\frac{M_{n,\tau_l}(g) - M_{n,\tau_{l-1}}(g)}{N_l(g)}| > \sqrt{t}\Big) dt\nonumber\\
        &\le& G^2 + 4|\sG|\int_{G^2}^{\infty}\exp\Big(-\frac{1}{8}\frac{x^2}{2(e C_{\varepsilon})^2 n + eC_{\varepsilon}\sqrt{\frac{n}{H}}x}\Big) dt\nonumber\\
        &\le& G^2 + 4|\sG|\int_{G^2}^{\infty}\exp\Big(-\frac{1}{8}\frac{t}{2(eC_{\varepsilon})^2n + eC_{\varepsilon}a^{-1}\sqrt{t}}\Big) dt\nonumber\\
        &\le& G^2 + 4|\sG| \int_{G^2}^\infty \exp(-\frac{1}{16}\frac{\sqrt{t}}{eC_{\varepsilon} a^{-1}}) dt\nonumber\\
        &\le& G^2 + 8|\sG|\Big(\frac{Ga}{eC_{\varepsilon}} + 1\Big)(16eC_{\varepsilon}a^{-1})^2 \exp(-\frac{1}{16eC_{\varepsilon}}Ga)\nonumber\\
        &\le& 2^{11}(eC_{\varepsilon})^2\Big[(na)^2 + |\sG|\cdot \exp\big(-na^2\big)\cdot (n + a^{-2})\Big].\label{proof_bernstein_depmeas_mod_eq7}
    \end{eqnarray}
    We conclude that
    \begin{eqnarray*}
        \IE\big[\sup_{g}|\frac{M_{n,\tau_l}(g) - M_{n,\tau_{l-1}}(g)}{N_l(g)}|^2\big] &\le& 2^{12}(eC_{\varepsilon})^2\Big[(na)^2 + |\sG|\cdot \exp\big(-na^2\big)\cdot (n + a^{-2})\Big]
    \end{eqnarray*}
    which can be upper bounded by
    \begin{eqnarray*}
        \IE\big[\sup_{g}|\frac{M_{n,\tau_l}(g) - M_{n,\tau_{l-1}}(g)}{N_l(g)}|^2\big] \le 2^{12}(eC_{\varepsilon})^2\Big[nH + n + \frac{n}{H}\Big] \le 2^{14}(eC_{\varepsilon})^2 n H.
    \end{eqnarray*}
    This shows \reff{bernstein_depmeas_mod_res3}.
\end{enumerate}
Using \reff{bernstein_depmeas_mod_res2} and \reff{bernstein_depmeas_mod_res3}, we can now upper bound  $ \IE|M_n^{\circ}(\hat g)|$. By the Cauchy-Schwarz inequality,
    \begin{eqnarray}
        \IE|M_n^{\circ}(\hat g)| &\le& \sum_{l=1}^L \Big[ \Big\|\frac{M_{n,\tau_l}(\hat g) - M_{n,\tau_{l-1}}( \hat g)}{N_l(\hat g)}\Big\|_2 \IE[D_l(\hat g)^2]^{1/2} \nonumber\\
        && + \IE\Big|\frac{M_{n,\tau_l}(\hat g) - M_{n,\tau_{l-1}}( \hat g)}{N_l(\hat g)}\Big|\cdot \tau_l^{1/2}\sqrt{\frac{H}{n}} \big]\nonumber\\
        && + \IE[M_{n,0}(\hat g)] \nonumber \\
        &\le& cC_{\varepsilon}(\sqrt{nH}\sum_{l = 1}^L\IE[D_l(\hat g)^2]^{1/2} + q^{1/2}H) + \IE[|M_{n,0}(\hat g)|].\label{convergenceproof_part1_depmeas_mod_eq3}
    \end{eqnarray}
    For $v\in\IR^{L}$, $|v|_1 \le \sqrt{L} |v|_2$. Thus,
    \begin{eqnarray}
        \sum_{l = 1}^L \IE[D_l(\hat g)^2]^{1/2} &\le& \sqrt{L}\cdot \Big(\sum_{l=1}^{L}\IE[D_l(\hat g)^2]\Big)^{1/2}\nonumber\\ &=& \sqrt{L}\IE\Big[\Big(\frac{1}{d}\sum_{l=1}^{L}\IE[|W_{1,\tau_l}(g) - W_{1,\tau_{l-1}}(g)|_2^2]\Big)\Big|_{g=\hat g}\Big]^{1/2}\label{proof_bernstein_depmeas_eq11}
    \end{eqnarray}
    Note that $(W_{1,\tau_l}(g) - W_{1,\tau_{l-1}}(g))_l$ is a martingale difference sequence with respect to $\tilde \sA^{l} := \sigma(\varepsilon_{1-\tau_l},...,\varepsilon_1)$. We therefore have
    \[
        \IE[|W_{1,q}(g) - W_{1,0}(g)|_2^2] = \IE\Big[\Big|\sum_{l=1}^{L}W_{1,\tau_l}(g) - W_{1,\tau_{l-1}}(g)\Big|_2^2\Big] = \sum_{l=1}^{L}\IE[|W_{1,\tau_l}(g) - W_{1,\tau_{l-1}}(g)|_2^2].
    \]
    Since the left hand side is bounded by $4\IE[|g(\IX_{1})|_2^2]$ by the projection property of conditional expectations, insertion into \reff{proof_bernstein_depmeas_eq11} yields
    \begin{equation}
        \sum_{l=1}^{L}\IE[D_l(\hat g)]^{1/2} \le 4\sqrt{L}\cdot \IE[ \frac{1}{d}\| |g(\IX_{1})|_2 \|_2^2\big|_{g=\hat g}]^{1/2}.\label{proof_bernstein_depmeas_eq12}
    \end{equation}
The last summand in \reff{convergenceproof_part1_depmeas_mod_eq3} can be similarly dealt with. With $N(g):= \sqrt{\frac{H}{n}} \vee \norm{|g(\IX_1)|_2}_2$,
    \begin{eqnarray}
        \IE[\big|M_{n,0}(\hat g)\big|] &=& \IE[\big|\frac{M_{n,0}(\hat g)}{N(\hat g)} \cdot N(\hat g)\big|]\nonumber\\
        &\le& \norm{\frac{M_{n,0}(\hat g)}{N(\hat g)}}_2 \IE[\norm{|g(\IX_1)|_2}_2^2\big|_{g=\hat g}]^{1/2}+ \IE\big[\big|\frac{M_{n,0}(\hat g)}{N(\hat g)}\big|\big]\sqrt{\frac{H}{n}}. \label{proof_bernstein_depmeas_mod_eq9}
    \end{eqnarray}
Since $M_{n,0}(g) = \sum_{i=r+1}^{n}\bar W_{i,0}(g)$ is a sum of independent variables, we can proceed as before in Lemma \ref{bernstein_depmeas} and obtain the existence of universal constants $c > 0$ such that
\[
    \norm{\frac{M_{n,0}(\hat g)}{N(\hat g)}}_2\le cC_{\varepsilon} \sqrt{nH}, \quad\quad \IE\big[\big|\frac{M_{n,0}(\hat g)}{N(\hat g)}\big|\big] \le cC_{\varepsilon}^2 \sqrt{nH}.
\]
Insertion into \reff{proof_bernstein_depmeas_mod_eq9} yields
\begin{equation}
    \IE[\big|M_{n,0}(\hat g)\big|] \le cC_{\varepsilon}\big\{\sqrt{nH}\cdot \IE[\norm{|g(\IX_1)|_2}_2^2\big|_{g=\hat g}]^{1/2} + H\big\}.\label{proof_bernstein_depmeas_eq13}
\end{equation}
Insertion of \reff{proof_bernstein_depmeas_eq12} and \reff{proof_bernstein_depmeas_eq13} into \reff{convergenceproof_part1_depmeas_mod_eq3} yields the result.
\end{proof}

\subsubsection{Oracle inequalities under functional dependence}

Let $\sF \subset\{f:\IR^{dr} \to \IR^d \text{ measurable}\}$ such that any $f = (f_j)_{j=1,...,d} \in \sF$ satisfies
\begin{equation}
    \sup_{j\in \{1,...,d\}}|f_j(x) - f_j(x')| \le L_{\sF}\cdot |x-x'|_{\infty}\label{lipschitzcond_depmeas_f1}
\end{equation}
and
\begin{equation}
    \sup_{j\in \{1,...,d\}}\sup_{x\in \text{supp}(\IW)}|f_j(x)| \le F\label{lipschitzcond_depmeas_f2}
\end{equation}
where $\IW:\IR^{dr} \to [0,1]$ is an arbitrary weight function depending on $\varsigma > 0$ with
\[
    |\IW(x) - \IW(x')| \le \frac{1}{\varsigma}\cdot |x-x'|_{\infty}.
\]
Let
\[
    \hat f \in \argmin_{f\in \sF}\hat R_n(f).
\]
The main result of this section is the following theorem. Here, $H(\delta) = \log N(\delta, \sF, \|\cdot\|_{\infty})$.

\begin{thm}\label{theorem_oracle_inequality_dep}
    Suppose that $X_i$ is of the form \reff{representation_x} and that Assumption \ref{ass_subgaussian} holds. Assume that there exist $F > 0, L_{\sF} > 0$ such that $\sF$ satisfies \reff{lipschitzcond_depmeas_f1} and \reff{lipschitzcond_depmeas_f2}. Furthermore, suppose that $f_0:\IR^{dr} \to \IR^d$ from \reff{model_time_evolution} is such that $|f_{0}(x) - f_{0}(x')|_{\infty} \le K|x-x'|_{\infty}$ for some $K > 0$.

    Suppose that Assumption \ref{ass_compatibility2} holds with $L_{\sG} = 2dr\big(\frac{2}{\varsigma} + \frac{(L_{\sF} + K)}{F}\big)$. Let $\delta \in (0,1)$. Then, for any $\eta > 0$ there exists a constant $\IC = \IC(\eta, C_{\varepsilon}, F)$ such that
    \[
        \IE D(\hat f) \le (1+\eta)^2 \inf_{f\in \sF}D(f) + \IC\cdot \big\{ \Lambda(\frac{H(\delta)}{n}) + \delta\big\}.
    \]
\end{thm}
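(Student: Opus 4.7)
The plan is to mirror the proof of Theorem \ref{theorem_oracle_inequality_present} (i.e.\ Theorem \ref{theorem_oracle_inequality}), replacing the maximal inequalities from Lemmas \ref{bernstein_mixing}/\ref{bernstein_mixing_martingale} by their functional-dependence counterparts (Lemmas \ref{bernstein_depmeas}/\ref{bernstein_depmeas_martingale}), and $\varphi$ by $\tilde V$, $\Lambda^{mix}$ by $\Lambda^{dep}$, and $q^{*,mix}(H/n)$ by $q^{*,dep}(\sqrt{H/n})$. Starting from the identity
\[
    \hat R_n(f) = \tfrac{1}{nd}\sum_{i=r+1}^{n}|\eps_i|_2^2\IW(\IX_{i-1}) + \tfrac{2}{nd}\sum_{i=r+1}^{n}\langle\eps_i,f_0(\IX_{i-1})-f(\IX_{i-1})\rangle\IW(\IX_{i-1}) + \hat D_n(f)
\]
together with $\hat R_n(\hat f)\le \hat R_n(f)$ for every $f\in \sF$ and $\IE[\eps_i|\sA_{i-1}]=0$, I obtain
\[
    \IE \hat D_n(\hat f) \le \inf_{f\in \sF}D(f) + 2\IE\Big[\tfrac{1}{nd}\sum_{i=r+1}^{n}\langle \eps_i,\hat f(\IX_{i-1})\rangle\IW(\IX_{i-1})\Big].
\]

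Next I prove two lemmas in the spirit of Lemmas \ref{convergenceproof_part1}/\ref{convergenceproof_part2}. For the first, pick a $\delta$-covering $(f_j)$ of $\sF$ in $\|\cdot\|_\infty$ and let $j^*$ be such that $\|\hat f-f_{j^*}\|_\infty\le\delta$. Define
\[
    g_j(x) := \tfrac{1}{dF^2}|f_j(x)-f_0(x)|_2^2 \IW(x).
\]
A direct product-rule computation, using $\|f_j-f_0\|_\infty\le 2F$, Lipschitz constants $L_{\sF}$ of $f_j$ and $K$ of $f_0$ w.r.t.\ $|\cdot|_\infty$, and the $\varsigma^{-1}$-Lipschitz continuity of $\IW$, shows that $\|g_j\|_\infty$ is bounded by a universal constant and the Lipschitz constant of $g_j$ is bounded by $L_{\sG}=2dr\big(2/\varsigma+(L_{\sF}+K)/F\big)$ (up to a harmless numerical factor), so that Assumption \ref{ass_compatibility2} applies with the stated $\Delta$. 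Lemma \ref{bernstein_depmeas} then yields
\[
    |\IE D(\hat f)-\IE \hat D_n(\hat f)| \lesssim F^2\sqrt{\tfrac{H}{n}}\,\tilde V\!\big(\IE[\|g\|_1|_{g=g_{j^*}}]\big) + F^2\,q^{*,dep}\!\big(\sqrt{\tfrac{H}{n}}\big)\tfrac{H}{n} + \delta F,
\]
and the bound $\IE[\|g\|_1|_{g=g_{j^*}}]\le 2F^{-2}(\IE D(\hat f)+\delta^2)$ combined with the concavity and subadditivity of $\tilde V$ (Lemma \ref{V_concave}) splits the right-hand side into a term depending on $\IE D(\hat f)$ and a remainder.

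For the second lemma, I apply the martingale maximal inequality Lemma \ref{bernstein_depmeas_martingale} to
\[
    \bar g_j(x) := \tfrac{1}{F}(f_j(x)-f_0(x))\IW(x),
\]
whose components are componentwise Lipschitz with constant again dominated by $L_{\sG}$. The analogue of Lemma \ref{convergenceproof_part2} then reads
\[
    \Big|\IE\Big[\tfrac{1}{nd}\sum_{i=r+1}^{n}\langle \eps_i,\hat f(\IX_{i-1})\rangle\IW(\IX_{i-1})\Big]\Big| \lesssim C_\eps\delta + C_\eps F\sqrt{\tfrac{H}{n}}\,\sqrt{1+\log q^{*,dep}}\,\big(\IE[D(\hat f)]^{1/2}+\delta\big) + C_\eps F\,q^{*,dep}(\sqrt{H/n})^{1/2}\tfrac{H}{n}.
\]

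Finally, I combine both lemmas. The bound on $|\IE D(\hat f)-\IE \hat D_n(\hat f)|$ is of the form $|a-b|\le r_1\tilde V(r_2 a)+P$; since $\tilde V$ is concave (Lemma \ref{V_concave}), $\tilde V^{-1}$ is convex, and Young's inequality (as in Lemma \ref{lemma_recursive_risk_general}, now with $\tilde V$ in place of $\varphi$) gives
\[
    a \le \tfrac{\eta}{r_2}(\tilde V^{-1})^*\!\big(\tfrac{1+\eta}{\eta}r_1 r_2\big) + (1+\eta)(b+P).
\]
The noise contribution from the second lemma is absorbed by $2ab\le a^2+b^2$ exactly as in the mixing proof, producing an extra factor $(1+\eta)^2$ on the leading term and a remainder of order $H/n+\delta$. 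Finally, invoking Lemma \ref{lemma_depmeas_raten} to obtain
\[
    (\tilde V^{-1})^*(\sqrt{H/n}) \lesssim \Lambda^{dep}(H/n), \qquad q^{*,dep}(\sqrt{H/n})\cdot \tfrac{H}{n} \lesssim \Lambda^{dep}(H/n),
\]
and Lemma \ref{V_homogen} to absorb the prefactors into the constant, I collect everything into the claimed inequality
\[
    \IE D(\hat f) \le (1+\eta)^2\inf_{f\in \sF}D(f) + \IC\cdot\big\{\Lambda^{dep}(H/n)+\delta\big\}.
\]

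The main obstacle is the bookkeeping around the Lipschitz constants: one must verify carefully that $g_{j^*}$ and $\bar g_{j^*}$ (although $j^*$ is random and $\hat f$ itself need not be Lipschitz) inherit Lipschitz constants bounded by $L_{\sG}$, so that Assumption \ref{ass_compatibility2} feeds directly into Lemmas \ref{bernstein_depmeas}/\ref{bernstein_depmeas_martingale}. Once this is settled, everything else is a mechanical translation of the mixing argument via the dictionary $\varphi\leftrightarrow \tilde V$, $\Lambda^{mix}\leftrightarrow \Lambda^{dep}$, $q^{*,mix}(H/n)\leftrightarrow q^{*,dep}(\sqrt{H/n})$.
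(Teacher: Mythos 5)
Your proposal is correct and follows essentially the same route as the paper: the same risk decomposition, the same two auxiliary lemmas built on the covering functions $g_{j^*}$ and $\bar g_{j^*}$ with the Lipschitz constant fed into Assumption \ref{ass_compatibility2}, the same absorption via $2ab\le a^2+b^2$ and Lemma \ref{lemma_recursive_risk_general} with $\tilde V$ in place of $\varphi$, and the same unification through Lemma \ref{lemma_depmeas_raten}. The only cosmetic difference is that the paper phrases the bias term $\IE D(\hat f)-\IE\hat D_n(\hat f)$ via a ghost-sample pair process $(\IX_{i-1},\IX_{i-1}')$ with a two-argument $g_j(x,x')$, whereas you work with the one-argument centered version; both are equivalent here.
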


\begin{proof}[Proof of Theorem \ref{theorem_oracle_inequality_dep}]
  Let $\eta > 0$. We follow the proof of Theorem \ref{theorem_oracle_inequality}. Define
\begin{eqnarray*}
    R_{1,n} &:=& (1+\eta)cF^2 q^{*,dep}(\sqrt{\frac{H}{n}})\frac{H}{n} + \frac{\eta F^2}{2} (\tilde V^{-1})^{*}\Big(2\frac{1+\eta}{\eta}\sqrt{\frac{H}{n}}\Big),\\
    R_{1,\delta} &:=& cF^2\sqrt{\frac{H}{n}}\tilde V(2F^{-2}\delta^2),\\
    R_{2,n} &:=& 2cC_{\varepsilon}Fq^{*,dep}(\frac{H}{n})\frac{H}{n},\\
    R_{2,\delta} &:=& C_{\varepsilon}\delta + 2cC_{\varepsilon}F \sqrt{\frac{H}{n}}\delta.
\end{eqnarray*}
Then, as in the mixing case, by Lemma \ref{convergenceproof_dep_part1}, \reff{riskproof_eq1} and Lemma \ref{convergenceproof_dep_part2},  
\begin{eqnarray*}
    \IE D(\hat f) &\le& (1+\eta)\IE \hat D_n(\hat f) + R_{1,n} + (1+\eta)R_{1,\delta}\\
    &\le& (1+\eta)\Big\{\inf_{f \in \sF}D(f) + 2cC_{\varepsilon}F \sqrt{\frac{H}{n}}\sqrt{q^{*,dep}({\sqrt{\frac{H}{n}}})}\IE[D(\hat f)]^{1/2} + R_{2,n} + R_{2,\delta} + R_{1,\delta} \Big\} + R_{1,n} .
\end{eqnarray*}
Due to $2ab \le a^2 + b^2$ with $a := (1+\eta)cC_{\varepsilon}F\sqrt{\frac{H}{n}}\sqrt{q^{*,dep}({\sqrt{\frac{H}{n}}})}(\frac{1+\eta}{\eta})^{1/2}$, $b:= (\frac{\eta}{1+\eta})^{1/2}\IE[D(\hat f)]^{1/2}$, we obtain
\begin{eqnarray*}
    \IE D(\hat f) &\le& (1+\eta)\inf_{f \in \sF}D(f) + \frac{(1+\eta)^3}{\eta}(cC_{\varepsilon}F)^2q^\ast(\sqrt{\frac{H}{n}})\frac{H}{n} + \frac{\eta}{1+\eta}\IE[D(\hat f)]\\
    &&\quad\quad\quad\quad + (1+\eta)(R_{2,n} + R_{2,\delta} +  R_{1,\delta}) + R_{1,n}.
\end{eqnarray*}
This implies
\begin{eqnarray}
    \IE D(\hat f) &\le& (1+\eta)^2 \inf_{f \in \sF}D(f) + (1+\eta)R_{1,n} \nonumber\\
    &&\quad\quad\quad\quad + (1+\eta)^2 (R_{2,n} + R_{2,\delta} + R_{1,\delta}) + \frac{(1+\eta)^4}{\eta}(c C_{\varepsilon}F)^2q^{*,dep}(\sqrt{\frac{H}{n}})\frac{H}{n}.\label{riskproof_dep_eq2}
\end{eqnarray} Using Young's inequality applied to $\tilde V^{-1}$ ($\tilde V^{-1}$ is convex) and Lemma \ref{lemma_depmeas_raten}, we obtain
\[
    R_{1,\delta} \le cF^2(\tilde V^{-1})^{*}\big( \sqrt{\frac{H}{n}}\big) + 2c \delta^2 \le cF^2 \Lambda(\frac{H}{n}) + 2c \delta^2,
\] as well as $R_{2,n} \le 4c F \Lambda(\frac{H}{n})$. Furthermore,
\[
    R_{1,n} \le (1+\eta)cF^2 \Lambda(\frac{H}{n}) + \frac{\eta F^2}{2} \Big(2\frac{1+\eta}{\eta}\Big)^2 \Lambda(\frac{H}{n}).
\] and
\[
    R_{2,\delta} \le C_{\varepsilon}(\delta + cF\delta^2 + cF\frac{H}{n}).
\]
Insertion of these results into \reff{riskproof_dep_eq2} yields the assertion.
\end{proof}

To prove Theorem \ref{theorem_oracle_inequality_dep}, the following two lemmata are used.

\begin{lem}\label{convergenceproof_dep_part2}
    Suppose that $X_i$ is of the form \reff{representation_x} and that Assumption \ref{ass_subgaussian} holds. Assume that there exist $F > 0, L_{\sF} > 0$ such that $\sF$ satisfies \reff{lipschitzcond_depmeas_f1} and \reff{lipschitzcond_depmeas_f2}. Furthermore, suppose that $f_0:\IR^{dr} \to \IR^d$ from \reff{model_time_evolution} is such that $|f_{0}(x) - f_{0}(x')|_{\infty} \le K|x-x'|_{\infty}$ for some $K > 0$.

    If additionally Assumption \ref{ass_compatibility2} holds with $L_{\sG} = 2dr \big(\frac{K + L_{\sF}}{F} + \frac{2}{\varsigma}\big)$, then
    \begin{eqnarray*}
        &&\Big|\IE\Big[\frac{1}{nd}\sum_{i=1}^{n}\langle \varepsilon_i, \hat f(\IX_{i-1})\rangle \IW(\IX_{i-1})\Big]\Big|\\
        &\le& C_{\varepsilon}\delta + 2c C_{\varepsilon}F\Big[q^{\ast,dep}(\sqrt{\frac{H}{n}})\frac{H}{n} + \sqrt{\frac{H}{n}}\Big(\sqrt{q^{\ast,dep}(\sqrt{\frac{H}{n}})}\IE[D(\hat f)]^{1/2} + \delta\Big)\Big].
    \end{eqnarray*}
\end{lem}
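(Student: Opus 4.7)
The argument tracks the mixing proof of Lemma \ref{convergenceproof_part2} step by step, with Lemma \ref{bernstein_depmeas_martingale} taking the role of Lemma \ref{bernstein_mixing_martingale}. First, I would fix a minimal $\delta$-net $(f_j)_{j=1,\ldots,\sN_n}$ of $\sF$ in $\|\cdot\|_{\infty}$, so that $H = \log \sN_n$, and let $j^{*}$ be a (random) index with $\|\hat f - f_{j^{*}}\|_{\infty}\le \delta$. Setting $g_j(x) := \frac{1}{2F}(f_j(x) - f_0(x))\IW(x)$, one has $\|g_j\|_{\infty}\le 1$ and each coordinate of $g_j$ is Lipschitz with constant at most $\frac{1}{\varsigma} + \frac{L_{\sF}+K}{2F}$. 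The prescribed value $L_{\sG} = 2dr(\frac{2}{\varsigma} + \frac{L_{\sF}+K}{F})$ in Assumption \ref{ass_compatibility2} is chosen precisely so that $dr\cdot L^{\theta}\sup_{l,j}\delta_{2\theta}^{X_{\cdot,l}}(k-j)^{\theta}$ — the quantity appearing in the hypothesis of Lemma \ref{bernstein_depmeas_martingale} — is dominated by the sequence $\Delta(k)$.

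Second, I decompose by writing $\hat f = f_{j^{*}} + (\hat f - f_{j^{*}})$ and $f_{j^{*}} = f_0 + (f_{j^{*}} - f_0)$. The independence of $\eps_i$ from $\IX_{i-1}$ and $\IE \eps_i = 0$ annihilate the $f_0$-piece in expectation, while the approximation piece contributes at most $C_{\eps}\delta$ by Assumption \ref{ass_subgaussian} and $\|\hat f - f_{j^{*}}\|_{\infty}\le \delta$. This yields
\[
    \Big|\IE\Big[\tfrac{1}{nd}\sum_{i=r+1}^{n}\langle \eps_i,\hat f(\IX_{i-1})\rangle \IW(\IX_{i-1})\Big]\Big| \le C_{\eps}\delta + \tfrac{2F}{n}\big|\IE M_n(g_{j^{*}})\big|.
\]

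Third, I invoke Lemma \ref{bernstein_depmeas_martingale} with the choice $q = q^{\ast,dep}(\sqrt{H/n})$. From \reff{bernstein_depmeas_res5} and the defining property $\beta^{dep}(q^{\ast,dep}(x)) \le q^{\ast,dep}(x)\cdot x$,
\[
    \tfrac{1}{n}\,\IE\sup_{g}|M_n(g) - M_n^{\circ}(g)| \;\le\; cC_{\eps}\, q^{\ast,dep}(\sqrt{H/n})\tfrac{H}{n}.
\]
For the coupled piece, applying \reff{bernstein_depmeas_res6} to $\hat g = g_{j^{*}}$ and using $\sqrt{\log q}+1 \le 2\sqrt{q}$ for $q \ge 1$ gives
\[
    \tfrac{1}{n}|\IE M_n^{\circ}(g_{j^{*}})| \;\le\; c'C_{\eps}\Big(\sqrt{\tfrac{H}{n}}\,\sqrt{q^{\ast,dep}(\sqrt{H/n})}\; A_{j^{*}} \;+\; q^{\ast,dep}(\sqrt{H/n})^{1/2}\tfrac{H}{n}\Big),
\]
where $A_{j^{*}}:= \IE[\| |g(\IX_1)|_2\|_2^2\,|_{g = g_{j^{*}}}]^{1/2}$.

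Finally, I relate $A_{j^{*}}$ to $D(\hat f)$. Since $\IW\in[0,1]$ implies $\IW^2\le \IW$, the definition of $g_{j^{*}}$ yields $\IE\||g_{j^{*}}(\IX_1)|_2\|_2^2 \le \frac{d}{4F^2}\IE D(f_{j^{*}})$, and from $D(f_{j^{*}})\le 2D(\hat f) + 2\delta^2$ (via $|a+b|_2^2 \le 2|a|_2^2 + 2|b|_2^2$ and $\|\hat f - f_{j^*}\|_\infty \le \delta$) combined with Jensen's inequality one obtains $A_{j^{*}} \lesssim F^{-1}(\IE[D(\hat f)]^{1/2} + \delta)$. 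Substituting and absorbing the leading factor $2F$ from Step~2 into a single prefactor $2cC_{\eps}F$ produces the stated inequality. The main obstacle is pure bookkeeping: matching the Lipschitz/boundedness constants of $g_j$ against the $L_{\sG}$ prescribed in Assumption \ref{ass_compatibility2}, and tracking how the loosening $\sqrt{\log q}+1 \le 2\sqrt{q}$ is precisely what produces the $\sqrt{q^{\ast,dep}(\sqrt{H/n})}$ factor in front of $\IE[D(\hat f)]^{1/2}$ in the conclusion.
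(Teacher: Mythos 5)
Your proposal is correct and follows essentially the same route as the paper's proof: a $\delta$-covering reduction to $M_n(g_{j^{*}})$ with $g_{j^{*}} \propto (f_{j^{*}}-f_0)\IW$, the coupling bounds \reff{bernstein_depmeas_res5} and \reff{bernstein_depmeas_res6} with $q=q^{\ast,dep}(\sqrt{H/n})$, and the identification of the variance proxy with $\IE D(f_{j^{*}})$ via $\IW^2\le\IW$. The only deviations are cosmetic (the $\frac{1}{2F}$ versus $\frac{1}{F}$ normalization, and bounding $\IE[D(f_{j^{*}})]^{1/2}$ by $\sqrt{2}(\IE[D(\hat f)]^{1/2}+\delta)$ instead of the paper's $L^2$ triangle inequality giving $\IE[D(\hat f)]^{1/2}+\delta$), both absorbed into the universal constant.
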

\begin{proof}[Proof of Lemma \ref{convergenceproof_dep_part2}]
    As in the proof of Lemma \ref{convergenceproof_part2}, let $j^{*} \in \{1,...,\sN_n\}$ be such that $\|\hat f - f_{j^{*}}\|_{\infty} \le \delta$. Since $\varepsilon_i$ is independent of $\IX_{i-1}$ and $\IE \varepsilon_i = 0$, we have
    \begin{equation}
         \Big|\IE\Big[\frac{1}{nd}\sum_{i=1}^{n}\langle \varepsilon_i, \hat f(\IX_{i-1})\rangle \IW(\IX_{i-1})\Big]\Big| \le \delta \cdot \underbrace{\frac{1}{nd}\sum_{i=1}^{n}\IE|\varepsilon_i|_1}_{\le \frac{1}{d}\sum_{k=1}^{d}\IE|\varepsilon_{1k}| \le C_{\varepsilon}} + \frac{F}{n}|\IE M_n(g_{j^{*}})|,\label{convergenceproof_dep_part2_eq1}
    \end{equation}
    where $g_j(x) := \frac{1}{F}(f_{j}(x) - f_0(x))\IW(x)$ and $M_n(\cdot)$ is from Lemma \ref{bernstein_depmeas_martingale}. We choose $\sG = \{g_j: j \in \{1,...,\sN_n\}\}$. Since
    \[
        \sup_{j=1,...,\sN_n}\sup_{k=1,...,d}\|g_{jk}\|_{\infty} \le \frac{1}{F}\cdot F\cdot \|\IW\|_{\infty} \le 1
    \]
    and
    \begin{eqnarray*}
        |g_{jk}(x) - g_{jk}(x')| &\le& \frac{1}{F}|f_{jk}(x) - f_{0k}(x) - f_{jk}(x') - f_{0k}(x')|\cdot \IW(x)\\
        &&\quad + \frac{1}{F}|f_{jk}(x') - f_{0k}(x')|\cdot |\IW(x) - \IW(x')|\\
        &\le& \Big(\frac{K + L_{\sF}}{F} + \frac{2}{\varsigma}\Big)\cdot |x-x'|_{\infty}
    \end{eqnarray*}
    That is, $\sG$ satisfies \reff{cond_depmeas_lipschitz1} and \reff{cond_depmeas_lipschitz2} with $G = 1$ and $L_{\sG} = \frac{K + L_{\sF}}{F} + \frac{2}{\varsigma}$. With the argument \reff{standard_bound_depmeas_g}, we conclude that for any $l\in \{1,...,d\}$,
    \[
       \delta_2^{g_{jl}(\IX)}(k) \le 2dr L_{\sG}\cdot \sup_{j\in \{1,...,r\}}\sup_{j=1,...,r}\delta_{2\theta}^{\IX_{\cdot,l}}(k-j)^\theta \le \Delta(k).
    \]
    
    By Lemma \ref{bernstein_depmeas_martingale}, \reff{bernstein_depmeas_res5} and \reff{bernstein_depmeas_res6} (taking $q := q^{\ast,dep}(\sqrt{\frac{H}{n}})$), we obtain
    \begin{eqnarray}
        \IE|M_n(g_{j^{*}})| &\le& c C_{\varepsilon}\sqrt{nH} \beta^{dep}(q^{\ast,dep}(\sqrt{\frac{H}{n}})) +  \IE|M_n^{\circ}(g_{j^{*}})|\nonumber\\
        &\le& c C_{\varepsilon}H q^{\ast,dep}(\sqrt{\frac{H}{n}}) + c C_{\varepsilon} \Big(\sqrt{nH}(\sqrt{\log(q^{\ast,dep}(\sqrt{\frac{H}{n}}))} + 1)\nonumber\\
        &&\quad\quad\quad\quad\quad\quad\times\IE\big[\| |g(\IX_r)|_2\|_2\big|_{g=g_{j^{*}}}]^{1/2} + q^{\ast,dep}(\sqrt{\frac{H}{n}})^{1/2}H\Big)\nonumber\\
        &\le& 2c C_{\varepsilon} \Big(H q^{*,dep}(\sqrt{\frac{H}{n}}) +  \sqrt{nH}\sqrt{q^{*,dep}(\sqrt{\frac{H}{n}})}\cdot \IE[D(f_{j^{*}})]^{1/2}\Big).\label{convergenceproof_dep_part2_eq2}
    \end{eqnarray}
    Since $\|\hat f_k - f_{j^{*}k}\|_{\infty} \le \delta$, $k = 1,...,d$, we have
    \begin{equation}
        \IE[D(f_{j^{*}})]^{1/2} \le \frac{1}{\sqrt{d}}\| |\hat f(\IX_r) - f_{j^{*}}(\IX_r)|_2 \IW(\IX_r)\|_2 + \IE[D(\hat f)]^{1/2} \le \delta + \IE[D(\hat f)]^{1/2}.\label{convergenceproof_dep_part2_eq3}
    \end{equation}
    Insertion of \reff{convergenceproof_dep_part2_eq2} and \reff{convergenceproof_dep_part2_eq3} into \reff{convergenceproof_dep_part2_eq1} yields the result.
\end{proof}

\begin{lem}\label{convergenceproof_dep_part1}
Suppose that $X_i$ is of the form \reff{representation_x} and that Assumption \ref{ass_subgaussian} holds. Assume that there exist $F > 0, L_{\sF} > 0$ such that $\sF$ satisfies \reff{lipschitzcond_depmeas_f1} and \reff{lipschitzcond_depmeas_f2}. Furthermore, suppose that $f_0:\IR^{dr} \to \IR^d$ from \reff{model_time_evolution} is such that $|f_{0}(x) - f_{0}(x')|_{\infty} \le K|x-x'|_{\infty}$ for some $K > 0$.

If additionally Assumption \ref{ass_compatibility2} holds with $L_{\sG} = 2dr\big(\frac{2}{\varsigma} + \frac{(L_{\sF} + K)}{F}\big)$, then there exists some universal constant $c > 0$ such that for every $\eta > 0$,
\begin{eqnarray*}
    \IE D(\hat f) &\le& (1+\eta)\IE \hat D_n(\hat f) \\
    && \qquad+ \Big\{(1+\eta)cF^2 q^{\ast,dep}(\sqrt{\frac{H}{n}})\frac{H}{n} + c\frac{F^2}{2}\eta (\tilde V^{-1})^{*}\Big(2\frac{1+\eta}{\eta}\sqrt{\frac{H}{n}}\Big)\Big\}\\
    &&\qquad + (1+\eta)cF^2\sqrt{\frac{H}{n}}\tilde V(2F^{-2}\delta^2).
\end{eqnarray*}
\end{lem}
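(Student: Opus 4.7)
My plan is to mirror almost step-for-step the proof of Lemma \ref{convergenceproof_part1}, substituting the mixing-based maximal inequalities by the functional-dependence ones from Lemma \ref{bernstein_depmeas} and taking $\tilde V$ in place of the Orlicz bound $C_\beta\varphi$. The Lipschitz/weight bookkeeping, which in the mixing case was absent, will be the main extra piece to carry through.

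First, I would fix a $\delta$-cover $(f_j)_{j=1,\dots,\sN_n}$ of $\sF$ with respect to $\|\cdot\|_{\infty}$ and pick a (random) $j^{*}$ with $\|\hat f - f_{j^{*}}\|_{\infty}\le \delta$. Exactly as in \reff{convergenceproof_part1_eq1}, replacing $\hat f$ by $f_{j^{*}}$ in $\hat D_n(\hat f)$ and in $D(\hat f)$ costs at most $10\delta F$, leaving a single centered sum
\[
|\IE D(\hat f)-\IE \hat D_n(\hat f)|\le \frac{cF^2}{n}\bigl|\IE S_n(g_{j^{*}})\bigr|+10\delta F,
\]
with $g_j(x):=(cdF^2)^{-1}|f_j(x)-f_0(x)|_2^2\IW(x)$, the normalization chosen so that $\|g_j\|_{\infty}\le 1$. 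A short bookkeeping (as in the proof of Lemma \ref{convergenceproof_dep_part2}) shows that each $g_j$ is Lipschitz with constant bounded by $L_{\sG}/(2dr)=\tfrac{2}{\varsigma}+\tfrac{L_{\sF}+K}{F}$, so that \reff{standard_bound_depmeas_g} combined with Assumption \ref{ass_compatibility2} delivers $\delta_2^{g(\IX)}(k)\le \Delta(k)$, making Lemma \ref{bernstein_depmeas} applicable to the finite class $\sG=\{g_1,\dots,g_{\sN_n}\}$.

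Next, with $q:=q^{*,dep}(\sqrt{H/n})$, I would decompose $S_n(g_{j^{*}})=[S_n-S_n^{\circ}](g_{j^{*}})+S_n^{\circ}(g_{j^{*}})$. The coupling term is controlled by \reff{bernstein_depmeas_res1} through $cHq$, while \reff{bernstein_depmeas_res2} applied to the random $\hat g=g_{j^{*}}$ gives
\[
\IE|S_n^{\circ}(g_{j^{*}})|\le c\bigl(\sqrt{nH}\,\tilde V(\IE[\|g(\IX_r)\|_1|_{g=g_{j^{*}}}])+qH\bigr).
\]
Since $\|\hat f-f_{j^{*}}\|_{\infty}\le\delta$, a triangle inequality yields $\IE D(f_{j^{*}})\le 2\delta^2+2\,\IE D(\hat f)$, so the argument of $\tilde V$ is at most $2F^{-2}\delta^2+2F^{-2}\IE D(\hat f)$. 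By Lemma \ref{V_concave} the function $\tilde V^2$ (and hence $\tilde V$) is concave with $\tilde V(0)=0$, so it is both monotone and subadditive, whence
\[
\tilde V\bigl(2F^{-2}\delta^2+2F^{-2}\IE D(\hat f)\bigr)\le \tilde V(2F^{-2}\delta^2)+\tilde V(2F^{-2}\IE D(\hat f)).
\]

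Assembling the pieces and absorbing the $O(\delta F)$ term yields the recursive estimate
\[
\IE D(\hat f)\le \IE \hat D_n(\hat f)+cF^2\sqrt{\tfrac{H}{n}}\,\tilde V\bigl(2F^{-2}\IE D(\hat f)\bigr)+P,
\]
with $P$ collecting $cF^2\sqrt{H/n}\,\tilde V(2F^{-2}\delta^2)$ and $cF^2q^{*,dep}(\sqrt{H/n})\tfrac{H}{n}$. Applying Lemma \ref{lemma_recursive_risk_general} with $\varphi=\tilde V$, $r_1=cF^2\sqrt{H/n}$, $r_2=2F^{-2}$ then solves for $\IE D(\hat f)$ and produces the $\tfrac{\eta F^2}{2}(\tilde V^{-1})^{*}\!\bigl(2\tfrac{1+\eta}{\eta}\sqrt{H/n}\bigr)$ contribution, matching the asserted bound. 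The main obstacle is the clean handling of the random covering index $j^{*}$ in the argument of $\tilde V$: without the concavity of $\tilde V$ (to pull the outer expectation inside) and its subadditivity (to separate the $\delta^2$ approximation from the risk of $\hat f$), the recursion would not close; verifying these two properties and tracking the normalization constants that keep $\|g_j\|_{\infty}\le 1$ are the only genuinely new ingredients beyond the mixing proof.
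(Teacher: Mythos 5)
Your proposal is correct and follows essentially the same route as the paper's proof: the same $\delta$-covering reduction, the same split of $S_n(g_{j^*})$ into the coupling term \reff{bernstein_depmeas_res1} and the bound \reff{bernstein_depmeas_res2} for the random index, the same use of concavity/subadditivity of $\tilde V$ to separate $\tilde V(2F^{-2}\delta^2)$ from $\tilde V(2F^{-2}\IE D(\hat f))$, and the same closing of the recursion via Lemma \ref{lemma_recursive_risk_general}. The only cosmetic difference is that you work with single-argument functions $g_j(x)$ centered at their means, whereas the paper introduces an independent copy $(X_i')$ and two-argument functions $g_j(x,x')$ on the pair process; both are valid, and your Lipschitz bookkeeping still fits under the stated $L_{\sG} = 2dr\big(\frac{2}{\varsigma} + \frac{L_{\sF}+K}{F}\big)$.
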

\begin{proof}[Proof of Lemma \ref{convergenceproof_dep_part1}]
    The proof follows a similar structure to Lemma \ref{convergenceproof_part1}. Let $(f_j)_{j = 1,...,\sN_n}$ be a $\delta$-covering of $\sF$ w.r.t. $\|\cdot\|_{\infty}$, where $\sN_n := N(\delta, \sF, \|\cdot\|_{\infty})$. Let $j^{*} \in \{1,...,\sN_n\}$ be such that $\|\hat f - f_{j^{*}}\|_{\infty} \le \delta$. Without loss of generality, assume that  $\delta \le F$.
    
    Let $(X_i')_{i\in\IZ}$ be an independent copy of the original time series $(X_i)_{i\in\IZ}$. We have
    \begin{eqnarray}
        \big|\IE D(\hat f) - \IE \hat D_n(\hat f)|
        \le \frac{2F^2}{n}\IE|S_n(g_{j^{*}})| + 10\delta F,\label{convergenceproof_dep_part1_eq1}
    \end{eqnarray}
    where for $x,x' \in \IR^{dr}$,
    \[
        g_j(x,x') := \frac{1}{2dF^2}|f_{j}(x') - f_0(x')|_2^2\IW(x') - \frac{1}{dF^2}|f_{j}(x) - f_0(x)|_2^2\IW(x),
    \]
    and $S_n(\cdot)$ is from Lemma \ref{bernstein_depmeas} based on the process $(X_i,X_{i}')$, where $X_i'$, $i \in\IZ$, is an independent copy of $X_i$, $i\in\IZ$.
    
    Here, due to the assumption on $f_0$ and on $\sF$,  
    \begin{eqnarray*}
        |g_j(x,x') - g_j(y,y')| &\le& \frac{1}{2dF^2}\big(|f_j(x') - f_0(x')|_2^2 - |f_j(y') - f_0(y')|_2^2\big) \IW(x')\\
        &&\quad\quad + \frac{1}{2dF^2}|f_j(y') - f_0(y')|_2^2\cdot |\IW(x') - \IW(y')|\\
        &&\quad\quad + \frac{1}{2dF^2}\big(|f_j(x) - f_0(x)|_2^2 - |f_j(y) - f_0(y)|_2^2\big) \IW(x)\\
        &&\quad\quad + \frac{1}{2dF^2}|f_j(y) - f_0(y)|_2^2\cdot |\IW(x) - \IW(y)|\\
        &\le& \Big(\frac{2}{\varsigma} + \frac{(L_{\sF} + K)}{F}\big)(|x-y|_{\infty} + |x' - y'|_{\infty}),
    \end{eqnarray*}
    and
    \[
        \|g_j\|_{\infty} \le \frac{2}{2dF^2}\cdot dF^2\cdot \|\IW\|_{\infty} \le 1.
    \]
    Thus, $\sG = \{g_j:j=1,...,\sN_n\}$ satisfies the conditions \reff{cond_depmeas_lipschitz1} and \reff{cond_depmeas_lipschitz2} with $G = 1$ and
    \[
        L_{\sG} = \Big(\frac{2}{\varsigma} + \frac{L_{\sF} + K}{F}\Big).
    \]
    Since $X_i', i\in\IZ$ has the same distribution as $X_i, i\in\IZ$, the argument \reff{standard_bound_depmeas_g} yields for $j \in \{1,...,\sN_n\}$ that
    \[
        \delta_2^{g_j(\IX_{\cdot-1}, \IX_{\cdot-1}')}(k) \le 2dr L_{\sG} \cdot \sup_{l=1,...,r}\sup_{j=1,...,r}\delta_{2\theta}^{X_{\cdot,l}}(k-j)^{\theta} \le \Delta(k).
    \]
    
    Furthermore, there exists another process $S_n^{\circ}(\cdot)$ and some universal constant $c > 0$ such that
    \begin{equation*}
        \IE|S_n(g_{j^{*}}) - S_n^{\circ}(g_{j^{*}})| \le \IE \sup_{g\in \sG}|S_n(g) - S_n^{\circ}(g)| \le c\sqrt{nH}\beta^{dep}(q).
    \end{equation*} For $q = q^\ast(\sqrt{\frac{H}{n}})$,
    \begin{equation}
        \IE|S_n(g_{j^{*}}) - S_n^{\circ}(g_{j^{*}})| \le c  H q^\ast (\sqrt{\frac{H}{n}}). \label{convergenceproof_part1_dep_eq2}
    \end{equation}
    Insertion of \reff{convergenceproof_part1_dep_eq2} and \reff{bernstein_depmeas_res2} into \reff{convergenceproof_dep_part1_eq1} yields
    \begin{equation}
        |\IE D(\hat f) - \IE \hat D_n(\hat f)| \le 2cF^2\Big[q^{*}(\sqrt{\frac{H}{n}})\frac{H}{n} + \sqrt{\frac{H}{n}} \tilde V (\IE[\norm{g(\IX_r)}_1|_{g = g_{j^\ast}}])\Big].\label{convergenceproof_part1_dep_eq4}
    \end{equation} Now, observe that
    \[
       \tilde V (\IE[\norm{g(\IX_r)}_1|_{g = g_{j^\ast}}]) \le \tilde V(F^{-2}\IE D(f_{j^\ast})) \le \tilde V(2F^{-2}\delta^2) + \tilde V(2F^{-2}\IE D(\hat f))
    \] which together with Lemma \ref{lemma_recursive_risk_general} delivers,
    \begin{eqnarray*}
        \IE D(\hat f) &\le& (1+\eta)\Big[\IE \hat D_n(\hat f) + 2cF^2 q^{*}(\sqrt{\frac{H}{n}})\frac{H}{n} + 2cF^2\tilde V(2F^{-2}\delta^2) \Big] \\
        && \qquad + c\eta F^2(\tilde V^{-1})^\ast\Big(2\frac{1+\eta}{\eta}\sqrt{\frac{H}{n}} \Big).
    \end{eqnarray*}
\end{proof}

\subsection{Approximation results}

In this section we consider the approximation error as well as the size of the corresponding network class.

\subsubsection{Proof of the approximation error, Section \ref{sec_approx_error}}

\begin{proof}[Proof of Theorem \ref{approx_error}]
    We follow the proof given by \cite[Theorem 1]{schmidthieber2017} and employ \cite[Theorem 5]{schmidthieber2017} (recited here as part of Theorem \ref{theorem_approximation_network}), adapting it to the ``encoder-decoder" structure. Since $C$ is not explicitly given, it is enough to prove the result for large enough $n$. Fix $N\in\IN$ and choose $m = \lceil \log_2(n)\rceil$. 
    
    By Theorem \ref{theorem_approximation_network}, we find for arbitrarily chosen $N > 0$ functions
\[
    \tilde g_{enc,0} \in \sF(L_{enc,0} + 2,(dr,p_{enc,0},D),D(s_{enc,0}+4))
\]
and
\[
    \tilde g_{enc,1} \in \sF(L_{enc,1} + 2,(D,p_{enc,1},\tilde d),\tilde d(s_{enc,1}+4))
\]
where 
\begin{eqnarray*}
    L_{enc,i} &=& 8 + (m+5)(1 + \log_2(t_{enc,i} \vee \beta_{enc,i})), \\
    p_{enc,0} &=& D(6(t_{enc,0}+\lceil\beta_{enc,0}\rceil)N,\dots,6(t_{enc,0}+\lceil\beta_{enc,0}\rceil)N) \in \IR^{L_{enc,0}+2}, \\
    p_{enc,1} &=& \tilde d(6(t_{enc,1}+\lceil\beta_{enc,1}\rceil)N,\dots,6(t_{enc,1}+\lceil\beta_{enc,1}\rceil)N) \in \IR^{L_{enc,1}+2}
    \end{eqnarray*} and
\[
    s_{enc,i} \le 141((t_{enc,i}+\beta_{enc,i}+1)^{3+t_{enc,i}}N(m+6), \qquad i = 0,1,
\]
such that 
\[
    \norm{(g_{enc,i})_j - (\tilde g_{enc,i})_j}_{\infty} \le (2K+1)(1+t_{enc,i}^2+\beta_{enc,i}^2)6^{t_{enc,i}}N2^{-m} + K3^{\beta_{enc,i}}N^{\frac{\beta_{enc,i}}{t_{enc,i}}}
\]
for $i = 0,1$.
The composed network $\tilde f_{enc} := \tilde g_{enc,1} \circ \tilde g_{enc,0}$ satisfies
\begin{eqnarray*}
    \tilde f_{enc} \in \sF(L_{enc,0} + L_{enc,1} + 5, (dr,p_{enc,0},D,p_{enc,1},\tilde d),\tilde d(D(s_{enc,0}+4)+\tilde d(s_{enc,1}+4)))
\end{eqnarray*}
 as well as $\tilde f_{enc} \in \sF(L_1,\bar p, \bar s)$ for
 \[
    L_{enc,0} + L_{enc,1} + 5 \le \sum_{i \in \{enc,0;enc,1\}} \log_2(4(t_i \vee \beta_i))\log_2(n) \le L_1
\]
(the first inequality holds for $n$ large enough) and
 \begin{eqnarray*}
    \bar p &:=& (\underbrace{dr,...,dr}_{(\bar k + 1) \text{ times}},p_{enc,0},D,p_{enc,1},\tilde d)\\
    \bar s &:=& \tilde d(D(s_{enc,0}+4)+\tilde d(s_{enc,1}+4))) + \bar k dr
 \end{eqnarray*} where $\bar k := L_1 - (L_{enc,0} + L_{enc,1} + 5)$ (cf. \cite[Section 7.1]{schmidthieber2017}). Furthermore, by Theorem \ref{theorem_approximation_network} there exists a network
\begin{eqnarray*}
    \tilde f_{dec} &\in& \sF(L_{dec} + 2,(\tilde d,p_{dec},d),d(s_{dec}+4))
\end{eqnarray*}
where
\begin{eqnarray*}
    L_{dec} &=& 8 + (m+5)(1 + \log_2(t_{dec} \vee \beta_{dec})),\\
    p_{dec} &=& d(6(t_{dec}+\lceil\beta_{dec}\rceil)N,\dots,6(t_{dec}+\lceil\beta_{dec}\rceil)N) \in \IR^{L_{dec} + 2},\\
    s_{dec} &\le& 141((t_{dec}+\beta_{dec}+1)^{3+t_{dec}}N(m+6),
\end{eqnarray*}
such that
\[
    \|(f_{dec})_j - (\tilde f_{dec})_j\|_{\infty} \le (2K+1)(1+t_{dec,i}^2+\beta_{dec,i}^2)6^{t_{dec,i}}N2^{-m} + K3^{\beta_{dec,i}}N^{\frac{\beta_{dec,i}}{t_{dec,i}}}
\]
for $j = 1,...,d$.
We then obtain $\tilde f_0 = \tilde f_{dec} \circ \tilde f_{enc} \in \sF(L', p',s')$ by composing the networks $\tilde f_{enc}$ and $\tilde f_{dec}$ (cf. \cite[Section 7.1]{schmidthieber2017}) with the values 
\begin{eqnarray*}
    L' &:=& L_1 + L_{dec} + 1,\\ 
    p' &:=& (\bar p,p_{dec},d),\\
    s' &:=& \bar s + d(s_{dec}+4).
\end{eqnarray*}
The composition also satisfies $\tilde f_0 \in \sF(L,p,s)$ by additional layers where
\[
    L' \le L_1 + \log_2(4(t_{dec} \vee \beta_{dec}))\log_2(n) \le L
    %\sum_{i \in \{dec;enc,0;enc,1\}} \log_2(4(t_i \vee \beta_i))\log_2(n) \le L
\]
(where the first inequality holds for $n$ large enough) and $s$, $p$ are set according to \cite[Section 7.1, equation (18)]{schmidthieber2017}, i.e.
\[
   k = L - L', \qquad p = (\underbrace{dr,...,dr}_{k \text{ times}},p'), \qquad s= s' + kp'_0.
\]
The conditions (ii) to (v) are automatically met. In analogy to \cite[Section 7.1, Lemma 3]{schmidthieber2017},
\begin{eqnarray} \label{approx_error_eq}
    \|\tilde f_0 - f_0\|_{\infty}^2 &\le& C \max_{k \in \{dec;enc,0;enc,1\}}\big\{\frac{N}{n} + N^{-\frac{2\beta_k}{t_k}}\big\}
\end{eqnarray} for a constant $C$ that only depends on $\tb, \betab$. By Theorem \ref{theorem_approximation_network}, since $N2^{-m} \lsim 1$, $\tilde f_0$ has Lipschitz constant
\[
    \|\tilde f_0\|_{\mathrm{Lip}} \le \|\tilde f_{dec}\|_{\mathrm{Lip}} \cdot \|\tilde g_{enc,1}\|_{\mathrm{Lip}} \cdot \|\tilde g_{enc,0}\|_{\mathrm{Lip}} \le C_2
\]
for a constant $C_2$ only depending on $\betab$, $\tb$.

Up to now, $\tilde f_0$ is not bounded by a given $F$. For large enough $n$ we are able to generate a sequence $(\tilde f_n)_{n \in \IN}$ in $\sF(L,L_1,p,s,\bar F,C_2)$ ($\bar F$ chosen arbitrarily large) satisfying equation \reff{approx_error_eq}.
If we define $f_n^\ast:=(\frac{\norm{f_0}_\infty}{\norm{\tilde f_n}_\infty} \wedge 1)\tilde f_n$,
\[
    \norm{f_n^\ast}_{\infty} \le \norm{f_0}_{\infty} \le \norm{f_{dec}}_{\infty} \le K \le F
\]
by assumption (i). Therefore, $f_n^\ast \in \sF(L,L_1,p,s,F,C_2)$. Equation \reff{approx_error_eq} also holds for the class $\sF(L,L_1,p,s,F,C_2)$ since $\|f_n^\ast - f_0\|_{\infty} \le 2\|\tilde f_n - f_0\|_{\infty}$. This completes the proof.
\end{proof}

We cite \cite[Remark 1]{schmidthieber2017} in order to maintain a consistent reading flow and for the sake of completeness.

\begin{prop} \label{covering_bound}
    For the network $\sF(L,L_1,p,s,\infty)$ we have the covering entropy bound
    \begin{eqnarray*}
       \log \sN(\delta,\sF(L,L_1,p,s,\infty),\norm{\cdot}_{\infty}) \le (s+1)\log(2^{2L+5}\delta^{-1}(L+1)p_0^2p_{L+1}^2s^{2L}).
    \end{eqnarray*}
\end{prop}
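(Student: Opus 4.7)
The plan is to combine a sparsity-pattern enumeration with a discretization of the active weights, calibrated by a Lipschitz-in-parameters bound for the network map. A network in $\sF(L,L_1,p,s,\infty)$ is parametrized by the vector $\theta=(W^{(0)},v^{(0)},\ldots,W^{(L)},v^{(L)}) \in [-1,1]^T$ where $T:=\sum_{j=0}^L (p_j p_{j+1} + p_{j+1})$ counts all parameter slots, of which at most $s$ may be non-zero. Since positions in fully inactive rows or columns can be pruned without altering the network, I may also replace each interior width $p_j$ by $\min(p_j,s+1)$, which bounds $T$ in terms of $L, p_0, p_{L+1}, s$ alone.

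The key technical step is a Lipschitz estimate: if $\theta, \theta'$ share the same sparsity pattern and $\|\theta - \theta'\|_\infty \le \eta$, then
\[
\|f_\theta - f_{\theta'}\|_\infty \le V\eta \quad \text{on } [0,1]^{p_0}, \qquad V \lsim (L+1)\, p_0\, p_{L+1}\, (2s)^L.
\]
I would prove this by the layer-by-layer telescoping $f_\theta - f_{\theta'} = \sum_{j=0}^L (f_{\theta^{(j)}} - f_{\theta^{(j-1)}})$, where $\theta^{(j)}$ agrees with $\theta'$ on layers $0,\ldots,j$ and with $\theta$ on layers $>j$. Two ingredients suffice: a uniform bound on the layer-$j$ pre-activations $\|u_j(x)\|_\infty \le (s+1)^j p_0$ on $[0,1]^{p_0}$, obtained by induction from $|W^{(j)}|_\infty \vee |v^{(j)}|_\infty \le 1$ and row-sparsity $\le s$; and the operator estimate $\|W^{(j)}y\|_\infty \le s\|y\|_\infty$ (row-sparsity again) combined with the $1$-Lipschitz property of $\sigma_{v^{(j)}}$. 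A perturbation of $\eta$ in a single layer-$j$ weight creates an initial error of size at most $\eta(s+1)^{j-1}p_0$, which propagates through the remaining layers picking up at most a factor $s^{L-j}$ and a final $p_{L+1}$ from the output dimensionality; summing $s$ such contributions across $L+1$ layers gives the quoted $V$.

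Given the Lipschitz bound, discretize each coordinate of $[-1,1]$ on a grid of spacing $2\eta$ with $\eta := \delta/V$, yielding at most $2/\eta$ values per coordinate. The quantized networks form a $\delta$-cover of $\sF(L,L_1,p,s,\infty)$ in $\|\cdot\|_\infty$, and their number is bounded by
\[
\binom{T}{s}(2/\eta)^s \le (2eT/s)^s (2V/\delta)^s.
\]
Substituting the explicit polynomial bounds for $T$ and $V$ into this count, and absorbing numerical constants into powers of $2$, produces the announced $(s+1)\log(2^{2L+5}\delta^{-1}(L+1)p_0^2 p_{L+1}^2 s^{2L})$ estimate; the prefactor $(s+1)$ rather than $s$ provides slack both for the degenerate case $s=0$ and for tidying constants. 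The product $T\cdot V$ inside the logarithm is precisely what generates the squared $p_0^2 p_{L+1}^2 s^{2L}$ pattern.

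The main obstacle will be the Lipschitz step: carefully tracking how a localized perturbation of a single weight in layer $j$ expands into a dense error vector at subsequent layers. The crucial cancellation that keeps the constants manageable is that the overall sparsity budget $s$ implies row-sparsity at most $s$ in every $W^{(k)}$, so that each propagation step expands the $\ell_\infty$ error by a factor $s$ rather than by the naive layer width $p_{\max}$. This is precisely why no interior $p_j$ appears in the final bound, and only the boundary widths $p_0$ and $p_{L+1}$ survive.
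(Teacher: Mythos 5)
Your proposal is correct and follows essentially the same route as the result this proposition rests on: the paper gives no proof of its own here (it simply cites Schmidt-Hieber's Lemma 5 / Remark 1), and that argument is exactly your combination of pruning inactive units so interior widths may be taken of order $s$, a layer-by-layer telescoping Lipschitz-in-parameters bound, and enumeration of sparsity patterns together with grid discretization of the active weights. The only reservations are bookkeeping ones (for instance the intermediate bound $\|u_j\|_{\infty}\le (s+1)^{j}p_0$ should rather read $\|u_j\|_{\infty}\lsim (s+1)^{j+1}$ when $s>p_0$, and the factor $p_{L+1}$ is not actually needed in an $|\cdot|_{\infty}$ output bound), all of which are absorbed by the generous $2^{2L+5}s^{2L}$ inside the logarithm and the $(s+1)$ prefactor.
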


\subsubsection{Approximation error and Lipschitz continuity of neural networks}

The first part of the following theorem is taken from \cite[Theorem 5]{schmidthieber2017}. The second part \reff{lipschitz_constant_approx_network} is proved below.

\begin{thm}\label{theorem_approximation_network}
    For any function $f\in C_\dt^{\beta}([0,1]^\dt,K)$ and any integers $m \ge 1$, $N \ge (\beta+1)^\dt \vee (K+1)e^{\dt}$, there exists a network
    \[
        \tilde f \in \sF(L,(\dt,6(\dt+\lceil \beta \rceil)N,...,6(\dt+\lceil \beta \rceil)N,1),s,\infty)
    \]
    with depth
    \[
        L = 8+(m+5)(1+\lceil \log_2(\dt \vee \beta)\rceil)
    \]
    and number of active parameters
    \[
        s \le 141 (\dt+\beta+1)^{3+r}N(m+6)
    \]
    such that
    \[
        \|\tilde f - f\|_{\infty} \le (2K+1)(1+\dt^2+\beta^2)6^\dt N 2^{-m} + K 3^{\beta}N^{-\frac{\beta}{\dt}}.
    \]
    Furthermore, $\tilde f$ satisfies for any $x,y\in[0,1]^\dt$ that
    \begin{equation}
        |\tilde f(x) - \tilde f(y)| \le \mathrm{Lip}(N,m)\cdot |x-y|_{\infty} \label{lipschitz_constant_approx_network}
    \end{equation}
    where
    \[
        \mathrm{Lip}(N,m) := 2\beta F(K+1)e^\dt( 24 \dt^6 2^\dt N 2^{-m} + 3\dt).
    \]
\end{thm}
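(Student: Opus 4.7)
The first part of Theorem \ref{theorem_approximation_network} (the approximation bound and the size constraints on $L$ and $s$) is lifted verbatim from \cite[Theorem~5]{schmidthieber2017}; the only new content is the Lipschitz estimate \reff{lipschitz_constant_approx_network}. My plan is therefore to follow Schmidt-Hieber's explicit construction of $\tilde f$ and propagate Lipschitz constants through each sub-network.

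Recall that $\tilde f$ is built as a sum $\tilde f(x) = \sum_{\alpha}c_\alpha \, \mathrm{Mult}^{(\kappa)}_m\big(\phi_{\alpha,1}(x_1),\ldots,\phi_{\alpha,t}(x_t),\pi_\alpha(x)\big)$, where $\phi_{\alpha,j}$ are the hat-function bumps of a partition of unity on $[0,1]$ at resolution $M\asymp N^{1/t}$, $\pi_\alpha$ is a degree $\le \beta$ monomial coming from a local Taylor expansion of $f$, $\kappa = t+\lceil\beta\rceil$, and $\mathrm{Mult}^{(\kappa)}_m:[0,1]^\kappa\to[0,1]$ is the iterated-squaring network that approximates the $\kappa$-fold product with sup-norm error $\le \kappa\,2^{-m}$. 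First, I would show that $\mathrm{Mult}_m^{(\kappa)}$ is itself nearly $1$-Lipschitz: the true product on $[0,1]^\kappa$ has all partial derivatives bounded by $1$, and a layer-by-layer analysis of the squaring refinement yields $\|\mathrm{Mult}_m^{(\kappa)}\|_{\mathrm{Lip}}\le \kappa + C_1(\kappa)\,2^{-m}$. Second, each hat function has Lipschitz constant exactly $M$ and sup-norm $1$, while each $\pi_\alpha$ has Lipschitz constant $\le\beta$, so by the product rule each local factor $\prod_j\phi_{\alpha,j}(x_j)\cdot\pi_\alpha(x)$ is Lipschitz with constant $\le tM+\beta$.

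Third, I would sum these local estimates, exploiting that at any $x\in[0,1]^t$ only $2^t$ bumps are non-zero and that $|c_\alpha|\le K\cdot 3^\beta$. The crucial observation is that the \emph{ideal} B-spline approximation of $f$ (in which $\mathrm{Mult}_m^{(\kappa)}$ is replaced by exact multiplication) reproduces $f$ up to a smooth local correction, so its Lipschitz constant is controlled by $\|f\|_{\mathrm{Lip}}\lesssim Kt$; this yields the constant $3t$ term in \reff{lipschitz_constant_approx_network}. The perturbation from using $\mathrm{Mult}_m^{(\kappa)}$ in place of exact multiplication contributes an additional Lipschitz error proportional to (Lipschitz of Mult input)$\times$(Mult error)$\times$(number of active terms), which scales as $N\cdot 2^{-m}$ after summation; the combinatorial and dimensional factors $24\, t^6\, 2^t$ emerge from this accounting together with the $2^t$ overlap bound and the $\binom{t+\beta}{t}$ monomials.

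The principal obstacle will be the third step: a naive Lipschitz bound via operator norms of the weight matrices (each width $\asymp N$, depth $\asymp\log n$) is exponential in $L$ and useless. The argument must therefore exploit that $\mathrm{Mult}_m^{(\kappa)}$ is a \emph{close} approximation to a smooth $1$-Lipschitz target and cannot blow up the Lipschitz constant of its input by more than a factor of essentially $1+O(2^{-m})$ per multiplication, rather than by the generic width factor. Once this refined bound on $\mathrm{Mult}_m^{(\kappa)}$ is in place, the remaining bookkeeping of the constants $C(t,\beta,K)$ is tedious but routine.
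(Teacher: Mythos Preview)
Your plan is correct and tracks the paper's route closely: the paper likewise shows that $\text{mult}_m$ and the iterated product network $\IM_m$ have partial derivatives within $O(2^{-m})$ of those of the exact product (Lemmas~\ref{lemma_mult_derivative} and~\ref{lemma_produkt_approx}), and then bounds $|\partial_{y_j}Q_2|$ directly by the chain rule, recovering the stated constant after the final affine rescaling $Q_3=BM^{t}(Q_2-\text{const})$. The one place where your sketch is looser than the paper is the justification of the $N$-free term $3t$. Your phrase ``reproduces $f$ up to a smooth local correction'' is not quite the mechanism, since the hat-function interpolant is only piecewise linear and sup-norm closeness to $f$ does not by itself control its Lipschitz constant. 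What the paper actually uses is the partition-of-unity cancellation: the derivatives $\partial_{y_1}I_{u_1}$ and $\partial_{y_1}I_{u_1+M^{-1}}$ have opposite signs, so in $\sum_{x_l}\big(\tfrac{1}{B}P^\beta_{x_l}f(y)+\tfrac{1}{2}\big)\,\partial_{y_1}\text{Hat}_{x_l}(y)$ the constant $\tfrac{1}{2}$ drops out and the Taylor polynomials pair into differences $P^\beta_{x_l}f(y)-P^\beta_{x_{l'}}f(y)=O(M^{-1})$, which after the $BM^{t}$ rescaling absorbs the factor $M$ coming from $\partial\,\text{Hat}$ and leaves an $N$-independent bound. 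This is the concrete form of $\sum_l\partial\psi_l=0$ that you will need to write out rather than appeal to smoothness of the ideal limit.
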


To prove \reff{lipschitz_constant_approx_network}, we first recap how $\tilde f$ is constructed in \cite[Theorem 5]{schmidthieber2017}.

As in \cite{schmidthieber2017}, we define for $x,y\in [0,1]$, $m\in\IN$,
    \[
        \text{mult}_m(x,y) := \Big(\sum_{k=1}^{m+1}\big\{R^k(\frac{x-y+1}{2}) - R^k(\frac{x+y}{2})\big\} + \frac{x+y}{2} - \frac{1}{4}\Big)_{+}
    \]
    where
    \[
        R^k := T^k \circ T^{k-1} \circ ... \circ T^1,\quad k\in\IN,
    \]
    and
    \[
        T^k(x) := \min\{\frac{x}{2}, 2^{1-2k} - \frac{x}{2}\}, \quad k\in\IN.
    \]
    
    \begin{lem}\label{lemma_mult_derivative}
        For $x,y \in [0,1]$ where $\text{mult}_m$ is differentiable, it holds that
        \[
            \partial_1\text{mult}_m(x,y) = y + \text{res}_1(x,y), \quad\quad \partial_2\text{mult}_m(x,y) = x + \text{res}_2(x,y)
        \]
        where $|\text{res}_i(x,y)| \le 2^{-m-1}$, $i = 1,2$. Furthermore,
        \[
            |\text{mult}_m(x,y) - x\cdot y| \le 2^{-m-1}(x+y) \le 2^{-m}.
        \]
    \end{lem}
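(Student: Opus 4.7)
The whole lemma reduces to an analysis of the univariate function $\Phi_m(z) := \sum_{k=1}^{m+1} R^k(z)$, which I claim is the continuous piecewise linear interpolant of $z \mapsto z - z^2$ on the dyadic grid $\{z_j := j\cdot 2^{-m-1} : j = 0,\ldots,2^{m+1}\}$. This interpolation property is established by induction on $m$: the base case $\Phi_0 = T^1$ is the tent agreeing with $z - z^2$ at $0, 1/2, 1$; the inductive step uses that $R^{m+2}$ is a sawtooth vanishing on the coarse grid $\{j\cdot 2^{-m-1}\}$ and attaining the value $2^{-2m-4}$ at the newly inserted midpoints $(2j+1)\cdot 2^{-m-2}$, which is exactly the interpolation defect $h^2/4$ of $\Phi_m$ at those midpoints (with $h := 2^{-m-1}$). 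From this structure one reads off the explicit error formula
\[
    e(z) := \Phi_m(z) - (z - z^2) = -(z - z_j)(z_{j+1} - z) \quad \text{on } [z_j,z_{j+1}],
\]
yielding the refined pointwise bound $|e(z)| \le h\cdot\min(z - z_j, z_{j+1} - z) \le h^2/4$ and the a.e.\ derivative bound $|e'(z)| = |2z - z_j - z_{j+1}| \le h$.

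I then substitute $u := (x-y+1)/2$ and $v := (x+y)/2$ and verify by direct expansion the algebraic identity $(u - u^2) - (v - v^2) + \tfrac{x+y}{2} - \tfrac{1}{4} = xy$, so that the pre-clip expression inside the positive part defining $\mathrm{mult}_m$ equals $xy + e(u) - e(v)$. The derivative claim then follows by bookkeeping: using $\partial_x u = \partial_x v = \tfrac{1}{2}$ and the chain rule,
\[
    \partial_1 \mathrm{mult}_m(x,y) = \tfrac{1}{2}\bigl(\Phi_m'(u) - \Phi_m'(v)\bigr) + \tfrac{1}{2} = \tfrac{1}{2}\bigl((1-2u)-(1-2v)\bigr) + \tfrac{1}{2} + \tfrac{1}{2}\bigl(e'(u) - e'(v)\bigr) = y + \mathrm{res}_1,
\]
since $(v-u) + \tfrac{1}{2} = y$, with residual $|\mathrm{res}_1| \le h = 2^{-m-1}$. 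The $\partial_2$ identity is analogous: using $\partial_y u = -\tfrac{1}{2}$, $\partial_y v = \tfrac{1}{2}$, the chain rule delivers $-\tfrac{1}{2}(\Phi_m'(u) + \Phi_m'(v)) + \tfrac{1}{2}$, which collapses to $u + v - \tfrac{1}{2} + \mathrm{res}_2 = x + \mathrm{res}_2$ with the same bound on the residual.

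For the refined value bound $|e(u) - e(v)| \le 2^{-m-1}(x+y)$ I write $\alpha := (x-y)/2$, $\beta := (x+y)/2$, so $u = \tfrac{1}{2} + \alpha$, $v = \beta$, and note $|\alpha| \le \beta$. I split into two cases. If $\beta \le h$, then also $|\alpha| \le h$, so $u$ lies in one of the two subintervals of $\Phi_m$ adjacent to the dyadic knot $1/2$ (itself in the grid) and $v$ lies in $[0,h]$; the refined local bound gives $|e(u)| \le h|\alpha|$ and $|e(v)| \le h\beta$, summing to $h(|\alpha|+\beta) \le 2h\beta = 2^{-m-1}(x+y)$. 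If $\beta \ge h$, then $x+y \ge 2h$ and so $2^{-m-1}(x+y) \ge 2^{-2m-1}$, whereas the crude bound $|e(u)-e(v)| \le h^2/2 = 2^{-2m-3}$ is comfortably smaller. The regime $\beta < h \le |\alpha|$ is excluded by $|\alpha| \le \beta$. Finally, if the positive part truncates a negative pre-activation, then $0 \le xy \le |e(u) - e(v)|$ and the same inequality controls $|\mathrm{mult}_m(x,y) - xy| = xy$.

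The main obstacle is the inductive proof of the dyadic interpolation property of $\Phi_m$; once that structural fact is in place, the derivative identities and the refined value bound both reduce to the algebraic and case-distinction steps outlined above.
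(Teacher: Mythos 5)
Your proof is correct, but it follows a genuinely different route from the paper's. The paper never identifies $\Phi_m=\sum_{k=1}^{m+1}R^k$ as an interpolant: it differentiates each sawtooth directly, writing $\partial_1 R^k(z)=\pm 2^{-k}$ according to which dyadic cell $z$ lies in, encodes that sign by the $k$-th binary digit of $z$, and then recognizes $\sum_{k=1}^{m+1}2^{-k}(a_k-b_k)+\tfrac{1}{2}$ as the truncated binary expansion of $\tfrac{x+y}{2}-\tfrac{x-y+1}{2}+\tfrac{1}{2}=y$, so that $\mathrm{res}_1$ is literally the tail $\sum_{k\ge m+2}(b_k-a_k)2^{-k}$ of that expansion; the value bound is then deduced from the derivative bound by integrating along the ray $t\mapsto (tx,ty)$, which is where the factor $x+y$ enters. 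You instead prove the structural fact that $\Phi_m$ is the piecewise-linear interpolant of $z-z^2$ on the mesh $h=2^{-m-1}$, obtain the explicit cell-wise error $e(z)=-(z-z_j)(z_{j+1}-z)$, and read off both conclusions: the derivative identities from $|e'|\le h$ together with the (correctly verified) algebraic identity $(u-u^2)-(v-v^2)+\tfrac{x+y}{2}-\tfrac{1}{4}=xy$, and the value bound from the refined estimate $|e(z)|\le h\min(z-z_j,z_{j+1}-z)$ plus the case split $\beta\le h$ versus $\beta\ge h$ (using $|\alpha|\le\beta$ and that $0$ and $1/2$ are knots). The price of your route is the interpolation induction, which you only sketch, but it is the standard Yarotsky-type telescoping argument and is routine to complete; what it buys is a sharper local error, a value bound that does not pass through the derivative, and --- unlike the paper --- an explicit treatment of the ReLU truncation in the value bound. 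On the derivative side both arguments share the same tacit restriction: where the positive part is active the true derivative is $0$ and neither proof verifies the stated formula there; you inherit this from the paper rather than introduce it, and it does not affect the downstream use of the lemma.
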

    \begin{proof}[Proof of Lemma \ref{lemma_mult_derivative}]
    A straightforward calculation yields
    \[
        \partial_1 R^k(x) = \begin{cases}\frac{1}{2^k}, & x \in A_{k+},\\
        -\frac{1}{2^k}, & x \in [0,1] \backslash A_{k+}
        \end{cases} = \frac{1}{2^k}(2\cdot \Ii_{A_{k+}}(x)-1)
    \]
    where
    \[
        A_{k+} := \bigcup_{j=0}^{2^k-1}[\frac{j}{2^k}, \frac{j+1}{2^k}].
    \]
    We conclude that
    \begin{eqnarray}
        \partial_1 \text{mult}_m(x,y) &=& \sum_{k=1}^{m+1}\big\{ \partial_1 R^k(\frac{x-y+1}{2})\cdot \frac{1}{2} - \partial_1 R^k(\frac{x+y}{2})\cdot \frac{1}{2}\big\} + \frac{1}{2}\nonumber\\
        &=& \sum_{k=1}^{m+1}\frac{1}{2^k}\big\{\Ii_{A_{k+}}(\frac{x-y+1}{2}) - \Ii_{A_{k+}}(\frac{x+y}{2})\big\} + \frac{1}{2}.\label{mult_derivative}
    \end{eqnarray}
    Suppose that the following binary representations hold for $x,y\in[0,1]$:
    \[
        \frac{x+y}{2} = \sum_{k=1}^{\infty}\frac{a_k}{2^k}, \quad\quad \frac{x-y+1}{2} = \sum_{k=1}^{\infty}\frac{b_k}{2^k}
    \]
    where $a_k, b_k \in \{0,1\}$ ($k\in\IN$). Then,
    \[
        \Ii_{A_{k+}}(\frac{x-y+1}{2}) = 1-b_k, \quad\quad \Ii_{A_{k+}}(\frac{x+y}{2}) = 1-a_k.
    \]
    Insertion into \reff{mult_derivative} yields
    \begin{eqnarray*}
        \partial_1 \text{mult}_m(x,y) &=& \sum_{k=1}^{m+1}\frac{1}{2^k}\big\{a_k - b_k\big\} + \frac{1}{2} = \frac{x+y}{2} - \frac{x-y+1}{2} + \frac{1}{2} +  \text{res}(x,y) = y + \text{res}(x,y)
    \end{eqnarray*}
    where
    \[
        \text{res}_k(x,y) := \sum_{k=m+2}^{\infty}\frac{b_k}{2^k} - \sum_{k=m+2}^{\infty}\frac{a_k}{2^k}.
    \]
    Due to $a_k,b_k \in \{0,1\}$ ($k\in\IN$), we see that $|\text{res}(x,y)| \le 2^{-(m+1)}$. The proof for $\partial_2 \text{mult}_m$ is similar.
    
    The second statement follows by the first one using the fundamental theorem of analysis:
    \begin{eqnarray*}
        \big|\text{mult}_m(x,y) - xy\big| &\le& x\int_0^{1}|\partial_1 \text{mult}_m(xt,yt) - yt| dt + y\int_0^{1}|\partial_2 \text{mult}_m(xt,yt)-xt| dt\\
        &\le& 2^{-m-1}(x+y).
    \end{eqnarray*}
\end{proof}

As in \cite{schmidthieber2017}, define recursively for $x \in [0,1]$,
\[
    \IM_m(x) := x,
\]
for $x = (x_1,...,x_{2^q}) \in [0,1]^{2^q}$ ($q\in\IN$),
\[
    \IM_m(x) := \text{mult}_m(\IM_m(x_1,...,x_{2^{q-1}}),\IM_m(x_{2^{q-1}+1},...,x_{2^q})),
\]
and for $x = (x_1,...,x_\dt)\in [0,1]^\dt$, $q = \lceil \log(r)\rceil$, 
\[
    \IM_m(x) := \IM_m(x_1,...,x_\dt,\underbrace{1,...,1}_{\text{$(2^q-\dt)$ ones}}).
\]

The first part of the following lemma is taken from \cite{schmidthieber2017}, Lemma A.3.
\begin{lem}\label{lemma_produkt_approx}
    For $y\in [0,1]^\dt$, it holds that
    \begin{equation}
        |\IM_m(y_1,...,y_\dt) - \prod_{k=1}^{r}y_k| \le \dt^2\cdot 2^{-m}\label{lemma_produkt_approx_eq1}
    \end{equation}
    and for $j \in \{1,...,\dt\}$, at the points $y$ where $\IM_m$ is differentiable, 
    \begin{equation}
        |\partial_j \IM_m(y_1,...,y_\dt) - \prod_{k=1, k\not= j}^{\dt}y_k| \le 2 \dt^{3}\cdot 2^{-m}\label{lemma_produkt_approx_eq2}
    \end{equation}
\end{lem}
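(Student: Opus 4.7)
\textbf{Proof plan for Lemma \ref{lemma_produkt_approx}.}

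The first inequality \reff{lemma_produkt_approx_eq1} is the content of Lemma A.3 of \cite{schmidthieber2017}, so we simply invoke it. The novel part is the derivative bound \reff{lemma_produkt_approx_eq2}, which we would prove by induction on the depth $q = \lceil \log_2 r\rceil$ of the binary tree defining $\IM_m$. Since we pad the input vector with $1$'s up to length $2^q \le 2r$, it suffices to treat the case $r = 2^q$; a factor of $2$ absorbed into the final constant handles the general case. The key point is that the product rule propagates cleanly through each \emph{mult}$_m$ node, and Lemma \ref{lemma_mult_derivative} gives an additive error of at most $2^{-m-1}$ per node.

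At the inductive step we write $\IM_m(y) = \text{mult}_m(A,B)$ with $A := \IM_m(y_1,\ldots,y_{2^{q-1}})$ and $B := \IM_m(y_{2^{q-1}+1},\ldots,y_{2^q})$. Without loss of generality assume $j \le 2^{q-1}$, so that $\partial_j B = 0$. The chain rule and Lemma \ref{lemma_mult_derivative} give
\[
    \partial_j \IM_m(y) \;=\; \partial_1 \text{mult}_m(A,B)\cdot \partial_j A \;=\; (B + \mathrm{res}_1)\cdot \partial_j A,
\]
with $|\mathrm{res}_1| \le 2^{-m-1}$. Let $P_A := \prod_{k=1}^{2^{q-1}} y_k$ and $P_B := \prod_{k=2^{q-1}+1}^{2^q}y_k$ so that the target quantity is $\partial_j(P_A P_B) = P_B \cdot \partial_j P_A$. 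Writing
\[
    \partial_j \IM_m(y) - P_B \partial_j P_A \;=\; (B - P_B)\partial_j A \;+\; \mathrm{res}_1\cdot \partial_j A \;+\; P_B(\partial_j A - \partial_j P_A),
\]
using $|B|, |P_B|, |\partial_j P_A| \le 1$, together with the inductive hypotheses $|B - P_B| \le \varepsilon_{q-1}$ and $|\partial_j A - \partial_j P_A| \le \varepsilon'_{q-1}$ (and the trivial $|\partial_j A| \le 1 + \varepsilon'_{q-1}$) yields the recursion
\[
    \varepsilon'_q \;\le\; \varepsilon'_{q-1} \;+\; 2\varepsilon_{q-1} \;+\; 2^{-m}.
\]

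Combined with the first part of the lemma applied at each tree level, $\varepsilon_{q-1} \le (2^{q-1})^2 \cdot 2^{-m} \le r^2 \cdot 2^{-m}$ (or one may use the sharper $\varepsilon_{q-1} \le (2^{q-1}-1)2^{-m}$ derived by the same induction), telescoping from $q=0$, where the base case $\IM_m(x) = x$ gives $\varepsilon'_0 = 0$, produces
\[
    \varepsilon'_q \;\le\; \sum_{i=1}^{q} \bigl( 2\varepsilon_{i-1} + 2^{-m} \bigr) \;\le\; \bigl(C_1 r + \log_2 r\bigr)\cdot 2^{-m},
\]
which is comfortably within the target $2r^3 \cdot 2^{-m}$. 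The final slack in constants absorbs both the padding (extra factor of $2$ from $2^q \le 2r$) and the case distinction on which half $j$ falls into.

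The main obstacle is purely bookkeeping: one must verify that $\IM_m$ is indeed differentiable at $y$ whenever each intermediate $\text{mult}_m$ evaluation lies off its finite set of kinks, and that the chain rule applies in the classical sense at such points (elsewhere the statement is vacuous since the lemma only claims the bound where $\IM_m$ is differentiable). Everything else reduces to the local linearization supplied by Lemma \ref{lemma_mult_derivative} and a triangle-inequality expansion as above.
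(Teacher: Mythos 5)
Your proposal is correct and follows essentially the same route as the paper: both control the derivative error by propagating it through the binary tree of $\text{mult}_m$ nodes, applying Lemma \ref{lemma_mult_derivative} at each node and the first inequality \reff{lemma_produkt_approx_eq1} to bound the intermediate sub-products, the only difference being that the paper unrolls this as a single telescoping sum over the $q$ levels rather than an explicit induction with an error recursion. (One minor slip: your telescoped bound should read $(C_1 \dt^2 + \log_2 \dt)\cdot 2^{-m}$ rather than $C_1 \dt$, since $\sum_{i}\varepsilon_{i-1} \asymp 4^{q}2^{-m} \lsim \dt^2 2^{-m}$ — still comfortably within the target $2\dt^3\cdot 2^{-m}$.)
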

\begin{proof}[Proof of Lemma \ref{lemma_produkt_approx}]
We only have to show \reff{lemma_produkt_approx_eq2}. We restrict ourselves to $j = 1$ for simplicity. With some abuse of notation, overload $y := (y,1,...,1)$ (where we added $2^{q}-\dt$ ones). Then by Lemma \ref{lemma_mult_derivative},  \reff{lemma_produkt_approx_eq1} and $|y_k| \le 1$ ($k=1,...,2^q$),
\begin{eqnarray*}
    &&|\partial_{y_1}\IM_m(y) - \prod_{k=2}^{\dt}y_k|\\
    &\le& \sum_{i=1}^{q}\Big\{\Big(\prod_{k=2^{q-i+1}+1}^{2^{q}}y_k\Big)\\
    &&\quad\quad\quad\quad\quad\quad\times \Big|\partial_1 \IM_m(\IM_m(y_1,...,y_{2^{q-i}}),\IM_m(y_{2^{q-i}+1},...,y_{2^{q-i+1}})) - \prod_{k=2^{q-i}+1}^{2^{q-i+1}}y_k\Big|\\
    &&\quad\quad\quad\quad\quad\quad \times \prod_{j=i+1}^{q}\partial_1 \IM_m(\IM_m(y_1,...,y_{2^{q-j}}),\IM_m(y_{2^{q-j}+1},...,y_{2^{q-j+1}}))\Big\}\\
    &\le& \sum_{i=1}^{q}\Big\{\Big(\prod_{k=2^{q-i+1}+1}^{2^{q}}y_k\Big)\cdot \big(\big|\IM_m(y_{2^{q-i}+1},...,y_{2^{q-i+1}}) - \prod_{k=2^{q-i}+1}^{2^{q-i+1}}y_k\big| + 2^{-m-1}\big)\\
    &&\quad\quad\quad\quad\quad\quad\quad\quad\quad\quad \times \prod_{j=i+1}^{q}\big(\IM_m(y_{2^{q-j}+1},...,y_{2^{q-j+1}}) + 2^{-m-1}\big)\Big\}\\
    &\le& \sum_{i=1}^{q}\Big\{\Big(\prod_{k=2^{q-i+1}+1}^{2^{q}}y_k\Big)\cdot 2\cdot 4^{q-i}2^{-m} \cdot \prod_{j=i+1}^{q}\big(\prod_{k=2^{q-j}+1}^{2^{q-j+1}}y_k + 4^{q-j}2^{-m}\big)\Big\}\\
    &\le& 2^{-m+1}\cdot \sum_{i=1}^{q}\Big\{4^{q-i}\cdot \prod_{j=i+1}^{q}(1 + 4^{q-j}2^{-m})\Big\}\\
    &\le& 2^{-m}\sum_{i=1}^{q}8^{q-i} \le \frac{8}{7} \dt^{3} 2^{-m}.
\end{eqnarray*}
\end{proof}

Now we show \reff{lipschitz_constant_approx_network}. To do so, we derive the mathematical expression $Q_3$ used in \cite[Theorem 5]{schmidthieber2017} to describe $\tilde f$.

Let $M$ be the largest integer such that $(M+1)^\dt \le N$. Define the grid
\[
    D(M) := \{x_l := (\ell_j/M)_{j=1,...,\dt}: (\ell_1,...,\ell_\dt) \in \{0,1,...,M\}^\dt\}.
\]
For $x,y\in [0,1]$, put
\[
    I_{y}(x) := (\frac{1}{M} - |x-y|)_{+},
\]
and for $x,y\in[0,1]^\dt$,
\[
    \text{Hat}_{x}(y) := \IM_m(I_{x_1}(y_1),...,I_{x_r}(y_r)).
\]
For $a,x \in [0,1]^r$, let
\[
    P_{a}^{\beta}f(y) = \sum_{0 \le |\alpha| < \beta}(\partial^{\alpha}f)(a)\cdot \frac{(y-a)^{\alpha}}{\alpha!} = \sum_{0 \le |\gamma| < \beta}c_{\gamma}(a) \cdot y^{\gamma}
\]
denote the multivariate Taylor polynomial of $f$ with degree $\beta$ at $a$. In the above formula, $\alpha$ and $\gamma$ denote multi-indices.

Then (cf. \cite{schmidthieber2017}, (31)-(35) therein), $|c_{\gamma}| \le \frac{K}{\gamma!}$ and $\sum_{\gamma \ge 0}|c_{\gamma}| \le Ke^\dt \le \frac{1}{2} B$, where $B := \lceil 2Ke^{\dt}\rceil$. Put
\[
    Q_1(y)_{x_l} := \frac{1}{B}\sum_{0 \le |\gamma| < \beta}c_{\gamma}(x_l)\cdot \IM_m(y_{\gamma}) + \frac{1}{2},
\]
where $y_{\gamma} := (y_{\gamma_1},y_{\gamma_2},...,y_{\gamma_\dt})$. Define
\[
    Q_2(y) := \sum_{x_l \in D(M)}\text{mult}_m\big(Q_1(y)_{x_l}, \text{Hat}_{x_l}(y)\big),
\]
and
\[
    Q_3 := S \circ Q_2
\]
where $S(x) := B M^\dt (x - \frac{1}{2M^\dt})$. Since $\tilde f = Q_3$, \reff{lipschitz_constant_approx_network} follows from Lemma \ref{lemma_lipschitz_q3}.

\begin{lem}\label{lemma_lipschitz_q3}
    For $x,y\in [0,1]^\dt$, it holds that
    \[
        |Q_3(x) - Q_3(y)| \le \beta F B( 24 \dt^6 2^\dt N 2^{-m} + 3\dt)\cdot |x-y|_{\infty}.
    \]
\end{lem}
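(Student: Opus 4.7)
The plan is a multi-level chain-rule computation. Because $S$ is affine with slope $BM^{\dt}$, one immediately has $|Q_3(x) - Q_3(y)| = BM^{\dt}\, |Q_2(x) - Q_2(y)|$, so the task reduces to bounding $\mathrm{Lip}(Q_2)$, where $Q_2 = \sum_{x_l \in D(M)} f_l$ with $f_l(y) = \mathrm{mult}_m(Q_1(y)_{x_l}, \mathrm{Hat}_{x_l}(y))$ and $|D(M)| \le N$. The key strategy is to split $Q_2 = \tilde Q_2 + (Q_2 - \tilde Q_2)$, where $\tilde Q_2$ uses the exact product in place of each $\mathrm{mult}_m$ and each $\IM_m$. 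The $BM^\dt$-rescaling of $\tilde Q_2$ (plus the constant shift in $S$) is exactly the multilinear $I_{x_l}$-interpolation of the local Taylor polynomials $P^{\beta}_{x_l} f$; its Lipschitz constant is inherited from the $\beta$-H\"older norm of $f$ (using $|c_\gamma(x_l)| \le K/\gamma!$, $\sum_\gamma |c_\gamma(x_l)| \le Ke^{\dt}$, and the fact that each coordinate appears in $\gamma$ at most $\beta - 1$ times), giving a bound of order $\beta F(K+1)e^{\dt}\cdot \dt$, which produces the $3\dt$ term.

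For the residue $Q_2 - \tilde Q_2$, I will apply the chain rule termwise, using Lemma \ref{lemma_mult_derivative} to write $\partial_i \mathrm{mult}_m(u,v)$ as the ``other'' argument plus a residue of size $\le 2^{-m-1}$, and Lemma \ref{lemma_produkt_approx} to bound both $|\IM_m(z) - \prod_k z_k| \le \dt^2 2^{-m}$ and $|\partial_j \IM_m(z) - \prod_{k\ne j} z_k| \le 2\dt^3 2^{-m}$. Each of the $N$ summands in $Q_2$ contributes a derivative residue of order $\dt^3 2^{-m}$ on the $\mathrm{Hat}_{x_l}$-channel and of order $\beta\dt\cdot 2^{-m}$ on the $Q_1$-channel (using $\mathrm{Lip}(Q_1(\cdot)_{x_l}) \le \tfrac{\beta}{2}(1 + 2\dt^3 2^{-m})$, which follows from the Taylor-coefficient bounds above together with Lemma \ref{lemma_produkt_approx}). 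Multiplying by $BM^\dt \le 2(K+1)e^{\dt}\, N$ yields the $24\, \dt^6\, 2^\dt\, N\, 2^{-m}$ term, with the combinatorial factor $2^\dt$ accounting for the at most $2^\dt$ grid points $x_l$ active at any fixed $y$ and the factor $\dt^6$ from propagating Lemma \ref{lemma_produkt_approx}'s cubic-in-$\dt$ gradient error through the composition.

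The main obstacle will be to ensure the sum $\sum_l |\partial_{y_j} f_l(y)|$ does not acquire a spurious second factor of $N$ upon multiplication by $BM^\dt$. This is averted by the partition-of-unity identities $\sum_{x_l \in D(M)} \prod_j I_{x_{l,j}}(y_j) \equiv M^{-\dt}$ and $\sum_{x_l \in D(M)} \partial_{y_j} \prod_k I_{x_{l,k}}(y_k) \equiv 0$, which force the cancellation making the leading exact-interpolation Lipschitz constant of $Q_3$ only $O(\beta F(K+1)e^{\dt}\dt)$ rather than $O(N)$; the genuine $N2^{-m}$ term then arises only from the $\IM_m$- and $\mathrm{mult}_m$-approximation residues, which are inherently of size $O(2^{-m})$ per summand, so their contribution to $\partial_{y_j} Q_2$ accumulates to at most $O(N 2^{-m})$ before the $BM^\dt$-rescaling is absorbed into the constant factors.
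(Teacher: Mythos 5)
Your plan follows essentially the same route as the paper's proof: reduce to bounding $|\partial_{y_j}Q_2|$ at points of differentiability, compare against the exact-product version using Lemma \ref{lemma_mult_derivative} and Lemma \ref{lemma_produkt_approx} for the $\mathrm{mult}_m$- and $\IM_m$-residues, and control the exact interpolation term through the partition-of-unity cancellation together with the H\"older continuity of adjacent local Taylor polynomials, before rescaling by $BM^{\dt}$. One caution on your closing sentence: the factor $N$ in the final bound comes solely from $BM^{\dt}\le BN$ acting on a residual gradient of size $O(2^{\dt}\,\mathrm{poly}(\dt)\,2^{-m})$ (only $2^{\dt}$ grid points are active at any fixed $y$), not from residues of size $O(2^{-m})$ accumulating over all $N$ summands of $Q_2$ --- that reading would yield a spurious $N^{2}2^{-m}$ after rescaling, whereas your second paragraph states the correct accounting.
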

\begin{proof}[Proof of Lemma \ref{lemma_lipschitz_q3}]
Since $Q_2$ is piecewise linear, it is enough to consider its first derivative at the points where it is differentiable to derive its Lipschitz constant.

With $q = \lceil \log(\dt)\rceil$, it holds that
\begin{eqnarray*}
    \partial_{y_1}\text{Hat}_{x_l}(y) &=& \partial_1 \IM_m(\IM_m(y_1,...,y_{2^{q-1}}),\IM_m(y_{2^{q-1}+1},...,y_\dt,1,...,1))\\
    &&\quad\quad\times \prod_{i=2}^{q-1}\partial_1 \IM_m(\IM_m(y_1,...,y_{2^{q-i}}),\IM_m(y_{2^{q-i}+1},...,y_{2^{q-i+1}}))\\
    &&\quad\quad \times \partial_1 \text{mult}_m(y_1,y_2).
\end{eqnarray*}
By Lemma \ref{lemma_produkt_approx},
\begin{eqnarray}
    |Q_1(y)_{x_l} - \big(\frac{1}{B}P_{x_l}^{\beta}f(y) + \frac{1}{2}\big)| &\le& \frac{1}{B}\sum_{0 \le |\gamma| < \beta}|c_{\gamma}(x_l)|\cdot |\IM_m(y_{\gamma}) - y^{\gamma}|\nonumber\\
    &\le& \frac{r^2 2^{-m}}{B}\cdot \sum_{0 \le |\gamma| < \beta}|c_{\gamma}| \le \frac{1}{2}\dt^2 2^{-m}.\label{lemma_lipschitz_q3_eq1}
\end{eqnarray}
Furthermore,
\begin{eqnarray*}
    \partial_{y_1}Q_1(y)_{x_l} &=& \frac{1}{B}\sum_{0\le |\gamma|< \beta, \gamma_1 \ge 1}c_{\gamma}(x_l) \cdot \partial_{y_1}\IM_m(y_{\gamma}) = \frac{1}{B}\sum_{0\le |\gamma|< \beta}c_{\gamma}(x_l) \cdot \sum_{j=1}^{\gamma_1}\partial_j \IM_m(y_{\gamma}).
\end{eqnarray*}
Thus by Lemma \ref{lemma_produkt_approx}, 
\begin{eqnarray}
    \big|\partial_{y_1}Q_1(y)_{x_l} - \frac{1}{B}\partial_{y_1}P_{x_l}^{\beta}f(y)\big| &\le& \frac{1}{B}\sum_{0\le |\gamma|< \beta, \gamma_1 \ge 1}|c_{\gamma}(x_l)| \cdot \sum_{j=1}^{\gamma_1}\big|\partial_j \IM_m(y_{\gamma}) - y^{\gamma-(1,0,...,0)}\big|\nonumber\\
    &\le& \frac{1}{B}\sum_{0 \le |\gamma|< \beta}|c_{\gamma}(x_l)|\cdot 2\gamma_1 \dt^3 2^{-m} \le \beta \dt^3 2^{-m}.\label{lemma_lipschitz_q3_eq2}
\end{eqnarray}
Finally, Lemma  \ref{lemma_produkt_approx} yields
\begin{equation}
    \big|\text{Hat}_{x_l}(y) - \prod_{k=1}^{\dt}I_{(x_l)_k}(y_k)\big| \le \dt^2 2^{-m}\label{lemma_lipschitz_q3_eq3}
\end{equation}
and for $j \in \{1,...,\dt\}$, since $\partial_{y_j}I_{(x_l)_j}(y_j) \in \{-1,+1\}$, 
\begin{eqnarray}
    &&\big|\partial_{y_j}\text{Hat}_{x_l}(y) - \partial_{y_j}\prod_{k=1}^{\dt}I_{(x_l)_k}(y_k)\big|\nonumber\\
    &=& \Big|\partial_j \IM_m(I_{(x_l)_1}(y_1),...,I_{(x_l)_\dt}(y_\dt)) - \prod_{k=1, k\not=j}^{\dt}I_{(x_l)_k}(y_k) \Big|\cdot |\partial_{y_j}I_{(x_l)_j}(y_j)|\nonumber\\
    &\le& \dt^2 2^{-m}.\label{lemma_lipschitz_q3_eq4}
\end{eqnarray}
Note furthermore that
\[
    |\partial_{y_j}\prod_{k=1}^{\dt}I_{(x_l)_k}(y_k)| \le \prod_{k=1, k\not=j}^{\dt}I_{(x_l)_k}(y_k)\cdot |\partial_{y_j}I_{(x_l)_j}(y_j)| \le 1
\]
and
\[
    |\frac{1}{B}\partial_{y_1}P_{x_l}^{\beta}f(y)| \le \frac{1}{B}\sum_{0\le|\gamma| < \beta}|c_{\gamma}(x_l)|\cdot \gamma_1 y^{\gamma-(1,0,...,0)} \le \frac{\beta}{2}.
\]
By Lemma \ref{lemma_mult_derivative}, \reff{lemma_lipschitz_q3_eq2} and \reff{lemma_lipschitz_q3_eq4}, it holds that
\begin{eqnarray}
    &&\Big|\partial_{y_1} Q_2(y)\nonumber\\
    &&\quad\quad\quad\quad - \sum_{x_l \in D(M), |x_l - y|_{\infty} \le M^{-1}}\Big\{\text{Hat}_{x_l}(y)\cdot \partial_{y_1}Q_1(y)_{x_l} + Q_1(y)_{x_l}\cdot \partial_{y_1}\text{Hat}_{x_l}(y)\Big\}\Big|\nonumber\\
    &\le& \sum_{x_l \in D(M), |x_l - y|_{\infty} \le M^{-1}}\Big\{\big|\partial_1 \text{mult}_m\big(Q_1(y)_{x_l}, \text{Hat}_{x_l}(y)\big) - \text{Hat}_{x_l}(y)\cdot \partial_{y_1}Q_1(y)_{x_l}\big|\nonumber\\
    &&\quad\quad\quad\quad\quad\quad\quad\quad\quad\quad\quad\quad\times |\partial_{y_1} Q_1(y)_{x_l}|\nonumber\\
    &&\quad\quad\quad\quad\quad\quad\quad\quad\quad\quad + \big|\partial_2 \text{mult}_m(Q_1(y)_{x_l}, \text{Hat}_{x_l}(y)) - Q_1(y)_{x_l}\big|\cdot \big|\partial_{y_1}\text{Hat}_{x_l}(y)\big|\Big\}\nonumber\\
    &\le& 2^{-m-1}\sum_{x_l \in D(M), |x_l - y|_{\infty} \le M^{-1}}\big\{|\partial_{y_1} Q_1(y)_{x_l}| + |\partial_{y_1}\text{Hat}_{x_l}(y)\big|\big\}\nonumber\\
    &\le& 2^{-m-1}\cdot 2^\dt\cdot \big\{(\beta \dt^3 2^{-m} + \frac{\beta}{2}) + (\dt^2 2^{-m}+1)\big\}\nonumber\\
    &\le& 4\beta \dt^3 2^{\dt}\cdot 2^{-m}.\nonumber\\
    &&\label{lemma_lipschitz_q3_eq5}
\end{eqnarray}
In a similar manner, we obtain with \reff{lemma_lipschitz_q3_eq1}, \reff{lemma_lipschitz_q3_eq2}, \reff{lemma_lipschitz_q3_eq3} and \reff{lemma_lipschitz_q3_eq4} that
\begin{eqnarray}
    &&\Big|\sum_{x_l \in D(M), |x_l - y|_{\infty} \le M^{-1}}\text{Hat}_{x_l}(y)\cdot \partial_{y_1}Q_1(y)_{x_l}\nonumber\\
    &&\quad\quad\quad\quad\quad\quad - \sum_{x_l \in D(M), |x_l - y|_{\infty} \le M^{-1}}\Big(\prod_{k=1}^{\dt}I_{(x_l)_k}(y_k)\Big)\cdot \frac{1}{B}\partial_{y_1}P_{x_l}^{\beta}f(y)\Big|\nonumber\\
    &\le& 2^\dt\cdot \big(\dt^2 2^{-m}\cdot (\beta \dt^3 2^{-m} + \frac{\beta}{2}) + 1\cdot (\beta \dt^3 2^{-m})\big)\nonumber\\
    &\le& 4\beta \dt^5 2^\dt\cdot 2^{-m},\label{lemma_lipschitz_q3_eq6}
\end{eqnarray}
and
\begin{eqnarray}
    &&\Big|\sum_{x_l \in D(M), |x_l - y|_{\infty} \le M^{-1}}Q_1(y)_{x_l}\cdot \partial_{y_1}\text{Hat}_{x_l}(y)\nonumber\\
    &&\quad\quad\quad\quad - \sum_{x_l \in D(M), |x_l - y|_{\infty} \le M^{-1}}\Big\{ \big(\frac{1}{B}P_{x_l}^{\beta}f(y) + \frac{1}{2}\big)\cdot \Big(\prod_{k=2}^{\dt}I_{(x_l)_k}(y_k)\Big)\cdot \partial_{y_1}I_{(x_l)_1}(y_1)\Big\}\Big|\nonumber\\
    &\le& 2^\dt\cdot \big(\frac{1}{2}\dt^2 2^{-m}\cdot (\dt^2 2^{-m} + 1) + 1\cdot (\dt^2 2^{-m})\big)\nonumber\\
    &\le& 4\dt^4 2^\dt\cdot 2^{-m}.\nonumber\\
    &&\label{lemma_lipschitz_q3_eq7}
\end{eqnarray}
Now, we have
\begin{eqnarray}
    &&\Big|\sum_{x_l \in D(M), |x_l - y|_{\infty} \le M^{-1}}\Big(\prod_{k=1}^{\dt}I_{(x_l)_k}(y_k)\Big)\cdot \frac{1}{B}\partial_{y_1}P_{x_l}^{\beta}f(y)\Big|\nonumber\\
    &\le& \frac{\beta}{2}\cdot \sum_{x_l \in D(M), |x_l - y|_{\infty} \le M^{-1}}\Big(\prod_{k=1}^{\dt}I_{(x_l)_k}(y_k)\Big) \le \frac{\beta}{2}\cdot M^{-\dt}.\label{lemma_lipschitz_q3_eq8}
\end{eqnarray}
Let $u \in D(M)$ be the grid point which satisfies $u_j \le y_j \le u_j + M^{-1}$, $j = 1,...,\dt$.

Let $b(|\alpha|) := b$, if $|\alpha| = \beta-1$ and $b(\alpha) = 1$, otherwise. For general $a,a' \in [0,1]^\dt$ with $|y-a|_{\infty}, |y-a'|_{\infty} \le M^{-1}$, $|a-a'|_{\infty} \le M^{-1}$, it holds that
\begin{eqnarray*}
    && |P^{\beta}_a f(y) - P^{\beta}_{a'} f(y)| \le \sum_{0
    \le |\alpha|< \beta}\frac{1}{\alpha!}\cdot\big\{ \big|\partial^{\alpha}f(a) - \partial^{\alpha}f(a')\big|\cdot |(y-a)^{\alpha}|\\
    &&\quad\quad\quad\quad + |\partial^{\alpha}f(a')|\cdot \big|(y-a)^{\alpha} - (y-a')^{\alpha}\big|\big\}\\
    &\le& \sum_{0 \le |\alpha| < \beta}\frac{1}{\alpha!}\cdot \big\{ K|a-a'|_{\infty}^{b(|\alpha|)} M^{-|\alpha|}\\
    &&\quad\quad + (K+F)\sum_{j=1}^{\dt}\Big(\prod_{k=1}^{j-1}|y_k - a_k|^{\alpha_k}\Big)\cdot \Big(\prod_{k=j+1}^{\dt}|y_k - a_k'|^{\alpha_k}\Big)\cdot |(y_j - a_j)^{\alpha_j} - (y_j - a_j')^{\alpha_j}|\big\}\\
    &\le& \sum_{0\le|\alpha| < \beta}\frac{1}{\alpha!}\cdot \big\{K|a-a'|_{\infty}^{b(|\alpha|)}M^{-|\alpha|} + \sum_{j=1}^{\dt}\alpha_j M^{-(\alpha_j-1)}|a_j - a_j'|\big\}\\
    &\le& \sum_{0\le|\alpha| < \beta}\frac{1}{\alpha!}\cdot \big\{K|a-a'|_{\infty}^{b(|\alpha|)}M^{-|\alpha|} + (K+F)\beta M^{-1}\big\}\\
    &\le& (K+\beta K + \beta F)e^t M^{-1}.
\end{eqnarray*}
The last step is due to the fact that $f$ is assumed to have at least H\"older exponent 1.

Using this result, we obtain 
\begin{eqnarray}
    &&\Big|\sum_{x_l \in D(M), |x_l - y|_{\infty} \le M^{-1}}\Big\{ \big(\frac{1}{B}P_{x_l}^{\beta}f(y) + \frac{1}{2}\big)\cdot \Big(\prod_{k=2}^{\dt}I_{(x_l)_k}(y_k)\Big)\cdot \partial_{y_1}I_{(x_l)_1}(y_1)\Big\}\Big|\nonumber\\
    &\le& \sum_{(i_2,...,i_\dt) \in \{0,1\}^r}\Big(\prod_{k=2}^{\dt}I_{u_k + \frac{i_k}{M}}(y_k)\Big)\cdot \Big| (\frac{1}{B}P_{(u_1 + M^{-1},u_2 + \frac{i_2}{M},...,u_\dt + \frac{i_\dt}{M})}^{\beta}f(y) + \frac{1}{2})\cdot \partial_{y_1}I_{u_1 + M^{-1}}(y_1)\nonumber\\
    &&\quad\quad\quad\quad\quad\quad\quad\quad\quad\quad\quad\quad\quad\quad\quad\quad + (\frac{1}{B}P_{(u_1,u_2 + \frac{i_2}{M},...,u_\dt + \frac{i_\dt}{M})}^{\beta}f(y) + \frac{1}{2})\cdot \partial_{y_1}I_{u_1}(y_1)\Big|\nonumber\\
    &\le& \frac{1}{B}\sum_{(i_2,...,i_\dt) \in \{0,1\}^\dt}\Big(\prod_{k=2}^{\dt}I_{u_k + \frac{i_k}{M}}(y_k)\Big)\cdot \big|P_{(u_1 + M^{-1},u_2 + \frac{i_2}{M},...,u_\dt + \frac{i_\dt}{M})}^{\beta}f(y) - P_{(u_1,u_2 + \frac{i_2}{M},...,u_\dt + \frac{i_\dt}{M})}^{\beta}f(y)\big|\nonumber\\
    &\le& \frac{(K+\beta K + \beta F)e^{\dt}}{B}M^{-(\dt-1)}\cdot M^{-1} = \frac{(K+\beta K + \beta F)e^{\dt}}{B}\cdot  M^{-\dt}.\nonumber\\
    &&\label{lemma_lipschitz_q3_eq9}
\end{eqnarray}
Using the bounds \reff{lemma_lipschitz_q3_eq5}, \reff{lemma_lipschitz_q3_eq6}, \reff{lemma_lipschitz_q3_eq7}, \reff{lemma_lipschitz_q3_eq8} and \reff{lemma_lipschitz_q3_eq9}, we obtain with $K \ge 1$ that
\[
    |\partial_{y_1}Q_2(y)| \le 24 \beta F \dt^5 2^{\dt} 2^{-m} + 3\beta M^{-\dt}.
\]
The proof for the other derivatives $\partial_{y_j}$, $j = 2,...,\dt$, is completely similar. Thus, for $x,y\in [0,1]^\dt$,
\[
    |Q_2(y) - Q_2(x)| \le \int_0^{1}|\langle \partial Q_2(x + t(y-x)), y-x\rangle| dt \le \dt \sup_y|\partial Q_2(y)|_{\infty}\cdot |y-x|_{\infty}.
\]
We obtain
\[
    |Q_3(x) - Q_3(y)| \le BM^\dt |Q_2(x) - Q_2(y)| \le \beta F B( 24 \dt^6 2^\dt M^\dt 2^{-m} + 3\dt)\cdot |x-y|_{\infty}.
\]
\end{proof}

%An improvement of the bound of the Lipschitz constant of $Q_3$ seems possible if the analysis of the interaction of the several summands in the sum $\sum_{x_l \in D(M)}$ is further investigated with binary representations. However, here we restrict ourselves to a rough upper bound since we conjecture that better results could be obtained with an approximation technique which is more fitted to produce a small Lipschitz constant.

\end{document}